\documentclass[a4paper,reqno]{amsart}

\usepackage[a4paper,left=3cm,right=3cm,top=3cm,bottom=3cm]{geometry}
\usepackage[T1]{fontenc}
\usepackage{lmodern}
\usepackage{microtype}
\usepackage{amsmath,amssymb,mathtools}
\usepackage{amsthm}
\usepackage{mathrsfs}
\usepackage{aliascnt}
\usepackage{booktabs,array}
\usepackage{xcolor}
\usepackage{enumerate}
\mathtoolsset{showonlyrefs}
\usepackage[all]{xy}
\usepackage{tikz}
\usepackage{graphicx}
\usepackage{comment}
\usetikzlibrary{arrows.meta,positioning,calc}
\usepackage{hyperref}
\usepackage[nameinlink,noabbrev]{cleveref}
\hypersetup{colorlinks=true,linkcolor=blue!45!black,citecolor=green!35!black,urlcolor=blue!55!black}
\numberwithin{equation}{section}

\newtheoremstyle{italicbody}{.75em}{.75em}{\itshape}{} {\bfseries}{.}{.5em}{}
\newtheoremstyle{romanbody}{.75em}{.75em}{\normalfont}{} {\bfseries}{.}{.5em}{}
\theoremstyle{italicbody}
\newtheorem{theorem}{Theorem}[section]
\newaliascnt{proposition}{theorem}
\newtheorem{proposition}[proposition]{Proposition}
\aliascntresetthe{proposition}
\newaliascnt{lemma}{theorem}
\newtheorem{lemma}[lemma]{Lemma}
\aliascntresetthe{lemma}
\newaliascnt{corollary}{theorem}
\newtheorem{corollary}[corollary]{Corollary}
\aliascntresetthe{corollary}
\theoremstyle{romanbody}
\newaliascnt{definition}{theorem}
\newtheorem{definition}[definition]{Definition}
\aliascntresetthe{definition}
\newaliascnt{remark}{theorem}
\newtheorem{remark}[remark]{Remark}

\aliascntresetthe{remark}

\crefname{theorem}{Theorem}{Theorems}
\crefname{proposition}{Proposition}{Propositions}
\crefname{lemma}{Lemma}{Lemmas}
\crefname{corollary}{Corollary}{Corollaries}
\crefname{definition}{Definition}{Definitions}
\crefname{construction}{Construction}{Constructions}
\crefname{claim}{Claim}{Claims}
\crefname{remark}{Remark}{Remarks}
\crefname{example}{Example}{Examples}

\newcommand{\kk}{\mathsf{k}}
\newcommand{\rep}{\operatorname{rep}}
\newcommand{\add}{\operatorname{add}}
\newcommand{\proj}{\operatorname{proj}}
\newcommand{\Int}{\operatorname{Int}}
\newcommand{\Adm}{\operatorname{Adm}}
\newcommand{\Hom}{\operatorname{Hom}}
\newcommand{\End}{\operatorname{End}}
\newcommand{\Ext}{\operatorname{Ext}}
\newcommand{\rad}{\operatorname{rad}}

\newcommand{\Min}{\operatorname{Min}}
\newcommand{\Max}{\operatorname{Max}}
\newcommand{\Tr}{\operatorname{Tr}}
\newcommand{\Image}{\operatorname{Im}}
\newcommand{\Span}{\operatorname{span}}
\newcommand{\pd}{\operatorname{pd}}
\newcommand{\id}{\operatorname{id}}
\newcommand{\gldim}{\operatorname{gldim}}
\newcommand{\RHom}{\mathbf R\!\operatorname{Hom}}

\newcommand{\op}{\mathrm{op}}
\newcommand{\D}{\mathrm D}
\newcommand{\I}{\mathbb I}
\newcommand{\PP}{\mathsf P}
\newcommand{\II}{\mathsf I}
\newcommand{\LL}{\mathsf L}
\newcommand{\DD}{\mathsf\Delta}
\newcommand{\NN}{\mathsf\nabla}
\newcommand{\KK}{\mathsf K}

\newcommand{\VV}{\mathsf V}
\newcommand{\cE}{\mathcal E}
\newcommand{\cM}{\mathcal M}
\newcommand{\cU}{\mathcal U}
\newcommand{\cD}{\mathcal D}
\newcommand{\cN}{\mathcal N}

\newcommand{\QQ}{\mathsf Q}

\newcommand{\intresgldim}{\operatorname{int\text{-}res\text{-}gldim}}
\newcommand{\piZero}{\pi_0}

\newcommand{\Simp}{\operatorname{Simp}}

\newcommand{\Sat}{\operatorname{Sat}}
\newcommand{\Lan}{\operatorname{Lan}}

\newcommand{\conv}{\operatorname{conv}}

\newcommand{\qand}{\operatorname{\quad \text{and} \quad}\nolimits}

\title[Interval Endomorphism Algebras of Posets]{Interval Endomorphism Algebras of Posets: Reedy Structure, Combinatorics, and Homological Theory}
\author{Toshitaka Aoki}
\address{Toshitaka Aoki,
  Graduate School of Human Development and Environment, Kobe University, 3-11 Tsurukabuto, Nada-ku, Kobe 657-8501 Japan}
\email{toshitaka.aoki@people.kobe-u.ac.jp}
\thanks{Email: toshitaka.aoki@people.kobe-u.ac.jp}

\subjclass[2020]{Primary 16G20; Secondary 16E10, 16E30, 55N31}

\keywords{interval representations, Reedy algebras, quasi-hereditary algebras,
representations of posets, persistence modules, Ext groups, global dimension}

\begin{document}

\begin{abstract}
Let $P$ be a finite connected poset and let $\Lambda_P$ be the opposite
endomorphism algebra of the direct sum of all interval representations
of $P$ over a field.
Via projectivization, this algebra governs resolutions relative to
interval-decomposable representations, which arise naturally in
persistence theory.

In this paper, we study the algebra $\Lambda_P$.
We first show that $\Lambda_P$ carries a Reedy algebra structure in the
sense of Dalezios--\v{S}\v{t}ov\'{\i}\v{c}ek.
Its Reedy degree is given by the cardinality of the indexing interval,
and the induced quasi-hereditary order is given by reverse interval
cardinality.
With respect to the resulting quasi-hereditary structure, we give a
concrete combinatorial description of the standard modules and construct
explicit projective resolutions of these modules.
Using these resolutions, we reduce the calculation of standard--simple
Ext groups to the reduced cohomology of simplicial complexes determined
by the interval combinatorics.  Order reversal gives the corresponding
simple--costandard formula.  Building on these calculations, we determine
all simple--simple Ext groups.
These groups are one-dimensional in a unique degree when the corresponding
pair of intervals is saturated, and vanish otherwise.
As a consequence, we obtain an exact combinatorial formula for the
global dimension of $\Lambda_P$, which in particular shows that it is
independent of the coefficient field.

As an application, for the $m$ by $\ell$ grid $G_{m,\ell}$ with
$m\geq\ell\geq2$, we give the explicit formula
\[
\gldim \Lambda_{G_{m,\ell}}
=
\min\{2\ell,m+\ell-2\}.
\]
This also gives an explicit formula for the interval-resolution global
dimension of these grids, settling the corresponding grid conjectures of Asashiba--Escolar--Nakashima--Yoshiwaki and determining the stable value and the precise stabilization threshold.
\end{abstract}

\maketitle

\tableofcontents

\section{Introduction}
\label{sec:introduction}

Homological methods play a fundamental role in many areas of mathematics.
Projective and injective resolutions provide a systematic way to measure
the homological complexity of objects, while projective dimensions and
global dimensions record this complexity numerically.  In ring theory,
determining these invariants and relating them to the structure of rings
are basic themes.

Quasi-hereditary algebras were introduced by
Cline--Parshall--Scott in connection with highest weight categories
\cite{CPS}, and their module-theoretic structure was further developed by
Dlab--Ringel \cite{DR89}.  
A quasi-hereditary structure is organized by a finite poset and associated
standard and costandard modules, in terms of which many homological
properties of the algebra can be controlled.  This class of algebras has
been extensively studied, and in particular every quasi-hereditary algebra has finite global dimension \cite{DR90,DR92,Krause17}.

Recently, Dalezios--\v{S}\v{t}ov\'{\i}\v{c}ek introduced linear Reedy
categories and their finite-dimensional algebraic counterparts, called
Reedy algebras \cite{DS}.  These are linear analogues of ordinary Reedy
categories (see \cite[Section~5.2]{Hovey} for background), which are
equipped with a degree function on objects (Reedy degree) and admit unique
factorizations of morphisms into degree-lowering and degree-raising parts.
In the linear setting, the degree conditions are retained, while unique
factorization is replaced by a corresponding linear factorization isomorphism induced by multiplication.  
Every such algebra is quasi-hereditary for the order obtained
by reversing the degree \cite[Theorem~4.22]{DS}.  
The associated standard and costandard modules are also determined by the Reedy data, and further relations between the Reedy and quasi-hereditary
structures are studied in \cite{CDK}.

The algebras considered in this paper are motivated by persistence theory in topological data analysis.
One-parameter persistent homology
\cite{ELZ02,ZC05} has found successful applications in areas such as
materials science, evolutionary biology, image analysis, and machine learning \cite{HBH+16,CCR13,CBO+22,AEK+17}.
Persistent homology gives rise naturally to persistence modules, which are representations of indexing posets and can therefore be studied using the representation theory of posets and quivers \cite{ZC05,CZ09,Oudot15,EH}.
In recent developments in multiparameter persistence \cite{BL23}, there has been
growing interest in relative homological algebra with respect to
distinguished classes of representations, notably interval representations
\cite{BOO25,BOOS24,AENY,BBH24,BBH25,CGRST,Asashiba25,AET,ABH26}.

We now fix the algebraic setup used throughout the paper.
Let $P$ be a finite connected poset and $\kk$ a field.
An \emph{interval} in $P$ is a nonempty connected convex subset.
Every interval $S\subseteq P$ defines an interval representation
$\I_S$ supported on $S$.  Let $\Int(P)$ denote the set of intervals in
$P$, and set
\[
G_P
=
\bigoplus_{S\in\Int(P)}\I_S
\qand
\Lambda_P
=
\End_P(G_P)^{\op}.
\]
Writing $e_S$ for the idempotent corresponding to $\I_S$, we have $e_R\Lambda_Pe_S\cong\Hom_P(\I_R,\I_S)$.
Under projectivization, interval-relative resolutions are carried to
projective resolutions over $\Lambda_P$, so that homological data relative to intervals are encoded in the ordinary homological algebra of $\Lambda_P$.

\medskip
In this paper, we study the algebra $\Lambda_P$.
Our starting point is the following structural result.

\begin{theorem}[\Cref{thm:interval-cardinality-reedy-decomposition}]
The algebra $\Lambda_P$ admits a Reedy structure with degree
$\deg(S)=|S|$ for $S\in\Int(P)$.  More precisely,
$(\Lambda_P,\Lambda_P^+,\Lambda_P^-)$ is a Reedy algebra, where
$\Lambda_P^+$ (resp., $\Lambda_P^-$) is the subalgebra defined
from quotient (resp., inclusion) maps between interval representations associated with relative downsets (resp., 
relative upsets).
\end{theorem}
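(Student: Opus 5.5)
The plan is to verify the axioms of a Reedy algebra in the sense of Dalezios--\v{S}\v{t}ov\'{\i}\v{c}ek directly, working with an explicit basis of $\Lambda_P$. The first step is the standard description of morphisms between interval representations. A subset $C\subseteq R\cap S$ is admissible for the pair $(R,S)$ if $C$ is a connected component of $R\cap S$ that is an upset of $R$ and a downset of $S$. Then $\Hom_P(\I_R,\I_S)$ has a basis $\{f_C\}$ indexed by the admissible $C$, where $f_C$ is the composite
\[
\I_R\twoheadrightarrow \I_C\hookrightarrow \I_S .
\]
One checks along the way that each such $C$ is again an interval: it is connected by construction, and it is convex because it is a relative upset/downset of convex sets. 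The relevant conventions for which side is "up" and which is "down" must be adjusted to the $\op$ in the definition of $\Lambda_P$; I will fix them so that $\Lambda_P^-$ is spanned by the quotient maps and $\Lambda_P^+$ by the inclusion maps, matching the roles in the statement. With that fixed, $\Lambda_P^0$ is the span of the idempotents $e_S$, which is semisimple since $\End_P(\I_S)=\kk$. $\Lambda_P^+$ (resp.\ $\Lambda_P^-$) is the span of the $f_C$ with $C=R$ (resp.\ $C=S$).

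The second step is to show that $\Lambda_P^\pm$ are subalgebras and satisfy the degree conditions. For closure, a composite of two inclusions $\I_C\hookrightarrow\I_D\hookrightarrow\I_S$ is the inclusion $\I_C\hookrightarrow \I_S$, because a downset of a downset is a downset. Dually, composites of quotient maps are quotient maps. For the degree conditions, a non-identity inclusion $\I_C\hookrightarrow \I_S$ has $|C|<|S|$ and a non-identity quotient $\I_R\twoheadrightarrow\I_C$ has $|C|<|R|$. Hence, with $\deg(S)=|S|$, the non-identity elements of $\Lambda_P^+$ strictly raise degree and those of $\Lambda_P^-$ strictly lower it, in the direction dictated by the $\op$ convention. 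Equivalently, $e_S\Lambda_P^{\pm}e_S=\kk e_S$ and $e_R\Lambda_P^{\pm}e_S=0$ unless the degree inequality holds strictly.

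The third step, which I expect to be the main obstacle, is the factorization isomorphism. The claim is that multiplication
\[
\bigoplus_{C} e_R\Lambda_P^- e_C\otimes_{\kk} e_C\Lambda_P^+ e_S\longrightarrow e_R\Lambda_P e_S
\]
is bijective, where the sum runs over all $C\in\Int(P)$ (or the tensor order is reversed if the convention requires). The source has a basis consisting of pairs (quotient $R\twoheadrightarrow C$, inclusion $C\hookrightarrow S$). Such a pair exists exactly when $C$ is an upset of $R$ and a downset of $S$, and its product is the map with image $\I_C$. The delicate point is to show that any interval $C\subseteq R\cap S$ which is simultaneously an upset of $R$ and a downset of $S$ is automatically a full connected component of $R\cap S$. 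Then the pairs correspond bijectively to the admissible $C$, and the products are exactly the basis $\{f_C\}$. To prove this, take $x\in C$ and $y\in R\cap S$ comparable to $x$. If $y\ge x$, then $y\in C$ because $C$ is an upset in $R$; if $y\le x$, then $y\in C$ because $C$ is a downset in $S$. So $C$ is closed under comparability within $R\cap S$, hence is a union of components, and being connected it is one component. Once this is checked, the map sends a basis to a basis, the Reedy axioms hold, and the theorem follows.
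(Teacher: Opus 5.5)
Your route is the paper's. You use the admissible-component basis, closure of the directional spans because relative up/downsets are transitive, cardinality as the degree, and a blockwise check of (R3). The paper also uses your "delicate point", with the same comparability argument, right after the definition of $\Adm(R,S)$: every element of $\cD(R)\cap\cU(S)$ is automatically a connected component of $R\cap S$. However, two of your conventions are reversed, and neither is a free choice.

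\textbf{Upsets and downsets.} For covariant representations, $\I_R\twoheadrightarrow\I_C$ exists precisely when $C$ is a relative \emph{downset} of $R$. Likewise, $\I_C\hookrightarrow\I_S$ exists precisely when $C$ is a relative \emph{upset} of $S$. So $\Adm(R,S)=\cD(R)\cap\cU(S)$, not "upset of $R$, downset of $S$". The statement itself ties quotients to relative downsets. This has nothing to do with the $\op$ in $\Lambda_P$. For $P=\{1<2\}$, your description predicts $\Hom_P(\I_{\{1,2\}},\I_{\{2\}})\neq0$, but naturality along $1\leq 2$ forces this space to vanish.

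\textbf{The labels $\pm$.} Here the $\op$ does matter. Since $e_R\Lambda_Pe_S\cong\Hom_P(\I_R,\I_S)$, a proper quotient $q_{R,U}$ gives $q_{R,U}^{\op}\in e_R\Lambda_Pe_U$ with $|R|>|U|$, which is exactly what (R1) demands of $\Lambda^+$. So quotients must span $\Lambda_P^+$ and inclusions must span $\Lambda_P^-$, as in the statement. Your assignment, with inclusions in $\Lambda_P^+$, is the right one for $E_P$, not for $\Lambda_P=E_P^{\op}$, and it violates (R1) and (R2). Your remark that it matches the statement is therefore wrong.

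\textbf{The tensor order in (R3).} Once the labels are correct, the order is forced, so your hedge cannot be left open. The map $q^{\op}\otimes j^{\op}\mapsto q^{\op}j^{\op}=(j\circ q)^{\op}$ realizes the image factorization. The other order computes composites $q\circ j$ and is not injective: the nonzero basis tensor coming from $\I_{\{2\}}\hookrightarrow\I_{\{1,2\}}\twoheadrightarrow\I_{\{1\}}$ maps to zero. If you interchange "upset" and "downset" throughout and swap your $\pm$ labels, your argument is correct. It then coincides with the paper's proof, which verifies $E_P^+\otimes_{E_P^0}E_P^-\xrightarrow{\sim}E_P$ and then passes to opposite algebras.
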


In this structure, the multiplication isomorphism
$\Lambda_P^+\otimes_{\Lambda_P^0}\Lambda_P^-\xrightarrow{\sim}\Lambda_P$,
which is the required Reedy decomposition, expresses morphisms between
interval representations as linear combinations of quotient--inclusion
composites obtained from image factorizations, with the images again being
interval representations.
See Section~\ref{sec:interval-reedy} for the precise formulation.

By \cite[Theorem~4.22]{DS}, the resulting Reedy structure makes
$\Lambda_P$ quasi-hereditary, with order
\[
R\preceq_{\mathrm{qh}}S
\quad\Longleftrightarrow\quad
R=S
\text{ or }
|R|>|S|.
\]

Throughout this paper, we regard $\Lambda_P$ as a quasi-hereditary algebra
with the structure induced by this Reedy structure.  We describe
various representation-theoretic and homological features of $\Lambda_P$
in terms of the combinatorics of intervals in $P$.  These descriptions
include the associated standard and costandard modules and ultimately
lead to an explicit formula for the Ext groups between simple modules,
and hence for the global dimension of $\Lambda_P$
(Section~\ref{sec:global-dimension}).

We work with finitely generated left $\Lambda_P$-modules.  With this
convention, the indecomposable projective module
$\PP(S):=\Lambda_Pe_S$ is spanned by all morphisms from interval representations to $\I_S$.

We first give a concrete combinatorial description of the standard modules (Section~\ref{sec:standards}).  We then construct explicit projective resolutions of these modules.

\begin{theorem}[\Cref{thm:standard-projective-resolution}]
For every $S\in\Int(P)$, the standard module $\DD(S)$ admits an explicit
projective resolution.
Its terms are indexed by the connected components of intersections of
maximal proper connected relative upsets of $S$, and its differentials are
alternating sums of maps induced by inclusions among these components.
\end{theorem}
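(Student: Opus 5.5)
We resolve $\DD(S)$ by a Čech-type complex built from the maximal proper connected relative upsets of $S$, and we check exactness pointwise at each idempotent $e_R$. There, the complex should split as a direct sum over possible images $T$, and each summand is the augmented chain complex of a full simplex.

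\emph{Step 1: describe $\PP(S)$ and $\DD(S)$.} For the Reedy structure of \Cref{thm:interval-cardinality-reedy-decomposition} the quasi-hereditary order is reverse cardinality. So $\DD(S)$ is $\PP(S)=\Lambda_Pe_S$ modulo the trace of the projectives $\PP(R)$ with $|R|<|S|$, i.e. modulo the morphisms into $\I_S$ that factor through a strictly smaller interval. I will use the image factorization underlying the Reedy decomposition. Every morphism $\I_R\to\I_S$ is a linear combination of quotient--inclusion composites $\I_R\twoheadrightarrow\I_T\hookrightarrow\I_S$, where $T$ is a connected relative upset of $S$ and a relative downset of $R$. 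Hence $e_R\PP(S)$ has a basis indexed by such $T$. The kernel $K$ of $\PP(S)\to\DD(S)$ is spanned by the basis elements with $T\neq S$. Consequently $e_R\DD(S)$ is $\kk$ when $S$ is a relative downset of $R$, and $0$ otherwise.

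\emph{Step 2: write down the complex.} Let $U_1,\dots,U_n$ be the maximal proper connected relative upsets of $S$. For $\emptyset\neq J\subseteq\{1,\dots,n\}$, let $\pi_0(U_J)$ be the set of connected components of $U_J=\bigcap_{i\in J}U_i$. Each component $C$ is again an interval and a relative upset of $S$. Set
\[
\PP_k=\bigoplus_{|J|=k}\ \bigoplus_{C\in\pi_0(U_J)}\PP(C)\quad(k\ge1),\qquad \PP_0=\PP(S).
\]
For $J'=J\setminus\{j\}$, each $C\in\pi_0(U_J)$ lies in a unique component $C'\in\pi_0(U_{J'})$. The differential $\PP_k\to\PP_{k-1}$ is the alternating sum, with Koszul signs, of the maps $\PP(C)\to\PP(C')$ induced by the inclusions $\I_C\hookrightarrow\I_{C'}$. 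The inclusions $\I_{U_i}\hookrightarrow\I_S$ give $\PP_1\to\PP_0$. One checks $d^2=0$ from the sign rule. I will augment the complex by $\PP_0\to\DD(S)$.

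\emph{Step 3: exactness at $e_R$.} By Step 1, $e_R\PP(C)$ has a basis indexed by connected $T$ that are relative downsets of $R$ and relative upsets of $C$. Composing with the inclusion into $\I_{C'}$ preserves the image, so every term of the complex and every differential decomposes according to $T$. Fix $T$ and let $J_T=\{i:T\subseteq U_i\}$. Since $T$ is connected and $T\subseteq U_J$ for $J\subseteq J_T$, it lies in exactly one component of each such $U_J$. Thus the $T$-summand of the augmented complex, evaluated at $e_R$, is exactly the augmented simplicial chain complex of the full simplex on $J_T$, with the augmentation term $\kk$ coming from $e_R\PP(S)$.

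There are two cases. If $T\neq S$, then $T$ is a proper connected relative upset of $S$, so by finiteness it lies in some maximal one. Hence $J_T\neq\emptyset$, and the augmented chain complex of a nonempty simplex is acyclic. If $T=S$, then $J_T=\emptyset$, and this summand is the $\kk$ in degree $0$ that maps isomorphically onto $e_R\DD(S)$. Summing over $T$ shows that $\PP_\bullet\to\DD(S)$ is exact, and the resolution is finite since $\PP_k=0$ for $k>n$.

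\emph{Main obstacle.} The delicate step is Step 1: showing that the basis of $e_R\PP(C)$ indexed by images $T$ is canonical and compatible with post-composition by inclusions. This compatibility is what makes the complex split over $T$. I would derive it directly from the Reedy decomposition $\Lambda_P^+\otimes_{\Lambda_P^0}\Lambda_P^-\cong\Lambda_P$, where $\Lambda_P^0$ is spanned by the idempotents. Under this isomorphism, composing with an inclusion only acts on the $\Lambda_P^-$ factor, so it sends a composite with image $T$ to a composite with the same image $T$. The remaining work is the sign bookkeeping in the differentials.
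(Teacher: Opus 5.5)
Your proposal is correct and follows essentially the same route as the paper. It uses the same \v{C}ech-type complex $\QQ_\bullet$ and checks exactness at each $e_R$ by splitting along the admissible-component support. Each proper-support summand is the augmented chain complex of the full simplex on $\{i : T\subseteq U_i\}$, and the support-$S$ summand is matched isomorphically with $e_R\DD(S)$. The step you flag as the main obstacle is in fact immediate from the composition law $j_{C,C'}\circ\rho^{T}_{R,C}=\rho^{T}_{R,C'}$ (\Cref{prop:admissible-basis-composition}). This is how the paper obtains the splitting, so no appeal to the Reedy decomposition is needed.
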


Applying $\Hom_{\Lambda_P}(-,\LL(C))$ to these resolutions leads to a combinatorial calculation of the Ext groups between standard modules $\DD(T)$ and simple modules $\LL(C)$.
The relevant combinatorics is captured by the following extremal boundary
condition:  For a connected relative upset $C$ of an interval $T$, define
its lower boundary $\partial_T^-C$ to be the subset of $T\setminus C$
consisting of the elements $x$ that are covered by some $c\in C$, that is,
$x\lessdot c$.  We call this boundary \emph{extremal} when
$\partial_T^-C=\Min(T\setminus C)$.  In particular, $T$ has extremal
lower boundary in itself.

\begin{theorem}[\Cref{thm:standard-simple-ext-formula}]
\label{intro:standard-simple-ext-formula}
Let $T,C\in\Int(P)$ and $p\geq0$.
The group
$\Ext^p_{\Lambda_P}(\DD(T),\LL(C))$ is one-dimensional when
$C$ is a relative upset of $T$ with extremal lower boundary and
$p=|\Min(T\setminus C)|$.  It vanishes otherwise.
\end{theorem}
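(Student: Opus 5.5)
We use the explicit projective resolution of $\DD(T)$ from \Cref{thm:standard-projective-resolution} and apply $\Hom_{\Lambda_P}(-,\LL(C))$. Since $\Hom_{\Lambda_P}(\PP(S),\LL(C))$ is $\kk$ if $S=C$ and $0$ otherwise, the complex $\Hom(\PP_\bullet,\LL(C))$ has, in homological degree $p$, a basis indexed by those terms $\PP(S)$ of $\PP_p$ with $S=C$. Concretely, let $U_1,\dots,U_n$ be the maximal proper connected relative upsets of $T$. The $p$-th term is indexed by pairs (subset $J\subseteq\{1,\dots,n\}$ with $|J|=p$, connected component $S$ of $\bigcap_{j\in J}U_j$), where the empty intersection is $T$ itself in degree $0$. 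Thus the cochain complex computing $\Ext^\bullet(\DD(T),\LL(C))$ has degree-$p$ basis
\[
\Sigma_C^{p-1}=\{J : |J|=p,\ C \text{ is a connected component of } \textstyle\bigcap_{j\in J}U_j\}.
\]
The differentials are alternating sums of the inclusion-induced maps. Since $\LL(C)$ is simple, only identity components survive, so the differential is the simplicial coboundary. We will check that the family $\Sigma_C$ is a simplicial complex, or a relative pair of complexes, whose reduced cohomology computes the Ext groups up to a shift.

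\textbf{Step 1: identify the complex.} First, if $C\not\subseteq T$ or $C$ is not a relative upset of $T$, then no term occurs and $\Ext$ vanishes. If $C=T$, only $J=\emptyset$ contributes, so we get $\kk$ in degree $0=|\Min(T\setminus T)|$. Otherwise $C$ is a proper connected relative upset. Each $U_i$ should have the form $T\setminus{\downarrow}m$ restricted to a component, or more naturally a component of $T\setminus\{m\}$'s upset complement, for $m\in\Min(T)$-type elements. I would describe the $U_i$ concretely as the components of $T\setminus {\downarrow_T} x$ for suitable $x$. Then $C$ is a component of $\bigcap_J U_j$ exactly when $J$ contains enough $U_j\supseteq C$ to cut $C$ off from the rest. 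In terms of the degree-$p$ basis, this amounts to removing the lower boundary $\partial_T^-C$ while staying inside $\{j: C\subseteq U_j\}$. So the complex is the relative cochain complex of the full simplex $\Delta_A$ on $A=\{j : C\subseteq U_j\}$ modulo the subcomplex $K$ of those $J$ for which $C$ is not yet a whole component, that is, for which some element of $\partial_T^-C$ is still present and connected to $C$.

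\textbf{Step 2: compute.} Since the full simplex is contractible, $\widetilde H^{*}(\Delta_A,K)\cong\widetilde H^{*-1}(K)$. I expect $K$ to be the boundary of a simplex exactly when the boundary is extremal. In that case, the $U_j$ needed to remove each $x\in\partial_T^-C=\Min(T\setminus C)$ are independent and must all be used. This would give $K=\partial\Delta$ on $|\Min(T\setminus C)|$ vertices, so the cohomology is a single $\kk$ in degree $p=|\Min(T\setminus C)|$. If instead some $x\in\partial_T^-C$ is not minimal in $T\setminus C$, there is an element $y<x$ below it in $T\setminus C$. Then a vertex cone point appears, given by a $U_j$ whose inclusion never affects whether $C$ splits off, and this forces $K$ or the relative pair to be contractible, so everything vanishes.

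\textbf{Main obstacle.} The main obstacle is Step 1 together with this cone argument. It requires a precise description of the maximal proper connected relative upsets and of when $C$ is a connected component of an intersection, in terms of $\partial_T^-C$ versus $\Min(T\setminus C)$. I would first prove a lemma characterizing $U_i$ via minimal elements of $T$. I would then check that the non-extremal case yields a cone, as opposed to a more subtle acyclic complex.
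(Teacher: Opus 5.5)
Your Step~1 is exactly the paper's first reduction: the Hom complex is the relative cochain complex of $\bigl(\Simp(I_T(C)),\Gamma_T^-(C)\bigr)$, and your $K$ is $\Gamma_T^-(C)$. For Step~2 the paper never describes the $U_i$. It applies Dowker duality to the relation $x\in U_i$ on $I_T(C)\times\partial_T^-C$, which replaces $\Gamma_T^-(C)$ by a complex $\cN_T^-(C)$ on the vertex set $\partial_T^-C$. That complex equals $\{F\subseteq\partial_T^-C\mid\Min(T\setminus C)\nsubseteq F\}$, because $(C\cup F)^{\uparrow_T}$ is a proper connected relative upset whenever $\Min(T\setminus C)\nsubseteq F$.

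Computing $\Gamma_T^-(C)$ directly, as you propose, also works. It needs the lemma you only guess at, in the following precise form. If $C\subseteq U_i$, then $U_i$ misses some $m\in\Min(T)\setminus C=\Min(T\setminus C)$. By maximality, $U_i$ is the connected component of $T\setminus\{m\}$ containing $C$. Hence $U_i\cap\partial_T^-C=\partial_T^-C\setminus\{m\}$. In the extremal case this puts $I_T(C)$ in bijection with $\Min(T\setminus C)$, and only $J=I_T(C)$ survives. So the Hom complex is $\kk$ concentrated in degree $|\Min(T\setminus C)|$, confirming your expectation.

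The genuine gap is that your non-extremal analysis covers only one of the two ways extremality can fail. The condition $\partial_T^-C\neq\Min(T\setminus C)$ can fail in two ways.
\begin{itemize}
\item Some $x\in\partial_T^-C$ is not minimal in $T\setminus C$. This is your case. The correct conclusion is stronger than a cone: $x\notin\Min(T)$, so by the lemma $x$ lies in every $U_i\supseteq C$. Then $C$ is never a component of $U_J$, and the complex is identically zero.
\item Some $m\in\Min(T\setminus C)$ lies outside $\partial_T^-C$. This can happen with $\partial_T^-C\subseteq\Min(T\setminus C)$, and then your criterion says nothing. For example, take $P=T=\{x,m,c,w\}$ with $x<c$, $x<w$, $m<w$, and $C=\{c\}$. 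Then $\partial_T^-C=\{x\}\subsetneq\{x,m\}=\Min(T\setminus C)$.
\end{itemize}
The second case needs an actual cone vertex. The component of $T\setminus\{m\}$ containing $C$ contains $C\cup\partial_T^-C$. Any maximal $U_{i_0}$ containing it satisfies $U_{i_0}\cap\partial_T^-C=\partial_T^-C$. Hence $i_0$ is a cone vertex of $\Gamma_T^-(C)$: adding or removing $i_0$ never changes whether $C\in\piZero(U_J)$. The relative complex is therefore acyclic.

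With the lemma and this missing case supplied, your route is complete. It trades the paper's appeal to Dowker duality for a classification of the maximal upsets containing $C$.
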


Except for the immediate degree-zero case, the proof identifies these Ext groups with the reduced
cohomology of simplicial complexes determined by the interval
combinatorics.  When the lower boundary is extremal, the relevant complex is homotopy
equivalent to the boundary of a simplex, while otherwise it is
contractible.  
This accounts for the one-dimensional cases and the vanishing in all other cases in the statement.

The notion of an interval is invariant under order reversal.  Accordingly,
$\kk$-duality sends interval representations of $P$ to interval
representations of $P^{\op}$ and yields
$\Lambda_{P^{\op}}\cong\Lambda_P^{\op}$.  In particular,
$\Ext_{\Lambda_P}^p(\LL_P(S),\NN_P(T))
\cong
\Ext_{\Lambda_{P^{\op}}}^p(\DD_{P^{\op}}(T),\LL_{P^{\op}}(S))$.
Thus the preceding calculation has an order-dual counterpart, with upper
extremal boundaries replacing lower ones.

These two calculations lead us to consider intervals satisfying both
extremal boundary conditions.
For fixed $S,C\in\Int(P)$, let $\mathcal W(S,C)$ be the set of intervals
$T$ such that $S$ has extremal upper boundary in $T$ and $C$ has
extremal lower boundary in $T$.
When nonempty, $\mathcal W(S,C)$ forms a
Boolean lattice with minimum $S\cup C$
(\Cref{prop:intermediate-boolean-structure}).
We call a pair $(S,C)$ \emph{saturated} in $P$ if
$\mathcal W(S,C)$ is a singleton, and set
\[
\bar\omega(S,C)
:=
|\Max(C\setminus S)|
+
|\Min(S\setminus C)|.
\]

\begin{theorem}[\Cref{thm:simple-simple-ext-formula}]
Let $S,C\in\Int(P)$ and $p\geq0$.  Then
\[
\Ext^p_{\Lambda_P}(\LL(S),\LL(C))
\cong
\begin{cases}
\kk,
&
(S,C)\text{ is saturated and }
p=\bar\omega(S,C),\\
0,
&
\text{otherwise}.
\end{cases}
\]
\end{theorem}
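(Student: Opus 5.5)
We will compute $\Ext^p(\LL(S),\LL(C))$ by combining the standard--simple formula (\Cref{thm:standard-simple-ext-formula}) with its order-dual simple--costandard formula. The tool is the standard spectral-sequence (or iterated long exact sequence) argument for quasi-hereditary algebras. $\LL(S)$ has a finite resolution by standard modules, which can be read off from $\Ext^\bullet(\LL(S),\NN(T))$. Dually, $\LL(C)$ has a coresolution by costandard modules. Since $\Ext^{>0}(\DD,\NN)=0$ and $\Hom(\DD(T),\NN(T'))=\kk^{\delta_{TT'}}$, we get a double complex computing $\Ext^\bullet(\LL(S),\LL(C))$. Its $E_1$ page is supported on
\[
\bigoplus_T \Ext^{a}(\LL(S),\NN(T))\otimes \Ext^{b}(\DD(T),\LL(C)).
\]
By the two Ext formulas, the term for $T$ is nonzero exactly when $T\in\mathcal W(S,C)$. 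In that case it is one-dimensional, in total degree
\[
a+b=|\Max(T\setminus S)|+|\Min(T\setminus C)|.
\]

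The first key step is to make this precise. I would apply $\Hom(-,\LL(C))$ to a minimal standard-filtered resolution of $\LL(S)$: a complex whose $i$-th term is $\bigoplus_T \DD(T)^{\dim\Ext^i(\LL(S),\NN(T))}$. Filtering by the qh order then gives a spectral sequence with the $E_1$ page above. Equivalently, one can induct on the number of intervals using recollement along the heredity chain.

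The second step uses the Boolean lattice structure (\Cref{prop:intermediate-boolean-structure}). When $\mathcal W(S,C)\neq\emptyset$ it is a Boolean lattice with minimum $S\cup C$. I expect the total degree to drop by exactly one along each cover in this lattice, so that the $E_1$ page has the shape of the cochain complex of a simplex, of dimension equal to the rank of the lattice.

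In the saturated case $\mathcal W(S,C)=\{S\cup C\}$. Then there is a single nonzero term, the spectral sequence degenerates, and the degree is
\[
|\Max((S\cup C)\setminus S)|+|\Min((S\cup C)\setminus C)|=|\Max(C\setminus S)|+|\Min(S\setminus C)|=\bar\omega(S,C),
\]
as claimed. If $\mathcal W(S,C)=\emptyset$, everything vanishes.

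The main obstacle is the non-saturated, nonempty case. There the $E_1$ page is one-dimensional at each vertex of a Boolean cube of positive rank, and we must show that the $d_1$ differentials between adjacent elements $T\lessdot T'$ of $\mathcal W(S,C)$ are all nonzero. Then $E_1$ is the augmented cochain complex of a full simplex, so it is acyclic and the Ext vanishes. To check nonvanishing, I would use the explicit projective resolutions of \Cref{thm:standard-projective-resolution}. The component of $d_1$ is induced by the nonzero inclusion/quotient morphism $\PP(T')\to\PP(T)$ between adjacent intervals. Its image in $\Ext^1(\DD(T'),\DD(T))$-type connecting data can be tested on generators using the combinatorial description of standard modules. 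Getting the signs to match a simplicial coboundary should follow from the alternating-sum convention in those resolutions. Should direct checking prove messy, an alternative is to compare with the case where $P$ is replaced by the full subposet $S\cup C$ plus the extra elements. This reduces to a product-type situation in which $\Lambda$ factors and the Koszul-type computation is explicit.
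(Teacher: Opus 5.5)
\textbf{Verdict: there is a genuine gap.} Your overall architecture matches the paper's. You filter $\RHom_{\Lambda_P}(\LL(S),\LL(C))$ so that the pieces are $\Ext^\bullet(\LL(S),\NN(T))\otimes_\kk\Ext^\bullet(\DD(T),\LL(C))$. You note from \Cref{thm:standard-simple-ext-formula} and \Cref{cor:simple-costandard-ext-formula} that these are one-dimensional exactly for $T\in\mathcal W(S,C)$, in degree $\omega(S,T,C)$. Then you read off the saturated and empty cases.

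The gap is the step you yourself flag: that every cover component $T\lessdot R$ in $\mathcal W(S,C)$ is nonzero. This is the real content of the theorem. Without it, nothing excludes nonzero Ext in the non-saturated case; the Euler characteristic vanishes, but that is all it gives.

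Your suggestions do not supply this step. The component is not simply ``induced by'' $(q_{R,T})_*$ or $(j_{T,R})_*$. A nonzero map of projectives can induce zero in cohomology, and the component mixes a standard-side and a costandard-side contribution, each of which must be shown nondegenerate. The fallback of a subposet on which ``$\Lambda$ factors'' has no basis either. $\Lambda_P$ is not a tensor product, and changing the poset changes the algebra with no Ext comparison available.

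The paper handles an upper cover $\partial_R^+S=\partial_T^+S\sqcup\{a\}$ as follows.
\begin{enumerate}
\item It shows $T\in\cD(R)$ with $\Min(R)=\Min(T)$.
\item It passes to the bimodule subquotient $B_{T,R}$ spanned by supports $W$ with $T\subseteq W\subseteq R$, and proves $\mathsf C^\bullet(B_{T,R};S,C)$ acyclic (\Cref{prop:ss-cover-bimodule-acyclicity}).
\item For this it compares with $\DD(R)\otimes_\kk e_RB_{T,R}$. It uses that each $\varphi_{R,W}$ induces isomorphisms on $\Ext^\bullet(-,\LL(C))$ (\Cref{lem:ss-standard-restriction-isomorphism}, proved via bar resolutions and the adjoint poset maps $f,g$).
\item It shows $e_RB_{T,R}\otimes^{\mathbf L}_{\Lambda_P}\LL(S)=0$. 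This uses a resolution whose reduction is the order complex of $\{D\in\cD(R)\mid S\subsetneq D,\ T\nsubseteq D\}$, which is contractible via the closure $D\mapsto(D\cup\{a\})^{\downarrow_R}$.
\end{enumerate}
Lower covers follow by duality. Acyclicity of the two-term local complex then forces the cover component to be an isomorphism.

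Two steps of your setup also fail as written.

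First, $\LL(S)$ need not admit any resolution by direct sums of standard modules. Take $P=\{a,b,c\}$ with $a,b<c$. The module $\DD(\{a\})$ is uniserial with radical layers $\LL(\{a\})$, $\LL(P)$, $\LL(\{a,c\})$. But $\Hom(\DD(P),\DD(\{a\}))=0$, since $\Min(P)\neq\{a\}$. Hence $\rad\DD(\{a\})$ is not a quotient of a sum of standard modules, and no such resolution of $\LL(\{a\})$ exists. The filtration must instead be built on the diagonal bimodule, via the strata $J_t/J_{t-1}\cong\bigoplus_{|T|=t}\DD(T)\otimes_\kk\D\NN(T)$ of \Cref{prop:cardinality-bimodule-strata}, or via a semi-orthogonal decomposition.

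Second, filtering by cardinality does not place the Boolean cube on $E_1$, because a cover in $\mathcal W(S,C)$ can add several elements. Take $P=\{s,y,z,a,c\}$ with $s<c$, $y<c$, $s<a$, $y<z<a$, and let $S=\{s\}$, $C=\{c\}$. Then $\mathcal W(S,C)=\{\{s,c\},\{s,y,c\},\{s,a,c\},P\}$. The cover $\{s,y,c\}\lessdot P$ adds both $z$ and $a$, so its component appears only as a $d_2$. The paper avoids this with the support-indexed triangular decomposition and Gaussian elimination of \Cref{lem:ss-upper-triangular-cohomology}. There the reduced differential may link any $T\subsetneq R$, and \eqref{eq:omega-boolean-rank} leaves only cover components.
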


More precisely, we show that the derived Hom complex
$\RHom_{\Lambda_P}(\LL(S),\LL(C))$ is quasi-isomorphic to a
combinatorial cochain complex supported on $\mathcal W(S,C)$
(\Cref{prop:simple-simple-combinatorial-model}).

The simple--simple Ext formula also determines the projective Betti
multiplicities of the simple modules
(\Cref{cor:simple-simple-Betti-projective-dimension}).  Taking the largest
degree in which one of these Ext groups is nonzero gives the global
dimension.  Let $\Omega(P)$ be the maximum of $\bar\omega(S,C)$ over all
saturated pairs $(S,C)$ in $P$.

\begin{theorem}[\Cref{thm:global-dimension-formula}]
For every finite connected poset $P$, we have
\[
\gldim\Lambda_P=\Omega(P).
\]
In particular, the global dimension of $\Lambda_P$ is independent of the
coefficient field.
\end{theorem}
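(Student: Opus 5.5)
The plan is to deduce the formula from the simple--simple Ext formula (\Cref{thm:simple-simple-ext-formula}) together with the standard fact that, for a finite-dimensional algebra $\Lambda$ of finite global dimension, one has
\[
\gldim\Lambda=\max\{p\geq 0 : \Ext^p_\Lambda(\LL,\LL')\neq0 \text{ for some simple modules } \LL,\LL'\}.
\]
First I record that $\Lambda_P$ has finite global dimension. This follows because it is quasi-hereditary by \cite[Theorem~4.22]{DS} applied to the Reedy structure of \Cref{thm:interval-cardinality-reedy-decomposition}. Finiteness is not even strictly needed, since the inequality $\pd M\leq\max_{\LL}\pd\LL$ holds for every finitely generated module by induction on length.

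For the displayed equality, I argue in two steps. For each simple $\LL$, the minimal projective resolution of $\LL$ has $\dim\Ext^p(\LL,\LL(C))$ copies of $\PP(C)$ in degree $p$; this holds because the differentials of a minimal complex map into the radical, so applying $\Hom(-,\LL(C))$ produces zero differentials. Hence $\pd\LL(S)$ is the largest $p$ with $\Ext^p(\LL(S),\LL(C))\neq0$ for some $C$. This is the content of \Cref{cor:simple-simple-Betti-projective-dimension}, which I would cite if it is already proved, and otherwise derive in a line. Next, $\gldim\Lambda_P=\max_S\pd\LL(S)$, because every finitely generated module has a composition series and projective dimension is subadditive along short exact sequences, which gives $\pd M\leq\max_S\pd\LL(S)$.

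Combining the two steps, $\gldim\Lambda_P$ is the largest $p$ for which some pair $(S,C)$ has $\Ext^p(\LL(S),\LL(C))\neq0$. By \Cref{thm:simple-simple-ext-formula}, such a nonzero group occurs exactly when $(S,C)$ is saturated and $p=\bar\omega(S,C)$. Therefore the maximum equals $\max\{\bar\omega(S,C):(S,C)\text{ saturated}\}=\Omega(P)$.

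This maximum is over a nonempty set. The pair $(S,S)$ is saturated: $\mathcal W(S,S)$ is a Boolean lattice with minimum $S\cup S=S$, and any $T$ in it must equal $S$ because $\bar\omega$ forces the degree-zero Hom $\End(\LL(S))=\kk$ to be the only contribution. Alternatively, $\Ext^0(\LL(S),\LL(S))=\kk$, and the theorem then forces $(S,S)$ to be saturated with $\bar\omega=0$. The set of intervals is finite, so the maximum is attained.

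Field independence is immediate, because both saturation and $\bar\omega$ are defined purely from the combinatorics of $P$. The substantive input is entirely \Cref{thm:simple-simple-ext-formula}; in this deduction the only step needing care is the minimal-resolution argument identifying $\pd\LL(S)$ with the top nonvanishing simple Ext, and that step is standard.
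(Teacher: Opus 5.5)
Your proposal is correct and follows the paper's route: the paper likewise reads off the Betti multiplicities and $\pd\LL(S)$ from \Cref{thm:simple-simple-ext-formula} (via \Cref{cor:simple-simple-Betti-projective-dimension} and \eqref{eq:simple-simple-projective-dimension}) and then takes the maximum over $S$. Your first justification that $(S,S)$ is saturated is vague, but your second one works. So does the direct observation that if $S\in\cD(T)\cap\cU(T)$ with $T$ connected, then $T=S$.
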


Finally, as an application of the preceding formula, we determine the
global dimensions for rectangular grids explicitly.  We denote by
$G_{m,\ell}$ the $m$ by $\ell$ rectangular grid $[m]\times[\ell]$ with $m,\ell\geq 2$, equipped with the product order.

\begin{theorem}[\Cref{thm:grid-exact-formula}]
\label{intro:grid-exact-formula}
For $m\geq\ell\geq2$, we have 
\[
\gldim\Lambda_{G_{m,\ell}}
=\min\{2\ell,m+\ell-2\} = 
\begin{cases}
2\ell-2,
& m=\ell,
\\
2\ell-1,
& m=\ell+1,
\\
2\ell,
& m\geq\ell+2.
\end{cases}
\]
\end{theorem}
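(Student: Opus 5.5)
The plan is to use \Cref{thm:global-dimension-formula}, which gives $\gldim\Lambda_{G_{m,\ell}}=\Omega(G_{m,\ell})$. The problem then becomes purely combinatorial: maximize $\bar\omega(S,C)=|\Max(C\setminus S)|+|\Min(S\setminus C)|$ over saturated pairs of intervals in $G_{m,\ell}=[m]\times[\ell]$. I would prove the upper bound $\Omega\le\min\{2\ell,m+\ell-2\}$ and then give explicit saturated pairs attaining it in each of the three cases $m=\ell$, $m=\ell+1$, $m\ge\ell+2$. The case split in the statement is just the arithmetic of $\min\{2\ell,m+\ell-2\}$.

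\emph{Upper bound.} Set $A=\Max(C\setminus S)$ and $B=\Min(S\setminus C)$. Both are antichains in $[m]\times[\ell]$. In an antichain the elements have pairwise distinct second coordinates, so $|A|,|B|\le\ell$ and hence $\bar\omega\le 2\ell$.

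For the bound $m+\ell-2$ I would exploit more structure.
\begin{itemize}
\item $A$ and $B$ are disjoint and both lie in $S\cup C$.
\item Saturation is obtained through \Cref{prop:intermediate-boolean-structure}: $\mathcal W(S,C)=\{S\cup C\}$. So $S\cup C$ itself must satisfy both extremal boundary conditions, namely $\partial^-_{S\cup C}C=\Min((S\cup C)\setminus C)$ and the dual condition for $S$.
\end{itemize}
I would sort $A\cup B$ by the first coordinate. The extremal boundary conditions force every element of $B$ to lie strictly below some element of $C$ that covers it, and dually for $A$. From this I expect to show that the two staircases $A$ and $B$ interleave: each "step" of one occupies a row or column not used by the other, except possibly for two shared ones. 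Counting the rows and columns used should give $|A|+|B|\le m+\ell-2$.

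I expect this interleaving/counting argument to be the main obstacle. To guide it, I would first treat $m=\ell$ and try to show that $|A|=|B|=\ell$ is impossible. The idea is that a full antichain of size $\ell$ in a square grid is forced onto the antidiagonal, and then the boundary conditions cannot hold for both $A$ and $B$.

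\emph{Lower bound.} For $m\ge\ell+2$, I would take $C$ to be a diagonal band whose upper edge is an antidiagonal staircase of $\ell$ points, and $S$ a translate of it shifted by two steps along the long side. This makes $C\setminus S$ and $S\setminus C$ each contain a full antichain of size $\ell$ as its maximal, respectively minimal, elements. For $m=\ell+1$ and $m=\ell$, I would truncate these bands, which loses one, respectively two, elements from the antichains.

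In each case I would verify three things: the sets are intervals (connected and convex); the pair of extremal boundary conditions holds for $T=S\cup C$; and $\mathcal W(S,C)$ has no atoms above $S\cup C$. For the last point, I would argue that any larger $T$ adds a point which either creates a non-minimal lower boundary point of $C$ or a non-maximal upper boundary point of $S$, so saturation holds. Combining the two bounds gives the formula.
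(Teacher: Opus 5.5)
\paragraph{Verdict.} Several parts of your plan match the paper: the reduction to $\Omega(G_{m,\ell})$, the bound $\bar\omega(S,C)\le 2\ell$ from the antichains $A=\Max(C\setminus S)$ and $B=\Min(S\setminus C)$, and the idea of realizing the bound with antidiagonal bands. The gap is the bound $\bar\omega(S,C)\le m+\ell-2$, which carries the whole content of the cases $m=\ell$ and $m=\ell+1$. You only say you expect an interleaving count to work, and the heuristic you describe does not fit the extremal configurations.

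Write $D_d=\{(x,y)\mid x+y=d\}$. For $m=\ell\ge3$, the optimal pair has $B=D_{\ell+1}$ and $A=D_{\ell+3}$, and every element of $A$ shares both its row and its column with an element of $B$. So a count of rows or columns used by one staircase but not the other cannot be what gives the bound. Disjointness plus the antichain property cannot give it either: for $m=\ell+1$, the disjoint antichains $D_{\ell+1}$ and $D_{\ell+2}$ have $2\ell>m+\ell-2$ elements. Your warm-up, excluding $|A|=|B|=\ell$ when $m=\ell$, already follows from disjointness and only yields $2\ell-1$. The real difficulty is excluding $\bar\omega=m+\ell-1$, and that needs the connectedness of $S$ and $C$, which your sketch never uses.

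\paragraph{The missing idea (upper bound).} The paper's argument runs as follows.
\begin{enumerate}
\item Put $T=S\cup C$. Since $S\in\cD(T)$ and $C\in\cU(T)$, you get $A=\Max(T\setminus S)\subseteq\Max(T)$ and $B=\Min(T\setminus C)\subseteq\Min(T)$. So it suffices to bound $|\Min(T)|+|\Max(T)|$.
\item Let $T$ be an interval with $|T|>1$ and $w\times h$ bounding box. Both antichains have at most $\min\{w,h\}$ elements, so the sum is at most $w+h$. Equality would force $w=h$ and both sets to equal the antidiagonal of the box, making $T$ an antichain and hence a singleton. Thus $|\Min(T)|+|\Max(T)|\le w+h-1\le m+\ell-1$.
\item If $\bar\omega(S,C)=m+\ell-1$, every inequality is an equality. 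This forces $T=\Min(T)\sqcup\Max(T)$, hence $S=\Min(T)$ and $C=\Max(T)$.
\item Then $S$ and $C$ are connected antichains, hence singletons, so $\bar\omega(S,C)=2<m+\ell-1$, a contradiction.
\end{enumerate}
The extremal boundary conditions are not even needed for this bound.

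\paragraph{The lower-bound construction.} Be careful with the shift. The paper uses $S=D_{\ell+1}\cup D_{\ell+2}$ and $C=D_{\ell+2}\cup D_{\ell+3}$, which overlap in exactly one antidiagonal. If you shift a width-two band by two steps, as you wrote, then $S\cap C=\varnothing$. In that case $\partial^+_{S\cup C}S$ lies in the lower antidiagonal of $C$, while $\Max(C\setminus S)$ meets the upper one, so $S\cup C\notin\mathcal W(S,C)$. For the correct pair, saturation is immediate: any point of $R\setminus(S\cup C)$ lies below a point of $D_{\ell+1}\subseteq S$ or above a point of $D_{\ell+3}\subseteq C$. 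This contradicts $S\in\cD(R)$ or $C\in\cU(R)$ respectively.
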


The global dimensions for some small values of $m$ and $\ell$ are listed
in Table~\ref{tab:grid-N-values}.

\begin{table}[tb]
\centering
\caption{Values of $\gldim \Lambda_{G_{m,\ell}}$ for
$2\leq\ell\leq10$ and $\ell\leq m\leq12$.
The thick rule in each row marks the threshold at $m=\ell+2$.}
\label{tab:grid-N-values}
\small
\setlength{\tabcolsep}{4.5pt}
\renewcommand{\arraystretch}{1.08}
\newcommand{\gridthresholdcell}[1]{%
  \multicolumn{1}{!{\vrule width 1.2pt}c}{#1}}
\begin{tabular}{c*{11}{c}}
\toprule
& \multicolumn{11}{c}{$m$}\\
\cmidrule(lr){2-12}
$\ell$
&2&3&4&5&6&7&8&9&10&11&12\\
\midrule
2
&2&3&\gridthresholdcell{4}&4&4&4&4&4&4&4&4\\
3
&&4&5&\gridthresholdcell{6}&6&6&6&6&6&6&6\\
4
&&&6&7&\gridthresholdcell{8}&8&8&8&8&8&8\\
5
&&&&8&9&\gridthresholdcell{10}&10&10&10&10&10\\
6
&&&&&10&11&\gridthresholdcell{12}&12&12&12&12\\
7
&&&&&&12&13&\gridthresholdcell{14}&14&14&14\\
8
&&&&&&&14&15&\gridthresholdcell{16}&16&16\\
9
&&&&&&&&16&17&\gridthresholdcell{18}&18\\
10
&&&&&&&&&18&19&\gridthresholdcell{20}\\
\bottomrule
\end{tabular}
\end{table}

\medskip
\noindent
{\bf Related work.}
An explicit description of morphisms between interval representations
in terms of admissible components was obtained in \cite[Proposition~5.5]{BBH24}.
The resulting admissible-component basis encodes the morphism combinatorics used throughout this paper.  In particular, it underlies the construction of the Reedy structure on $\Lambda_P$ in \Cref{sec:interval-reedy}.

A related cohomological description occurs for incidence algebras of finite posets. Igusa--Zacharia express Ext groups between simple
modules over an incidence algebra in terms of the reduced cohomology of
order complexes of open intervals \cite{IZ90}. Using this description, they also exhibit examples
in which the global dimension of the incidence algebra depends on the characteristic of the coefficient field.
In contrast, the global dimension of $\Lambda_P$ obtained here
is independent of the coefficient field. 

For incidence algebras of finite lattices, Iyama--Marczinzik associate
a module to an antichain $A$ and construct its projective resolution,
whose terms are determined by joins of subsets of $A$
\cite[Theorem~2.2]{IM22}.
Their construction is closely related to the projective resolution
$\QQ_\bullet(T)$ of $\DD(T)$ in
\Cref{thm:standard-projective-resolution}.
More precisely, consider the lattice of all upper sets of $T$ ordered
by reverse inclusion, and apply their construction to the
antichain consisting of maximal proper connected relative upsets of $T$.
Decomposing the upper sets occurring in this resolution into their
connected components relates this resolution to $\QQ_\bullet(T)$.
However, the exactness of $\QQ_\bullet(T)$ does not follow formally
from this comparison and is proved in
\Cref{thm:standard-projective-resolution}.

In \cite{AENY}, interval resolutions are studied from the viewpoint of
the representation theory of incidence algebras of finite posets.
They observe that every submodule of an interval representation
is interval-decomposable.  
Together with Ringel's criterion for endomorphism algebras
\cite{Ringel10}, cf.~\cite[Lemma~2.2]{Iyama03}, this gives a left
strongly quasi-hereditary structure on $\Lambda_P$ and, in particular,
the finiteness of $\gldim\Lambda_P$.  Since $G_P$ is a
generator--cogenerator, projectivization further yields
the relative Auslander formula for the interval-resolution global dimension,
$\intresgldim_\kk P=\gldim\Lambda_P-2$ for $|P|>1$.
The left strongly quasi-hereditary structure obtained in this way is in
general different from the quasi-hereditary structure induced by our
Reedy structure.  See \Cref{rem:reedy-versus-left-strongly-qh}.

Computations for rectangular grids led to conjectures concerning stabilization of the interval-resolution global dimension when one side of the grid is fixed \cite[Conjectures~4.11 and~4.12]{AENY}.
In \cite{BDHS}, this stabilization problem is studied, and eventual stabilization is proved for rectangular
grids.  More precisely, for every fixed $\ell\geq2$, they show that the
interval-resolution global dimension of $G_{m,\ell}$ is constant for
$m\geq1+4\ell$.
They also give a positive answer to \cite[Conjecture~4.11]{AENY},
proving that the interval-resolution global dimension of $G_{m,2}$ 
is $2$ for every $m\geq4$ \cite[Corollary~D]{BDHS}.
Their argument uses functors between categories of grid representations
induced by aligned grid inclusions $Q\hookrightarrow G$ and the
corresponding floor maps $\lfloor-\rfloor$.
In particular, they use the left Kan extension $\Lan_{\lfloor-\rfloor}$ along the floor map, which is the contraction functor considered in \cite{BBH25}.
They show that both functors in this adjunction 
preserve interval-decomposable representations.
Based on this, they compare interval-relative resolutions across different grids using the adjunction. 

Our formula in \Cref{thm:grid-exact-formula} determines the global dimension
for every rectangular grid over an arbitrary coefficient field.  Consequently, it proves both grid conjectures
of \cite{AENY}, recovers the stabilization established in \cite{BDHS}, and
determines the stable value and the precise stabilization threshold $m = \ell + 2$.
See \Cref{rem:AENY-grid-conjectures}.

\medskip
\noindent
{\bf Organization.}
In \Cref{sec:preliminaries}, we recall interval representations,
projectivization, quasi-hereditary algebras, and Reedy algebras.
\Cref{sec:interval-reedy} establishes the Reedy structure on $\Lambda_P$.
In \Cref{sec:standards}, we describe the standard and costandard modules,
together with projective and injective filtrations and the bimodule strata
of the associated heredity chain.
In \Cref{sec:projective-resolutions-standard-modules}, we construct
projective resolutions of the standard modules in terms of the combinatorics
of intervals.  In \Cref{sec:standard-simple-extensions}, we use these
resolutions to determine the Ext groups between standard and simple modules,
together with the order-dual simple--costandard formula.
Building on these formulas, \Cref{sec:homological-invariants} determines
the Ext groups between simple modules and their projective Betti
multiplicities.
This yields an exact combinatorial formula for the global dimension of
$\Lambda_P$.
Finally, as an application, we determine the global dimension for rectangular grids explicitly in \Cref{sec:grids}.
The appendix proves the combinatorial model for the simple--simple derived Hom complex used in the Ext calculation.

\medskip
\noindent\textbf{Notation.}
Throughout the paper, we fix a field $\kk$ and write
$\D=\Hom_{\kk}(-,\kk)$ for the usual $\kk$-duality.
For a finite-dimensional $\kk$-algebra $\Lambda$, we write
$\Lambda\text{-mod}$ for the category of finitely generated left
$\Lambda$-modules and $\proj\Lambda$ for its full subcategory of
finitely generated projective modules.  We call $\Lambda$
\emph{elementary} if its quotient by the Jacobson radical is isomorphic
to a finite direct product of copies of $\kk$.
In addition, we write $[n]=\{1,\ldots,n\}$ for $n\geq1$ and set
$[0]=\varnothing$.
For a finite set $X$, let $\langle X\rangle_{\kk}$ denote the free
$\kk$-vector space on $X$ and write
$\det(X)=\bigwedge^{|X|}\langle X\rangle_{\kk}$.
An ordering $(x_1,\ldots,x_n)$ of $X$ determines the basis vector
$x_1\wedge\cdots\wedge x_n$ of $\det(X)$.
Changing the ordering by $\sigma\in\mathfrak S_n$ multiplies this
vector by $\operatorname{sgn}(\sigma)$.  
By construction, $\det(X)$ is one-dimensional.
We also write
$\Simp(X)=\{F\mid F\subseteq X\}$ for the simplex with vertex set $X$
and $\partial\Simp(X)=\{F\mid F\subsetneq X\}$ for its boundary.

\section{Preliminaries}\label{sec:preliminaries}

\subsection{Intervals, morphisms, and projectivization}
\label{sec:prelim-intervals}

Let $P$ be a finite connected poset.  We regard $P$ as a small category
with objects the elements of $P$ and a unique morphism $x\to y$
precisely when $x\leq y$.  We write $\rep_{\kk} P$ for the category of
covariant functors from $P$ to the category of finite-dimensional $\kk$-vector spaces and call its objects representations of $P$.

\begin{definition}
Let $S\subseteq P$.
\begin{enumerate}[\rm (1)]
    \item The subset $S$ is \emph{connected} if, for every $x,y\in S$, there exists a sequence
    $x=x_0,x_1,\ldots,x_r=y$
    in $S$ such that $x_{i-1}$ and $x_i$ are comparable for every $1\leq i\leq r$.

    \item The subset $S$ is \emph{convex} if, whenever $x,y\in S$ and $x\leq z\leq y$, one has $z\in S$.

    \item The subset $S$ is an \emph{interval} if it is nonempty, connected, and convex.
\end{enumerate}
We write $\Int(P)$ for the set of intervals in $P$. 

For a subset $X\subseteq P$, we write $\conv(X)$ for the convex hull of
$X$ in $P$, that is, the smallest convex subset of $P$ containing $X$.
We also write $\piZero(X)$ for the set of connected components of $X$.
In particular,
$\piZero(\varnothing)=\varnothing$.
\end{definition}

For each $S\in\Int(P)$, the \emph{interval representation} $\I_S$ of $P$ is defined by
\begin{equation}
\I_S(x)
=
\begin{cases}
\kk & \text{if } x\in S,\\
0   & \text{if } x\notin S,
\end{cases}
\qand
\I_S(x\leq y)
=
\begin{cases}
\id_{\kk} & \text{if } x,y\in S,\\
0         & \text{otherwise}.
\end{cases}
\label{eq:interval-module-definition}
\end{equation}

For $S\in\Int(P)$ and $C\subseteq S$, set
\begin{equation}
C^{\downarrow_S}
=
\{x\in S\mid x\leq c \text{ for some } c\in C\},
\qand
C^{\uparrow_S}
=
\{x\in S\mid c\leq x \text{ for some } c\in C\}.
\label{eq:relative-closures}
\end{equation}
We call $C$ a \emph{relative downset} in $S$ if
$C^{\downarrow_S}=C$, and a \emph{relative upset} in $S$ if
$C^{\uparrow_S}=C$.
Every relative downset or relative upset in $S$ is convex in $P$, so
every nonempty connected one is an interval.
We denote by $\cD(S)$ and $\cU(S)$ the sets of
intervals contained in $S$ that are relative downsets and relative
upsets in $S$, respectively.

For intervals $R,S\in\Int(P)$, a connected component $C$ of $R\cap S$
is called \emph{admissible for $(R,S)$} if it is a relative downset in $R$
and a relative upset in $S$. We write $\Adm(R,S)$ for the set of such
components. Accordingly,
\begin{equation}
\Adm(R,S)
=
\cD(R)\cap\cU(S).
\label{eq:admissible-components-as-relative-subsets}
\end{equation}
Indeed, if $C\in\cD(R)\cap\cU(S)$ and $x\in C$, then every element of $R\cap S$ comparable with $x$ belongs to $C$. Hence the connected subset $C$ is a connected component of $R\cap S$.

\begin{proposition}[{\cite[Proposition~5.5]{BBH24}}]\label{prop:interval-hom-admissible-basis}
Let $R,S\in \Int(P)$.
For every $C\in\Adm(R,S)$, the assignment
\begin{equation}
(\rho_{R,S}^C)(x)=
\begin{cases}
\id_\kk & \text{if } x\in C,\\
0       & \text{if } x\notin C
\end{cases}
\label{eq:rho-definition}
\end{equation}
defines a morphism $\rho_{R,S}^C\colon\I_R\to\I_S$. Moreover, the family
$\{\rho_{R,S}^C\mid C\in\Adm(R,S)\}$ forms a $\kk$-linear basis of
$\Hom_P(\I_R,\I_S)$.
\end{proposition}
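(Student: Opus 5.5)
To prove \Cref{prop:interval-hom-admissible-basis}, I will work pointwise. A morphism $f\colon\I_R\to\I_S$ is a family of scalars $f_x\in\kk$ for $x\in R\cap S$ (and $f_x=0$ otherwise), subject to naturality for every relation $x\leq y$ in $P$. So the plan is to describe exactly which families of scalars are natural and then read off the basis.

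\emph{Step 1: reduce to covering relations.} Since every relation $x\le y$ factors through a chain of cover relations, it suffices to check naturality squares $f_y\circ\I_R(x\le y)=\I_S(x\le y)\circ f_x$ for $x\le y$. I will in fact analyze general $x\le y$ directly, splitting into cases by membership of $x,y$ in $R$ and $S$.

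\emph{Step 2: derive the constraints on $f$.} The cases give:
\begin{itemize}
\item If $x,y\in R\cap S$, the square says $f_x=f_y$. Hence $f$ is constant on each connected component of $R\cap S$; here convexity of $R\cap S$ ensures that comparable pairs give the component structure.
\item If $x\in R\cap S$, $y\in R\setminus S$, the square reads $0=f_y\cdot 1$ against $0$ on the other side, which is vacuous. The roles are cleanest if I write it as $f_y\circ\I_R(x\le y)=\I_S(x\le y)\circ f_x$: the left side is $0$ since $f_y=0$, and the right side is $0$ since $y\notin S$. So no constraint arises.
\item If $x\in R\setminus S$ and $y\in R\cap S$, the left side is $f_y$ and the right side is $0$. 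This forces $f_y=0$ whenever some $x\in R\setminus S$ lies below $y$.
\item Dually, if $x\in R\cap S$ and $y\in S\setminus R$, the constraint is $f_x=0$.
\item The remaining cases involve $x$ or $y$ outside $R\cap S$ with vanishing maps, giving no constraint.
\end{itemize}

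\emph{Step 3: identify the admissible components.} Combining these constraints, $f$ is constant on each component $C$ of $R\cap S$. That constant must vanish unless $C$ has no element of $R\setminus S$ below it and no element of $S\setminus R$ above it. By convexity of $R$ and $S$, this condition is equivalent to $C$ being a relative downset in... I need to check the orientation carefully. The condition "nothing in $R$ below $C$ outside $C$" means $C$ is a relative downset in $R$. Elements of $R\cap S$ below $C$ and comparable to it already lie in $C$, and the case $R\setminus S$ is exactly what was excluded. Likewise, "nothing in $S$ above $C$ outside $C$" means $C$ is a relative upset in $S$. Thus the components carrying a nonzero constant are exactly $\Adm(R,S)$.

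\emph{Step 4: conclude.} Conversely, for $C\in\Adm(R,S)$, every case of Step 2 holds for $\rho^C_{R,S}$, so it is a morphism. Since the $\rho^C$ have disjoint supports, they are linearly independent. Any morphism equals $\sum_C f_C\,\rho^C$, so they span.

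The main obstacle is the orientation and sign bookkeeping in the mixed cases of Step 2. In particular, I must confirm that a component which is a downset in $R$ really has no element of $R\setminus S$ below it, not merely none in $R\cap S$ outside $C$. This follows directly from the definition $C^{\downarrow_R}=C$.
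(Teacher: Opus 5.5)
Your proof is correct and complete. The case analysis in Step~2 covers every case. A naturality square for $x\leq y$ can have a nonzero left side only if $x\in R$ and $y\in R\cap S$, and a nonzero right side only if $x\in R\cap S$ and $y\in S$. This yields exactly your three constraints. So a family of scalars on $R\cap S$ is natural if and only if it is constant on connected components and vanishes on every component containing a point above some element of $R\setminus S$ or below some element of $S\setminus R$. Step~3 then correctly identifies the surviving components with $\cD(R)\cap\cU(S)=\Adm(R,S)$. This gives both the morphism property of $\rho_{R,S}^C$ and the basis statement.

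The paper does not prove this proposition; it imports it from \cite[Proposition~5.5]{BBH24}. Your pointwise verification is therefore a self-contained replacement for that citation, not an alternative to an argument in the text.

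One small correction: the appeals to convexity in Steps~2 and~3 are unnecessary. Constancy on components follows directly from the definition of connectedness via comparability chains inside $R\cap S$. The equivalence with the relative downset and upset conditions uses only that an element of $R\cap S$ comparable to some $c\in C$ lies in the component $C$. Convexity of $R$ and $S$ is needed only to ensure that $\I_R$ and $\I_S$ are functors in the first place.
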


We call $\rho_{R,S}^C$ an \emph{admissible-component morphism} and call
$C$ its \emph{support}.  
We refer to the resulting basis as the admissible-component basis.
By \Cref{prop:interval-hom-admissible-basis},
$\End_P(\I_S)\cong\kk$ for every $S\in\Int(P)$.
In particular, every interval representation $\I_S$ is indecomposable.

\medskip
Set
\[
G_P=\bigoplus_{S\in\Int(P)}\I_S,
\qquad
E_P=\End_P(G_P),
\qand
\Lambda_P=E_P^{\op}.
\]
Since the interval representations are pairwise nonisomorphic and
have endomorphism ring $\kk$, both $E_P$ and $\Lambda_P$ are
elementary finite-dimensional $\kk$-algebras.
Let $e_S$ be the idempotent associated with the direct summand $\I_S$ of
$G_P$. Then
\[
e_SE_Pe_R\cong\Hom_P(\I_R,\I_S) \cong e_R\Lambda_Pe_S.
\]

The functor
\[
\Phi_P=\Hom_P(G_P,-)\colon
\rep_{\kk} P\longrightarrow \Lambda_P\text{-mod}
\]
restricts to an equivalence
\[
\Phi_P\colon
\add(G_P)\xrightarrow{\sim}\proj\Lambda_P.
\]
For each $S\in\Int(P)$, we write
\[
\PP(S)=\Lambda_Pe_S\cong\Phi_P(\I_S),
\qquad
\II(S)=\D(e_S\Lambda_P),
\qand
\LL(S)=\PP(S)/\rad\PP(S)
\]
for the indecomposable projective module, the indecomposable injective
module, and the simple module corresponding to $S$, respectively.

Under the identifications above, we have
\begin{equation}
\begin{aligned}
e_R\PP(S)
&=
\Span_{\kk}
\bigl\{\rho_{R,S}^{C}\mid C\in\Adm(R,S)\bigr\},\\
e_R\II(S)
&=
\Span_{\kk}
\bigl\{(\rho_{S,R}^{C})^\vee\mid C\in\Adm(S,R)\bigr\},
\end{aligned}
\label{eq:projective-injective-component-bases}
\end{equation}
where $(\rho_{S,R}^{C})^\vee$ denotes the dual basis element
corresponding to $\rho_{S,R}^{C}$.

For a morphism
$f\colon\I_R\to \I_S$
between interval representations, we write
$f_{*}
=
\Phi_P(f)
\colon
\PP(R)\to\PP(S)$ for the corresponding morphism between projective modules.


\medskip
The Gabriel quiver of $\Lambda_P$ can be described in terms of
irreducible morphisms between interval representations. For
$R,S\in\Int(P)$, a morphism
$f\colon \I_R\to\I_S$
is said to be \emph{irreducible relative to intervals} if it is neither
a section nor a retraction and, for every factorization
\[
\I_R
\xrightarrow{g}
X
\xrightarrow{h}
\I_S,
\qquad
X\in\add(G_P),
\qquad
f=h\circ g,
\]
either $g$ is a section or $h$ is a retraction.

The vertices of the Gabriel quiver of $\Lambda_P$ are indexed by
$\Int(P)$, and an arrow
\[
S\longrightarrow R
\]
is represented by a morphism
\[
\I_R\longrightarrow\I_S
\]
that is irreducible relative to intervals. 

An explicit combinatorial description of these morphisms is given in
\cite[Proposition~6.8]{BBH25}. In particular, every morphism irreducible
relative to intervals is either a monomorphism or an epimorphism in
$\rep_{\kk} P$.

The $\kk$-duality $\D$ defines a contravariant equivalence
\begin{equation}
\D\colon
\rep_{\kk} P
\longrightarrow
\rep_{\kk} P^{\op}.
\label{eq:poset-duality}
\end{equation}
For every $S\in\Int(P)$, we have $\D(\I_S)\cong\I_S$. Consequently,
\[
E_{P^{\op}}\cong E_P^{\op},
\qand
\Lambda_{P^{\op}}\cong\Lambda_P^{\op}.
\]
Moreover, this duality exchanges relative downsets with relative upsets.

\medskip
Finally, for interval approximations, interval resolutions, and the associated
relative homological algebra, we refer to \cite{AENY,BBH24,BBH25}.
We write $\intresgldim_\kk P$ for the interval-resolution global
dimension of $P$.  Under the projectivization functor $\Phi_P$,
interval resolutions are translated into projective resolutions over
$\Lambda_P$, and the corresponding resolution dimensions agree; see
\cite[Proposition~3.13]{AENY}. 
All indecomposable projective and injective representations of $P$ are
interval representations, so $G_P$ is a generator--cogenerator.
We record the resulting relation between the two homological dimensions.

\begin{proposition}[{\cite[Propositions~3.15 and~4.5]{AENY}}]
\label{prop:relative-auslander}
The dimensions $\intresgldim_\kk P$ and $\gldim\Lambda_P$ are finite.
If $|P|>1$, then
\begin{equation}    
\intresgldim_\kk P
=
\gldim\Lambda_P-2.
\label{eq:relative-Auslander-formula}
\end{equation}
If $|P|=1$, both dimensions are zero.
\end{proposition}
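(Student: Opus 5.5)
The statement is cited from \cite{AENY}, so the plan is to reconstruct the argument through projectivization together with the standard relative Auslander argument.

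\textbf{Finiteness.} First I would show that $\Lambda_P$ has finite global dimension. The observation from \cite{AENY} is that every submodule of an interval representation is interval-decomposable. Order the intervals by decreasing cardinality. A nonsplit epimorphism onto $\I_S$ from an object of $\add(G_P)$ has an interval-decomposable kernel whose summands are strictly smaller in a suitable sense. Ringel's criterion \cite{Ringel10} then gives a left strongly quasi-hereditary structure on $\Lambda_P$, and quasi-hereditary algebras have finite global dimension. Finiteness of $\intresgldim_\kk P$ follows via \cite[Proposition~3.13]{AENY}, since interval resolutions of $M$ correspond to projective resolutions of $\Phi_P(M)$ and the dimensions agree.

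\textbf{The formula for $|P|>1$.} I would use that $G_P$ is a generator--cogenerator, because all indecomposable projectives and injectives of $P$ are intervals. Let $X$ be a $\Lambda_P$-module. Take a projective presentation $\Phi_P(G_1)\to\Phi_P(G_0)\to X\to0$. By fullness of $\Phi_P$ on $\add(G_P)$, it comes from a map $f\colon G_1\to G_0$. Left exactness of $\Phi_P$ gives an exact sequence
\[
0\to\Phi_P(\ker f)\to\Phi_P(G_1)\to\Phi_P(G_0)\to X\to0 .
\]
So every second syzygy is of the form $\Phi_P(M)$, and its projective dimension equals the interval-resolution dimension of $M$. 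Hence $\gldim\Lambda_P\le\intresgldim_\kk P+2$.

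Conversely, for any representation $M$, choose a cogenerator embedding $0\to M\to I_0\to I_1$ with $I_0,I_1$ injective and hence in $\add(G_P)$. Applying $\Phi_P$ and using left exactness, $\Phi_P(M)$ is a second syzygy of $X=\operatorname{Coker}\Phi_P(I_0\to I_1)$. This gives $\operatorname{pd}\Phi_P(M)+2\ge\operatorname{pd}X$ whenever $\operatorname{pd}\Phi_P(M)>0$, and hence the reverse inequality whenever $\intresgldim_\kk P\ge1$.

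\textbf{Main obstacle.} The delicate case is when $\intresgldim_\kk P=0$, or more generally when the equality must be realized exactly. There one must show $\gldim\Lambda_P\ge2$, which requires $|P|>1$. I would treat it by exhibiting a non-injective indecomposable injective representation of $P$, or a representation that is not interval-decomposable. Alternatively, I would take $S=P$ and use the simple module $\LL$ at a projective–injective interval that is not simple, and show its projective dimension is exactly $2$ using the cogenerator property.

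In general, to get exact equality I would pick $M$ realizing $\intresgldim$ and show that the $X$ constructed above has $\operatorname{pd}X=\operatorname{pd}\Phi_P(M)+2$. This holds because $X$ is not of projective dimension $\le1$: the map $\Phi_P(I_0)\to\Phi_P(I_1)$ is not split mono, since $M$ is not injective when its dimension is positive. Finally, when $|P|=1$, $\Lambda_P=\kk$ and both dimensions are $0$.
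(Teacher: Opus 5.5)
Your skeleton is the one the paper relies on via \cite{AENY}: finiteness from a quasi-hereditary structure, and the formula from projectivizing the generator--cogenerator $G_P$ and comparing second syzygies. The upper bound $\gldim\Lambda_P\le\intresgldim_\kk P+2$ is fine. So is the lower bound when $\intresgldim_\kk P\ge1$, although the inequality you wrote points the wrong way: dimension shifting gives $\pd X=\pd\Phi_P(M)+2$ whenever $\pd\Phi_P(M)\ge1$.

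\textbf{The finiteness step rests on a false claim.} A nonsplit epimorphism onto $\I_S$ from an object of $\add(G_P)$ need not have interval-decomposable kernel. Take $P=\{a,b,c,d\}$ with $a,b,c<d$, and the sum of the inclusions $\I_{\{a,b,d\}}\oplus\I_{\{a,c,d\}}\oplus\I_{\{b,c,d\}}\to\I_P$. Write $e_1,e_2,e_3$ for the basis of the value $\kk^3$ at $d$. The kernel has the three distinct lines spanned by $e_1-e_2$, $e_1-e_3$, $e_2-e_3$ inside the plane $x_1+x_2+x_3=0$. So it is the indecomposable non-thin representation with dimension vector $(1,1,1,2)$. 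The observation from \cite{AENY} concerns submodules of a single $\I_S$ (such as its radical, whose summands are proper relative upsets), not kernels of maps out of direct sums; that is what feeds Ringel's criterion. Moreover, ordering by cardinality as in \eqref{eq:interval-cardinality-QH-order} produces the Reedy structure. By \Cref{rem:reedy-versus-left-strongly-qh} this is not left strongly quasi-hereditary. It is, however, quasi-hereditary (\Cref{thm:interval-cardinality-reedy-decomposition}), and that is the quickest correct route to finiteness.

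\textbf{The case $\intresgldim_\kk P=0$ (for example $P$ of type $A$) is not settled by what you wrote.} None of your three suggestions works. A non-interval-decomposable representation does not exist in this case. A ``non-injective indecomposable injective'' is contradictory. The simple module at a projective--injective interval need not have projective dimension $2$: for $P=\{a<b\}$ one has $0\to\PP(\{b\})\to\PP(\{a,b\})\to\LL(\{a,b\})\to0$, so $\pd\LL(P)=1$. In your last paragraph, the map that must fail to split is $\Phi_P(M)\to\Phi_P(I_0)$, not $\Phi_P(I_0)\to\Phi_P(I_1)$. Also, ``$M$ is not injective when its dimension is positive'' says nothing in dimension $0$.

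The fix is to choose $M\in\add(G_P)$ non-injective. Since $P$ is connected with $|P|>1$, every $y\in\Max(P)$ is not minimal. Hence $\I_{\{y\}}$ is projective, with injective envelope $\I_{\{z\in P\mid z\le y\}}\neq\I_{\{y\}}$. By full faithfulness of $\Phi_P$ on $\add(G_P)$, the map $\Phi_P(\I_{\{y\}})\to\Phi_P(I_0)$ is not split mono. So the cokernel $X$ of $\Phi_P(I_0)\to\Phi_P(I_1)$ has projective second syzygy but cannot have projective dimension at most $1$. Thus $\pd X=2$, giving $\gldim\Lambda_P\ge2=\intresgldim_\kk P+2$.
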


\subsection{Quasi-hereditary algebras}\label{sec:prelim-qh}

We recall some standard notions and facts on quasi-hereditary algebras.
We refer to \cite{CPS,DR89} for further details.

Let $\Lambda$ be an elementary finite-dimensional $\kk$-algebra, and let
$\{e_\lambda\mid \lambda\in\mathcal L\}$ be a complete set of primitive
orthogonal idempotents indexed by a finite poset $\mathcal L$. We write
\[
\PP(\lambda)=\Lambda e_\lambda,
\qquad
\II(\lambda)=\D(e_\lambda\Lambda),
\qand
\LL(\lambda)=\PP(\lambda)/\rad\PP(\lambda).
\]
For $\lambda\in\mathcal L$, the \emph{standard module}
$\Delta(\lambda)$ is the largest quotient of $\PP(\lambda)$ whose
composition factors are of the form $\LL(\mu)$ with
$\mu\leq\lambda$. Dually, the \emph{costandard module}
$\nabla(\lambda)$ is the largest submodule of $\II(\lambda)$ whose
composition factors are of the form $\LL(\mu)$ with
$\mu\leq\lambda$.

For each $\lambda\in\mathcal{L}$, let $\KK(\lambda)$ denote the kernel in the canonical exact sequence
\begin{equation}
0
\longrightarrow
\KK(\lambda)
\longrightarrow
\PP(\lambda)
\overset{\pi_{\lambda}}{\longrightarrow}
\Delta(\lambda)
\longrightarrow
0.
\label{eq:general-standard-kernel}
\end{equation}
By the definition of the standard module, we have 
\begin{equation}
\KK(\lambda)
=
\sum_{\mu\nleq\lambda}
\Tr_{\PP(\mu)}\PP(\lambda).
\label{eq:general-standard-kernel-trace}
\end{equation}

\begin{definition}
The algebra $\Lambda$ is called \emph{quasi-hereditary} with respect to
$\mathcal L$ if, for every $\lambda\in\mathcal L$, the following
conditions hold:
\begin{enumerate}
    \item $[\Delta(\lambda):\LL(\lambda)]=1$;
    \item the module $\KK(\lambda)$ in
\eqref{eq:general-standard-kernel} admits a finite filtration whose
successive quotients are standard modules $\Delta(\mu)$ with
$\mu>\lambda$.
\end{enumerate}
\end{definition}

Suppose that $\Lambda$ is quasi-hereditary with respect to
$\mathcal L$. We record the following standard properties
\cite{CPS,DR89}. The opposite algebra $\Lambda^{\op}$ is
quasi-hereditary with respect to the same poset $\mathcal L$, and
under the $\kk$-duality one has
\begin{equation}
\D\Delta_{\Lambda^{\op}}(\lambda)
\cong
\nabla_{\Lambda}(\lambda)
\qand
\D\nabla_{\Lambda^{\op}}(\lambda)
\cong
\Delta_{\Lambda}(\lambda)
\label{eq:QH-opposite-standard-costandard-duality}
\end{equation}
for every $\lambda\in\mathcal L$.

For each $\lambda\in\mathcal L$, the module $\Delta(\lambda)$ has
simple top $\LL(\lambda)$, while $\nabla(\lambda)$ has simple socle
$\LL(\lambda)$. Moreover,
$\End_\Lambda(\Delta(\lambda))\cong\kk$ and
$\End_\Lambda(\nabla(\lambda))\cong\kk$.
In addition, for $\lambda,\mu\in\mathcal L$,
$\Hom_\Lambda(\Delta(\lambda),\nabla(\mu))\cong\kk$ if
$\lambda=\mu$, and this Hom space is zero otherwise.

\subsection{Reedy algebras}\label{sec:prelim-reedy}

Reedy algebras were introduced by 
Dalezios--\v{S}\v{t}ov\'{\i}\v{c}ek as
the finite-dimensional algebraic form of linear Reedy categories
\cite[Section~4.3]{DS}.  In this setting, the unique degree-lowering and
degree-raising factorization of an ordinary Reedy category is replaced
by a linear factorization isomorphism induced by multiplication.

Let $\Lambda$ be an elementary finite-dimensional $\kk$-algebra, and
let $\{e_\lambda\mid\lambda\in\mathcal L\}$ be a complete set of
primitive orthogonal idempotents. Set
\[
\Lambda^0
=
\bigoplus_{\lambda\in\mathcal L}\kk e_\lambda.
\]

\begin{definition}[{\cite[Definition~4.17]{DS}}]
A \emph{Reedy structure} on $\Lambda$ consists of a degree function
\[
\deg\colon\mathcal L\longrightarrow\mathbb N
\]
and unital subalgebras $\Lambda^+,\Lambda^-\subseteq\Lambda$ containing
$\Lambda^0$ such that the following conditions hold:
\begin{enumerate}
    \item[\rm (R1)] For every $\lambda\in\mathcal L$, one has
    $e_\lambda\Lambda^+e_\lambda=\kk e_\lambda$, and, for distinct
    $\lambda,\mu\in\mathcal L$,
    \begin{equation}
    e_\mu\Lambda^+e_\lambda\neq0
    \Longrightarrow
    \deg(\mu)>\deg(\lambda).
    \label{eq:Reedy-plus-degree}
    \end{equation}

    \item[\rm (R2)] For every $\lambda\in\mathcal L$, one has
    $e_\lambda\Lambda^-e_\lambda=\kk e_\lambda$, and, for distinct
    $\lambda,\mu\in\mathcal L$,
    \begin{equation}
    e_\mu\Lambda^-e_\lambda\neq0
    \Longrightarrow
    \deg(\mu)<\deg(\lambda).
    \label{eq:Reedy-minus-degree}
    \end{equation}

    \item[\rm (R3)] Multiplication induces an isomorphism of $\kk$-vector spaces
    \begin{equation}
    \Lambda^+\otimes_{\Lambda^0}\Lambda^-
    \xrightarrow{\sim}
    \Lambda.
    \label{eq:Reedy-multiplication}
    \end{equation}
\end{enumerate}
In this case, the triple $(\Lambda,\Lambda^+,\Lambda^-)$ is called a
\emph{Reedy algebra}, and \eqref{eq:Reedy-multiplication} is called its
\emph{Reedy decomposition}.
\end{definition}

Equivalently, \eqref{eq:Reedy-multiplication} restricts, for every
$\lambda,\mu\in\mathcal L$, to an isomorphism
\begin{equation}
\bigoplus_{\nu\in\mathcal L}
e_\mu\Lambda^+e_\nu
\otimes_\kk
e_\nu\Lambda^-e_\lambda
\xrightarrow{\sim}
e_\mu\Lambda e_\lambda.
\label{eq:Reedy-blockwise-multiplication}
\end{equation}
The degree conditions imply
$\Lambda^+\cap\Lambda^-=\Lambda^0$.

The quasi-hereditary order associated with a Reedy structure is the
reverse of the Reedy degree order. 

\begin{proposition}[{\cite[Theorem~4.22]{DS}}]
\label{prop:Reedy-QH}
Let $(\Lambda,\Lambda^+,\Lambda^-)$ be a Reedy algebra. Define a partial
order on $\mathcal L$ by
\begin{equation}
\mu\leq_{\mathrm{qh}}\lambda
\Longleftrightarrow
\mu=\lambda
\text{ or }
\deg(\mu)>\deg(\lambda).
\label{eq:Reedy-QH-order}
\end{equation}
Then $\Lambda$ is quasi-hereditary with respect to
$\leq_{\mathrm{qh}}$.
\end{proposition}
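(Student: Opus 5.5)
The plan is to build the standard modules directly from the Reedy data as induced modules from $\Lambda^-$, and to verify both quasi-hereditary axioms through the factorization (R3).

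\textbf{Step 1: simple $\Lambda^-$-modules and induced modules.} By (R2), the off-diagonal part $\Lambda^-_{>}=\bigoplus_{\mu\neq\lambda}e_\mu\Lambda^-e_\lambda$ strictly lowers degree. Since $\deg$ takes values in $\mathbb N$ and $\mathcal L$ is finite, $\Lambda^-_{>}$ is a nilpotent two-sided ideal of $\Lambda^-$. Moreover $\Lambda^-/\Lambda^-_{>}\cong\Lambda^0$, so $\rad\Lambda^-=\Lambda^-_{>}$. The simple left $\Lambda^-$-modules are therefore the one-dimensional modules $\kk e_\lambda$, on which $\Lambda^-$ acts through $\Lambda^-\to\Lambda^0$.

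Define
\[
\Delta'(\lambda):=\Lambda\otimes_{\Lambda^-}\kk e_\lambda .
\]
We have $\Lambda\otimes_{\Lambda^0}\Lambda^-\cong\Lambda^+\otimes_{\Lambda^0}\Lambda^-\otimes_{\Lambda^-}(-)$ on the right, since $\Lambda\cong\Lambda^+\otimes_{\Lambda^0}\Lambda^-$ by (R3) and (R3) is an isomorphism of right $\Lambda^-$-modules. Hence $\Lambda$ is a right $\Lambda^-$-module of the form $\Lambda^+\otimes_{\Lambda^0}\Lambda^-$. Because $\Lambda^0$ is semisimple, this is a projective right $\Lambda^-$-module. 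Two consequences follow:
\begin{itemize}
\item[(a)] The functor $\Lambda\otimes_{\Lambda^-}-$ is exact.
\item[(b)] As vector spaces, $\Delta'(\lambda)\cong\Lambda^+\otimes_{\Lambda^0}\kk e_\lambda=\Lambda^+e_\lambda$.
\end{itemize}
By (R1), $e_\mu\Lambda^+e_\lambda\neq0$ only when $\mu=\lambda$ or $\deg\mu>\deg\lambda$, that is, only when $\mu\leq_{\mathrm{qh}}\lambda$. Also $e_\lambda\Lambda^+e_\lambda=\kk e_\lambda$. Thus every composition factor of $\Delta'(\lambda)$ is some $\LL(\mu)$ with $\mu\leq_{\mathrm{qh}}\lambda$, and $[\Delta'(\lambda):\LL(\lambda)]=1$.

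\textbf{Step 2: the kernel has a standard filtration.} Apply the exact functor $\Lambda\otimes_{\Lambda^-}-$ to the exact sequence
\[
0\to\rad\Lambda^-e_\lambda\to\Lambda^-e_\lambda\to\kk e_\lambda\to0 .
\]
This gives
\[
0\to\Lambda\otimes_{\Lambda^-}\rad\Lambda^-e_\lambda\to\PP(\lambda)\to\Delta'(\lambda)\to0,
\]
using $\Lambda\otimes_{\Lambda^-}\Lambda^-e_\lambda=\Lambda e_\lambda$.

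The module $\rad\Lambda^-e_\lambda=\bigoplus_{\nu\neq\lambda}e_\nu\Lambda^-e_\lambda$ has a composition series as a $\Lambda^-$-module with factors $\kk e_\nu$. By (R2), each such $\nu$ satisfies $\deg\nu<\deg\lambda$, i.e.\ $\nu>_{\mathrm{qh}}\lambda$. By exactness again, the kernel $K'$ acquires a finite filtration whose subquotients are the modules $\Delta'(\nu)$ with $\nu>_{\mathrm{qh}}\lambda$.

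\textbf{Step 3: $\Delta'(\lambda)$ is the standard module.} Since $\Delta'(\lambda)$ has all composition factors $\leq_{\mathrm{qh}}\lambda$ and is a quotient of $\PP(\lambda)$, we get $\KK(\lambda)\subseteq K'$, where $\KK(\lambda)=\sum_{\mu\nleq\lambda}\Tr_{\PP(\mu)}\PP(\lambda)$ as in \eqref{eq:general-standard-kernel-trace}.

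Conversely, each $\Delta'(\nu)$ is a quotient of $\PP(\nu)$. Lifting successively along the filtration of $K'$ from Step 2, we see that $K'$ is a quotient of $\bigoplus\PP(\nu)$ over the $\nu$ occurring, all of which satisfy $\nu\nleq_{\mathrm{qh}}\lambda$. Hence $K'\subseteq\KK(\lambda)$.

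Therefore $K'=\KK(\lambda)$ and $\Delta'(\lambda)\cong\Delta(\lambda)$. Conditions (1) and (2) of the definition of quasi-hereditary then follow from Steps 1 and 2.

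\textbf{Main obstacle.} The delicate point is Step 1. One must check that (R3), which is stated only as an isomorphism of vector spaces induced by multiplication, is compatible with the right $\Lambda^-$-action. This holds because multiplication $\Lambda^+\otimes_{\Lambda^0}\Lambda^-\to\Lambda$ is right $\Lambda^-$-linear by associativity. This compatibility is what makes $\Lambda$ a projective right $\Lambda^-$-module, and everything else rests on it.
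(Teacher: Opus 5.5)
Your argument is correct and complete. The paper gives no proof of this proposition. It imports the result from \cite[Theorem~4.22]{DS} and only extracts from that proof the kernel formula \eqref{eq:Reedy-standard-kernel}, so there is no in-paper argument to compare against. Your route is the classical Verma-module (exact Borel subalgebra) argument:
\begin{itemize}
\item you show that $\Lambda$ is projective as a right $\Lambda^-$-module, so induction $\Lambda\otimes_{\Lambda^-}-$ is exact;
\item you identify $\Delta(\lambda)$ with the induced module $\Lambda\otimes_{\Lambda^-}\kk e_\lambda$.
\end{itemize}

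Beyond being self-contained, this yields explicit information the paper later relies on. You get $e_\mu\Delta(\lambda)\cong e_\mu\Lambda^+e_\lambda$ and $\KK(\lambda)=\Lambda\,(\rad\Lambda^-)e_\lambda=\sum_{\nu\neq\lambda}\Lambda e_\nu\Lambda^-e_\lambda$. Only $\nu$ with $\deg\nu<\deg\lambda$ contribute to this sum. Hence $\KK(\lambda)\subseteq\sum_{\deg\nu<\deg\lambda}\Tr_{\PP(\nu)}\PP(\lambda)\subseteq\KK(\lambda)$, which is \eqref{eq:Reedy-standard-kernel} at once. For $\Lambda=\Lambda_P$ it also predicts the support description in \Cref{prop:standard-concrete-model}: $e_R\DD(S)\neq0$ exactly when $e_R\Lambda_P^+e_S\neq0$, i.e.\ when $S\in\cD(R)$.

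Two small points to tidy up.
\begin{enumerate}
\item The display ``$\Lambda\otimes_{\Lambda^0}\Lambda^-\cong\Lambda^+\otimes_{\Lambda^0}\Lambda^-\otimes_{\Lambda^-}(-)$'' is garbled. What you actually use, and what is true, is simpler. Multiplication $\Lambda^+\otimes_{\Lambda^0}\Lambda^-\to\Lambda$ is an isomorphism of right $\Lambda^-$-modules. Its source is $\bigoplus_\nu\Lambda^+e_\nu\otimes_\kk e_\nu\Lambda^-$, a direct sum of copies of the projectives $e_\nu\Lambda^-$.
\item In (b), an isomorphism of vector spaces is not enough to read off composition factors. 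You need $e_\mu\Delta'(\lambda)\cong e_\mu\Lambda^+e_\lambda$ for every $\mu$, together with $[M:\LL(\mu)]=\dim_\kk e_\mu M$, which holds because $\Lambda$ is elementary. The componentwise isomorphism holds because the multiplication map is also left $\Lambda^+$-linear, hence left $\Lambda^0$-linear and compatible with the idempotent decomposition. So the compatibility you need is as $\Lambda^+$--$\Lambda^-$-bimodules, not only the right $\Lambda^-$-linearity you single out in your ``main obstacle'' paragraph.
\end{enumerate}
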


By the proof of \cite[Theorem~4.22]{DS}, the description 
\eqref{eq:general-standard-kernel-trace} specializes to 
\begin{equation}
\KK(\lambda)
=
\sum_{\substack{\mu\in\mathcal L\\
\deg(\mu)<\deg(\lambda)}}
\Tr_{\PP(\mu)}\PP(\lambda).
\label{eq:Reedy-standard-kernel}
\end{equation}

\section{Reedy structure on $\Lambda_P$}
\label{sec:interval-reedy}

In this section, we prove that $\Lambda_P$ admits a Reedy structure
with degree function given by the cardinality of intervals
(\Cref{thm:interval-cardinality-reedy-decomposition}).
The directional subalgebras $\Lambda_P^+$ and $\Lambda_P^-$ in this
decomposition are defined using quotient and inclusion maps associated with
relative downsets and relative upsets, respectively.

Each admissible-component basis morphism in
\Cref{prop:interval-hom-admissible-basis} factors through its image,
which is again an interval representation.  More precisely, let $R,S\in\Int(P)$ and
$U\in\Adm(R,S)$.  Then $U$ is a relative downset in $R$ and a relative
upset in $S$, and the image factorization is
\begin{equation}
\label{eq:admissible-component-factorization}
\rho_{R,S}^{U}
=
j_{U,S}\circ q_{R,U}
\colon
\I_R\twoheadrightarrow\I_U\hookrightarrow\I_S,
\end{equation}
where
$q_{R,U}:=\rho_{R,U}^{U}$ and
$j_{U,S}:=\rho_{U,S}^{U}$ are the corresponding quotient and inclusion.

More generally, the morphisms $q_{R,U}$ and $j_{U,S}$ are defined in the same way for
relative downsets $U\in\cD(R)$ and relative upsets
$U\in\cU(S)$, respectively.  In particular,
$q_{S,S}=j_{S,S}=\id_{\I_S}$.

Inside $E_P$, define the directional $\kk$-subspaces
\begin{equation}
\begin{aligned}
E_P^-
&=
\operatorname{span}_{\kk}
\{q_{R,U}\mid R\in\Int(P),\ U\in\cD(R)\},\\
E_P^+
&=
\operatorname{span}_{\kk}
\{j_{U,S}\mid S\in\Int(P),\ U\in\cU(S)\},\\
E_P^0
&=
\bigoplus_{S\in\Int(P)}\kk e_S.
\end{aligned}
\label{eq:interval-directional-subspaces}
\end{equation}
We then set
\begin{equation}
\Lambda_P^+
=
(E_P^-)^{\op},
\qquad
\Lambda_P^-
=
(E_P^+)^{\op},
\qand
\Lambda_P^0
=
E_P^0.
\label{eq:interval-Reedy-subspaces}
\end{equation}

The principal structural result of this section is the following.

\begin{theorem}
\label{thm:interval-cardinality-reedy-decomposition}
The triple
\[
(\Lambda_P,\Lambda_P^+,\Lambda_P^-)
\]
is a Reedy algebra with degree $\deg(S)=|S|$.  Its associated
quasi-hereditary order is given by reverse interval cardinality:
\begin{equation}
R\preceq_{\mathrm{qh}}S
\Longleftrightarrow
R=S
\text{ or }
|R|>|S|.
\label{eq:interval-cardinality-QH-order}
\end{equation}
\end{theorem}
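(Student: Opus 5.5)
The plan is to verify the three axioms (R1)--(R3) directly from the admissible-component basis of \Cref{prop:interval-hom-admissible-basis}, working inside $E_P$ and translating to $\Lambda_P=E_P^{\op}$ at the end. The quasi-hereditary statement \eqref{eq:interval-cardinality-QH-order} will then follow at once from \Cref{prop:Reedy-QH} with $\deg(S)=|S|$.

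\emph{Step 1: $E_P^-$ and $E_P^+$ are unital subalgebras containing $E_P^0$.} Since $q_{S,S}=j_{S,S}=e_S$, both subspaces contain $E_P^0$. For closure under composition, take $U\in\cD(R)$ and $V\in\cD(U')$.
\begin{itemize}
\item If $U\neq U'$, then $q_{U',V}\circ q_{R,U}=0$, since the two maps are not composable as idempotent-supported maps.
\item If $U=U'$, we show $V\in\cD(R)$. Let $x\in R$ with $x\leq v$ for some $v\in V\subseteq U$. Since $U$ is a relative downset in $R$, we get $x\in U$. Since $V$ is a relative downset in $U$, we then get $x\in V$.
\end{itemize}
Comparing supports pointwise gives $q_{U,V}\circ q_{R,U}=q_{R,V}$, which lies in $E_P^-$. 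The dual argument shows that if $U\in\cU(S)$ and $V\in\cU(U)$, then $V\in\cU(S)$ and $j_{U,S}\circ j_{V,U}=j_{V,S}$. Passing to opposites, $\Lambda_P^{\pm}$ are unital subalgebras containing $\Lambda_P^0$.

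\emph{Step 2: degree conditions.} Under $e_R\Lambda_Pe_S\cong\Hom_P(\I_R,\I_S)$, we have:
\begin{itemize}
\item $e_R\Lambda_P^+e_U$ is spanned by $q_{R,U}$ when $U\in\cD(R)$, and is zero otherwise;
\item $e_U\Lambda_P^-e_S$ is spanned by $j_{U,S}$ when $U\in\cU(S)$, and is zero otherwise.
\end{itemize}
If $U\neq R$ and $U\in\cD(R)$, then $U\subsetneq R$, so $|R|>|U|$. This gives (R1), and the diagonal part is $\kk e_R$ because the only $U\in\cD(R)$ with $|U|=|R|$ is $R$ itself. Symmetrically, $U\subsetneq S$ gives $|U|<|S|$, which yields (R2).

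\emph{Step 3: the Reedy decomposition.} It suffices to check the blockwise form \eqref{eq:Reedy-blockwise-multiplication}. Fix $R,S\in\Int(P)$. By Step 2, the left-hand side has a basis
\[
\{q_{R,U}\otimes j_{U,S}\mid U\in\cD(R)\cap\cU(S)\}.
\]
Multiplication in $\Lambda_P=E_P^{\op}$ sends $q_{R,U}\otimes j_{U,S}$ to $j_{U,S}\circ q_{R,U}$. By the image factorization \eqref{eq:admissible-component-factorization}, this composite equals $\rho^U_{R,S}$. By \eqref{eq:admissible-component-factorization-as-relative-subsets}, namely $\Adm(R,S)=\cD(R)\cap\cU(S)$, the index sets agree. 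Therefore the multiplication map carries a basis bijectively onto the admissible-component basis of $e_R\Lambda_Pe_S$ from \Cref{prop:interval-hom-admissible-basis}, and so it is an isomorphism.

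The main obstacle is bookkeeping rather than mathematics. One must track the op convention so that $\Lambda^+$ really comes from $E^-$, the quotients raising degree in the sense of (R1). One must also note that elements of $\cD(R)$ and $\cU(S)$ are by definition intervals, hence nonempty and connected. This matches exactly the admissible components, and it is what guarantees that no extra middle terms $U$ appear in the tensor product.
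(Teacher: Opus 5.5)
Your proposal is correct and follows essentially the same route as the paper. In both, the composition law for quotient and inclusion maps gives the subalgebras together with (R1)--(R2), and (R3) is checked blockwise by matching the tensor basis indexed by $\cD(R)\cap\cU(S)=\Adm(R,S)$ with the admissible-component basis via the image factorization $\rho^U_{R,S}=j_{U,S}\circ q_{R,U}$; the only cosmetic difference is that you verify (R3) directly in $\Lambda_P$, whereas the paper works in $E_P$ and then passes to opposite algebras.
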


We first verify the directional composition laws.

\begin{lemma}
\label{lem:interval-directional-composition}
Let $R,U,V\in\Int(P)$.
\begin{enumerate}[\rm (1)]
    \item If $U\in\cD(R)$ and $V\in\cD(U)$, then $V\in\cD(R)$ and $q_{U,V}\circ q_{R,U} = q_{R,V}$.

    \item If $V\in\cU(U)$ and $U\in\cU(R)$, then $V\in\cU(R)$ and $j_{U,R}\circ j_{V,U}=j_{V,R}$.
\end{enumerate}
\end{lemma}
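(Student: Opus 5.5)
The plan is to verify the two claims of each part directly from the definitions: transitivity of relative closures, then a pointwise comparison of the morphisms. Part (2) is the order dual of part (1). It follows either by repeating the argument with arrows reversed, or by applying the duality $\D$ of \eqref{eq:poset-duality}. That duality sends $\I_S$ to $\I_S$ over $P^{\op}$, exchanges relative downsets with relative upsets, and sends the quotient $q_{R,U}$ to an inclusion. So I would write out (1) carefully and deduce (2) by duality, or simply note the symmetric argument.

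For (1), first the set-theoretic claim. $V$ is already an interval, so it suffices to show $V^{\downarrow_R}=V$. Let $x\in R$ with $x\le v$ for some $v\in V\subseteq U$. Since $U$ is a relative downset in $R$, we get $x\in U$. Then $x\in U$ and $x\le v\in V$, and $V$ is a relative downset in $U$, so $x\in V$. Hence $V\in\cD(R)$, and $q_{R,V}$ is defined.

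Next the identity of morphisms. Both sides are morphisms $\I_R\to\I_V$, so it suffices to compare their components at each $x\in P$, since morphisms of representations are determined pointwise. By \eqref{eq:rho-definition}, $q_{R,U}(x)=\id_\kk$ for $x\in U$ and $0$ otherwise, and $q_{U,V}(x)=\id_\kk$ for $x\in V$ and $0$ otherwise. Because $V\subseteq U$, the composite at $x$ is $\id_\kk$ precisely when $x\in V$, and $0$ otherwise. This is exactly $q_{R,V}=\rho^V_{R,V}$. Here $V\in\Adm(R,V)$ holds because $V$ is a relative downset in $R$ and trivially a relative upset in $V$. So $q_{R,V}$ is a genuine basis morphism by \Cref{prop:interval-hom-admissible-basis}.

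There is no real obstacle. The only point needing care is confirming that each symbol is well defined, i.e.\ that the supports are admissible components, so that the morphisms in question exist; the transitivity argument above supplies this.
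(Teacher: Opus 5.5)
Your proposal is correct and matches the paper's proof: transitivity of relative downsets gives $V\in\cD(R)$, both sides of the identity are the identity on $V$ and zero elsewhere, and (2) is the order dual. Your extra check that $V\in\Adm(R,V)$, so that $q_{R,V}$ is a well-defined morphism, is a harmless addition that the paper leaves implicit.
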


\begin{proof}
Suppose that $U\in\cD(R)$ and $V\in\cD(U)$.  If $x\in R$ and
$x\leq v$ for some $v\in V$, then $x\in U$ because $U$ is a relative
downset in $R$, and hence $x\in V$ because $V$ is a relative downset in
$U$.  Thus $V\in\cD(R)$.  Both sides of the equality in {\rm (1)} are the identity on $V$ and
zero outside $V$, so they coincide.  The second assertion is dual.
\end{proof}

It follows that $E_P^-$ and $E_P^+$ are unital subalgebras of $E_P$
containing $E_P^0$.  Consequently, $\Lambda_P^+$ and $\Lambda_P^-$ are
unital subalgebras of $\Lambda_P$ containing $\Lambda_P^0$.  Moreover,
their diagonal blocks are
\[
e_S\Lambda_P^+e_S
=
\kk e_S
=
e_S\Lambda_P^-e_S.
\]
If $U\in\cD(R)$ and $U\neq R$, then $U\subsetneq R$, and
\[
q_{R,U}^{\op}\in e_R\Lambda_P^+e_U
\qand
|R|>|U|.
\]
Similarly, if $U\in\cU(S)$ and $U\neq S$, then $U\subsetneq S$, and
\[
j_{U,S}^{\op}\in e_U\Lambda_P^-e_S
\qand
|U|<|S|.
\]
Thus conditions {\rm (R1)} and {\rm (R2)} hold for
$\deg(S)=|S|$.  

\begin{proof}[Proof of \Cref{thm:interval-cardinality-reedy-decomposition}]
It remains to verify {\rm (R3)}.  Before passing to opposite algebras,
it is equivalent to show that multiplication induces an isomorphism
\begin{equation}
E_P^+
\otimes_{E_P^0}
E_P^-
\xrightarrow{\sim}
E_P.
\label{eq:interval-Reedy-multiplication-in-E}
\end{equation}

Fix $R,S\in\Int(P)$. 
The $(S,R)$-component of
\eqref{eq:interval-Reedy-multiplication-in-E} is
\begin{equation}
\bigoplus_{U\in\Int(P)}
e_SE_P^+e_U
\otimes_\kk
e_UE_P^-e_R
\longrightarrow
e_SE_Pe_R.
\label{eq:blockwise-interval-Reedy-multiplication}
\end{equation}
Its source has basis
\[
\{j_{U,S}\otimes q_{R,U}
\mid
U\in\cD(R)\cap\cU(S)\}.
\]
By \eqref{eq:admissible-components-as-relative-subsets}, the indexing
set is exactly $\Adm(R,S)$.  
For $U\in\Adm(R,S)$, 
\eqref{eq:admissible-component-factorization} gives 
$\rho_{R,S}^{U} =j_{U,S}\circ q_{R,U}$. 
These morphisms form a $\kk$-linear basis of
\[
e_SE_Pe_R
\cong
\Hom_P(\I_R,\I_S)
\]
by \Cref{prop:interval-hom-admissible-basis}.  Hence
\eqref{eq:blockwise-interval-Reedy-multiplication} is an isomorphism
for every $R,S\in\Int(P)$, and therefore
\eqref{eq:interval-Reedy-multiplication-in-E} is an isomorphism.
Passing to opposite algebras verifies {\rm (R3)}.

Thus $(\Lambda_P,\Lambda_P^+,\Lambda_P^-)$ is a Reedy algebra with
degree $\deg(S)=|S|$.  Its associated quasi-hereditary order is
\eqref{eq:interval-cardinality-QH-order} by \Cref{prop:Reedy-QH}.
\end{proof}

Throughout the remainder of the paper, we regard $\Lambda_P$ as equipped with the Reedy structure $(\Lambda_P,\Lambda_P^+,\Lambda_P^-)$ established above.

We first record the following composition law for the admissible-component basis. 

\begin{proposition}
\label{prop:admissible-basis-composition}
Let $Q,R,S\in\Int(P)$, and let
$D\in\Adm(Q,R)$ and $C\in\Adm(R,S)$.
Then every connected component $W$ of $C\cap D$ belongs to
$\Adm(Q,S)$, and
\begin{equation}
\rho_{R,S}^{C}\circ\rho_{Q,R}^{D}
=
\sum_{W\in\piZero(C\cap D)}
\rho_{Q,S}^{W},
\label{eq:admissible-basis-composition}
\end{equation}
where the empty sum is understood to be zero.
\end{proposition}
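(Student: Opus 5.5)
The plan is to compute the composite pointwise and then match it against the admissible-component basis of \Cref{prop:interval-hom-admissible-basis}.

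First, the pointwise computation. At each $x\in P$, the map $(\rho_{R,S}^{C}\circ\rho_{Q,R}^{D})(x)$ equals $\id_\kk$ when $x\in C\cap D$ and $0$ otherwise. The components of $C\cap D$ are pairwise disjoint, so the right-hand side of \eqref{eq:admissible-basis-composition} is also $\id_\kk$ exactly on $C\cap D$ and $0$ elsewhere. Hence the identity holds as an equality of families of linear maps, and when $C\cap D=\varnothing$ both sides are zero. It remains to show that each summand $\rho_{Q,S}^{W}$ is a genuine morphism, i.e.\ that $W\in\Adm(Q,S)=\cD(Q)\cap\cU(S)$ via \eqref{eq:admissible-components-as-relative-subsets}. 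For this, $W$ must be an interval contained in $Q\cap S$ that is a relative downset in $Q$ and a relative upset in $S$.

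Next, the relative downset property in $Q$. Take $w\in W$ and $x\in Q$ with $x\leq w$. Since $D\in\cD(Q)$ and $w\in D$, we get $x\in D\subseteq R$. Since $C\subseteq R$ and $w\in C$, both $x$ and $w$ lie in $R$. We cannot use the upset property of $C$ here, because $C\in\cU(S)$ is only upward closed. Instead we use that $C\in\cD(R)$, which holds because $C\in\Adm(R,S)$ and so $C$ is a relative downset in $R$. Together with $x\leq w\in C$ and $x\in R$, this gives $x\in C$. So $x\in C\cap D$, and $x$ is comparable with $w$, hence $x$ lies in the component $W$. Thus $W$ is a relative downset in $Q$.

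Then the relative upset property in $S$, by the dual argument. Take $w\in W$ and $y\in S$ with $w\leq y$. From $C\in\cU(S)$ we get $y\in C\subseteq R$. From $D\in\cU(R)$, since $D$ is admissible for $(Q,R)$, we get $y\in D$. Comparability with $w$ then puts $y$ in $W$.

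Finally, $W$ is nonempty and connected because it is a component. It is convex in $P$ because it is a relative downset in the interval $Q$, and we noted earlier that relative downsets are convex. So $W$ is an interval. Moreover, $W\subseteq C\cap D\subseteq Q\cap S$. This places $W$ in $\cD(Q)\cap\cU(S)=\Adm(Q,S)$, which completes the argument.

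The only real obstacle is bookkeeping: for each closure step, identify which of the four admissibility conditions ($D\in\cD(Q)$, $D\in\cU(R)$, $C\in\cD(R)$, $C\in\cU(S)$) applies, and note that the middle interval $R$ lets us chain them. No cancellation arises, since distinct components have disjoint supports.
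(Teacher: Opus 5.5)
Your proof is correct and follows essentially the same route as the paper. You prove $W\in\cD(Q)$ by the same closure argument (first $D\in\cD(Q)$, then $C\in\cD(R)$), prove $W\in\cU(S)$ by its dual, and observe that the disjoint supports of the $\rho_{Q,S}^{W}$ cover $C\cap D$; the only cosmetic difference is that you get convexity of $W$ from its being a relative downset of the interval $Q$, whereas the paper uses that $C\cap D$ is convex and hence so are its components.
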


\begin{proof}
The subset $C\cap D$ is convex.  Hence each of its connected components
is an interval.  Let $W$ be such a component.  If $w\in W$ and $x\in Q$
satisfy $x\leq w$, then $x\in D$ because $D$ is a relative downset in
$Q$.  Since $x\in D\subseteq R$ and $w\in W\subseteq C$, the
relative-downset property of $C$ in $R$ gives $x\in C$.  Thus
$x\in C\cap D$.  Since $x$ and $w$ are comparable, they belong to the
same connected component, so $x\in W$.  Therefore $W\in\cD(Q)$.
Dually, $W\in\cU(S)$, and hence $W\in\Adm(Q,S)$ by
\eqref{eq:admissible-components-as-relative-subsets}.

The supports of the morphisms $\rho_{Q,S}^{W}$ are pairwise disjoint
and their union is $C\cap D$.  Thus
\eqref{eq:admissible-basis-composition} follows from
\eqref{eq:rho-definition}.
\end{proof}

Next, we consider the heredity chain determined by the Reedy degree and its
successive layers. For $0\leq t\leq |P|$, set
\begin{equation}
\varepsilon_t
:=
\sum_{\substack{T\in\Int(P)\\ |T|=t}} e_T,
\qquad
J_t
:=
\Lambda_P
(\varepsilon_0+\varepsilon_1+\cdots+\varepsilon_t)
\Lambda_P.
\label{eq:interval-cardinality-heredity-ideals}
\end{equation}
Since the intervals in $\Int(P)$ are nonempty, we have
$\varepsilon_0=0$ and hence $J_0=0$. By
\cite[Corollary~4.8]{CDK}, the sequence
\begin{equation}
0
=
J_0
\subseteq
J_1
\subseteq
\cdots
\subseteq
J_{|P|}
=
\Lambda_P
\label{eq:interval-cardinality-heredity-chain}
\end{equation}
is a heredity chain.
Since each $J_t$ is a two-sided ideal, we also regard this chain as a
filtration $J_\bullet$ of the regular
$\Lambda_P$--$\Lambda_P$-bimodule $\Lambda_P$.

By \eqref{eq:interval-cardinality-heredity-ideals} and
\Cref{prop:admissible-basis-composition}, we have
\begin{equation}
J_t
=
\operatorname{span}_{\kk}
\left\{
\bigl(\rho_{R,S}^{C}\bigr)^{\op}
\;\middle|\;
R,S,C\in\Int(P),\
C\in\Adm(R,S),\
|C|\leq t
\right\}.
\label{eq:interval-cardinality-admissible-basis-filtration}
\end{equation}
Since the admissible-component morphisms form a basis by
\Cref{prop:interval-hom-admissible-basis}, the quotient has the following basis description: 
\begin{equation}
J_t/J_{t-1}
=
\operatorname{span}_{\kk}
\left\{
(\rho_{R,S}^{C})^{\op}+J_{t-1}
\;\middle|\;
R,S,C\in\Int(P),\
C\in\Adm(R,S),\
|C|=t
\right\}.
\label{eq:interval-cardinality-admissible-basis-layer}
\end{equation}

\begin{remark}
\label{rem:reedy-versus-left-strongly-qh}
By the proof of \cite[Proposition~4.5]{AENY}, $\Lambda_P$ admits a
left strongly quasi-hereditary structure. 
The finiteness of
$\gldim\Lambda_P$ recorded in \Cref{prop:relative-auslander} follows from this structure.

By \Cref{thm:interval-cardinality-reedy-decomposition}, the Reedy
structure $(\Lambda_P,\Lambda_P^+,\Lambda_P^-)$ induces a
quasi-hereditary structure on $\Lambda_P$.  In general, this
quasi-hereditary structure is not left strongly quasi-hereditary, and
hence differs from the one considered in \cite{AENY}.
Indeed, let $P=\{a,b,c\}$ with $a<c$ and
$b<c$.  A direct calculation for the quasi-hereditary structure
induced by the Reedy structure gives the minimal projective resolution
\[
0
\longrightarrow
\PP(\{c\})
\longrightarrow
\PP(\{a,c\})\oplus\PP(\{b,c\})
\longrightarrow
\PP(P)
\longrightarrow
\DD(P)
\longrightarrow
0.
\]
Thus $\pd_{\Lambda_P}\DD(P)=2$, so this quasi-hereditary structure is
not left strongly quasi-hereditary.

The finiteness of $\gldim\Lambda_P$ also follows from
\Cref{thm:interval-cardinality-reedy-decomposition}, since
$\Lambda_P$ is quasi-hereditary.
\end{remark}

\section{Standard and costandard modules}\label{sec:standards}

We study the standard and costandard modules associated with the
quasi-hereditary structure induced by the Reedy algebra
$(\Lambda_P,\Lambda_P^+,\Lambda_P^-)$ constructed in the previous section.
We give a combinatorial description of the standard modules and of morphisms among them 
(\Cref{prop:standard-concrete-model,prop:standard-hom-formula}).
The corresponding descriptions for costandard modules are obtained by
duality.
We also use the filtration $J_\bullet$ of $\Lambda_P$ in
\eqref{eq:interval-cardinality-heredity-chain} to construct
filtrations of the indecomposable projective and injective modules
with standard and costandard subquotients, respectively, and to identify the successive quotients $J_t/J_{t-1}$ as bimodules (\Cref{sec:standard-cardinality-filtrations}).

\subsection{Standard modules}\label{sec:standard-modules}

For each
$S\in\Int(P)$, recall the canonical exact sequence
\begin{equation}
0
\longrightarrow
\KK(S)
\longrightarrow
\PP(S)
\xrightarrow{\pi_S}
\DD(S)
\longrightarrow
0.
\label{eq:interval-standard-defining-sequence}
\end{equation}
We write $[f]:=\pi_S(f)$ for $f\in\PP(S)$.

\subsubsection{Basis descriptions and algebra actions}
\label{sec:standard-concrete}
Let $S\in \Int(P)$. 
We first describe the kernel $\KK(S)$ in terms of the
admissible-component basis. 
\begin{lemma}
\label{lem:standard-kernel-admissible-basis}
For $R\in\Int(P)$, 
\begin{equation}
e_R\KK(S)
=
\bigoplus_{\substack{U\in\Adm(R,S)\\ U\subsetneq S}}
\kk\rho_{R,S}^{U}.
\label{eq:standard-kernel-admissible-basis}
\end{equation}
Moreover, the kernel is generated by the canonical maps associated with proper relative upsets of $S$:
\begin{equation}
\KK(S)
=
\sum_{\substack{C\in\cU(S)\\ C\subsetneq S}}
\Image (j_{C,S})_*.
\label{eq:standard-kernel-generators}
\end{equation}
\end{lemma}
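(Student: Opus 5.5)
We start from the trace description \eqref{eq:Reedy-standard-kernel}, which gives $\KK(S)=\sum_{|T|<|S|}\Tr_{\PP(T)}\PP(S)$. The argument identifies this trace in the admissible-component basis of \eqref{eq:projective-injective-component-bases} and compares it with the images of the inclusions $(j_{C,S})_*$. Concretely, write $M$ for the right-hand side of \eqref{eq:standard-kernel-admissible-basis}, that is, $e_RM=\bigoplus_{U\in\Adm(R,S),\,U\subsetneq S}\kk\rho^U_{R,S}$ for every $R$. The plan is to prove the chain of inclusions
\[
\KK(S)\subseteq M\subseteq \sum_{C\in\cU(S),\,C\subsetneq S}\Image(j_{C,S})_*\subseteq\KK(S).
\]
This yields both assertions at once.

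\textbf{Last inclusion.} Let $C\in\cU(S)$ with $C\subsetneq S$. Then $|C|<|S|$, and $(j_{C,S})_*\colon\PP(C)\to\PP(S)$ is a map out of $\PP(C)$. Its image therefore lies in $\Tr_{\PP(C)}\PP(S)\subseteq\KK(S)$.

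\textbf{Middle inclusion.} Take $U\in\Adm(R,S)$ with $U\subsetneq S$. By \eqref{eq:admissible-component-factorization}, $\rho^U_{R,S}=j_{U,S}\circ q_{R,U}$. Here $U\in\cU(S)$ is a proper relative upset, and $q_{R,U}$ is an element of $e_R\PP(U)$. Hence $\rho^U_{R,S}=(j_{U,S})_*(q_{R,U})\in\Image(j_{U,S})_*$.

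\textbf{First inclusion (main step).} Since $\KK(S)$ is a sum of traces, it is spanned by elements of the form $g\circ h$, where $h\colon\I_R\to\I_T$, $g\colon\I_T\to\I_S$ and $|T|<|S|$; in the left-module convention $\PP(S)=\Lambda_Pe_S$, these are exactly the morphisms $\I_R\to\I_S$ factoring through some $\I_T$. By bilinearity we may take $g=\rho^C_{T,S}$ and $h=\rho^D_{R,T}$ to be basis elements. \Cref{prop:admissible-basis-composition} then expresses $g\circ h$ as $\sum_W\rho^W_{R,S}$ over $W\in\piZero(C\cap D)$. Each such $W$ satisfies $W\subseteq C\subseteq T$, so $|W|\le|T|<|S|$ and $W\neq S$. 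Every term therefore lies in $M$.

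The only point needing care is that $M$ really is a $\Lambda_P$-submodule, rather than merely a subspace, so that the chain of inclusions makes sense as submodules. This follows again from \Cref{prop:admissible-basis-composition}. Precomposing $\rho^U_{R,S}$ with any $\rho^D_{Q,R}$ gives a sum of $\rho^W_{Q,S}$ with $W\subseteq U\subsetneq S$. In any case, equality of the two ends of the chain already forces $M$ to equal the submodule $\KK(S)$. Directness of the sum in \eqref{eq:standard-kernel-admissible-basis} is immediate, because the $\rho^U_{R,S}$ are part of a basis by \Cref{prop:interval-hom-admissible-basis}.
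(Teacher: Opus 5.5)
Your proof is correct and follows essentially the same route as the paper: both reduce elements of the trace to composites of admissible-component basis morphisms via \Cref{prop:admissible-basis-composition}, and both use the image factorization $\rho^U_{R,S}=j_{U,S}\circ q_{R,U}$ for the reverse inclusion and for the generator statement. Arranging the argument as a single cycle of inclusions gives both assertions at once, but the underlying argument is the paper's.
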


\begin{proof}
By \eqref{eq:Reedy-standard-kernel} and $\deg(T)=|T|$, the space
$e_R\KK(S)$ is spanned by morphisms $\I_R\to\I_S$ that factor through
some $\I_T$ with $|T|<|S|$.  For such a factorization, it suffices to
consider
$\rho_{T,S}^{B}\circ\rho_{R,T}^{A}$ with
$A\in\Adm(R,T)$ and $B\in\Adm(T,S)$.  By
\Cref{prop:admissible-basis-composition},
\[
\rho_{T,S}^{B}\circ\rho_{R,T}^{A}
=
\sum_{W\in\piZero(A\cap B)}\rho_{R,S}^{W}.
\]
Each $W$ belongs to $\Adm(R,S)$ and satisfies
$W\subseteq T$, hence $W\subsetneq S$.  This gives one inclusion in
\eqref{eq:standard-kernel-admissible-basis}.  Conversely, if
$U\in\Adm(R,S)$ and $U\subsetneq S$, then
the morphism $\rho_{R,S}^{U}$
factors through $\I_U$, with $|U|<|S|$.  Thus
$\rho_{R,S}^{U}\in e_R\KK(S)$, proving
\eqref{eq:standard-kernel-admissible-basis}.

For every proper $C\in\cU(S)$, the image of $(j_{C,S})_*$ is contained
in $\KK(S)$ since its elements factor through $\I_C$.  Conversely, take a basis vector $\rho_{R,S}^{U}$ in
\eqref{eq:standard-kernel-admissible-basis}. 
Then its support $U$ satisfies $U\in\cU(S)$ and $U\subsetneq S$. Hence $\rho_{R,S}^{U}= j_{U,S} \circ q_{R,U}$ is contained in the right-hand side. 
This proves
\eqref{eq:standard-kernel-generators}.
\end{proof}

Set
\begin{equation}
\cE_S
=
\{R\in\Int(P)\mid S\in\cD(R)\}.
\label{eq:standard-support-set}
\end{equation}
The following proposition describes the corresponding components of
$\DD(S)$ and the $\Lambda_P$-action explicitly.

\begin{proposition}
\label{prop:standard-concrete-model}
For $R\in \Int(P)$, we have
\begin{equation}
e_R\DD(S)
\cong
\begin{cases}
\kk[q_{R,S}] & \text{if } R\in\cE_S,\\
0            & \text{if } R\notin\cE_S.
\end{cases}
\label{eq:standard-components-explicit}
\end{equation}
Moreover, let $Q,R\in\Int(P)$, let $C\in\Adm(Q,R)$, and assume
$R\in\cE_S$. Then
\begin{equation}
(\rho_{Q,R}^{C})^{\op}\cdot[q_{R,S}]
=
\begin{cases}
[q_{Q,S}] & \text{if } Q\in\cE_S \text{ and } S\subseteq C,\\
0         & \text{otherwise.}
\end{cases}
\label{eq:standard-action-explicit}
\end{equation}
In particular, $\DD(S)$ is a thin $\Lambda_P$-module with support $\cE_S$.
\end{proposition}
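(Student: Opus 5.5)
The plan is to compute $\DD(S)=\PP(S)/\KK(S)$ componentwise from \Cref{lem:standard-kernel-admissible-basis}, and then read off the action from the composition law \Cref{prop:admissible-basis-composition}.

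\emph{Components.} By \eqref{eq:projective-injective-component-bases}, $e_R\PP(S)$ has basis $\{\rho_{R,S}^U\mid U\in\Adm(R,S)\}$. By \eqref{eq:standard-kernel-admissible-basis}, $e_R\KK(S)$ is spanned by exactly those basis vectors with $U\subsetneq S$. Hence $e_R\DD(S)$ has as basis the classes of the $\rho_{R,S}^U$ with $U=S$.

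It remains to decide when $S\in\Adm(R,S)$. The condition $S\in\cU(S)$ always holds, so by \eqref{eq:admissible-components-as-relative-subsets} we have $S\in\Adm(R,S)$ if and only if $S\in\cD(R)$, that is, $R\in\cE_S$. In that case $\rho_{R,S}^S=q_{R,S}$, since both are the identity on $S$ and zero elsewhere. This gives \eqref{eq:standard-components-explicit}. It also shows that every nonzero component is one-dimensional, which is the thinness claim, with support $\cE_S$.

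\emph{Action.} Left multiplication by $(\rho_{Q,R}^C)^{\op}$ on $\PP(S)=\Lambda_Pe_S$ corresponds to precomposition in $E_P$: it sends $f\in\Hom_P(\I_R,\I_S)$ to $f\circ\rho_{Q,R}^C$. Applying \Cref{prop:admissible-basis-composition} with the component $S\in\Adm(R,S)$ and $D=C$ gives
\[
q_{R,S}\circ\rho_{Q,R}^C=\sum_{W\in\piZero(S\cap C)}\rho_{Q,S}^W.
\]
Now pass to the quotient. Each $W$ lies in $\Adm(Q,S)$, and the term $\rho_{Q,S}^W$ survives modulo $\KK(S)$ only if $W=S$. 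Since $W$ is a component of $S\cap C$, the equality $W=S$ forces $S\subseteq C$; in that case $S\cap C=S$ is connected, so $S$ is the only term.

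So the image is $[\rho_{Q,S}^S]$ when $S\subseteq C$, and zero otherwise. When $S\subseteq C$, \Cref{prop:admissible-basis-composition} gives $S\in\Adm(Q,S)$, so $Q\in\cE_S$ automatically and $\rho_{Q,S}^S=q_{Q,S}$. Conversely, if $Q\notin\cE_S$ then $S\not\subseteq C$ by the same reasoning, so the "otherwise" case covers it.

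The only step needing care is this bookkeeping of the opposite-algebra convention, so that multiplication becomes precomposition on the correct side. Everything else is immediate from the cited results.
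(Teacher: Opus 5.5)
Your proposal is correct and follows essentially the same route as the paper. You compute $e_R\DD(S)$ as the admissible-component basis of $e_R\PP(S)$ modulo the proper-support vectors from \Cref{lem:standard-kernel-admissible-basis}, and you obtain the action by expanding $q_{R,S}\circ\rho_{Q,R}^{C}$ with \Cref{prop:admissible-basis-composition} and discarding every component of $S\cap C$ properly contained in $S$; your reading of left multiplication in $\Lambda_P$ as precomposition also matches the paper.
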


\begin{proof}
Fix $R\in\Int(P)$. By
\eqref{eq:projective-injective-component-bases} and
\Cref{lem:standard-kernel-admissible-basis},
\eqref{eq:interval-standard-defining-sequence} gives
\begin{equation}
e_R\DD(S)
\cong
\bigoplus_{U\in\Adm(R,S)}\kk\rho_{R,S}^{U}
\Big/
\bigoplus_{\substack{U\in\Adm(R,S)\\ U\subsetneq S}}
\kk\rho_{R,S}^{U}.
\label{eq:standard-component-basis-quotient}
\end{equation}
Every $U\in\Adm(R,S)$ satisfies $U\subseteq S$. Hence
\eqref{eq:standard-component-basis-quotient} is nonzero precisely when
$S\in\Adm(R,S)$, equivalently $S\in\cD(R)$, equivalently $R\in\cE_S$. 
In this case it is generated by
$[\rho_{R,S}^{S}]=[q_{R,S}]$.
This proves \eqref{eq:standard-components-explicit}.

Let $R\in\cE_S$.  The action of $(\rho_{Q,R}^{C})^{\op}$ on
$[q_{R,S}]$ is represented by $q_{R,S}\circ\rho_{Q,R}^{C}$, and
\Cref{prop:admissible-basis-composition} gives
\begin{equation}
q_{R,S}\circ\rho_{Q,R}^{C}
=
\sum_{W\in\piZero(S\cap C)}\rho_{Q,S}^{W}.
\label{eq:standard-action-before-quotient}
\end{equation}
If $S\subseteq C$, the right-hand side is $q_{Q,S}$; in particular,
$S\in\Adm(Q,S)$ and hence $Q\in\cE_S$.  If $S\nsubseteq C$, every
component of $S\cap C$ is a proper subinterval of $S$ and vanishes in
$\DD(S)$ by \Cref{lem:standard-kernel-admissible-basis}.  This proves
\eqref{eq:standard-action-explicit}.
\end{proof}

\subsubsection{Factorization posets and Loewy structure}
\label{sec:standard-factorization-posets}

We next organize the preceding description in terms of a poset on the support $\cE_S$, which also determines the Loewy structure of $\DD(S)$.

For $R,Q\in\cE_S$, define
\begin{equation}
\label{eq:standard-factorization-order}
R\preceq_S^{\mathcal E}Q
\quad:\Longleftrightarrow\quad
\text{there exists } C\in\Adm(Q,R)
\text{ such that } S\subseteq C.
\end{equation}
For such a component $C$, we have 
$q_{R,S}\circ\rho_{Q,R}^{C}=
q_{Q,S}$.
Hence this relation gives a factorization of $q_{Q,S}$ through $q_{R,S}$.
One checks that $\preceq_S^{\mathcal E}$ is a partial order on
$\cE_S$, with $S$ as its unique minimum.
We call $(\cE_S,\preceq_S^{\mathcal E})$ the
\emph{factorization poset} associated with $\DD(S)$.

The covering relations in this factorization poset correspond
precisely to morphisms between the corresponding interval
representations that are irreducible relative to intervals.

\begin{proposition}
\label{prop:standard-factorization-poset-covers}
Let $R,Q\in\cE_S$ be distinct. Then
\begin{equation}
R\lessdot_S^{\mathcal E}Q
\quad\Longleftrightarrow\quad
\text{there exists a morphism }
\I_Q\longrightarrow\I_R
\text{ irreducible relative to intervals}.
\label{eq:standard-factorization-covers}
\end{equation}
Whenever these conditions hold, such a morphism is unique up to a
nonzero scalar. It is either $q_{Q,R}$ if $R\subsetneq Q$, or
$j_{Q,R}$ if $Q\subsetneq R$.
\end{proposition}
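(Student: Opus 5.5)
The plan is to work entirely with the admissible-component basis and the composition law of \Cref{prop:admissible-basis-composition}, and to treat ``irreducible relative to intervals'' as ``not lying in the square of the radical of $\add(G_P)$.'' A morphism $\I_Q\to\I_R$ with $Q\neq R$ is irreducible relative to intervals exactly when it is nonzero modulo $\rad^2_{\add G_P}(\I_Q,\I_R)$. This space is spanned by composites $\I_Q\to\I_T\to\I_R$ with $T\neq Q,R$, since endomorphisms of each $\I_T$ are scalars. So the first step is to compute $\rad^2$ in the admissible basis.

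\textbf{Step 1: basis elements that are not pure quotients or inclusions lie in $\rad^2$.} Let $C\in\Adm(Q,R)$. By \eqref{eq:admissible-component-factorization}, $\rho^C_{Q,R}=j_{C,R}\circ q_{Q,C}$. If $C\neq Q$ and $C\neq R$, this is a composite through $\I_C$ with $C\neq Q,R$, so it lies in $\rad^2$. Hence $\Hom_P(\I_Q,\I_R)/\rad^2$ is spanned by the basis elements with support $C=R$ or $C=Q$, namely $q_{Q,R}$ when $R\in\cD(Q)$ and $j_{Q,R}$ when $Q\in\cU(R)$. Since $Q\neq R$, these two cases are mutually exclusive: in the first $R\subsetneq Q$, in the second $Q\subsetneq R$. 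Therefore this quotient has dimension at most one. This gives the uniqueness up to scalar, understood modulo $\rad^2$. It also shows that any irreducible morphism is, up to scalar, $q_{Q,R}$ or $j_{Q,R}$, consistent with the mono/epi statement of \cite[Proposition~6.8]{BBH25}.

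\textbf{Step 2: $\preceq_S^{\mathcal E}$ refines through pure steps.} Suppose $R\preceq_S^{\mathcal E}Q$ via some $C\in\Adm(Q,R)$ with $S\subseteq C$. Since $S\in\cD(Q)$ and $C\subseteq Q$, we have $S\in\cD(C)$, so $C\in\cE_S$. Moreover $C\in\Adm(Q,C)$ and $C\in\Adm(C,R)$, which gives $R\preceq_S^{\mathcal E}C\preceq_S^{\mathcal E}Q$. Consequently, if $R\lessdot_S^{\mathcal E}Q$, then $C\in\{Q,R\}$. So either $R\in\cD(Q)$, giving $q_{Q,R}$, or $Q\in\cU(R)$, giving $j_{Q,R}$. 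In both cases the witness contains $S$.

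\textbf{Step 3: the cover relation and irreducibility coincide.} It remains to show the following. When $R\preceq_S^{\mathcal E}Q$ through a pure morphism $f\in\{q_{Q,R},j_{Q,R}\}$, the morphism $f$ lies in $\rad^2$ if and only if some $T\in\cE_S\setminus\{Q,R\}$ satisfies $R\preceq_S^{\mathcal E}T\preceq_S^{\mathcal E}Q$.

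For ($\Leftarrow$), let $A\in\Adm(Q,T)$ and $B\in\Adm(T,R)$, both containing $S$. Then $\rho^B\circ\rho^A$ has a component $W$ of $A\cap B$ containing $S$, and by Step 1 the other basis summands lie in $\rad^2$ or are pure. The task is to show that this composite equals $f$ modulo $\rad^2$, i.e.\ that $W$ is the full support of $f$. The idea is to use that $W\in\Adm(Q,R)$ contains $S$. Step 1 then forces $W\in\{Q,R\}$ modulo $\rad^2$, and $W$ must be the unique pure support, since only one of $q_{Q,R}$ and $j_{Q,R}$ exists.

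For ($\Rightarrow$), write $f$ as a sum of composites $\rho^B_{T,R}\circ\rho^A_{Q,T}$ with $T\neq Q,R$ and compare coefficients of the basis vector $f$ using \eqref{eq:admissible-basis-composition}. Some term must have the support of $f$ as a component of $A\cap B$. That support contains $S$, so $S\subseteq A$ and $S\subseteq B$. Also $T\in\cE_S$, because $S\in\cD(A)$ and $A\in\cD(Q)$ give $S$ a downset in $A\subseteq T$, and a short check with $A\in\cD(T)$ shows $S\in\cD(T)$. Hence $R\preceq_S^{\mathcal E}T\preceq_S^{\mathcal E}Q$ strictly.

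I expect the main obstacle to be the ($\Leftarrow$) direction of Step 3. The difficulty is controlling the component $W$ of $A\cap B$ that contains $S$, and showing it is the full support of $f$ rather than a smaller admissible component. The first thing I would try is to reduce, via Step 2 and transitivity of $\preceq_S^{\mathcal E}$, to the case where $A$ and $B$ are themselves pure. Then $A\cap B$ is simply $T\cap R$ or $Q\cap T$, and this intersection can be compared directly with $R$ or $Q$.
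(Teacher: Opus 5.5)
Your framework is sound and more explicit than the paper's proof. The paper cites \cite[Proposition~6.8]{BBH25} for the shape of irreducible morphisms and asserts each of ``cover $\Rightarrow$ irreducible'' and ``irreducible $\Rightarrow$ cover'' in a single sentence. Your $\rad^2$ computation in Step~1 derives that shape directly, and your Step~2 is the paper's first paragraph.

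The genuine gap is the ($\Leftarrow$) half of Step~3, which is exactly what ``irreducible $\Rightarrow$ cover'' needs. You leave it open, and the inference ``Step~1 forces $W\in\{Q,R\}$ modulo $\rad^2$'' is not valid. Step~1 only kills basis vectors with other supports modulo $\rad^2$; it says nothing about which components $A\cap B$ actually has.

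The missing observation is short. In Step~3 a pure morphism $f$ exists, so $R\subsetneq Q$ or $Q\subsetneq R$. Hence $Q\cap R$ is connected, and $\Adm(Q,R)\subseteq\{Q\cap R\}$ consists only of the support of $f$. By \Cref{prop:admissible-basis-composition}, every component of $A\cap B$ lies in $\Adm(Q,R)$, and $A\cap B\supseteq S\neq\varnothing$. Therefore $\rho^B_{T,R}\circ\rho^A_{Q,T}=f$ exactly, and since $T\neq Q,R$, this places $f$ in $\rad^2$. No reduction to pure witnesses is needed.

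The same observation also gives $\dim_\kk\Hom_P(\I_Q,\I_R)=1$ whenever an irreducible morphism exists. If $Q$ and $R$ are not nested, your Step~1 already gives $\Hom/\rad^2=0$, so no irreducible morphism exists. This upgrades your ``unique up to scalar modulo $\rad^2$'' to the genuine uniqueness up to scalar that the statement claims.

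There is also a smaller slip in the ($\Rightarrow$) half of Step~3. You use $A\in\cD(T)$, but $A\in\Adm(Q,T)=\cD(Q)\cap\cU(T)$ is a relative upset of $T$, so your argument for $T\in\cE_S$ does not go through as written. Argue with $B$ instead: $B\in\Adm(T,R)\subseteq\cD(T)$, and $S\subseteq B\subseteq R$ with $S\in\cD(R)$ gives $S\in\cD(B)$. Hence $S\in\cD(T)$ by \Cref{lem:interval-directional-composition}.

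With these two repairs, your argument proves the proposition.
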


\begin{proof}
Suppose first that
$R\preceq_S^{\mathcal E}Q$, and let
$C\in\Adm(Q,R)$ such that $S\subseteq C$.
Then $C\in\cE_S$, and
$R\preceq_S^{\mathcal E}C
\preceq_S^{\mathcal E}Q$.
If $R\lessdot_S^{\mathcal E}Q$, it follows that $C=R$ or $C=Q$.
Hence $\rho_{Q,R}^{C}$ is respectively $q_{Q,R}$ or $j_{Q,R}$.
The cover condition and
\eqref{eq:standard-factorization-order} show that this morphism is irreducible relative to intervals.

Conversely, let $f\colon\I_Q\to\I_R$ be irreducible relative to intervals.  By \cite[Proposition~6.8]{BBH25}, up to a nonzero scalar, $f$ is either $q_{Q,R}$ or $j_{Q,R}$.  In the first case,
$R\in\Adm(Q,R)$ and $S\subseteq R$, while in the second case,
$Q\in\Adm(Q,R)$ and $S\subseteq Q$.  Hence
$R\prec_S^{\mathcal E}Q$ by
\eqref{eq:standard-factorization-order}. 
The irreducibility of $f$ forces $R\lessdot_S^{\mathcal E}Q$.
\end{proof}

By the description of the Gabriel quiver of $\Lambda_P$ in
\Cref{sec:prelim-intervals},
\Cref{prop:standard-factorization-poset-covers} identifies the Hasse
quiver of $(\cE_S,\preceq_S^{\mathcal E})$ with the full subquiver of
the Gabriel quiver of $\Lambda_P$ on the vertex set $\cE_S$. 

The factorization poset gives a direct diagrammatic description of the
standard module.

\begin{proposition}
\label{prop:standard-factorization-poset-model}
With respect to the bases in \eqref{eq:standard-components-explicit}, 
the module $\DD(S)$ is
represented by the Hasse diagram of
$(\cE_S,\preceq_S^{\mathcal E})$, with a copy of $\kk$ at each vertex
and the identity map along each arrow. All parallel paths have the same
composite.
\end{proposition}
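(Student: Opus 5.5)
We prove the statement by combining \Cref{prop:standard-concrete-model} with the covering description in \Cref{prop:standard-factorization-poset-covers}. By \eqref{eq:standard-components-explicit}, $\DD(S)$ has a one-dimensional component $e_R\DD(S)=\kk[q_{R,S}]$ for each $R\in\cE_S$ and zero components elsewhere. We fix these as the bases at the vertices. It then suffices to show two things: the action of $\Lambda_P$ is determined by its values on arrows of the Gabriel quiver, and on each arrow inside $\cE_S$ the action is the identity $[q_{R,S}]\mapsto[q_{Q,S}]$. Since a module over $\Lambda_P$ is determined by the action of representatives of the Gabriel arrows, and $\DD(S)$ is supported on $\cE_S$, only arrows with both endpoints in $\cE_S$ can act nontrivially. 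Arrows leaving $\cE_S$ act by zero because the target component vanishes.

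First consider the arrows inside $\cE_S$. By \Cref{prop:standard-factorization-poset-covers} and the identification of the Hasse quiver with the full subquiver of the Gabriel quiver on $\cE_S$, each such arrow corresponds to a cover $R\lessdot^{\mathcal E}_S Q$. It is represented by $q_{Q,R}$ if $R\subsetneq Q$, and by $j_{Q,R}$ if $Q\subsetneq R$. In both cases the morphism equals $\rho_{Q,R}^{C}$ with $C=R$ or $C=Q$ respectively, and $S\subseteq C$. Indeed, in the first case $S\subseteq R$ since $R\in\cE_S$, and in the second case $S\subseteq Q$. Applying \eqref{eq:standard-action-explicit}, the element $(\rho^{C}_{Q,R})^{\op}$ sends $[q_{R,S}]$ to $[q_{Q,S}]$. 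So with respect to the chosen bases, each arrow acts as $\id_\kk$.

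It remains to check that all parallel paths have the same composite, which is the main point to verify. One way is to note that this is automatic: the action is well defined and each arrow acts as the identity, so any path from $R$ to $Q$ acts as the identity. Alternatively, one can argue directly by iterating \eqref{eq:standard-action-explicit}. A composite of basis morphisms along a Hasse path, each with support containing $S$, has by \Cref{prop:admissible-basis-composition} a component $W\supseteq S$ in its support. Modulo $\KK(S)$ this composite is exactly $[q_{Q,S}]$, so every path gives the same map.

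Finally, we rule out any hidden contribution from non-arrow morphisms. A general element $(\rho^{C}_{Q,R})^{\op}$ acts nonzero only when $S\subseteq C$, that is, only when $R\preceq^{\mathcal E}_S Q$. In that case it factors in $\Lambda_P$ through arrows along a saturated chain in the finite poset $\cE_S$, modulo radical terms that act compatibly. The closed formula \eqref{eq:standard-action-explicit} already gives the value $[q_{Q,S}]$, so the whole module structure coincides with the described representation of the Hasse diagram.
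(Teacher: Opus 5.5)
Your proposal is correct and follows essentially the same route as the paper: you read off the one-dimensional components from \Cref{prop:standard-concrete-model}, identify each Hasse arrow with $\rho_{Q,R}^{C}$ for some $C\in\{R,Q\}$ containing $S$ via \Cref{prop:standard-factorization-poset-covers}, apply \eqref{eq:standard-action-explicit} to see that it acts as the identity, and then the parallel-path statement follows immediately. Your final paragraph is unnecessary, since the module structure is already determined by the arrow actions, and its claim that a general basis element factors along a saturated chain in $\cE_S$ ``modulo radical terms that act compatibly'' is vague, so you can simply drop it.
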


\begin{proof}
By \Cref{prop:standard-concrete-model}, the nonzero components of
$\DD(S)$ are the one-dimensional spaces $\kk[q_{R,S}]$ with
$R\in\cE_S$.  If
$R\lessdot_S^{\mathcal E}Q$, then
\Cref{prop:standard-factorization-poset-covers} and
\eqref{eq:standard-factorization-order} identify the corresponding
arrow with an admissible-component morphism
$\rho_{Q,R}^{C}$ satisfying $S\subseteq C$.
Hence \eqref{eq:standard-action-explicit} sends
$[q_{R,S}]$ to $[q_{Q,S}]$.
After identifying each $[q_{R,S}]$ with $1\in\kk$, every arrow is the
identity map, and the assertion about parallel paths follows.
\end{proof}

Consequently, the faithful quotient
$\Lambda_P/\operatorname{Ann}_{\Lambda_P}\DD(S)$ is isomorphic to the
incidence algebra of $(\cE_S,\preceq_S^{\mathcal E})$. Under this
isomorphism, $\DD(S)$ is isomorphic to the indecomposable projective
module corresponding to the unique minimum $S$ of
$(\cE_S,\preceq_S^{\mathcal E})$.

The description of $\DD(S)$ in terms of the factorization poset also determines its radical filtration. 
For $R\in\cE_S$, let $h_S^{\DD}(R)$ be the largest length of a strict chain from $S$ to $R$ in
$(\cE_S,\preceq_S^{\mathcal E})$.

\begin{corollary}
\label{cor:standard-loewy}
For $n\geq0$,
\begin{equation}
e_R\rad^n\DD(S)
=
\begin{cases}
\kk[q_{R,S}] & \text{if }h_S^{\DD}(R)\geq n,\\
0            & \text{if }h_S^{\DD}(R)<n.
\end{cases}
\label{eq:standard-radical-components}
\end{equation}
Consequently,
\begin{equation}
\rad^n\DD(S)/\rad^{n+1}\DD(S)
\cong
\bigoplus_{\substack{R\in\cE_S\\ h_S^{\DD}(R)=n}}
\LL(R).
\label{eq:standard-radical-layers}
\end{equation}
\end{corollary}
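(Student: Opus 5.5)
The plan is to use \Cref{prop:standard-factorization-poset-model}: $\DD(S)$ is the thin module on the Hasse diagram of $(\cE_S,\preceq_S^{\mathcal E})$, with identity maps on arrows and all parallel paths commuting. Equivalently, as noted after that proposition, $\DD(S)$ is the indecomposable projective module at the minimum $S$ over the incidence algebra $A_S$ of this poset. The quotient map $\Lambda_P\to A_S$ is surjective with kernel $\operatorname{Ann}\DD(S)$, so radical powers of $\DD(S)$ over $\Lambda_P$ agree with those over $A_S$. The reason is that $\rad\Lambda_P$ maps onto $\rad A_S$: a surjection of finite-dimensional algebras sends the radical onto the radical. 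Hence $\rad^n\DD(S)=(\rad\Lambda_P)^n\DD(S)=(\rad A_S)^n\DD(S)$.

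Next I would compute $(\rad A_S)^n e_S$ for the incidence algebra. The radical of $A_S$ is spanned by the basis elements $\epsilon_{Q,R}$ with $R\prec Q$. Its $n$-th power is spanned by those $\epsilon_{Q,R}$ for which there is a strict chain of length at least $n$ from $R$ to $Q$. This holds because a product of $n$ radical basis elements is either zero or $\epsilon$ of a pair joined by a strict chain of length $n$. Conversely, a chain of length $m\geq n$ can be grouped into $n$ strict steps. Applying this to $e_S$, the component of $(\rad A_S)^n e_S$ at $R$ is nonzero exactly when there is a strict chain of length at least $n$ from $S$ to $R$, that is, when $h_S^{\DD}(R)\geq n$. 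Since each component of $\DD(S)$ is one-dimensional and spanned by $[q_{R,S}]$, this gives \eqref{eq:standard-radical-components}.

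A self-contained variant avoids citing the incidence-algebra fact and works directly in $\Lambda_P$. Here $\rad\Lambda_P$ is spanned by the non-identity basis elements $(\rho_{Q,R}^{C})^{\op}$, since $\Lambda_P$ is elementary with these as a basis. By \eqref{eq:standard-action-explicit}, each such element either kills $[q_{R,S}]$ or sends it to $[q_{Q,S}]$ with $R\prec_S^{\mathcal E}Q$. One direction follows by induction on $n$. For the other, one uses the covers of \Cref{prop:standard-factorization-poset-covers}: along a maximal chain, the irreducible maps $q$ or $j$ realize each step, and composing them yields an element of $\rad^n$ that sends $[q_{S,S}]$ to $[q_{R,S}]$.

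The layer formula \eqref{eq:standard-radical-layers} then follows immediately. The quotient $\rad^n/\rad^{n+1}$ has a one-dimensional component exactly at those $R$ with $h_S^{\DD}(R)=n$. Being a semisimple module over an elementary algebra, it is the direct sum of the corresponding $\LL(R)$. The only point needing care is the equality $\rad^n\DD(S)=(\rad\Lambda_P)^n\DD(S)$ together with the path-realization step; the latter is handled by the cover characterization.
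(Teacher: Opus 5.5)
Your proposal is correct and is essentially the paper's own argument: the paper gives no separate proof, and reads the corollary off from the identification of $\DD(S)$ with the indecomposable projective at the minimum $S$ over the incidence algebra of $(\cE_S,\preceq_S^{\mathcal E})$, combined with the longest-chain description of radical powers, exactly as you do. In your self-contained variant the cover characterization is not actually needed, because every strict relation $R\prec_S^{\mathcal E}Q$ is already witnessed by a non-identity basis element $(\rho_{Q,R}^{C})^{\op}$ with $S\subseteq C$, and by \eqref{eq:standard-action-explicit} this element sends $[q_{R,S}]$ to $[q_{Q,S}]$.
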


\subsubsection{Morphisms among standard modules}
\label{sec:standard-morphisms}

We now describe the morphisms among standard modules and the resulting
endomorphism algebra of their direct sum.

Suppose that $T\in\cD(S)$. 
The quotient map
$q_{S,T}\colon\I_S\to\I_T$ induces 
$(q_{S,T})_*\colon\PP(S)\to\PP(T)$, see \Cref{sec:prelim-intervals}.

By \Cref{lem:standard-kernel-admissible-basis} and
\eqref{eq:standard-action-explicit}, the composite
\[
\PP(S)
\xrightarrow{(q_{S,T})_*}
\PP(T)
\xrightarrow{\pi_T}
\DD(T)
\]
vanishes on $\KK(S)$ precisely when
no proper relative upset $C\in\cU(S)$ contains $T$, equivalently when $\Min(S)=\Min(T)$.
In this case, it factors uniquely through $\pi_S$ and induces a
nonzero homomorphism
\begin{equation}
\varphi_{S,T}\colon\DD(S)\longrightarrow\DD(T),
\qquad
\varphi_{S,T}([\id_{\I_S}])=[q_{S,T}].
\label{eq:standard-Hom-generator}
\end{equation}

\begin{proposition}
\label{prop:standard-hom-formula}
Let $S,T\in\Int(P)$. Then
\begin{equation}
\Hom_{\Lambda_P}(\DD(S),\DD(T))
=
\begin{cases}
\kk\varphi_{S,T}
& \text{if } T\in\cD(S) \text{ and } \Min(S)=\Min(T),\\
0
& \text{otherwise}.
\end{cases}
\label{eq:standard-hom-formula}
\end{equation}
Moreover, the generator $\varphi_{S,T}$ is a monomorphism in the first case.
\end{proposition}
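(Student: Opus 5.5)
The plan is to exploit the fact that $\DD(S)$ has simple top $\LL(S)$ and is generated by $[\id_{\I_S}]=[q_{S,S}]$. Any homomorphism $\psi\colon\DD(S)\to\DD(T)$ is therefore determined by $\psi([q_{S,S}])\in e_S\DD(T)$. By \Cref{prop:standard-concrete-model}, this space is $\kk[q_{S,T}]$ if $S\in\cE_T$, i.e.\ $T\in\cD(S)$, and zero otherwise. This already gives $\dim\Hom_{\Lambda_P}(\DD(S),\DD(T))\le 1$, with vanishing unless $T\in\cD(S)$.

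Next I determine when the candidate map exists. A map $\DD(S)\to\DD(T)$ sending the generator to $[q_{S,T}]$ is the same as a map $\PP(S)\to\DD(T)$ with that value, namely $\pi_T\circ(q_{S,T})_*$, which vanishes on $\KK(S)$. By \eqref{eq:standard-kernel-generators} it suffices to check the generators $j_{C,S}$ for proper $C\in\cU(S)$. The composite sends such a generator to $[q_{S,T}\circ j_{C,S}]$, and by \eqref{eq:standard-action-explicit} (with $Q=C$, $R=S$, support $C$) this is nonzero exactly when $T\subseteq C$. So the map exists iff no proper relative upset of $S$ that is an interval contains $T$.

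It remains to show this condition is equivalent to $\Min(S)=\Min(T)$, given $T\in\cD(S)$. I expect this to be the main obstacle, since connectedness must be handled. If $\Min(S)=\Min(T)$ and $T\subseteq C\in\cU(S)$, then $C\supseteq\Min(S)$, so $C=S$. Conversely, suppose $m\in\Min(S)\setminus T$. Since $T$ is a relative downset, $\Min(T)\subseteq\Min(S)$, so every minimal element of $S$ below an element of $T$ lies in $T$. Hence $m\not\le t$ for all $t\in T$. Take $C$ to be the connected component containing $T$ of $S\setminus\{m\}^{\uparrow_S}$. The set $S\setminus\{m\}^{\uparrow_S}$ is a relative upset: if $x$ lies in it and $x\le y\in S$ with $m\le y$, this is no contradiction, so the argument must be corrected. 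Instead, take the component containing $T$ of $(\Min(S)\setminus\{m\})^{\uparrow_S}$. This set is an upset, and it contains $T$ because each $t\in T$ lies above some element of $\Min(T)\subseteq\Min(S)\setminus\{m\}$. A connected component of a relative upset is again a relative upset, since comparable elements lie in the same component. It omits $m$, so it is proper. This proves the equivalence, and the resulting map is exactly $\varphi_{S,T}$ of \eqref{eq:standard-Hom-generator}.

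Finally, for injectivity, use \Cref{prop:standard-concrete-model} and \Cref{prop:standard-factorization-poset-model}. It suffices to show that for $R\in\cE_S$ the element $\varphi_{S,T}([q_{R,S}])=[q_{R,T}]$ is nonzero. This holds because $T\in\cD(S)\subseteq$ relative downsets of $R$ by \Cref{lem:interval-directional-composition}, so $R\in\cE_T$ and $[q_{R,T}]$ spans $e_R\DD(T)$. As $\DD(S)$ is thin, nonvanishing on each basis vector gives that $\varphi_{S,T}$ is a monomorphism.
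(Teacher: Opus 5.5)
Your proposal is correct and follows the paper's proof essentially step for step. As in the paper, the map is determined by the image of $[\id_{\I_S}]$ in $e_S\DD(T)$, the existence of $\varphi_{S,T}$ is tested on the generators $j_{C,S}$ of $\KK(S)$ via \eqref{eq:standard-action-explicit}, and injectivity follows componentwise from $[q_{R,S}]\mapsto[q_{R,T}]$. Your argument using the connected component of $(\Min(S)\setminus\{m\})^{\uparrow_S}$ containing $T$ correctly proves the equivalence with $\Min(S)=\Min(T)$, which the paper only asserts; remove the abandoned attempt with $S\setminus\{m\}^{\uparrow_S}$ from the write-up.
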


\begin{proof}
A homomorphism $\DD(S)\to\DD(T)$ is determined by the image of
$[\id_{\I_S}]$, and hence by $e_S\DD(T)$. By
\eqref{eq:standard-components-explicit}, this component is zero unless
$T\in\cD(S)$. If $T\in\cD(S)$, then
$e_S\DD(T)=\kk[q_{S,T}]$, so
$\dim_{\kk}\Hom_{\Lambda_P}(\DD(S),\DD(T))\leq1$.
By the preceding observation, the nonzero homomorphism
$\varphi_{S,T}$ exists precisely when
$\Min(S)=\Min(T)$. This proves
\eqref{eq:standard-hom-formula}.

Suppose that $T\in\cD(S)$ and $\Min(S)=\Min(T)$.
Then $\cE_S\subseteq\cE_T$. In fact, if $R\in\cE_S$, then
$S\in\cD(R)$. Since $T\in\cD(S)$, we have $T\in\cD(R)$, and hence
$R\in\cE_T$.

By \Cref{prop:standard-concrete-model}, for every $R\in\cE_S$ the
components $e_R\DD(S)$ and $e_R\DD(T)$ are generated by
$[q_{R,S}]$ and $[q_{R,T}]$, respectively. Composition of the canonical
quotient maps gives
$e_R\varphi_{S,T}([q_{R,S}])=[q_{R,T}]$.
Thus $e_R\varphi_{S,T}$ is injective for every $R\in\cE_S$, and hence
$\varphi_{S,T}$ is a monomorphism.
\end{proof}

The monomorphism $\varphi_{S,T}$ also has a direct interpretation in
terms of the factorization posets.  Suppose that
$T\in\cD(S)$ and $\Min(S)=\Min(T)$. Then 
$\cE_S\subseteq\cE_T$, and for $R,Q\in\cE_S$ one has
\[
R\preceq_S^{\mathcal E}Q
\quad\Longleftrightarrow\quad
R\preceq_T^{\mathcal E}Q.
\]
Thus $(\cE_S,\preceq_S^{\mathcal E})$ may naturally be regarded as a
full subposet of $(\cE_T,\preceq_T^{\mathcal E})$.  Moreover, by the
definition of the partial order,
\[
\cE_S
=
\{R\in\cE_T\mid S\preceq_T^{\mathcal E}R\},
\]
so this full subposet is the principal upset generated by $S$.  Hence,
by \Cref{prop:standard-factorization-poset-model},
$\varphi_{S,T}$ identifies $\DD(S)$ with the submodule of $\DD(T)$
represented by this principal upset.

We now use the preceding description of morphisms to identify the
endomorphism algebra of the direct sum of the standard modules. For
this purpose, define a relation $\preceq_{\Delta}$ on $\Int(P)$ by the
following equivalent conditions.
\begin{equation}
\begin{aligned}
S\preceq_{\Delta}T
&:\Longleftrightarrow
\Hom_{\Lambda_P}(\DD(S),\DD(T))\neq0\\
&\Longleftrightarrow
T\in\cD(S) \qand \Min(S)=\Min(T)\\
&\Longleftrightarrow
\cE_S\subseteq\cE_T
\qand
\cE_S\text{ is an upset in }
(\cE_T,\preceq_T^{\cE}).
\end{aligned}
\label{eq:standard-morphism-order-support}
\end{equation}
The relation $\preceq_{\Delta}$ is a partial order on $\Int(P)$.

\begin{corollary}
\label{cor:standard-endomorphism-incidence}
The endomorphism algebra
\[
\End_{\Lambda_P}
\left(
\bigoplus_{S\in\Int(P)}\DD(S)
\right)
\]
is isomorphic to the incidence algebra of
$(\Int(P),\preceq_{\Delta})$.
\end{corollary}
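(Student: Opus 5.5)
We identify each Hom space using \Cref{prop:standard-hom-formula}, and then check that composition matches multiplication in the incidence algebra. Write $\Delta_P:=\bigoplus_{S\in\Int(P)}\DD(S)$. Its endomorphism algebra decomposes as
\[
\End_{\Lambda_P}(\Delta_P)
=\bigoplus_{S,T\in\Int(P)}\Hom_{\Lambda_P}(\DD(S),\DD(T)).
\]
By \Cref{prop:standard-hom-formula}, the $(S,T)$-block is $\kk\varphi_{S,T}$ when $S\preceq_\Delta T$ and zero otherwise. For $S=T$ we have $T\in\cD(T)$ and $\Min(T)=\Min(T)$, so $\varphi_{T,T}=\id$; its defining property is $[\id_{\I_T}]\mapsto[q_{T,T}]=[\id_{\I_T}]$.

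We first check that $\preceq_\Delta$ is a partial order, which the paper asserts without proof.
\begin{itemize}
\item Reflexivity was just observed.
\item For antisymmetry: $T\in\cD(S)$ forces $T\subseteq S$, and $S\in\cD(T)$ forces $S\subseteq T$, so $S=T$.
\item For transitivity: suppose $T\in\cD(S)$ and $U\in\cD(T)$. Then $U\in\cD(S)$ by \Cref{lem:interval-directional-composition}(1), and $\Min(S)=\Min(T)=\Min(U)$.
\end{itemize}
Hence the nonzero blocks, one-dimensional each, are indexed exactly by the comparable pairs of $(\Int(P),\preceq_\Delta)$. This already matches the vector-space structure of the incidence algebra.

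The key step is the composition rule. For $S\preceq_\Delta T\preceq_\Delta U$ we claim
\[
\varphi_{T,U}\circ\varphi_{S,T}=\varphi_{S,U}.
\]
Since $\DD(S)$ is generated by $[\id_{\I_S}]$, it suffices to evaluate both sides on this element. We have $\varphi_{S,T}([\id_{\I_S}])=[q_{S,T}]$. This element is the action of $q_{S,T}^{\op}$ on $[\id_{\I_T}]$, and $\varphi_{T,U}$ is $\Lambda_P$-linear, so
\[
\varphi_{T,U}([q_{S,T}])=[q_{T,U}\circ q_{S,T}].
\]
By \Cref{lem:interval-directional-composition}(1), $q_{T,U}\circ q_{S,T}=q_{S,U}$, and $[q_{S,U}]=\varphi_{S,U}([\id_{\I_S}])$. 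This proves the claim.

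Because all structure constants are $1$, the map sending $\varphi_{S,T}$ to the incidence basis element attached to the pair $S\preceq_\Delta T$ is an algebra isomorphism. The only bookkeeping issue is the direction convention: composition $\varphi_{T,U}\circ\varphi_{S,T}$ corresponds to multiplying the basis elements for $S\preceq T$ and $T\preceq U$. If the chosen convention for the incidence algebra runs the other way, we pass to its opposite. This is harmless, because the incidence algebra of a poset and of its opposite poset are anti-isomorphic via the identification of the statement, and the statement does not fix a convention. No real obstacle is expected; the only care needed is the $\Lambda_P$-linearity step, which relies on the description of the action in \eqref{eq:standard-action-explicit}.
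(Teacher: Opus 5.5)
Your proposal is correct and follows essentially the same route as the paper: you use the basis $\{\varphi_{S,T}\mid S\preceq_{\Delta}T\}$ from \Cref{prop:standard-hom-formula} together with the composition law $\varphi_{T,U}\circ\varphi_{S,T}=\varphi_{S,U}$, which you verify on the generator $[\id_{\I_S}]$ via $\Lambda_P$-linearity and \Cref{lem:interval-directional-composition}(1). Your check that $\preceq_{\Delta}$ is a partial order and your remark on the direction convention fill in details that the paper leaves implicit.
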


\begin{proof}
By \Cref{prop:standard-hom-formula}, the morphisms
$\varphi_{S,T}$ with $S\preceq_{\Delta}T$ form a basis of the
endomorphism algebra. Whenever
$S\preceq_{\Delta}T\preceq_{\Delta}U$, their definitions give
\[
\varphi_{T,U}\circ\varphi_{S,T}
=
\varphi_{S,U}.
\]
Thus their composition agrees with the multiplication of the incidence
basis of $(\Int(P),\preceq_{\Delta})$.
\end{proof}

\subsection{Costandard modules via poset duality}
\label{sec:costandard-via-duality}

We record the corresponding description of the costandard modules.
The poset duality \eqref{eq:poset-duality}, together with
\eqref{eq:QH-opposite-standard-costandard-duality}, gives
\begin{equation}
\NN_P(S)
\cong
\D\DD_{P^{\op}}(S).
\label{eq:interval-standard-costandard-duality}
\end{equation}

For $S\in\Int(P)$, set
\begin{equation}
\cM_S
=
\{R\in\Int(P)\mid S\in\cU(R)\}.
\label{eq:costandard-support-set}
\end{equation}

\begin{proposition}
\label{prop:costandard-concrete-model}
Let $R,S\in\Int(P)$. Then
\begin{equation}
e_R\NN(S)
\cong
\begin{cases}
\kk j_{S,R}^{\vee} & \text{if } R\in\cM_S,\\
0                  & \text{if } R\notin\cM_S.
\end{cases}
\label{eq:costandard-components-explicit}
\end{equation}
Moreover, let $Q,R\in\Int(P)$, let $C\in\Adm(Q,R)$, and assume
$R\in\cM_S$. Then
\begin{equation}
(\rho_{Q,R}^{C})^{\op}\cdot j_{S,R}^{\vee}
=
\begin{cases}
j_{S,Q}^{\vee}
& \text{if } Q\in\cM_S \text{ and } S\subseteq C,\\
0
& \text{otherwise}.
\end{cases}
\label{eq:costandard-action-explicit}
\end{equation}
In particular, $\NN(S)$ is thin, with support $\cM_S$.
\end{proposition}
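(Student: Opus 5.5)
The plan is to deduce \Cref{prop:costandard-concrete-model} from \Cref{prop:standard-concrete-model} applied to the opposite poset $P^{\op}$, via the isomorphism \eqref{eq:interval-standard-costandard-duality}, $\NN_P(S)\cong\D\DD_{P^{\op}}(S)$.

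First I will fix the dictionary between $P$ and $P^{\op}$. Intervals of $P$ and of $P^{\op}$ coincide, and relative downsets in $P^{\op}$ are exactly relative upsets in $P$. Hence $\cD_{P^{\op}}(R)=\cU_P(R)$ and $\Adm_{P^{\op}}(R,Q)=\cD_P(Q)\cap\cU_P(R)=\Adm_P(Q,R)$. In particular $\cE^{P^{\op}}_S=\cM_S$.

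Next I will check how the duality $\D\colon\rep_\kk P\to\rep_\kk P^{\op}$ acts on morphisms, using the identification $\D\I_S\cong\I_S$ given pointwise by the canonical isomorphism $\D\kk=\kk$. Under this identification, $\D(\rho^{C}_{Q,R})=\rho^{C}_{R,Q}$ computed in $P^{\op}$, since both are the identity on $C$ and zero elsewhere. This yields the anti-isomorphism $E_P\cong E_{P^{\op}}$ and hence $\Lambda_{P^{\op}}\cong\Lambda_P^{\op}$, with $e_S\mapsto e_S$. In particular, $q^{P^{\op}}_{R,S}$ corresponds to $j^{P}_{S,R}$.

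Then I will transport the module structure. By \eqref{eq:standard-components-explicit} for $P^{\op}$, the component $e_R\DD_{P^{\op}}(S)$ equals $\kk[q^{P^{\op}}_{R,S}]$ when $R\in\cM_S$ and is zero otherwise. Dualizing gives $e_R\NN(S)$ spanned by the dual vector, which I name $j_{S,R}^\vee$. This is consistent with the identification of $\NN(S)$ as a submodule of $\II(S)$: under \eqref{eq:projective-injective-component-bases}, $e_R\II(S)$ has basis $(\rho^C_{S,R})^\vee$, and $\NN(S)$ is the span of those with $C=S$.

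For the action formula, I will compute directly in $\II(S)=\D(e_S\Lambda_P)$ rather than chase duality conventions. Let $a=(\rho^C_{Q,R})^{\op}$, viewed as an element of $e_Q\Lambda_P e_R$. For a functional $\xi\in e_R\II(S)$ and a basis element $b=\rho^{D}_{S,Q}$, the action is $(a\xi)(b)=\xi(b\cdot a)$, where the product $b\cdot a$ is taken in $\Lambda_P$. This product corresponds to the composite $\rho^C_{Q,R}\circ\rho^D_{S,Q}$, which by \Cref{prop:admissible-basis-composition} equals $\sum_{W\in\piZero(C\cap D)}\rho^W_{S,R}$.

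Now take $\xi=j_{S,R}^\vee$. Then $(a\xi)(\rho^D_{S,Q})$ equals the number of components $W$ of $C\cap D$ with $W=S$. Such a $W$ exists exactly when $S\subseteq C$ and $S\subseteq D$. Since $D\subseteq S$ for every $D\in\Adm(S,Q)$, this forces $D=S$, which in turn requires $S\in\Adm(S,Q)$, that is, $Q\in\cM_S$. So $a\xi=j_{S,Q}^\vee$ if $Q\in\cM_S$ and $S\subseteq C$, and $a\xi=0$ otherwise. This proves \eqref{eq:costandard-action-explicit}. Thinness and the support statement follow at once from the component description.

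The main obstacle is bookkeeping of left versus right conventions and of the op-twists, so that the direct computation in $\II(S)$ genuinely agrees with $\D\DD_{P^{\op}}(S)$. To settle this, I would simply verify that the span of the vectors $j^\vee_{S,R}$ is a submodule of $\II(S)$ matching the dimension count from the duality.
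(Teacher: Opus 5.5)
Your proposal is correct and follows essentially the same route as the paper. The component description is transported from $\DD_{P^{\op}}(S)$ through $\NN_P(S)\cong\D\DD_{P^{\op}}(S)$, with $q^{P^{\op}}_{R,S}$ corresponding to $j_{S,R}$. The action formula is computed inside $\II(S)=\D(e_S\Lambda_P)$ by evaluating $j_{S,R}^\vee$ on $\rho^C_{Q,R}\circ\rho^D_{S,Q}$ via \Cref{prop:admissible-basis-composition}.

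One gap in your final consistency check: a dimension count alone does not identify two submodules of $\II(S)$. You also need that the span of the $j_{S,R}^\vee$ is contained in $\NN(S)$. This holds because its composition factors are $\LL(R)$ with $S\subseteq R$, hence $R\preceq_{\mathrm{qh}}S$. Alternatively, dualizing $\PP_{P^{\op}}(S)\twoheadrightarrow\DD_{P^{\op}}(S)$ identifies $\NN(S)$ directly with the annihilator of the kernel, which is exactly this span.
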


\begin{proof}
In $P^{\op}$, relative downsets are relative upsets in $P$, so the
standard support set for $S$ in $P^{\op}$ is $\cM_S$. Pointwise
duality sends the quotient map $q_{R,S}$ in $\rep_{\kk}P^{\op}$ to
the inclusion map $j_{S,R}$ in $\rep_{\kk}P$. Hence
\eqref{eq:costandard-components-explicit} follows from
\eqref{eq:interval-standard-costandard-duality} and
\eqref{eq:standard-components-explicit} applied to $P^{\op}$.

For the action, regard $\NN(S)$ as a submodule of
$\II(S)=\D(e_S\Lambda_P)$. If $Q\notin\cM_S$, then
$e_Q\NN(S)=0$. Suppose that $Q\in\cM_S$. The coefficient of
$j_{S,Q}^{\vee}$ in
$(\rho_{Q,R}^{C})^{\op}\cdot j_{S,R}^{\vee}$ is obtained by
evaluating $j_{S,R}^{\vee}$ on
$\rho_{Q,R}^{C}\circ j_{S,Q}$. By
\Cref{prop:admissible-basis-composition}, the coefficient of
$j_{S,R}=\rho_{S,R}^{S}$ in this composite is one precisely when
$S\subseteq C$. This proves \eqref{eq:costandard-action-explicit}.
\end{proof}

For $R,Q\in\cM_S$, define
\begin{equation}
\label{eq:costandard-factorization-order}
R\preceq_S^{\mathcal M}Q
\quad:\Longleftrightarrow\quad
\text{there exists } C\in\Adm(Q,R)
\text{ such that } S\subseteq C.
\end{equation}
For such a component $C$, we have
$\rho_{Q,R}^{C}\circ j_{S,Q}=j_{S,R}$.
This is a partial order on $\cM_S$, and $S$ is its unique maximum.
Under \eqref{eq:poset-duality}, it is the order dual of the
factorization poset for the standard module $\DD_{P^{\op}}(S)$.

Applying the results of
\Cref{sec:standard-factorization-posets} to $P^{\op}$ and dualizing,
the module $\NN(S)$ is represented by the Hasse diagram of
$(\cM_S,\preceq_S^{\mathcal M})$, with a copy of $\kk$ at each
vertex and the identity map along each arrow; all parallel paths have
the same composite. Consequently,
$\Lambda_P/\operatorname{Ann}_{\Lambda_P}\NN(S)$ is isomorphic to
the incidence algebra of $(\cM_S,\preceq_S^{\mathcal M})$, and
$\NN(S)$ is isomorphic to the indecomposable injective module
corresponding to its unique maximum $S$.

The same duality determines the socle filtration of $\NN(S)$ from
the lengths of chains ending at $S$. It also gives the dual statements
for morphisms among costandard modules from
\Cref{sec:standard-morphisms}; in particular, every nonzero morphism
between costandard modules is an epimorphism.

\subsection{Projective and injective filtrations and bimodule strata}
\label{sec:standard-cardinality-filtrations}

In this subsection, we use the heredity chain
\eqref{eq:interval-cardinality-heredity-chain} to construct standard
and costandard filtrations of the indecomposable projective and
injective modules, respectively, and to identify the successive strata
$J_t/J_{t-1}$ as bimodules.

Fix $S\in\Int(P)$ and set $n=|S|$.
For $0\leq t\leq n$, set
\[
F_t(S)
:=
J_t e_S
=
\operatorname{span}_{\kk}
\left\{
\rho_{R,S}^{C}
\;\middle|\;
R\in\Int(P),\ C\in\Adm(R,S),\ |C|\leq t
\right\}
\subseteq\PP(S).
\]
Since $J_t$ is a two-sided ideal, $F_t(S)$ is a left
$\Lambda_P$-submodule of $\PP(S)$.

Dually, for $1\leq t\leq n+1$, set
\[
G_t(S)
:=
\D(e_S\Lambda_P/e_SJ_{t-1})
=
\operatorname{span}_{\kk}
\left\{
(\rho_{S,R}^{C})^{\vee}
\;\middle|\;
R\in\Int(P),\ C\in\Adm(S,R),\ |C|\geq t
\right\}
\subseteq\II(S),
\]
where $\D(e_S\Lambda_P/e_SJ_{t-1})$ is viewed naturally as a
submodule of $\II(S)=\D(e_S\Lambda_P)$.

\begin{proposition}
\label{prop:projective-injective-cardinality-filtrations}
The left $\Lambda_P$-submodules defined above give filtrations
\begin{equation}
0=F_0(S)\subseteq\cdots\subseteq F_{n-1}(S)=\KK(S)
\subseteq F_n(S)=\PP(S)
\label{eq:projective-cardinality-filtration-sequence}
\end{equation}
and
\begin{equation}
0=G_{n+1}(S)\subseteq G_n(S)=\NN(S)
\subseteq\cdots\subseteq G_1(S)=\II(S).
\label{eq:injective-cardinality-filtration-sequence}
\end{equation}
Their successive factors are
\begin{align}
F_t(S)/F_{t-1}(S)
&\cong
\bigoplus_{\substack{C\in\cU(S)\\ |C|=t}}\DD(C),
\label{eq:projective-standard-factors}\\
G_t(S)/G_{t+1}(S)
&\cong
\bigoplus_{\substack{C\in\cD(S)\\ |C|=t}}\NN(C).
\label{eq:injective-costandard-factors}
\end{align}
In particular, all standard and costandard filtration multiplicities in
these filtrations are zero or one.
\end{proposition}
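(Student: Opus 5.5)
The plan is to work entirely with the admissible-component basis. By \eqref{eq:interval-cardinality-admissible-basis-filtration}, $F_t(S)=J_te_S$ has basis $\{\rho_{R,S}^{C}\mid C\in\Adm(R,S),\ |C|\leq t\}$. Every $C\in\Adm(R,S)$ is contained in $S$, so $|C|\leq n$, which gives $F_n(S)=\PP(S)$. Moreover $|C|\leq n-1$ holds exactly when $C\subsetneq S$, so \Cref{lem:standard-kernel-admissible-basis} gives $F_{n-1}(S)=\KK(S)$. Consequently $F_t(S)/F_{t-1}(S)$ has basis the classes of $\rho_{R,S}^{C}$ with $|C|=t$ and $C\in\Adm(R,S)=\cD(R)\cap\cU(S)$. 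Equivalently, the basis is indexed by pairs $(C,R)$ with $C\in\cU(S)$, $|C|=t$, and $R\in\cE_C$.

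For each $C\in\cU(S)$ with $|C|=t$, I will consider the composite
\[
\PP(C)\xrightarrow{(j_{C,S})_*}F_t(S)\longrightarrow F_t(S)/F_{t-1}(S).
\]
The image of $(j_{C,S})_*$ lies in $F_t(S)$, since every element factors through $\I_C$. For $U\in\Adm(R,C)$, \Cref{lem:interval-directional-composition}(2) gives $U\in\cU(S)$, and hence $j_{C,S}\circ\rho_{R,C}^{U}=\rho_{R,S}^{U}$ by \Cref{prop:admissible-basis-composition}. If $U\subsetneq C$, then $|U|<t$ and this element lies in $F_{t-1}(S)$. Thus the composite kills $\KK(C)$ by \eqref{eq:standard-kernel-admissible-basis}, and so it induces a map $\DD(C)\to F_t(S)/F_{t-1}(S)$ sending $[q_{R,C}]\mapsto[\rho_{R,S}^{C}]$ for $R\in\cE_C$.

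Summing these maps over all such $C$ gives a morphism $\bigoplus_C\DD(C)\to F_t(S)/F_{t-1}(S)$. By \Cref{prop:standard-concrete-model}, the basis $\{[q_{R,C}]\mid R\in\cE_C\}$ of the source is sent bijectively onto the basis of the target described above. Hence this morphism is an isomorphism, which proves \eqref{eq:projective-standard-factors}; the multiplicity statement for standard modules is immediate. The only point needing care is that distinct pairs $(C,R)$ give distinct basis vectors, which is clear because the support $C$ is recorded in $\rho_{R,S}^{C}$.

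For the injective side I plan to argue by order duality rather than redo the computation. Under $\Lambda_{P^{\op}}\cong\Lambda_P^{\op}$ the cardinality of supports is preserved, so the heredity chain $J_\bullet$ for $P$ corresponds to that for $P^{\op}$. Applying the first half to $P^{\op}$ gives a filtration of the right module $e_S\Lambda_P$ by $e_SJ_t$, with factors standard modules over $\Lambda_{P^{\op}}$ indexed by $C\in\cU_{P^{\op}}(S)=\cD(S)$. Applying $\D$ reverses the filtration and yields $G_t(S)=\D(e_S\Lambda_P/e_SJ_{t-1})$, with $G_t/G_{t+1}\cong\D(e_SJ_t/e_SJ_{t-1})$, and by \eqref{eq:interval-standard-costandard-duality} these factors are the $\NN(C)$. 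The endpoints are $G_{n+1}=0$ (since $e_SJ_n=e_S\Lambda_P$), $G_1=\II(S)$, and $G_n=\NN(S)$ (the dual of $\PP_{P^{\op}}(S)/\KK_{P^{\op}}(S)$).

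I expect the main obstacle to be this duality bookkeeping: checking that the basis description of $G_t(S)$ by $(\rho_{S,R}^C)^\vee$ with $|C|\geq t$ matches the annihilator of $e_SJ_{t-1}$, and tracking indices so that the quotient $G_t/G_{t+1}$, rather than an index shifted by one, corresponds to the support size $t$. If that becomes messy, the fallback is to repeat the direct argument: map $G_t(S)\to\D(e_S\Lambda_P)$-valued duals $\II(C)\supseteq\NN(C)$ via $(q_{S,C})^*$, and compare bases using \Cref{prop:costandard-concrete-model}.
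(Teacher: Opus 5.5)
Your proposal is correct and follows the paper's proof essentially step for step. You use the same admissible-component basis description of $F_t(S)$, the same maps $\DD(C)\to F_t(S)/F_{t-1}(S)$ induced by $(j_{C,S})_*$ and compared basis-by-basis at each $R$, and the same order-duality argument for the injective side. The duality bookkeeping you flagged is harmless: $e_SJ_{t-1}$ is spanned by a subset of the admissible-component basis, so its annihilator in $\II(S)$ is exactly the span of the dual vectors whose support has size at least $t$.
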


\begin{proof}
Since $J_\bullet$ is an increasing chain of two-sided ideals, the
modules $F_t(S)=J_te_S$ form an increasing filtration of $\PP(S)$.
The admissible-component basis gives $F_0(S)=0$ and
$F_n(S)=\PP(S)$, while
\eqref{eq:standard-kernel-admissible-basis} gives
$F_{n-1}(S)=\mathsf K(S)$.

Fix $1\leq t\leq n$. The quotient $F_t(S)/F_{t-1}(S)$ has basis
given by the classes of $\rho_{R,S}^C$ with
$C\in\Adm(R,S)$ and $|C|=t$. 
We show that the contribution with fixed support $C$ is a copy of
$\DD(C)$.

Let $C\in\cU(S)$ with $|C|=t$. Composition with $j_{C,S}$ induces
a map $\PP(C)\rightarrow F_t(S)/F_{t-1}(S)$.
For $R\in\Int(P)$ and $U\in\Adm(R,C)$, the basis vector
$\rho_{R,C}^U$ is sent to the class of $\rho_{R,S}^U$.
If $U\subsetneq C$, then
$|U|<t$, so this class vanishes in the quotient. Hence the map
factors through $\DD(C)$ and gives
$f_C\colon\DD(C)\rightarrow F_t(S)/F_{t-1}(S)$.

We now compare these maps for all $C\in\cU(S)$ with $|C|=t$.
Fix $R\in\Int(P)$. By \Cref{prop:standard-concrete-model},
$e_R\DD(C)$ is one-dimensional, with basis $[q_{R,C}]$, precisely
when $C\in\cD(R)$. Since $C\in\cU(S)$, this is equivalent to
$C\in\Adm(R,S)$, and $f_C$ sends $[q_{R,C}]$ to
$[\rho_{R,S}^C]$.
On the other hand, the classes $[\rho_{R,S}^C]$ with
$C\in\Adm(R,S)$ and $|C|=t$ form a basis of
$e_R(F_t(S)/F_{t-1}(S))$. Thus, for each $R$, the nonzero
components $e_R\DD(C)$, as $C$ varies, are mapped bijectively onto
this basis. This proves the asserted decomposition.

The assertions for the injective filtration follow by applying the
projective statement to $P^{\op}$ and then applying $\kk$-duality.
Under order reversal, relative upsets correspond to relative
downsets, and the resulting filtration is precisely $G_\bullet(S)$.
\end{proof}

We next identify the successive quotients $J_t/J_{t-1}$ as bimodules.
For each $t$, the quotient $J_t/J_{t-1}$ is a heredity ideal of
$\Lambda_P/J_{t-1}$, so the multiplication isomorphism for heredity
ideals of Dlab--Ringel \cite[Proposition~7]{DRconstruction89} applies.
This gives the following decomposition in terms of standard and costandard modules.

\begin{proposition}
\label{prop:cardinality-bimodule-strata}
For $1\leq t\leq |P|$, multiplication induces an isomorphism of
$\Lambda_P$--$\Lambda_P$-bimodules
\begin{equation}
\label{eq:cardinality-stratum-vector-space-decomposition}
\theta_t\colon
\bigoplus_{\substack{T\in\Int(P)\\ |T|=t}}
\DD(T)\otimes_\kk\D\NN(T)
\xrightarrow{\sim}
J_t/J_{t-1}.
\end{equation}
Moreover, for each $T\in\Int(P)$ with $|T|=t$, the image under
$\theta_t$ of the summand
$\DD(T)\otimes_\kk\D\NN(T)$ is
\[
\operatorname{span}_{\kk}
\left\{
(\rho_{R,S}^{T})^{\op}+J_{t-1}
\;\middle|\;
R,S\in\Int(P),\ T\in\Adm(R,S)
\right\}.
\]
\end{proposition}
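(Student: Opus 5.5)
The plan is to build $\theta_t$ explicitly on bases, check that it is a bimodule homomorphism, and then show it is bijective by matching bases. The Dlab--Ringel isomorphism \cite[Proposition~7]{DRconstruction89} gives the abstract form $J_t/J_{t-1}\cong\bigoplus_{|T|=t}\Lambda_P e_T\otimes_{e_T\Lambda_Pe_T} e_T\Lambda_P$ modulo $J_{t-1}$. This is the direct sum over $|T|=t$ of $\DD(T)\otimes_\kk\D\NN(T)$, because in $\Lambda_P/J_{t-1}$ the modules $\Lambda_Pe_T$ and $e_T\Lambda_P$ become $\DD(T)$ and the right standard module. The latter is $\D\NN(T)$ by \eqref{eq:QH-opposite-standard-costandard-duality}. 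Rather than only quoting this, I would make the map concrete, since the second assertion needs the image of each summand.

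First, the bases. By \Cref{prop:standard-concrete-model}, $\DD(T)$ has basis $[q_{R,T}]$ for $R\in\cE_T$, that is, $T\in\cD(R)$. By \Cref{prop:costandard-concrete-model} applied through the duality, $\D\NN(T)$ is a right module with basis dual to $j_{T,S}^\vee$ for $S\in\cM_T$, that is, $T\in\cU(S)$; write these basis vectors as $[j_{T,S}]$. Define $\theta_t([q_{R,T}]\otimes[j_{T,S}])=(j_{T,S}\circ q_{R,T})^{\op}+J_{t-1}=(\rho_{R,S}^T)^{\op}+J_{t-1}$, using \eqref{eq:admissible-component-factorization}. This is well defined because $T\in\cD(R)\cap\cU(S)=\Adm(R,S)$ by \eqref{eq:admissible-components-as-relative-subsets}.

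Second, bijectivity. Pairs $(R,S)$ with $T\in\Adm(R,S)$ are exactly pairs with $R\in\cE_T$ and $S\in\cM_T$. Letting $T$ range over intervals of size $t$, the triples $(R,S,T)$ therefore biject with the basis of $J_t/J_{t-1}$ in \eqref{eq:interval-cardinality-admissible-basis-layer}. So $\theta_t$ sends a basis to a basis, and the image of each summand is the stated span.

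Third, and this is the main obstacle, I must check bilinearity. For the left action, take $(\rho_{Q,R}^C)^{\op}$. In $\Lambda_P$ this is $(\rho_{R,S}^T\circ\rho_{Q,R}^C)^{\op}$, which by \Cref{prop:admissible-basis-composition} equals $\sum_{W\in\piZero(T\cap C)}(\rho_{Q,S}^W)^{\op}$. If $T\nsubseteq C$, each $W$ satisfies $|W|<t$, so the sum lies in $J_{t-1}$; this matches \eqref{eq:standard-action-explicit}, which gives $0$. If $T\subseteq C$, the sum is $(\rho_{Q,S}^T)^{\op}$, and here $Q\in\cE_T$ because $T\in\Adm(Q,S)$; this matches $[q_{Q,T}]$. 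The right action is the order-dual computation, using \eqref{eq:costandard-action-explicit}; I would obtain it by applying the left case to $P^{\op}$. I also need to keep the opposite-algebra conventions straight, so that the right module structure on $\D\NN(T)$ coincides with postcomposition by $\rho_{S,S'}^{C}$. Once the actions agree on basis elements, $\theta_t$ is a bimodule isomorphism.
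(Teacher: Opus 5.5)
Your argument is correct, but it reaches bijectivity by a different route from the paper.

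\textbf{The paper's route.} The paper obtains bijectivity abstractly. Since $J_t/J_{t-1}$ is a heredity ideal of $A_t=\Lambda_P/J_{t-1}$ by the heredity chain \eqref{eq:interval-cardinality-heredity-chain}, the Dlab--Ringel multiplication map $A_t\varepsilon_t\otimes_{\varepsilon_tA_t\varepsilon_t}\varepsilon_tA_t\to J_t/J_{t-1}$ is an isomorphism. The paper then notes $\varepsilon_tA_t\varepsilon_t=\bigoplus_{|T|=t}\kk e_T$. It identifies $A_te_T\cong\DD(T)$ and $e_TA_t\cong\D\NN(T)$ using $F_{t-1}(T)=\KK(T)$ and $G_t(T)=\NN(T)$ from \Cref{prop:projective-injective-cardinality-filtrations}. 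In that route the bimodule property is automatic, because the map is literally multiplication, and the basis computation is used only for the second assertion.

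\textbf{Your route.} You get bijectivity directly from the basis matching $(T,R,S)\leftrightarrow(R,S,C=T)$ against \eqref{eq:interval-cardinality-admissible-basis-layer}, which is legitimate. You pay for this with an explicit check of the two actions against \eqref{eq:standard-action-explicit} and \eqref{eq:costandard-action-explicit}. Your case split is right: if $T\nsubseteq C$, every support has size $<t$; if $T\subseteq C$, you get $(\rho_{Q,S}^T)^{\op}$ with $Q\in\cE_T$ automatically. The right-hand check works the same way.

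\textbf{Comparison.} Your version shows that the Dlab--Ringel isomorphism is not actually needed here, since the basis computation alone gives bijectivity. The paper's version is shorter.

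\textbf{A simplification.} The step you call the main obstacle can be removed. Define $\theta_t([a]\otimes[b])=ab+J_{t-1}$ on representatives. This is well defined because $\KK(T)=J_{t-1}e_T$ and $\D\NN(T)\cong e_T\Lambda_P/e_TJ_{t-1}$. Bilinearity is then just associativity of multiplication, and these two identifications are exactly where the paper's filtration results enter.
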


\begin{proof}
Fix $t$, set $A_t=\Lambda_P/J_{t-1}$, and write $e_T$ and
$\varepsilon_t$ also for their images in $A_t$.  By
\eqref{eq:interval-cardinality-heredity-chain}, $A_t$ is
quasi-hereditary and
\[
A_t\varepsilon_tA_t=J_t/J_{t-1}
\]
is a heredity ideal of $A_t$.  Hence
\cite[Proposition~7]{DRconstruction89} gives an isomorphism of
$A_t$--$A_t$-bimodules
\begin{equation}
\label{eq:cardinality-heredity-multiplication}
A_t\varepsilon_t
\otimes_{\varepsilon_tA_t\varepsilon_t}
\varepsilon_tA_t
\xrightarrow{\sim}
J_t/J_{t-1}
\end{equation}
induced by multiplication.

By \eqref{eq:interval-cardinality-admissible-basis-layer},
\[
\varepsilon_tA_t\varepsilon_t
=
\bigoplus_{\substack{T\in\Int(P)\\ |T|=t}}\kk e_T.
\]
Therefore the source of the multiplication map \eqref{eq:cardinality-heredity-multiplication} decomposes as
\[
\bigoplus_{\substack{T\in\Int(P)\\ |T|=t}}
A_te_T\otimes_\kk e_TA_t.
\]
For $|T|=t$, \Cref{prop:projective-injective-cardinality-filtrations} gives $F_{t-1}(T) = \KK(T)$ and $G_t(T) = \NN(T)$. Hence 
\[
A_te_T
\cong
\PP(T)/\KK(T)
=
\DD(T),
\qquad
e_TA_t
\cong
\D\NN(T).
\]
Under these identifications, the multiplication map is
\eqref{eq:cardinality-stratum-vector-space-decomposition}.

Using the canonical double-dual identification, write $j_{T,S}$ for
the basis vector of $\D\NN(T)$ dual to $j_{T,S}^{\vee}$.
For $T\in\Adm(R,S)$, multiplication gives
\begin{equation}
\label{eq:cardinality-bimodule-basis-correspondence}
\theta_t\bigl([q_{R,T}]\otimes j_{T,S}\bigr)
=
q_{R,T}^{\op}j_{T,S}^{\op}+J_{t-1}
=
(\rho_{R,S}^{T})^{\op}+J_{t-1}.
\end{equation}
By \eqref{eq:interval-cardinality-admissible-basis-layer}, these are
precisely the classes of the admissible-component basis elements with
support $T$, which proves the second assertion.
\end{proof}

\section{Projective resolutions and Ext groups of standard modules}
\label{sec:projective-resolutions-ext-groups-standard-modules}

In this section, we study projective resolutions of standard
$\Lambda_P$-modules and compute Ext groups between standard and simple modules.

\subsection{Projective resolutions of standard modules}
\label{sec:projective-resolutions-standard-modules}

For every $T\in\Int(P)$, we construct an explicit projective
resolution of $\DD(T)$ from the combinatorics of relative upsets of
$T$ (\Cref{thm:standard-projective-resolution}). 

\subsubsection{Combinatorial preliminaries}
\label{subsec:projective-resolution-combinatorics}

Fix an interval $T\in\Int(P)$.  
Let $\cU^{\circ}(T):=\cU(T)\setminus\{T\}$ be the set of intervals contained in $T$ that are proper relative upsets in $T$. 
Choose an ordering $U_1,\ldots,U_m$ of the inclusion-maximal elements
of $\cU^\circ(T)$ and write
\begin{equation}
\cU^\circ_{\max}(T)=\{U_1,\ldots,U_m\}
\qand
m=|\cU^\circ_{\max}(T)|.
\label{eq:maximal-proper-relative-upsets}
\end{equation}
For $J\subseteq[m]$, put
\[
U_J=\bigcap_{j\in J}U_j
\]
with the convention $U_\varnothing=T$. 
If $U_J=\varnothing$, it contributes no
connected components.

Let $J'\subseteq J\subseteq[m]$. Since $U_J\subseteq U_{J'}$, every
$C\in\piZero(U_J)$ is contained in a unique connected component of
$U_{J'}$. We denote this component by
\[
C^{J'}\in\piZero(U_{J'}).
\]

\begin{lemma}
\label{lem:projective-resolution-face-components}
Let $J'\subseteq J\subseteq[m]$ and
$C\in\piZero(U_J)$. Then $C\in \cU(C^{J'})$.
Moreover,
\begin{equation}
\label{eq:face-component-functoriality}
\bigl(C^{J'}\bigr)^{J''}
=
C^{J''}
\qquad
(J''\subseteq J'\subseteq J).
\end{equation}
\end{lemma}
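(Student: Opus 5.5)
The plan is to argue directly from the definitions of relative upsets and connected components; nothing beyond elementary order-theoretic bookkeeping should be needed. The first step is to record that each $U_J$ is a relative upset of $T$. For $J=\varnothing$ this holds because $U_\varnothing=T$. Otherwise $U_J$ is an intersection of relative upsets of $T$: if $x\in T$ and $x\geq u$ for some $u\in U_J$, then $x\in U_j$ for every $j\in J$, so $x\in U_J$. In particular $U_J\subseteq U_{J'}\subseteq T$ whenever $J'\subseteq J$.

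For the first assertion, $C$ is a nonempty connected component of the convex set $U_J$. Since $U_J$ is a relative upset of the interval $T$, it is convex, and its components are intervals. We must show that $C$ is a relative upset of $D:=C^{J'}$. Let $x\in D$ and $c\in C$ with $c\leq x$. Since $D\subseteq T$ and $U_J$ is a relative upset of $T$, we get $x\in U_J$. Because $x$ and $c$ are comparable elements of $U_J$, they lie in the same connected component of $U_J$, so $x\in C$. Since also $C\subseteq D$ by the definition of $C^{J'}$, this shows $C\in\cU(C^{J'})$. This is the same argument used for admissible components in \eqref{eq:admissible-components-as-relative-subsets}.

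For \eqref{eq:face-component-functoriality}, take $J''\subseteq J'\subseteq J$. By definition $C\subseteq C^{J'}\subseteq (C^{J'})^{J''}$, and $(C^{J'})^{J''}$ is a connected component of $U_{J''}$ containing $C$. The component of $U_{J''}$ containing the nonempty connected set $C$ is unique, so it equals $C^{J''}$.

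No step is a real obstacle. The only point needing care is the convention $U_\varnothing=T$, together with the fact that the upset condition for $C$ inside $C^{J'}$ only uses that $C^{J'}\subseteq T$.
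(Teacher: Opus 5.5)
Your proof is correct and is essentially the paper's argument. The paper notes that $U_J$ is a relative upset of $U_{J'}$; you use the equivalent fact that $U_J$ is a relative upset of $T$ together with $C^{J'}\subseteq T$. Either way, elements of $C^{J'}$ lying above $C$ stay in $U_J$ and hence in the component $C$, and the second assertion follows from uniqueness of the component containing $C$.
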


\begin{proof}
Since $U_J$ is a relative upset of $U_{J'}$, each connected component
$C$ of $U_J$ is a relative upset of the component $C^{J'}$ of
$U_{J'}$ containing it.  Hence $C\in\cU(C^{J'})$.

The second assertion follows from the uniqueness of the connected
component containing $C$.
\end{proof}

For $D\in\cU^\circ(T)$, put
\begin{equation}
I_T(D)
=
\bigl\{
i\in[m]
\bigm|
D\subseteq U_i
\bigr\}.
\label{eq:ITD-definition}
\end{equation}
Every such $D$ is contained in some
$U_i\in\cU^\circ_{\max}(T)$, and hence $I_T(D)$ is nonempty.

The following lemma records the condition under which $D$ is contained
in $U_J$.

\begin{lemma}
\label{lem:unique-lift-admissible-support}
Let $D\in\cU^\circ(T)$. For $J\subseteq[m]$,
the following conditions are equivalent:
\begin{enumerate}[\rm (1)]
    \item $J\subseteq I_T(D)$;
    \item $D\subseteq U_J$;
    \item there exists a unique $C\in\piZero(U_J)$ such that
    $D\subseteq C$.
\end{enumerate}
\end{lemma}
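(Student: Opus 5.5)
The plan is to prove the cycle of implications (1)$\Rightarrow$(2)$\Rightarrow$(3)$\Rightarrow$(1). Each step is an unwinding of definitions. The only substantive input is that $D$ is connected and that connected components partition $U_J$.

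\emph{(1)$\Rightarrow$(2).} Assume $J\subseteq I_T(D)$. By the definition \eqref{eq:ITD-definition}, we have $D\subseteq U_j$ for every $j\in J$, and hence $D\subseteq\bigcap_{j\in J}U_j=U_J$. When $J=\varnothing$, the convention $U_\varnothing=T$ applies, and $D\subseteq T$ holds because $D\in\cU^\circ(T)$.

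\emph{(2)$\Rightarrow$(3).} Assume $D\subseteq U_J$. Since $D$ is an interval, it is nonempty and connected, so it lies in exactly one connected component of $U_J$. For existence, every pair of comparable elements of $D$ lies in the same component of $U_J$; a chain of comparabilities inside $D$ therefore stays in one component, which gives a $C\in\piZero(U_J)$ with $D\subseteq C$. For uniqueness, distinct components of $U_J$ are disjoint, and $D\neq\varnothing$.

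\emph{(3)$\Rightarrow$(1).} If $D\subseteq C$ for some $C\in\piZero(U_J)$, then $D\subseteq U_J\subseteq U_j$ for every $j\in J$. Thus $j\in I_T(D)$, and so $J\subseteq I_T(D)$.

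I do not expect a real obstacle here. The one point needing care is the existence of the component in (2)$\Rightarrow$(3), where connectedness of $D$ in the comparability sense is used. Nonemptiness of $D$ is also needed, both for uniqueness and so that (3) is not vacuous, and it is guaranteed because intervals are nonempty by definition.
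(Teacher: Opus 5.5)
Your proof is correct and follows essentially the same route as the paper. The paper also treats (1)$\Leftrightarrow$(2) as immediate from the definitions, gets (3) from the connectedness of $D$, and returns to (2) via $D\subseteq C\subseteq U_J$; arranging the steps as a cycle and spelling out the $J=\varnothing$ convention and the nonemptiness of $D$ only adds detail.
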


\begin{proof}
The equivalence of \textup{(1)} and \textup{(2)} follows directly from
the definitions. If \textup{(2)} holds, then the connected subset $D$
is contained in a unique connected component of $U_J$, proving
\textup{(3)}. Conversely, \textup{(3)} gives
$D\subseteq C\subseteq U_J$, and hence \textup{(2)}.
\end{proof}

Consequently, the subsets $J\subseteq[m]$ satisfying
$D\subseteq U_J$ form the full simplex with vertex set $I_T(D)$:
\begin{equation}
\bigl\{
J\subseteq[m]
\bigm|
D\subseteq U_J
\bigr\}
=
\Simp(I_T(D)).
\label{eq:ITD-simplex}
\end{equation}

For $D\in\cU^\circ(T)$ and
$J\subseteq I_T(D)$, denote the unique component in
\Cref{lem:unique-lift-admissible-support} by
\begin{equation}
\label{eq:compJD-definition}
\operatorname{comp}_J(D)\in\piZero(U_J).
\end{equation}
By construction,
$D\subseteq\operatorname{comp}_J(D)$ and
$\operatorname{comp}_\varnothing(D)=T$.
Moreover, \Cref{lem:projective-resolution-face-components}, applied
with $J'=\varnothing$, gives
$\operatorname{comp}_J(D)\in\cU(T)$.
The same uniqueness also gives
\begin{equation}
\label{eq:lifted-component-compatibility}
\bigl(\operatorname{comp}_J(D)\bigr)^{J'}
=
\operatorname{comp}_{J'}(D)
\qquad
\bigl(J'\subseteq J\subseteq I_T(D)\bigr).
\end{equation}

\begin{lemma}
\label{lem:projective-resolution-admissibility-transfer}
Let $R\in\Int(P)$ and $C\in\cU(T)$. Then
\begin{equation}
\label{eq:admissibility-transfer}
\Adm(R,C)
=
\bigl\{
D\in\Adm(R,T)
\bigm|
D\subseteq C
\bigr\}.
\end{equation}
\end{lemma}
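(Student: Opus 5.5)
The plan is to prove the two inclusions in \eqref{eq:admissibility-transfer} directly, using $\Adm(R,X)=\cD(R)\cap\cU(X)$ from \eqref{eq:admissible-components-as-relative-subsets}. Both sides are then sets of intervals that are relative downsets in $R$, so only the relative-upset condition and containment need comparison.

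For ``$\subseteq$'', let $D\in\Adm(R,C)$. Then $D\in\cD(R)$ and $D\in\cU(C)$, and in particular $D\subseteq C$. Since $C\in\cU(T)$, the transitivity in \Cref{lem:interval-directional-composition}(2) (with $V=D$, $U=C$, and $T$ in place of $R$) gives $D\in\cU(T)$. Hence $D\in\cD(R)\cap\cU(T)=\Adm(R,T)$ and $D\subseteq C$.

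For ``$\supseteq$'', let $D\in\Adm(R,T)$ with $D\subseteq C$. We must show $D\in\cU(C)$. Take $d\in D$ and $x\in C$ with $d\leq x$. Since $C\subseteq T$, we have $x\in T$. Because $D$ is a relative upset in $T$, it follows that $x\in D$. So $D^{\uparrow_C}=D$. As $D$ is already an interval, $D\in\cU(C)$, and together with $D\in\cD(R)$ this gives $D\in\Adm(R,C)$.

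I expect no real obstacle. The only point needing care is that $D$ must lie in $\cU(C)$ as an interval contained in $C$. This is exactly why the hypothesis $D\subseteq C$ appears on the right-hand side: the upset condition relative to the smaller set $C$ is inherited from the upset condition relative to $T$.
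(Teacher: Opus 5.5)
Your proposal is correct and follows essentially the same route as the paper: both use $\Adm=\cD\cap\cU$, deduce $D\in\cU(T)$ from $D\in\cU(C)$ and $C\in\cU(T)$ by transitivity, and obtain $D\in\cU(C)$ from $D\in\cU(T)$ together with $D\subseteq C\subseteq T$. Your observation that the second inclusion only needs $C\subseteq T$ is also implicit in the paper's argument.
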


\begin{proof}
Let $D\in\Adm(R,C)$. Then
$D\in\cD(R)$ and $D\in\cU(C)$.
Since $C\in\cU(T)$, we have
$D\in\cU(T)$ and $D\subseteq C$.
Thus $D$ belongs to the right-hand side of
\eqref{eq:admissibility-transfer}.

Conversely, let $D$ belong to the right-hand side of
\eqref{eq:admissibility-transfer}. Then
$D\in\cD(R)$, $D\in\cU(T)$, and $D\subseteq C$.
Since $C\in \cU(T)$, we have $D\in\cU(C)$. Thus $D\in\Adm(R,C)$.
\end{proof}

For $R\in\Int(P)$, write
\begin{equation}
\label{eq:proper-admissible-supports}
\Adm^\circ(R,T)
=
\Adm(R,T)\setminus\{T\}.
\end{equation}
For any $D\in\Adm^\circ(R,T)$, we have $D \in \cU^\circ(T)$, and hence $I_T(D)$ is nonempty.
Moreover, for each $J\subseteq I_T(D)$, one has
$D\subseteq\operatorname{comp}_J(D)$ and
$\operatorname{comp}_J(D)\in\cU(T)$.
Therefore, \eqref{eq:admissibility-transfer} yields
$D\in\Adm\bigl(R,\operatorname{comp}_J(D)\bigr)$.

Consequently, for every $R\in\Int(P)$ and every nonempty
$J\subseteq[m]$, projection onto the second factor induces a bijection
\begin{equation}
\label{eq:admissible-support-component-bijection}
\bigsqcup_{C\in\piZero(U_J)}
\Adm(R,C)
\xrightarrow{\sim}
\bigl\{
D\in\Adm^\circ(R,T)
\bigm|
J\subseteq I_T(D)
\bigr\},
\qquad
(C,D)\longmapsto D.
\end{equation}
Its inverse sends $D$ to
$\bigl(\operatorname{comp}_J(D),D\bigr)$.

\subsubsection{Construction of projective resolutions}
\label{subsec:projective-resolution-construction}
We now construct a complex $\QQ_\bullet(T)$ of projective
$\Lambda_P$-modules for $T\in\Int(P)$, indexed by the integers
$q\geq0$.  
Let $\cU^\circ_{\max}(T)=\{U_1,\ldots,U_m\}$ be as in
\eqref{eq:maximal-proper-relative-upsets}.
For $q\geq0$, set
\begin{equation}
\label{eq:explicit-projective-terms}
\QQ_q(T)
=
\bigoplus_{\substack{J\subseteq[m]\\ |J|=q}}
\ \bigoplus_{C\in\piZero(U_J)}
\PP(C).
\end{equation}
Since $U_\varnothing=T$ and $T$ is connected, this gives
$\QQ_0(T)=\PP(T)$.

Let $q\geq1$, let
$J=\{j_0<\cdots<j_{q-1}\}\subseteq[m]$,
and let $C\in\piZero(U_J)$. For $0\leq r\leq q-1$, put
\[
J_r=J\setminus\{j_r\}.
\]
By \Cref{lem:projective-resolution-face-components},
$C\in\cU(C^{J_r})$. Define
\[
\partial_{J,C}^{r}
\colon
\PP(C)\longrightarrow\QQ_{q-1}(T)
\]
to be the composite of
\[
(j_{C,C^{J_r}})_*
\colon
\PP(C)\longrightarrow\PP(C^{J_r})
\]
with the canonical inclusion of the summand of $\QQ_{q-1}(T)$ indexed
by $(J_r,C^{J_r})$. Set
\begin{equation}
\label{eq:explicit-projective-component-differential}
d_{J,C}
=
\sum_{r=0}^{q-1}(-1)^r\partial_{J,C}^{r}
\colon
\PP(C)\longrightarrow\QQ_{q-1}(T).
\end{equation}
As $(J,C)$ ranges over the summands in
\eqref{eq:explicit-projective-terms}, the maps $d_{J,C}$ define a
homomorphism
\begin{equation}
\label{eq:explicit-projective-differential}
d_q
\colon
\QQ_q(T)\longrightarrow\QQ_{q-1}(T)
\qquad
(q\geq1).
\end{equation}
In particular,
$\QQ_1(T)=\bigoplus_{i=1}^{m}\PP(U_i)$, and $d_1$ is the sum of the
maps
\[
(j_{U_i,T})_*\colon\PP(U_i)\longrightarrow\PP(T).
\]

Every summand of $d_{q-1}d_q$ is obtained by deleting two indices from
$J$.  By \eqref{eq:face-component-functoriality} and the inclusion-map
composition law in \Cref{lem:interval-directional-composition}, the two
possible orders of deletion contribute the same projective map with
opposite signs.  Hence
\begin{equation}
\label{eq:explicit-projective-complex-condition}
d_{q-1}d_q=0
\qquad
(q\geq2).
\end{equation}
Thus $\QQ_\bullet(T)$ is a complex of projective
$\Lambda_P$-modules.

The following result shows that adjoining the canonical epimorphism
$\pi_T$ to $\QQ_\bullet(T)$ gives a projective resolution of $\DD(T)$.

\begin{theorem}
\label{thm:standard-projective-resolution}
For every interval $T\in\Int(P)$, the sequence
\begin{equation}
\label{eq:standard-projective-resolution-sequence}
\cdots
\longrightarrow
\QQ_2(T)
\xrightarrow{d_2}
\QQ_1(T)
\xrightarrow{d_1}
\QQ_0(T)
\xrightarrow{\pi_T}
\DD(T)
\longrightarrow0
\end{equation}
is exact. Consequently, it is a projective resolution of
$\DD(T)$.
\end{theorem}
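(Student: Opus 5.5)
Exactness can be checked componentwise: for each $R\in\Int(P)$, apply $e_R(-)$ and show that the resulting complex of $\kk$-vector spaces
\[
\cdots\to e_R\QQ_1(T)\xrightarrow{d_1} e_R\QQ_0(T)\xrightarrow{\pi_T} e_R\DD(T)\to0
\]
is exact. By \eqref{eq:projective-injective-component-bases}, $e_R\PP(C)$ has basis $\{\rho_{R,C}^D\mid D\in\Adm(R,C)\}$. The map $(j_{C,C'})_*$ sends $\rho_{R,C}^D$ to $\rho_{R,C'}^D$, by \Cref{prop:admissible-basis-composition} with $D\subseteq C$. So every differential preserves the \emph{support} $D$, and the complex splits as a direct sum over $D\in\Adm(R,T)$.

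\textbf{Support $D=T$.} This occurs only when $T\in\Adm(R,T)$. Then $T$ lies in no $U_J$ with $J\neq\varnothing$, so the summand is just $\kk\rho_{R,T}^T$ in degree $0$. It maps isomorphically onto $e_R\DD(T)=\kk[q_{R,T}]$ by \Cref{prop:standard-concrete-model}, hence is exact.

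\textbf{Support $D\in\Adm^\circ(R,T)$.} These basis vectors lie in $\KK(T)$ by \Cref{lem:standard-kernel-admissible-basis}, so they are killed by $\pi_T$. By the bijection \eqref{eq:admissible-support-component-bijection}, together with \eqref{eq:ITD-simplex}, the basis vectors of $e_R\QQ_q(T)$ with support $D$ are in bijection with subsets $J\subseteq I_T(D)$ of size $q$: each such $J$ contributes exactly the one vector $\rho_{R,\operatorname{comp}_J(D)}^D$. By \eqref{eq:lifted-component-compatibility}, the face map $\partial^r_{J,C}$ sends this vector to the vector indexed by $J_r$. The signs $(-1)^r$ are taken with respect to the fixed ordering of $[m]$, which restricts to an ordering of $I_T(D)$.

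Hence the $D$-summand is isomorphic to the augmented simplicial chain complex of the full simplex $\Simp(I_T(D))$, with the empty face in degree $0$, terminated at degree $0$ because $\pi_T$ is zero on it. Since $I_T(D)\neq\varnothing$, the full simplex is contractible, and its augmented chain complex is acyclic. This gives exactness at every $\QQ_q(T)$ with $q\geq0$. Surjectivity of $\pi_T$ is by definition.

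\textbf{Main obstacle.} The delicate step is the bookkeeping that identifies the $D$-summand with the simplex chain complex. Two points need care.
\begin{enumerate}
\item Each $J$ contributes exactly one basis vector with support $D$. This is what the uniqueness in \Cref{lem:unique-lift-admissible-support} and the bijection provide.
\item The image of $\rho^D_{R,C}$ under $\partial^r$ is a single basis vector. This holds because $D\subseteq C\subseteq C^{J_r}$, so $D\cap C=D$ is connected and $j_{C,C^{J_r}}\circ\rho^D_{R,C}=\rho^D_{R,C^{J_r}}$. It does not decompose into several components.
\end{enumerate}
Once these are in place, exactness reduces to the standard contractibility of a simplex. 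It also follows that the resolution terminates in degree $m$.
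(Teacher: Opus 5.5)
Your proposal is correct and follows the paper's proof essentially step for step. You check exactness after applying each $e_R$, split the complex by admissible support $D$, treat the support-$T$ piece as the two-term complex $\kk\rho_{R,T}^{T}\to e_R\DD(T)$, and identify each proper-support piece with the augmented simplicial chain complex of the nonempty simplex $\Simp(I_T(D))$ via the bijection \eqref{eq:admissible-support-component-bijection}.
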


In the rest of this subsubsection, we prove
\Cref{thm:standard-projective-resolution}.
Since exactness of
\eqref{eq:standard-projective-resolution-sequence}
is detected componentwise, it suffices to prove that
\begin{equation}
\label{eq:eR-standard-projective-resolution-sequence}
\cdots
\longrightarrow
e_R\QQ_2(T)
\xrightarrow{e_Rd_2}
e_R\QQ_1(T)
\xrightarrow{e_Rd_1}
e_R\QQ_0(T)
\xrightarrow{e_R\pi_T}
e_R\DD(T)
\longrightarrow0
\end{equation}
is exact for every $R\in\Int(P)$.

Fix $R\in\Int(P)$. For $D\in\Adm^\circ(R,T)$ and
$J\subseteq I_T(D)$,
\eqref{eq:admissibility-transfer} gives
$D\in\Adm\bigl(R,\operatorname{comp}_J(D)\bigr)$. Set
\begin{equation}
\label{eq:xiJD-definition}
\xi_{J,D}
:=
\rho_{R,\operatorname{comp}_J(D)}^{D}
\in
e_R\PP\bigl(\operatorname{comp}_J(D)\bigr).
\end{equation}
In particular, 
$\xi_{\varnothing,D}=\rho_{R,T}^{D}
\in e_R\QQ_0(T)$.

With this notation, for every $q\geq1$, we have
\begin{align}
e_R\QQ_q(T)
&\overset{\eqref{eq:explicit-projective-terms}}{=}
\bigoplus_{\substack{J\subseteq[m]\\|J|=q}}
\ \bigoplus_{C\in\piZero(U_J)}
e_R\PP(C)
\notag\\
&\overset{\eqref{eq:projective-injective-component-bases}}{=}
\bigoplus_{\substack{J\subseteq[m]\\|J|=q}}
\ \bigoplus_{C\in\piZero(U_J)}
\ \bigoplus_{D\in\Adm(R,C)}
\kk\rho_{R,C}^{D}
\notag\\
&\overset{\eqref{eq:admissible-support-component-bijection}}{=}
\bigoplus_{D\in\Adm^\circ(R,T)}
\ \bigoplus_{\substack{J\subseteq I_T(D)\\|J|=q}}
\kk\xi_{J,D}.
\label{eq:eR-projective-terms-support-decomposition}
\end{align}

The decomposition
\eqref{eq:eR-projective-terms-support-decomposition}
groups the basis vectors according to
$D\in\Adm^\circ(R,T)$.  
For such a $D$, let
\begin{equation}
\label{eq:VRTD-definition}
\VV_q^{R,T}(D)
=
\bigoplus_{\substack{J\subseteq I_T(D)\\|J|=q}}
\kk\xi_{J,D}
\qquad
(q\geq0).
\end{equation}
In particular,
$\VV_0^{R,T}(D)=\kk\xi_{\varnothing,D} = \kk\rho_{R,T}^D$.

We claim that the differential $e_Rd_\bullet$ preserves these
summands and restricts to
$\VV_\bullet^{R,T}(D)$, making it a subcomplex of
$e_R\QQ_\bullet(T)$. 
Indeed, let $q\geq1$ and let
$J=\{j_0<\cdots<j_{q-1}\}\subseteq I_T(D)$.  Put
$J_r=J\setminus\{j_r\}$.  By
\eqref{eq:lifted-component-compatibility},
\eqref{eq:explicit-projective-component-differential}, and
\eqref{eq:admissible-basis-composition},
\begin{equation}
\label{eq:admissible-support-differential}
(e_Rd_q)(\xi_{J,D})
=
\sum_{r=0}^{q-1}(-1)^r\xi_{J_r,D}.
\end{equation}
Thus the differential preserves
$\VV_\bullet^{R,T}(D)$, proving the claim.

\begin{lemma}
\label{lem:admissible-support-simplex-complex}
For every $D\in\Adm^\circ(R,T)$, the complex
$\VV_\bullet^{R,T}(D)$ is a direct summand of
$e_R\QQ_\bullet(T)$.
Moreover,
\begin{equation}
\VV_\bullet^{R,T}(D)
\cong
\widetilde C_\bullet
\bigl(\Simp(I_T(D));\kk\bigr)[1]
\label{eq:VRTD-simplex-isomorphism}
\end{equation}
as chain complexes. 
In particular, $\VV_\bullet^{R,T}(D)$ is exact.
\end{lemma}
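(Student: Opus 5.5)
The lemma has three parts: $\VV_\bullet^{R,T}(D)$ is a direct summand, it is isomorphic to the shifted augmented simplicial chain complex, and it is exact. I will prove them in that order.

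\emph{Direct summand.} By \eqref{eq:eR-projective-terms-support-decomposition}, in each degree $q\geq1$ the space $e_R\QQ_q(T)$ is the direct sum over $D\in\Adm^\circ(R,T)$ of the spaces $\VV_q^{R,T}(D)$. Formula \eqref{eq:admissible-support-differential} shows that $e_Rd_q$ maps each $\VV_q^{R,T}(D)$ into $\VV_{q-1}^{R,T}(D)$ with the same $D$. In degree $0$, $e_R\QQ_0(T)=e_R\PP(T)$ has basis $\rho_{R,T}^{D'}$ for $D'\in\Adm(R,T)$. This is the direct sum of the lines $\kk\xi_{\varnothing,D}$ for $D\in\Adm^\circ(R,T)$, together with the extra line $\kk\rho_{R,T}^{T}$ when $T\in\Adm(R,T)$. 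That extra line receives nothing from $d_1$, since $e_Rd_1$ lands in the span of the $\xi_{\varnothing,D}$. Hence the truncated complex $e_R\QQ_\bullet(T)$ splits as the direct sum of the subcomplexes $\VV_\bullet^{R,T}(D)$, plus possibly the one-dimensional piece in degree $0$. In particular each $\VV_\bullet^{R,T}(D)$ is a direct summand.

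\emph{Isomorphism.} Set $I=I_T(D)$; it is nonempty by the remark after \eqref{eq:ITD-definition}. The augmented simplicial chain complex $\widetilde C_\bullet(\Simp(I);\kk)$ has, in degree $q-1$, the basis of subsets $J\subseteq I$ with $|J|=q$, ordered increasingly; the empty face sits in degree $-1$. Its differential sends $J=\{j_0<\cdots<j_{q-1}\}$ to $\sum_r(-1)^r J_r$. I will define $\xi_{J,D}\mapsto J$. This is bijective on bases by \eqref{eq:ITD-simplex} and \eqref{eq:VRTD-definition}, and it matches degrees after the shift $[1]$. Comparing with \eqref{eq:admissible-support-differential}, it commutes with differentials on the nose, including the step from $q=1$ to the empty face, where $\xi_{\{j\},D}\mapsto\xi_{\varnothing,D}$. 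Before relying on this I will check that the ordering $j_0<\cdots<j_{q-1}$ in \eqref{eq:explicit-projective-component-differential} agrees with the ordering inherited from $I\subseteq[m]$. It does, because both are the restriction of the order on $[m]$, so no signs need adjusting.

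\emph{Exactness.} Since $I\neq\varnothing$, the simplex $\Simp(I)$ is contractible, so its reduced homology vanishes in every degree. Hence the augmented chain complex is acyclic. To keep this self-contained, I will give the standard contracting homotopy: fix $i_0\in I$ and let $h(J)=\pm(J\cup\{i_0\})$ for $i_0\notin J$, with the sign chosen so that $i_0$ is moved to the front, and $h(J)=0$ otherwise. Acyclicity then transfers to $\VV_\bullet^{R,T}(D)$ through the isomorphism.

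The only delicate point is the bookkeeping of degree $0$ and of the augmentation. That is, one must confirm that the identification $\VV_0^{R,T}(D)=\kk\rho_{R,T}^{D}$ corresponds exactly to the empty face, and that nothing from $\Adm^\circ$ is lost or double-counted in the bijection \eqref{eq:admissible-support-component-bijection}.
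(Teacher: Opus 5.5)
Your proposal is correct and follows essentially the same route as the paper: you split $e_R\QQ_\bullet(T)$ according to the support $D$ using \eqref{eq:admissible-support-differential}, you identify $\xi_{J,D}$ with the face $J$ (so that $\rho_{R,T}^{D}$ corresponds to the empty face), and you deduce exactness from the acyclicity of the augmented chain complex of the nonempty simplex $\Simp(I_T(D))$. One small correction: the paper's convention is that the shift $[1]$ negates the differential, so $\xi_{J,D}\mapsto J$ anticommutes with the differentials, and, as the paper does, you should multiply the identification in degree $q$ by $(-1)^q$; this does not affect the isomorphism or the exactness.
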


\begin{proof}
In each degree, $\VV_{q}^{R,T}(D)$ is spanned by the basis vectors
whose second index is $D$.  By
\eqref{eq:admissible-support-differential}, the differential of such
a basis vector is a linear combination of basis vectors with the same
second index.  Moreover, a basis vector indexed by $D'\neq D$ has no
component indexed by $D$ after applying the differential.  Hence the
degreewise projection onto the basis vectors indexed by $D$ is a
chain map and restricts to the identity on
$\VV_{\bullet}^{R,T}(D)$.  Thus $\VV_{\bullet}^{R,T}(D)$ is a direct summand
subcomplex of $e_R\QQ_\bullet(T)$.

For $q\geq1$, the basis vectors of $\VV_q^{R,T}(D)$ are indexed by the
$q$-element subsets of $I_T(D)$, while $\VV_0^{R,T}(D)$ is spanned by
$\rho_{R,T}^{D}$.  Hence there are natural identifications
\[
\VV_q^{R,T}(D)
\cong
\widetilde C_{q-1}\bigl(\Simp(I_T(D));\kk\bigr),
\qquad q\geq0,
\]
where $\rho_{R,T}^{D}$ corresponds to the augmentation term in degree
$-1$.  
Under these identifications,
\eqref{eq:admissible-support-differential} is the usual alternating
simplicial boundary. After multiplying the identification in degree
$q$ by $(-1)^q$, this gives the chain isomorphism
\eqref{eq:VRTD-simplex-isomorphism}.
\end{proof}

We now complete the proof of
\Cref{thm:standard-projective-resolution}.

\begin{proof}[Proof of \Cref{thm:standard-projective-resolution}]
Fix $R\in\Int(P)$. 
By \eqref{eq:eR-projective-terms-support-decomposition} and the
admissible-component basis of $e_R\PP(T)$, the complex
$e_R\QQ_\bullet(T)$ decomposes as
\begin{equation}
e_R\QQ_\bullet(T)
=
\left(
\bigoplus_{D\in\Adm^\circ(R,T)}
\VV_\bullet^{R,T}(D)
\right)
\oplus
\begin{cases}
\kk q_{R,T}[0]
& \text{if } R\in\cE_T,\\
0
& \text{otherwise}.
\end{cases}
\label{eq:eR-projective-complex-support-decomposition}
\end{equation}

By \eqref{eq:standard-components-explicit}, if $R\in\cE_T$, then
\begin{equation}
e_R\pi_T\colon
\kk q_{R,T}
\xrightarrow{\sim}
e_R\DD(T),
\qquad
q_{R,T}\longmapsto[q_{R,T}]
\label{eq:eR-standard-augmentation-complement}
\end{equation}
is an isomorphism.
We have $e_R\DD(T)=0$ otherwise.

Hence the augmented complex
\eqref{eq:eR-standard-projective-resolution-sequence} decomposes into
the complexes $\VV_\bullet^{R,T}(D)$, with
$D\in\Adm^\circ(R,T)$, and, when $R\in\cE_T$, the two-term complex
in \eqref{eq:eR-standard-augmentation-complement}.
By \Cref{lem:admissible-support-simplex-complex}, all these complexes
are exact. Therefore
\eqref{eq:eR-standard-projective-resolution-sequence} is exact.
\end{proof}

The projective resolution in
\Cref{thm:standard-projective-resolution} need not be minimal in general.

\subsection{Ext groups with simple modules}
\label{sec:standard-simple-extensions}

Using the projective resolutions constructed in
\Cref{sec:projective-resolutions-standard-modules}, we determine the Ext groups
$\Ext^q_{\Lambda_P}(\DD(T),\LL(C))$ for $q\geq0$ in terms of the
combinatorics of the intervals $T$ and $C$
(\Cref{thm:standard-simple-ext-formula}).
The corresponding simple--costandard statements follow by
poset duality.

\subsubsection{Standard--simple Ext groups}
\label{subsec:standard-simple-ext-groups}

Fix $T\in\Int(P)$ and $C\in\cU^\circ(T)$.
With $I_T(C)$ and
$\operatorname{comp}_J(C)$ as in
\eqref{eq:ITD-definition} and \eqref{eq:compJD-definition},
respectively, define
\begin{equation}
\label{eq:Gamma-definition}
\Gamma_T^-(C)
=
\bigl\{
J\subseteq I_T(C)
\bigm|
C\subsetneq\operatorname{comp}_J(C)
\bigr\}.
\end{equation}
If $J\in\Gamma_T^-(C)$ and $J'\subseteq J$, then
\eqref{eq:lifted-component-compatibility} shows that
$\operatorname{comp}_J(C)\subseteq \operatorname{comp}_{J'}(C)$. 
Hence $C\subsetneq\operatorname{comp}_{J'}(C)$ and $J'\in \Gamma_T^-(C)$. 
Thus
$\Gamma_T^-(C)$ forms a simplicial subcomplex of
$\Simp(I_T(C))$.

We first describe these Ext groups using the relative simplicial
cochain complex associated with this pair.

\begin{proposition}
\label{prop:Gamma-relative-cochain}
There is an isomorphism of cochain complexes
\begin{equation}
\label{eq:Gamma-relative-cochain-complex}
\Hom_{\Lambda_P}\bigl(\QQ_\bullet(T),\LL(C)\bigr)
\cong
C^\bullet
\bigl(\Simp(I_T(C)),\Gamma_T^-(C);\kk\bigr)[-1].
\end{equation}
Consequently, for every $n\geq1$,
\begin{equation}
\label{eq:Gamma-Ext-formula}
\Ext_{\Lambda_P}^n\bigl(\DD(T),\LL(C)\bigr)
\cong
\widetilde H^{n-2}\bigl(\Gamma_T^-(C);\kk\bigr).
\end{equation}
\end{proposition}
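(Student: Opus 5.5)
The plan is to compute $\Hom_{\Lambda_P}(\QQ_q(T),\LL(C))$ directly from \eqref{eq:explicit-projective-terms}. Since $\Hom_{\Lambda_P}(\PP(C'),\LL(C))\cong\kk$ if $C'=C$ and $0$ otherwise, a summand $\PP(C')$ indexed by $(J,C')$ with $|J|=q$ contributes exactly when $C'=C$, that is, when $C$ is a connected component of $U_J$. We first identify these indices. If $C\in\piZero(U_J)$, then $C\subseteq U_J$, so $J\subseteq I_T(C)$ by \Cref{lem:unique-lift-admissible-support}, and $\operatorname{comp}_J(C)=C$. Conversely, if $J\subseteq I_T(C)$ and $\operatorname{comp}_J(C)=C$, then $C$ is a component of $U_J$. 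So the contributing indices are exactly the sets $J\in\Simp(I_T(C))\setminus\Gamma_T^-(C)$. The case $J=\varnothing$ contributes nothing, since $\operatorname{comp}_\varnothing(C)=T\neq C$ and hence $\varnothing\in\Gamma_T^-(C)$.

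This gives a degreewise identification
\[
\Hom_{\Lambda_P}(\QQ_q(T),\LL(C))\cong\bigoplus_{\substack{J\in\Simp(I_T(C))\setminus\Gamma_T^-(C)\\|J|=q}}\kk,
\]
and the right-hand side is the relative cochain group $C^{q-1}(\Simp(I_T(C)),\Gamma_T^-(C);\kk)$.

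Next we check the differentials. The coboundary $\Hom(d_{q+1},\LL(C))$ sends the dual basis vector $\delta_J$ (with $|J|=q$) to a combination of $\delta_{J\cup\{j\}}$. The map $\PP(C)\to\PP(C^{J})$ induced by $j_{C,C^{J}}$ for $J\cup\{j\}$ contributes to $\Hom(-,\LL(C))$ only when its target summand is $\PP(C)$ itself, i.e. $C^{J}=C$. In that case the map is the identity, because $j_{C,C}=\id$. Any nonidentity map between indecomposable projectives lies in the radical and is killed after composing into a simple module.

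So the coboundary sends $\delta_J$ to $\sum \pm\delta_{J\cup\{j\}}$, summed over $j\in I_T(C)\setminus J$ with $J\cup\{j\}\notin\Gamma_T^-(C)$. The terms require $C^{J}=C$, which is automatic: $C^{J}=\operatorname{comp}_J(C)=C$ because $J\notin\Gamma$, and removing indices only enlarges the component. The sign is $(-1)^r$, where $r$ is the position of $j$ in $J\cup\{j\}$. This is the standard simplicial coboundary on $\Simp(I_T(C))$ restricted to faces outside the subcomplex, which is the relative cochain complex. Keeping track of these sign conventions and the degree shift $[-1]$ is the fiddly step. Any discrepancy can be absorbed by rescaling basis vectors by $\pm1$, as in \Cref{lem:admissible-support-simplex-complex}. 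This proves \eqref{eq:Gamma-relative-cochain-complex}.

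Finally, by \Cref{thm:standard-projective-resolution}, $\Ext^n_{\Lambda_P}(\DD(T),\LL(C))$ is the $n$-th cohomology of the left-hand side, which is $H^{n-1}(\Simp(I_T(C)),\Gamma_T^-(C);\kk)$. The simplex $\Simp(I_T(C))$ is acyclic, since $I_T(C)\neq\varnothing$, and $\Gamma_T^-(C)$ contains the empty face. The long exact sequence of the pair in reduced cohomology then gives
\[
H^{n-1}(\Simp(I_T(C)),\Gamma_T^-(C))\cong\widetilde H^{n-2}(\Gamma_T^-(C)).
\]
Here we use augmented complexes, which is consistent because $\varnothing$ lies in both the simplex and the subcomplex. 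This gives \eqref{eq:Gamma-Ext-formula}.

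We restrict to $n\geq1$ to avoid the low-degree edge case in the augmented long exact sequence. That case only matters when $\Gamma_T^-(C)$ is the empty complex, and it does not arise here since $\varnothing\in\Gamma_T^-(C)$; still, the statement is only needed for $n\ge1$.
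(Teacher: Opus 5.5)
Your proposal is correct and follows essentially the same route as the paper. You identify the surviving summands $\PP(C)$ of $\QQ_q(T)$ with the faces $J\in\Simp(I_T(C))\setminus\Gamma_T^-(C)$ and check that the induced coboundary is the alternating simplicial one; the paper fixes the sign by multiplying the identification in degree $q$ by $(-1)^q$, which is your $\pm1$ rescaling (and your condition $J\cup\{j\}\notin\Gamma_T^-(C)$ is automatic, since the complement of a subcomplex is closed under enlarging faces). You then pass to $\widetilde H^{n-2}(\Gamma_T^-(C);\kk)$ via the long exact sequence of the pair, using that the nonempty simplex is contractible, exactly as the paper does.
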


\begin{proof}
For every $A\in\Int(P)$,
\[
\Hom_{\Lambda_P}\bigl(\PP(A),\LL(C)\bigr)
\cong
e_A\LL(C)
\cong
\begin{cases}
\kk,&A=C,\\
0,&A\neq C.
\end{cases}
\]
Hence, after applying
$\Hom_{\Lambda_P}(-,\LL(C))$ to $\QQ_\bullet(T)$,
only the summands isomorphic to $\PP(C)$ remain.

We first determine which subsets $J$ index these summands.
By \eqref{eq:explicit-projective-terms}, a summand $\PP(C)$ occurs in
$\QQ_n(T)$ for a subset $J\subseteq[m]$ precisely when
\[
|J|=n
\qand
C\in\piZero(U_J).
\]
For the fixed interval $C$, the second condition is equivalent to
\[
J\subseteq I_T(C)
\qand
\operatorname{comp}_J(C)=C.
\]
Indeed, if $C\in\piZero(U_J)$, then $C\subseteq U_J$, so
\Cref{lem:unique-lift-admissible-support} gives
$J\subseteq I_T(C)$, and the unique component of $U_J$ containing
$C$ is $C$ itself. The converse follows immediately from
\eqref{eq:compJD-definition}.

Since
$C\subseteq\operatorname{comp}_J(C)$ for every
$J\subseteq I_T(C)$, \eqref{eq:Gamma-definition} now shows that the
surviving subsets $J$ with $|J|=n$ are precisely the
$(n-1)$-faces of $\Simp(I_T(C))$ that do not belong to
$\Gamma_T^-(C)$. Thus, for $n\geq1$, the summands on the two sides of
\eqref{eq:Gamma-relative-cochain-complex} are indexed by the same
subsets $J$. In degree zero, both sides vanish because
$\QQ_0(T)=\PP(T)$, $C\neq T$, and
$\varnothing\in\Gamma_T^-(C)$.
Taking $(-1)^n$ times the resulting identification in degree $n$
gives an isomorphism of graded vector spaces
\[
\Hom_{\Lambda_P}\bigl(\QQ_\bullet(T),\LL(C)\bigr)
\cong
C^\bullet
\bigl(\Simp(I_T(C)),\Gamma_T^-(C);\kk\bigr)[-1].
\]

It remains to check the differentials. Let
$K=\{j_0<\cdots<j_n\}\subseteq I_T(C)$ satisfy
$\operatorname{comp}_K(C)=C$, and put
$K_r=K\setminus\{j_r\}$.
The component associated with $K_r\subseteq K$ contributes after
applying $\Hom_{\Lambda_P}(-,\LL(C))$ precisely when
$\operatorname{comp}_{K_r}(C)=C$.
In that case,
\eqref{eq:explicit-projective-component-differential} gives the
corresponding component of $d_{n+1}$ as
\[
(-1)^r(j_{C,C})_*
=
(-1)^r\id_{\PP(C)}.
\]
Hence precomposition with $d_{n+1}$ gives the usual alternating
simplicial coboundary on the subsets $J$ that do not belong to
$\Gamma_T^-(C)$. The differential on the shifted cochain complex
$[-1]$ is the negative of the simplicial coboundary, and the factor
$(-1)^n$ in degree $n$ makes the degreewise identifications commute
with the differentials. This proves
\eqref{eq:Gamma-relative-cochain-complex}.

Since $\QQ_\bullet(T)$ is a projective resolution of $\DD(T)$ by
\Cref{thm:standard-projective-resolution}, taking cohomology in
\eqref{eq:Gamma-relative-cochain-complex} gives, for every $n\geq1$,
\[
\Ext_{\Lambda_P}^n\bigl(\DD(T),\LL(C)\bigr)
\cong
H^{n-1}
\bigl(\Simp(I_T(C)),\Gamma_T^-(C);\kk\bigr).
\]
Since $C\in\cU^\circ(T)$, the set $I_T(C)$ is nonempty, so
$\Simp(I_T(C))$ is a nonempty simplex and hence contractible.
The long exact sequence in relative cohomology
\cite[Section~3.1]{Hatcher02} therefore gives
\[
H^{n-1}
\bigl(\Simp(I_T(C)),\Gamma_T^-(C);\kk\bigr)
\cong
\widetilde H^{n-2}\bigl(\Gamma_T^-(C);\kk\bigr).
\]
This proves \eqref{eq:Gamma-Ext-formula}.
\end{proof}

Consider the \emph{lower boundary} of $C$ in $T$ defined by 
\begin{equation}
\label{eq:lower-boundary}
\partial_T^-C
:=
\bigl\{
x\in T\setminus C
\bigm|
x\lessdot_T c
\text{ for some }c\in C
\bigr\}. 
\end{equation}
We then define a simplicial complex 
\begin{equation}
\label{eq:boundary-Dowker-complex}
\cN_T^-(C)
:=
\bigl\{
F\subseteq\partial_T^-C
\bigm|
F\subseteq U_i
\text{ for some }i\in I_T(C)
\bigr\}.
\end{equation}

The following comparison is a consequence of Dowker duality \cite{Dowker52} for the
simplicial complexes $\Gamma_T^-(C)$ and $\cN_T^-(C)$.

\begin{proposition}
\label{prop:Gamma-Dowker}
There is a homotopy equivalence
\[
\Gamma_T^-(C)\simeq\cN_T^-(C).
\]
Consequently, for every $r\in\mathbb Z$,
\begin{equation}
\widetilde H^r\bigl(\Gamma_T^-(C);\kk\bigr)
\cong
\widetilde H^r\bigl(\cN_T^-(C);\kk\bigr).
\label{eq:Gamma-boundary-cohomology}
\end{equation}
\end{proposition}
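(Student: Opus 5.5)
The plan is to exhibit both complexes as the two Dowker complexes of a single relation, and then invoke Dowker's theorem \cite{Dowker52}. Consider the finite relation
\[
\mathcal R\subseteq I_T(C)\times\partial_T^-C,
\qquad
(i,x)\in\mathcal R
\;:\Longleftrightarrow\;
x\in U_i .
\]
The Dowker complex on the column side consists of the sets $F\subseteq\partial_T^-C$ for which some $i\in I_T(C)$ is related to every element of $F$. This is exactly $\cN_T^-(C)$ by \eqref{eq:boundary-Dowker-complex}. The Dowker complex on the row side consists of the sets $J\subseteq I_T(C)$ for which some $x\in\partial_T^-C$ satisfies $x\in U_j$ for all $j\in J$, that is, $x\in U_J$. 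So the whole proof reduces to one combinatorial claim: for $J\subseteq I_T(C)$,
\[
C\subsetneq\operatorname{comp}_J(C)
\quad\Longleftrightarrow\quad
U_J\cap\partial_T^-C\neq\varnothing .
\]
Once this is established, $\Gamma_T^-(C)$ is the row-side Dowker complex, and Dowker duality gives $\Gamma_T^-(C)\simeq\cN_T^-(C)$. The cohomology statement \eqref{eq:Gamma-boundary-cohomology} then follows by homotopy invariance.

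For ($\Leftarrow$), take $x\in U_J\cap\partial_T^-C$ with $x\lessdot_T c$ and $c\in C$. Since $C\subseteq U_J$, both $x$ and $c$ lie in $U_J$ and are comparable. Hence $x$ lies in the component $\operatorname{comp}_J(C)$ of $U_J$ containing $C$. As $x\notin C$, the inclusion $C\subsetneq\operatorname{comp}_J(C)$ is strict.

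For ($\Rightarrow$), write $A=\operatorname{comp}_J(C)$. Both $C$ and $A$ belong to $\cU(T)$, so $C$ is a relative upset in $A$. Since $A$ is connected and strictly contains $C$, some $a\in A\setminus C$ is comparable with some $c\in C$. Because $C$ is an upset in $A$, necessarily $a<c$. Choose a maximal chain $a=y_0\lessdot\cdots\lessdot y_k=c$ in $P$. By convexity of $A$, the whole chain lies in $A$, and hence in $T$, so its steps are covers in $T$. Let $y_{s}$ be the last element of the chain not in $C$; then $y_{s}\lessdot_T y_{s+1}\in C$. This gives $y_s\in\partial_T^-C\cap A\subseteq\partial_T^-C\cap U_J$.

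The only delicate point is the treatment of the empty face and degenerate cases, and this is where I expect the main (though minor) care to be needed. Both $\Gamma_T^-(C)$ and $\cN_T^-(C)$ contain $\varnothing$. Suppose first that $\partial_T^-C\neq\varnothing$. Every $x\in\partial_T^-C$ is covered by an element of $C$, and one checks that $x$ lies in some $U_i$ with $i\in I_T(C)$: for instance, $x$ together with $C$ generates a proper relative upset. So both complexes are nonempty, and Dowker's theorem applies to their geometric realizations. If instead $\partial_T^-C=\varnothing$, then connectedness of $T$ together with the argument above forces $\Gamma_T^-(C)=\{\varnothing\}=\cN_T^-(C)$, and the reduced cohomologies agree trivially. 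If the claim that every $x\in\partial_T^-C$ lies in some $U_i$ fails, this is harmless: isolated vertices of the column side not related to anything would have to be handled separately, but by the above they do not occur.
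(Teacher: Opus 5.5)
Your proposal is essentially the paper's proof. It uses the same relation $i\mathrel{\mathcal R}x\Leftrightarrow x\in U_i$ on $I_T(C)\times\partial_T^-C$, the same key equivalence $C\subsetneq\operatorname{comp}_J(C)\Leftrightarrow U_J\cap\partial_T^-C\neq\varnothing$, and Dowker duality in the form of Bj\"orner's bipartite relation theorem. Your saturated-chain argument for the forward direction is equivalent to the paper's choice of a pair $x<c$ with $[x,c]$ minimal.

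One side remark is wrong, though harmless. An element $x\in\partial_T^-C$ need not lie in any $U_i$ with $i\in I_T(C)$, because $(C\cup\{x\})^{\uparrow_T}$ can be all of $T$. For example, take $T=\{x<c\}$ and $C=\{c\}$; then $\mathcal R=\varnothing$, both complexes equal $\{\varnothing\}$, and in particular they are not nonempty as you claim. Nothing depends on this: Dowker's theorem needs no such hypothesis. Since $I_T(C)$ and $\partial_T^-C$ are both nonempty, both complexes always contain the empty face, so their reduced cohomologies agree even when $\mathcal R$ is empty.
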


\begin{proof}
Consider the finite relation
$\mathcal R\subseteq I_T(C)\times\partial_T^-C$ defined by
\[
i\mathrel{\mathcal R}x
:\Longleftrightarrow
x\in U_i.
\]

Let $J\subseteq I_T(C)$. We first observe that
\[
C\subsetneq\operatorname{comp}_J(C)
\Longleftrightarrow
U_J\cap\partial_T^-C\neq\varnothing.
\]
Indeed, if
$C\subsetneq\operatorname{comp}_J(C)$, connectedness gives
$x\in U_J\setminus C$ and $c\in C$ with $x<c$.
Choose such a pair with $[x,c]$ minimal.  Since $U_J$ is a relative
upset of $T$, every element strictly between $x$ and $c$ belongs to
$U_J$.  The minimality of $[x,c]$ therefore gives
$x\lessdot_T c$.  Hence
$x\in U_J\cap\partial_T^-C$.
Conversely, if $x\in U_J\cap\partial_T^-C$, then
$x\lessdot_T c$ for some $c\in C$.  Thus $x$ and $C$ belong to the same connected component of $U_J$, so
$C\subsetneq\operatorname{comp}_J(C)$.

Since $U_J=\bigcap_{i\in J}U_i$, it follows from
\eqref{eq:Gamma-definition} that
\[
J\in\Gamma_T^-(C)
\Longleftrightarrow
\text{there exists }x\in\partial_T^-C
\text{ such that }
i\mathrel{\mathcal R}x
\text{ for every }i\in J.
\]
On the other hand, by
\eqref{eq:boundary-Dowker-complex}, for
$F\subseteq\partial_T^-C$,
\[
F\in\cN_T^-(C)
\Longleftrightarrow
\text{there exists }i\in I_T(C)
\text{ such that }
i\mathrel{\mathcal R}x
\text{ for every }x\in F.
\]

Thus $\Gamma_T^-(C)$ and $\cN_T^-(C)$ are precisely the two
common-neighbor complexes associated with the finite relation
$\mathcal R$. The Bipartite Relation Theorem
\cite[Theorem~10.9]{Bjorner95} gives
\[
\Gamma_T^-(C)\simeq\cN_T^-(C).
\]
The isomorphisms in reduced cohomology follow.
\end{proof}

By \Cref{prop:Gamma-Dowker}, it remains to compute the reduced
cohomology of $\cN_T^-(C)$. This is determined by an extremality
condition on the lower boundary, which we record in the following
definition.

\begin{definition}
\label{def:lower-extremal-boundary-pairs}
For $T\in\Int(P)$, put $\partial_T^-T:=\varnothing$. 
For $C\in \cU(T)$, we say that
$C$ has \emph{extremal lower boundary} in $T$ if
\[
\partial_T^-C=\Min(T\setminus C).
\]
In this case, we call $(T,C)$ a \emph{lower extremal-boundary pair}
in $P$ and put
\[
\omega^-(T,C)
:=
\bigl|\partial_T^-C\bigr|
=
\bigl|\Min(T\setminus C)\bigr|.
\]
In particular, $T$ has extremal lower boundary in itself with $\omega^-(T,T)=0$.
We denote by $\Upsilon^-(P)$ the set of all lower extremal-boundary
pairs in $P$.
\end{definition}

\begin{proposition}
\label{prop:boundary-complex-topology}
For $T\in \Int(P)$ and $C\in \cU^\circ(T)$, there is a homotopy equivalence
\begin{equation}
\label{eq:boundary-complex-topology}
\cN_T^-(C)\simeq
\begin{cases}
\partial\Simp\bigl(\Min(T\setminus C)\bigr)
\simeq
\mathbb S^{\omega^-(T,C)-2},
&
(T,C)\in\Upsilon^-(P),
\\
\mathrm{pt},
&
(T,C)\notin\Upsilon^-(P).
\end{cases}
\end{equation}
Consequently, for every $r\in\mathbb Z$,
\begin{equation}
\label{eq:boundary-complex-cohomology}
\widetilde H^r\bigl(\cN_T^-(C);\kk\bigr)
\cong
\begin{cases}
\det\bigl(\Min(T\setminus C)\bigr),
&
(T,C)\in\Upsilon^-(P)
\qand
r=\omega^-(T,C)-2,
\\
0,
&
\text{otherwise}.
\end{cases}
\end{equation}
\end{proposition}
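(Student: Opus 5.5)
The plan is to describe the simplicial complex $\cN_T^-(C)$ explicitly, as a complex on the vertex set $\partial_T^-C$ cut out by a single forbidden face. For $F\subseteq\partial_T^-C$, put
\[
W_F:=(C\cup F)^{\uparrow_T}.
\]
The first step is to show that
\[
F\in\cN_T^-(C)\iff W_F\neq T .
\]
This needs three facts.
\begin{enumerate}[\rm (i)]
\item $W_F$ is connected. Each $x\in F$ is covered by some element of $C$, and every element of $W_F$ lies above some element of the connected set $C\cup F$. Hence $W_F$ is a connected relative upset of $T$ containing $C$.
\item If $W_F\neq T$, then $W_F\in\cU^\circ(T)$. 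It is therefore contained in some maximal $U_i$. Since $C\subseteq W_F\subseteq U_i$, we have $i\in I_T(C)$ and $F\subseteq U_i$.
\item Conversely, suppose $F\subseteq U_i$ with $i\in I_T(C)$. Then $C\cup F\subseteq U_i$, and since $U_i$ is a relative upset, $W_F\subseteq U_i\subsetneq T$.
\end{enumerate}

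Next we find when $W_F=T$. Since $C$ is a proper relative upset, $T\setminus C$ is a nonempty relative downset of $T$, so $\Min(T\setminus C)=\Min(T)$. For $y\in\Min(T)$, we have $y\in W_F$ if and only if $y\in C\cup F$, and $y\notin C$. Every element of $T$ lies above some element of $\Min(T)$. Hence
\[
W_F=T\iff \Min(T)\subseteq F,
\]
and therefore
\[
\cN_T^-(C)=\bigl\{F\subseteq\partial_T^-C \bigm| \Min(T)\not\subseteq F\bigr\}.
\]

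We also need $\partial_T^-C\neq\varnothing$. Since $T$ is connected and $C$ is a proper nonempty relative upset, there is a comparable pair $x<c$ with $x\in T\setminus C$ and $c\in C$. Take such a pair with $[x,c]$ minimal. By convexity and the upset property, as in the proof of \Cref{prop:Gamma-Dowker}, this forces $x\lessdot_T c$.

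Now set $M=\Min(T)$ and split into cases.
\begin{enumerate}[\rm (a)]
\item If $M\not\subseteq\partial_T^-C$, no subset of $\partial_T^-C$ contains $M$. Then $\cN_T^-(C)$ is the full simplex on the nonempty set $\partial_T^-C$, which is contractible.
\item If $M\subseteq\partial_T^-C$, then $\cN_T^-(C)$ is the join
\[
\partial\Simp(M)*\Simp(\partial_T^-C\setminus M).
\]
This is contractible when $\partial_T^-C\setminus M\neq\varnothing$, because it is a join with a nonempty simplex. Otherwise it equals $\partial\Simp(M)\simeq\mathbb S^{|M|-2}$.
\end{enumerate}
Because $\partial_T^-C\subseteq T\setminus C$ and $M=\Min(T\setminus C)$, the last subcase is exactly $\partial_T^-C=\Min(T\setminus C)$, that is, $(T,C)\in\Upsilon^-(P)$. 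This gives \eqref{eq:boundary-complex-topology}.

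For \eqref{eq:boundary-complex-cohomology}, only the sphere case is nontrivial. There I would use the simplicial cochain complex of $\partial\Simp(M)$. Its top reduced cohomology, in degree $|M|-2$, is one-dimensional and spanned by the class of any facet cochain. Using the orientation sign conventions, this identifies it canonically with $\det(M)$. When $|M|=1$, the complex is $\{\varnothing\}$ and the statement reads $\mathbb S^{-1}$ in degree $-1$, which is consistent.

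The main obstacle is the first step, the identification $F\in\cN_T^-(C)\iff(C\cup F)^{\uparrow_T}\neq T$. In particular one must check that this upset is connected, so that it genuinely lies in $\cU^\circ(T)$ and hence below some $U_i$. The remaining steps are elementary simplicial topology.
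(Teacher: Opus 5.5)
Your strategy is the paper's: characterize faces of $\cN_T^-(C)$ by whether $(C\cup F)^{\uparrow_T}$ is all of $T$, reduce to a single forbidden face, and finish with the full-simplex/join/sphere case split. Steps (i)--(iii) and the nonemptiness of $\partial_T^-C$ are fine.

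\textbf{The error.} The step ``$T\setminus C$ is a nonempty relative downset of $T$, so $\Min(T\setminus C)=\Min(T)$'' is false, and so is the clause ``and $y\notin C$'' for $y\in\Min(T)$. The set $C$ may contain minimal elements of $T$. What holds is only $\Min(T\setminus C)=\Min(T)\setminus C$.

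\textbf{Counterexample.} Take $T=P=\{a,b,c\}$ with $a<c$ and $b<c$, the poset of \Cref{rem:reedy-versus-left-strongly-qh}, and $C=\{b,c\}$. Then $\Min(T)=\{a,b\}$, while $\Min(T\setminus C)=\partial_T^-C=\{a\}$. The maximal proper connected relative upsets are $\{a,c\}$ and $\{b,c\}$. Only the second contains $C$, and it does not contain $a$. Hence $\cN_T^-(C)=\{\varnothing\}=\partial\Simp(\{a\})\simeq\mathbb S^{-1}$. This agrees with $\Ext^1_{\Lambda_P}(\DD(P),\LL(\{b,c\}))\neq0$, which the minimal resolution in that remark shows. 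Your description $\{F\subseteq\partial_T^-C\mid \Min(T)\nsubseteq F\}$ instead gives the full simplex on $\{a\}$, and your case (a) would call it contractible. More generally, whenever $\Min(T)\cap C\neq\varnothing$, your forbidden face can never lie in $\partial_T^-C\subseteq T\setminus C$. Your formula would then always return a full simplex, so the extremal cases with $\Min(T)\cap C\neq\varnothing$ are misclassified.

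\textbf{The repair.} For $y\in\Min(T)$ you correctly have $y\in W_F\iff y\in C\cup F$. Using $F\cap C=\varnothing$, this gives
\[
W_F=T\iff \Min(T)\subseteq C\cup F\iff \Min(T)\setminus C\subseteq F\iff \Min(T\setminus C)\subseteq F.
\]
Hence $\cN_T^-(C)=\{F\subseteq\partial_T^-C\mid \Min(T\setminus C)\nsubseteq F\}$, which is exactly the identity the paper proves before its case analysis. With $M:=\Min(T\setminus C)$, your cases (a) and (b) and the cohomology computation go through unchanged. Your final sentence then holds by definition of $M$ rather than by the false identity, and the argument coincides with the paper's.
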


\begin{proof}
Let $T\in \Int(P)$ and $C\in \cU^\circ(T)$.
We first claim that
\begin{equation}
\label{eq:boundary-minimal-form}
\cN_T^-(C)
=
\bigl\{
F\subseteq\partial_T^-C
\bigm|
\Min(T\setminus C)\nsubseteq F
\bigr\}.
\end{equation}

Recall that
\[
i\in I_T(C)
\Longleftrightarrow
C\subseteq U_i.
\]
Hence, 
\begin{align}
\cN_T^-(C)
&=
\bigl\{
F\subseteq\partial_T^-C
\bigm|
F\subseteq U_i
\text{ for some }i\in I_T(C)
\bigr\}
\notag
\\
&=
\bigl\{
F\subseteq\partial_T^-C
\bigm|
C\cup F\subseteq U_i
\text{ for some }i\in[m]
\bigr\}
\notag
\\
&=
\bigl\{
F\subseteq\partial_T^-C
\bigm|
C\cup F\subseteq U
\text{ for some }U\in\cU^\circ(T)
\bigr\}.
\label{eq:boundary-proper-upset-characterization}
\end{align}

For $F\subseteq\partial_T^-C$, we now compare the condition in
\eqref{eq:boundary-proper-upset-characterization} with that in
\eqref{eq:boundary-minimal-form}. If
$\Min(T\setminus C)\subseteq F$,
then every relative upset containing $C\cup F$ is all of $T$.
Thus $F\notin\cN_T^-(C)$ by
\eqref{eq:boundary-proper-upset-characterization}.

Conversely, suppose that
$\Min(T\setminus C)\nsubseteq F$
and consider 
\[
U=(C\cup F)^{\uparrow_T}.
\]
Then $U$ is a proper relative upset of $T$ containing $C\cup F$. 
Moreover, $U$ is connected since $F\subseteq\partial_T^-C$ and $C$ is connected. Thus $U$ satisfies the condition in
\eqref{eq:boundary-proper-upset-characterization}, and therefore
$F\in\cN_T^-(C)$. This proves
\eqref{eq:boundary-minimal-form}.

Suppose first that $(T,C)\in\Upsilon^-(P)$. Then
$\partial_T^-C=\Min(T\setminus C)$, so
\eqref{eq:boundary-minimal-form} gives
\[
\cN_T^-(C)
=
\partial\Simp\bigl(\Min(T\setminus C)\bigr).
\]
Since
$\omega^-(T,C)=|\Min(T\setminus C)|$ by
\Cref{def:lower-extremal-boundary-pairs}, we obtain
\[
\cN_T^-(C)
\simeq
\mathbb S^{\omega^-(T,C)-2}.
\]

Suppose next that $(T,C)\notin\Upsilon^-(P)$. If
$\Min(T\setminus C)\nsubseteq\partial_T^-C$, then the condition in
\eqref{eq:boundary-minimal-form} is automatic. Hence $\cN_T^-(C)$ is
the full simplex on $\partial_T^-C$. Since $T$ is connected and
$C$ is a proper relative upset, $\partial_T^-C$ is nonempty, and
thus $\cN_T^-(C)$ is contractible.

Otherwise,
$\Min(T\setminus C)\subsetneq\partial_T^-C$, and
\eqref{eq:boundary-minimal-form} gives
\[
\cN_T^-(C)
=
\partial\Simp\bigl(\Min(T\setminus C)\bigr)
*
\Simp\bigl(
\partial_T^-C\setminus\Min(T\setminus C)
\bigr).
\]
The second factor is a simplex on a nonempty vertex set, so this join
is contractible. This proves
\eqref{eq:boundary-complex-topology}.

In the first case, the canonical identification of the top reduced
cohomology of the boundary of a simplex gives
\[
\widetilde H^{\omega^-(T,C)-2}
\bigl(\cN_T^-(C);\kk\bigr)
\cong
\det\bigl(\Min(T\setminus C)\bigr),
\]
and all other reduced cohomology groups vanish. In the remaining
cases, $\cN_T^-(C)$ is contractible. This proves
\eqref{eq:boundary-complex-cohomology}.
\end{proof}

The preceding results yield the main result of this subsection.

\begin{theorem}
\label{thm:standard-simple-ext-formula}
For $T,C\in\Int(P)$ and $p\geq0$,
\begin{equation}
\label{eq:standard-simple-Ext}
\Ext^p_{\Lambda_P}\bigl(\DD(T),\LL(C)\bigr)
\cong
\begin{cases}
\det\bigl(\Min(T\setminus C)\bigr),
&
(T,C)\in\Upsilon^-(P)
\qand
p=\omega^-(T,C),
\\
0,
&
\text{otherwise}.
\end{cases}
\end{equation}
\end{theorem}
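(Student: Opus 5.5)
The proof assembles the preceding results, with a case split according to whether $C\in\cU^\circ(T)$.

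\emph{Degree zero.} Since $\DD(T)$ has simple top $\LL(T)$, we have $\Hom_{\Lambda_P}(\DD(T),\LL(C))\cong\kk$ if $C=T$ and zero otherwise. The pair $(T,T)$ lies in $\Upsilon^-(P)$ with $\omega^-(T,T)=0$, and $\det(\Min(T\setminus T))=\det(\varnothing)=\kk$. Moreover, no $C\neq T$ gives $p=0$ with $(T,C)\in\Upsilon^-(P)$. Indeed, $\omega^-(T,C)=|\Min(T\setminus C)|\geq1$ when $C\subsetneq T$, because $T\setminus C$ is then nonempty. So the case $p=0$ matches the formula.

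\emph{Case $C\notin\cU(T)$, $p\geq1$.} Here I will use the resolution $\QQ_\bullet(T)$ of \Cref{thm:standard-projective-resolution}. Every summand of $\QQ_q(T)$ with $q\geq1$ is of the form $\PP(C')$, where $C'$ is a connected component of some $U_J$ with $J\neq\varnothing$. Each such $U_J$ is a relative upset of $T$, so each component $C'$ lies in $\cU^\circ(T)$. Therefore $\Hom_{\Lambda_P}(\QQ_q(T),\LL(C))=0$ for all $q\geq1$. The same holds in degree $0$ because $\QQ_0(T)=\PP(T)$ and $C\neq T$. Hence all Ext groups vanish. This is consistent with the formula, since $(T,C)\notin\Upsilon^-(P)$ whenever $C\notin\cU(T)$. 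The case $C=T$, $p\geq1$ is handled the same way: $\PP(T)$ occurs only in degree $0$, so the Ext groups vanish, as the formula requires.

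\emph{Main case $C\in\cU^\circ(T)$, $p\geq1$.} I will chain three earlier results:
\begin{itemize}
\item \Cref{prop:Gamma-relative-cochain} gives $\Ext^p\cong\widetilde H^{p-2}(\Gamma_T^-(C);\kk)$.
\item \Cref{prop:Gamma-Dowker} identifies this group with $\widetilde H^{p-2}(\cN_T^-(C);\kk)$.
\item \Cref{prop:boundary-complex-topology} shows this is $\det(\Min(T\setminus C))$ exactly when $(T,C)\in\Upsilon^-(P)$ and $p-2=\omega^-(T,C)-2$, i.e.\ $p=\omega^-(T,C)$, and zero otherwise.
\end{itemize}

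The main subtlety is the low-degree edge case, where reduced cohomology in degree $-1$ or $-2$ must be handled correctly.
\begin{itemize}
\item For $p=1$ we need $\widetilde H^{-1}(\Gamma_T^-(C))$. This is nonzero exactly when $\Gamma_T^-(C)=\{\varnothing\}$, which corresponds to $\omega^-=1$ and $\cN_T^-(C)=\partial\Simp$ of a point.
\item Since $\varnothing\in\Gamma_T^-(C)$ (as recorded in the proof of \Cref{prop:Gamma-relative-cochain}), the complex $\Gamma_T^-(C)$ is never the void complex, so degree $-2$ never contributes.
\end{itemize}
I would check that these conventions are compatible through the Dowker equivalence: both complexes contain the empty face, so the bipartite relation theorem applies in its usual form. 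With this, all cases agree with the formula.
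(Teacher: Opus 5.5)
Your proposal is correct and follows the paper's proof essentially step for step. Degree zero comes from the simple top $\LL(T)$ of $\DD(T)$, vanishing for $C\notin\cU^\circ(T)$ holds because the positive-degree terms of $\QQ_\bullet(T)$ contain no summand $\PP(C)$, and the main case chains \Cref{prop:Gamma-relative-cochain}, \Cref{prop:Gamma-Dowker}, and \Cref{prop:boundary-complex-topology}; your check of the degree $-1$ convention (the empty face lies in both complexes) is a harmless detail that the paper leaves implicit.
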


\begin{proof}
For $p\geq1$, suppose first that $C\in\cU^\circ(T)$.
Then the formula follows directly from
\eqref{eq:Gamma-Ext-formula},
\eqref{eq:Gamma-boundary-cohomology}, and
\eqref{eq:boundary-complex-cohomology}.
If $C\notin\cU^\circ(T)$, the positive-degree terms of the projective
resolution $\QQ_\bullet(T)$ contain no summand $\PP(C)$, so the
corresponding Ext groups vanish.

For $p=0$, the simple top of $\DD(T)$ is $\LL(T)$. Hence
$\Hom_{\Lambda_P}(\DD(T),\LL(C))$ is one-dimensional if $C=T$ and
zero otherwise. Since $\omega^-(T,T)=0$, while $\omega^-(T,C)>0$ for
every lower extremal-boundary pair with $C\neq T$, this gives
\eqref{eq:standard-simple-Ext} in degree zero.
\end{proof}

\subsubsection{Simple--costandard Ext groups via poset duality}
\label{subsec:simple-costandard-ext-via-poset-duality}
Order reversal and $\kk$-linear duality also give an injective
coresolution of each costandard module $\NN(T)$ from the projective
resolution in \Cref{thm:standard-projective-resolution}, with maximal
proper relative downsets replacing maximal proper relative upsets.
Applying the same duality directly to the preceding standard--simple
calculation gives the corresponding simple--costandard Ext groups as
follows.

For $S,T\in \Int(P)$ with $S\in\cD(T)$, put
\[
\partial_T^+S
:=
\{x\in T\setminus S\mid s\lessdot_T x
\text{ for some }s\in S\}.
\]
We say that $S$ has \emph{extremal upper boundary} in $T$ if
\[
\partial_T^+S=\Max(T\setminus S),
\]
and in this case call $(S,T)$ an \emph{upper extremal-boundary pair}.
For such a pair, put
\[
\omega^+(S,T)
:=
|\partial_T^+S|
=
|\Max(T\setminus S)|.
\]
In particular, $T$ has extremal upper boundary in itself with
$\omega^+(T,T)=0$.
Let $\Upsilon^+(P)$ be the set of all upper extremal-boundary pairs.

Under poset duality, relative downsets in $T$ become relative upsets,
and the upper boundary computed in $P$ becomes the lower boundary computed
in $P^{\op}$. Thus
\[
(S,T)\in\Upsilon^+(P)
\Longleftrightarrow
(T,S)\in\Upsilon^-(P^{\op}).
\]

\begin{corollary}
\label{cor:simple-costandard-ext-formula}
For $S,T\in\Int(P)$ and $p\geq0$,
\begin{equation}
\label{eq:simple-costandard-Ext}
\Ext_{\Lambda_P}^p\bigl(\LL(S),\NN(T)\bigr)
\cong
\begin{cases}
\det\bigl(\Max(T\setminus S)\bigr),
&
(S,T)\in\Upsilon^+(P)
\qand
p=\omega^+(S,T),
\\
0,
&
\text{otherwise}.
\end{cases}
\end{equation}
\end{corollary}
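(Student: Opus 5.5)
The plan is to deduce the statement from \Cref{thm:standard-simple-ext-formula} applied to the opposite poset $P^{\op}$, transporting it back along $\kk$-duality.

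First, I will set up the duality. The contravariant equivalence $\D$ of \eqref{eq:poset-duality} fixes each $\I_S$ and yields $\Lambda_{P^{\op}}\cong\Lambda_P^{\op}$. Under this isomorphism the idempotents $e_S$ correspond, and the Reedy degree $|S|$ is unchanged. Hence the quasi-hereditary order \eqref{eq:interval-cardinality-QH-order} is the same on both sides, which is the compatibility needed for \eqref{eq:QH-opposite-standard-costandard-duality}. The $\kk$-duality $\D\colon\Lambda_P^{\op}\text{-mod}\to\Lambda_P\text{-mod}$ is an exact contravariant equivalence. It sends simples to simples, $\D\LL_{P^{\op}}(S)\cong\LL_P(S)$, and by \eqref{eq:interval-standard-costandard-duality} it sends $\DD_{P^{\op}}(T)$ to $\NN_P(T)$. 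Since an exact anti-equivalence induces isomorphisms on all Ext groups with the arguments swapped, I obtain
\[
\Ext^p_{\Lambda_P}\bigl(\LL_P(S),\NN_P(T)\bigr)
\cong
\Ext^p_{\Lambda_{P^{\op}}}\bigl(\DD_{P^{\op}}(T),\LL_{P^{\op}}(S)\bigr)
\]
for all $p\geq0$, as already noted in the introduction.

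Second, I will evaluate the right-hand side using \Cref{thm:standard-simple-ext-formula} for the poset $P^{\op}$, with the pair $(T,S)$. It is nonzero exactly when $(T,S)\in\Upsilon^-(P^{\op})$ and $p=\omega^-_{P^{\op}}(T,S)$, and in that case it is $\det$ of the set of minimal elements of $T\setminus S$ computed in $P^{\op}$. I will then translate back to $P$:
\begin{itemize}
\item relative upsets in $P^{\op}$ are relative downsets in $P$;
\item covers reverse, so the lower boundary of $S$ in $T$ computed in $P^{\op}$ equals $\partial_T^+S$ computed in $P$;
\item minimal elements of $T\setminus S$ in $P^{\op}$ are its maximal elements in $P$.
\end{itemize}
This gives $(T,S)\in\Upsilon^-(P^{\op})\iff(S,T)\in\Upsilon^+(P)$, as recorded just before the corollary, together with $\omega^-_{P^{\op}}(T,S)=\omega^+(S,T)$ and $\Min_{P^{\op}}(T\setminus S)=\Max(T\setminus S)$. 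Substituting these identities yields \eqref{eq:simple-costandard-Ext}.

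The main point needing care is the first step. I must check that the isomorphism $\Lambda_{P^{\op}}\cong\Lambda_P^{\op}$ matches the quasi-hereditary structures, so that \eqref{eq:interval-standard-costandard-duality} really identifies $\D\DD_{P^{\op}}(T)$ with the costandard module for our Reedy-induced order. The cleanest route is the explicit models. \Cref{prop:costandard-concrete-model} was itself derived by exactly this duality, and its support set $\cM_T$ agrees with the standard support set for $P^{\op}$. Everything else is bookkeeping of the order reversal. The only residual subtlety is the canonical one-dimensional space $\det(\Max(T\setminus S))$ versus simply $\kk$; since the statement only claims an isomorphism, I will note that the identification is canonical up to the duality.
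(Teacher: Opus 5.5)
Your proposal is correct and matches the paper's proof: the paper likewise uses $\Lambda_{P^{\op}}\cong\Lambda_P^{\op}$ and $\kk$-duality to identify $\Ext^p_{\Lambda_P}(\LL_P(S),\NN_P(T))$ with $\Ext^p_{\Lambda_{P^{\op}}}(\DD_{P^{\op}}(T),\LL_{P^{\op}}(S))$. It then applies \Cref{thm:standard-simple-ext-formula} to $P^{\op}$ and translates $\Upsilon^-(P^{\op})$ into $\Upsilon^+(P)$, with $\Min_{P^{\op}}=\Max_P$. Your extra check that the two quasi-hereditary orders agree is exactly what justifies invoking \eqref{eq:interval-standard-costandard-duality}: both orders are reverse interval cardinality, which does not depend on the orientation of $P$.
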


\begin{proof}
The isomorphism
$\Lambda_{P^{\op}}\cong\Lambda_P^{\op}$ together with $\kk$-linear
duality gives a contravariant exact equivalence that exchanges
costandard modules for $P$ with standard modules for $P^{\op}$ and
preserves simple labels. We therefore have
\[
\Ext_{\Lambda_P}^p
\bigl(\LL_P(S),\NN_P(T)\bigr)
\cong
\Ext_{\Lambda_{P^{\op}}}^p
\bigl(\DD_{P^{\op}}(T),\LL_{P^{\op}}(S)\bigr).
\]
Applying \Cref{thm:standard-simple-ext-formula} to $P^{\op}$ and using
the preceding identification of upper extremal-boundary pairs with
lower extremal-boundary pairs gives
\eqref{eq:simple-costandard-Ext}.
\end{proof}

\section{Homological invariants}
\label{sec:homological-invariants}

In this section, we determine the Ext groups between simple
$\Lambda_P$-modules and derive a combinatorial formula for the global
dimension of $\Lambda_P$.

\subsection{Ext groups between simple modules}
\label{subsec:simple-simple-ext-groups}

We now determine the Ext groups between the simple
$\Lambda_P$-modules.  By
\Cref{thm:standard-simple-ext-formula,cor:simple-costandard-ext-formula}, the standard--simple and
simple--costandard Ext groups associated with an interval $T$ are
simultaneously nonzero precisely for the intervals appearing in the
following family.
  For $S,C\in\Int(P)$, define
\begin{equation}
\label{eq:simple-simple-intermediate-family}
\mathcal W(S,C)
:=
\bigl\{
T\in\Int(P)
\bigm|
(S,T)\in\Upsilon^+(P)
\text{ and }
(T,C)\in\Upsilon^-(P)
\bigr\}.
\end{equation}
For $T\in\mathcal W(S,C)$, put
\begin{equation}
\label{eq:omega}
\omega(S,T,C)
:=
\omega^+(S,T)+\omega^-(T,C).
\end{equation}
We first show that $\mathcal W(S,C)$ forms a Boolean lattice whenever it is nonempty.

\begin{proposition}
\label{prop:intermediate-boolean-structure}
Suppose that $\mathcal W(S,C)$ is nonempty.  With respect to the inclusion order, it has a minimum
and a maximum given by
\begin{equation}
\label{eq:intermediate-extremal-intervals}
T_{\min}
=
S\cup C,
\qquad
T_{\max}
=
\conv
\{x \mid T\in\mathcal W(S,C), \ x \in T\}.
\end{equation}
Moreover,
\begin{equation}
\label{eq:intermediate-minimum-boundaries}
\partial_{T_{\min}}^+S
=
\Max(C\setminus S),
\qquad
\partial_{T_{\min}}^-C
=
\Min(S\setminus C),
\end{equation}
and
\begin{equation}
\label{eq:intermediate-maximum-boundaries}
\partial_{T_{\max}}^+S
=
\bigcup_{T\in\mathcal W(S,C)}\partial_T^+S,
\qquad
\partial_{T_{\max}}^-C
=
\bigcup_{T\in\mathcal W(S,C)}\partial_T^-C.
\end{equation}
The assignment
\begin{equation}
\label{eq:intermediate-boolean-isomorphism}
\mathcal W(S,C)
\longrightarrow
\bigl[
\partial_{T_{\min}}^+S,
\partial_{T_{\max}}^+S
\bigr]
\times
\bigl[
\partial_{T_{\min}}^-C,
\partial_{T_{\max}}^-C
\bigr],
\qquad
T
\longmapsto
\bigl(
\partial_T^+S,
\partial_T^-C
\bigr),
\end{equation}
is an isomorphism of posets, where the right-hand side is equipped with the product order induced by inclusion.
In particular, $\mathcal W(S,C)$ is a Boolean
lattice with rank function
\begin{equation}
\label{eq:intermediate-boolean-rank}
\operatorname{rk}_{S,C}(T)
=
\left|
\partial_T^+S
\setminus
\partial_{T_{\min}}^+S
\right|
+
\left|
\partial_T^-C
\setminus
\partial_{T_{\min}}^-C
\right|,
\end{equation}
and 
\begin{equation}
\label{eq:omega-boolean-rank}
\omega(S,T,C)
=
|\Max(C\setminus S)|
+
|\Min(S\setminus C)|
+
\operatorname{rk}_{S,C}(T)
\quad 
(T\in\mathcal W(S,C)).
\end{equation}
\end{proposition}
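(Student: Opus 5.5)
The plan is to first prove a structural lemma describing every $T\in\mathcal W(S,C)$ explicitly in terms of $S$, $C$ and a set of ``extra'' extremal points, and then read off all assertions from it.

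\textbf{Structural lemma.} Fix $T\in\mathcal W(S,C)$. Since $S\in\cD(T)$, the set $T\setminus S$ is a relative upset of $T$, so $\Max(T\setminus S)=\Max(T)\setminus S$. Dually, $\Min(T\setminus C)=\Min(T)\setminus C$. The two extremality conditions then read as follows: every element of $T\setminus S$ that covers (in $T$) an element of $S$ is maximal in $T$, and every maximal element of $T$ outside $S$ covers an element of $S$; dually for $C$ with minimal elements. I will show
\[
T=S\cup C\cup A_T\cup B_T,\qquad A_T:=\partial_T^+S\setminus C\subseteq\Max(T),\quad B_T:=\partial_T^-C\setminus S\subseteq\Min(T).
\]
To see this, take $x\in T\setminus(S\cup C)$ and walk along a saturated chain in $T$ from $x$ downwards to a minimal element of $T$. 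If the chain meets $S$, the first step into $S$ exhibits an element of $\partial_T^+S$ lying on or below $x$. Since $T\setminus S$ is up-closed and that element is maximal in $T$, it must equal $x$. If the chain avoids $S$, I run the dual argument upwards towards $C$. The subtle case is a chain avoiding both $S$ and $C$. I expect to exclude it using connectedness of $T$ together with the convexity facts $S\in\cD(T)$ and $C\in\cU(T)$; in particular, $T\setminus S\subseteq C\cup\Max(T)$ should follow from the extremality conditions by an induction on the height in $T$. \emph{This case analysis is the main obstacle}, and I would settle it first.

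\textbf{The minimum $T_{\min}=S\cup C$.} Given the lemma, I check directly that $S\cup C$ is an interval lying in $\mathcal W(S,C)$ whenever $\mathcal W(S,C)\neq\varnothing$. Then $\partial^+_{S\cup C}S=\Max(C\setminus S)$ and $\partial^-_{S\cup C}C=\Min(S\setminus C)$, because the complement of $S$ in $S\cup C$ is $C\setminus S$. Moreover, for general $T$,
\[
\partial_T^+S=\Max(C\setminus S)\sqcup A_T,\qquad \partial_T^-C=\Min(S\setminus C)\sqcup B_T.
\]
So $T\mapsto(\partial_T^+S,\partial_T^-C)$ is injective and order-preserving, and its inverse $(X,Y)\mapsto S\cup C\cup X\cup Y$ is order-preserving as well.

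\textbf{Boolean structure and maximum.} For surjectivity onto the product of Boolean intervals, I show that adding any subset of the candidate extra maxima $A:=\bigcup_T A_T$ and extra minima $B:=\bigcup_T B_T$ to $S\cup C$ gives an element of $\mathcal W(S,C)$. Each added point is extremal and covers $S$ (respectively is covered by $C$), so convexity, connectedness and both extremality conditions are preserved, and the choices are independent. Taking all of $A\cup B$ gives $T_{\max}$, which equals the convex hull of the union, and yields \eqref{eq:intermediate-maximum-boundaries}. The rank function is $|A_T|+|B_T|$, which is \eqref{eq:intermediate-boolean-rank}. Finally, \eqref{eq:omega-boolean-rank} follows by adding the cardinalities of the two disjoint decompositions above.
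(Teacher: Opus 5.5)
\textbf{The structural lemma is false.} The case you flagged as the main obstacle, a point of $T\setminus(S\cup C)$ lying on a chain that avoids both $S$ and $C$, really occurs and cannot be excluded.

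Take the five-element poset $P=\{s,c,y,x,z\}$ with covers $s\lessdot c$, $s\lessdot z$, $y\lessdot c$, $y\lessdot x$, $x\lessdot z$. Let $T=P$, $S=\{s\}$ and $C=\{c\}$. Then
\[
\partial_T^+S=\{c,z\}=\Max(T\setminus S),
\qquad
\partial_T^-C=\{s,y\}=\Min(T\setminus C),
\]
so $T\in\mathcal W(S,C)$. But $x\in T\setminus(S\cup C)$ is neither maximal nor minimal in $T$. Hence $T\setminus S\not\subseteq C\cup\Max(T)$, and $T\neq S\cup C\cup A_T\cup B_T=\{s,c,y,z\}$; the latter set is not even convex.

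Three later steps depend on the lemma and fail with it:
\begin{itemize}
\item Your inverse map $(X,Y)\mapsto S\cup C\cup X\cup Y$ is wrong.
\item Injectivity, as you argue it, rests on this lemma.
\item The surjectivity step fails: adding $z\in A$ and $y\in B$ to $S\cup C$ gives a non-convex set, so convexity is not preserved.
\end{itemize}
Here $\mathcal W(S,C)=\{\{s,c\},\{s,c,z\},\{s,c,y\},T\}$, so the Boolean structure itself is correct. Several of your intermediate claims remain true and can be proved without the lemma: $A_T\subseteq\Max(T)$, the decomposition $\partial_T^+S=\Max(C\setminus S)\sqcup A_T$ and its dual, and $S\cup C\in\mathcal W(S,C)$ with the stated boundaries.

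\textbf{How to repair it.} Replace the union by the convex hull. Every element of $T\setminus S$ lies below an element of $\partial_T^+S$, and every element of $T\setminus C$ lies above an element of $\partial_T^-C$. Hence
\[
T=(S\cup\partial_T^+S)^{\downarrow_T}\cap(C\cup\partial_T^-C)^{\uparrow_T}
=\conv\bigl(S\cup C\cup\partial_T^+S\cup\partial_T^-C\bigr),
\]
which restores injectivity.

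Surjectivity then requires real work. You must show that, for arbitrary $X,Y$ in the two Boolean intervals, the points added by convex closure create no new covers from $S$ and no new maximal elements outside $S$ (and dually), so the boundaries are exactly $X$ and $Y$. This needs a common ambient interval in which every candidate point of $X$ is maximal off $S$ and every candidate point of $Y$ is minimal off $C$. Your proposal draws $A=\bigcup_T A_T$ and $B=\bigcup_T B_T$ from different $T$ with no such compatibility argument.

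The paper supplies this in two steps:
\begin{itemize}
\item It shows that $\mathcal W(S,C)$ is closed under $(T_1,T_2)\mapsto\conv(T_1\cup T_2)$, with boundaries the unions. This produces $T_{\max}$.
\item It works with $U(X,Y)=(S\cup X)^{\downarrow_{T_{\max}}}\cap(C\cup Y)^{\uparrow_{T_{\max}}}$ inside $T_{\max}$. A cover from $S$ in $U(X,Y)$ lies in $\partial^+_{T_{\max}}S=\Max(T_{\max}\setminus S)$ and below some element of $X$, so it equals that element. This is the step your argument is missing.
\end{itemize}
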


\begin{proof}
We first construct the maximal element.  Let
$T_1,T_2\in\mathcal W(S,C)$ and let $R$ be the convex hull of
$T_1\cup T_2$ in $P$.
The union $T_1\cup T_2$ is connected since both intervals contain
$S$, so $R$ is an interval.

Since $S$ is a relative downset in both $T_1$ and $T_2$, convexity
shows that $S$ is a relative downset in $R$.  Convexity also shows
that taking the convex hull creates no new cover from $S$, and hence
\[
\partial_R^+S
=
\partial_{T_1}^+S\cup\partial_{T_2}^+S
=
\Max(R\setminus S).
\]
Hence $(S,R)\in\Upsilon^+(P)$.
Dually, $(R,C)\in\Upsilon^-(P)$ with
\[
\partial_R^-C
=
\partial_{T_1}^-C\cup\partial_{T_2}^-C
=
\Min(R\setminus C).
\]
Therefore $R\in\mathcal W(S,C)$.

Since $\mathcal W(S,C)$ is finite, iterating this construction gives
the interval $T_{\max}$ in
\eqref{eq:intermediate-extremal-intervals}.  It contains every element
of $\mathcal W(S,C)$ and satisfies
\eqref{eq:intermediate-maximum-boundaries}.

Next, we characterize the elements of $\mathcal W(S,C)$ in terms of
$T_{\max}$.  We first record that every
$T\in\mathcal W(S,C)$ satisfies
\begin{equation}
\label{eq:intermediate-mandatory-boundaries}
\Max(C\setminus S)
\subseteq
\partial_T^+S,
\qquad
\Min(S\setminus C)
\subseteq
\partial_T^-C.
\end{equation}
Indeed, if $a\in\Max(C\setminus S)$ and $a<x$ for some
$x\in T\setminus S$, then $x\in C$ because $C$ is a relative upset
in $T$.  This contradicts the maximality of $a$ in $C\setminus S$.
The second inclusion is dual.

For $X$ and $Y$ satisfying
$\Max(C\setminus S)
\subseteq X \subseteq
\partial_{T_{\max}}^+S$ and
$\Min(S\setminus C) \subseteq Y \subseteq
\partial_{T_{\max}}^-C$,
define
\begin{equation}
\label{eq:intermediate-coordinate-inverse}
U(X,Y)
:=
\bigl(S\cup X\bigr)^{\downarrow_{T_{\max}}}
\cap
\bigl(C\cup Y\bigr)^{\uparrow_{T_{\max}}}.
\end{equation}
We claim that $U(X,Y)\in\mathcal W(S,C)$ and
\begin{equation}
\label{eq:intermediate-coordinate-boundaries}
\partial_{U(X,Y)}^+S
=
X,
\qquad
\partial_{U(X,Y)}^-C
=
Y.
\end{equation}

First, $S,C,X,Y\subseteq U(X,Y)$.  Indeed, every element of
$S\setminus C$ lies above an element of
$\Min(S\setminus C)\subseteq Y$, and dually every element of
$C\setminus S$ lies below an element of
$\Max(C\setminus S)\subseteq X$.  Hence
$S\subseteq(C\cup Y)^{\uparrow_{T_{\max}}}$ and
$C\subseteq(S\cup X)^{\downarrow_{T_{\max}}}$.
Since $X\subseteq\partial_{T_{\max}}^+S$ and
$Y\subseteq\partial_{T_{\max}}^-C$, it follows that $X$ and $Y$
also lie in both factors of
\eqref{eq:intermediate-coordinate-inverse}.

The set $U(X,Y)$ is convex.  Moreover, $S$ is a relative downset and
$C$ is a relative upset in $U(X,Y)$, since the same holds in
$T_{\max}$.  If $u\in U(X,Y)\setminus S$, then
$u\leq x$ for some $x\in X$.  Each $x\in X$ lies in
$\partial_{T_{\max}}^+S$, so $U(X,Y)$ is connected.  Thus
$U(X,Y)$ is an interval.

Every element of $U(X,Y)\setminus S$ lies below an element of $X$,
while
$X\subseteq\Max(T_{\max}\setminus S)$.  Hence
$\Max(U(X,Y)\setminus S)=X$.  Since $U(X,Y)$ is convex in
$T_{\max}$, a cover from $S$ to an element of $U(X,Y)$ is also a
cover in $T_{\max}$.  It follows that
$\partial_{U(X,Y)}^+S=X$.  Dually,
$\partial_{U(X,Y)}^-C=Y$.  Therefore
$U(X,Y)\in\mathcal W(S,C)$.

For every $T\in\mathcal W(S,C)$, we claim that
\[
T
=
U\bigl(
\partial_T^+S,
\partial_T^-C
\bigr).
\]
Indeed, every element of $T\setminus S$ lies below an element of
$\Max(T\setminus S)=\partial_T^+S$, and every element of
$T\setminus C$ lies above an element of
$\Min(T\setminus C)=\partial_T^-C$.
This gives the inclusion from left to right.
Conversely, every element of the right-hand side lies between two
elements of $T$, and hence belongs to $T$ by convexity.

Together with \eqref{eq:intermediate-coordinate-boundaries}, this shows
that the boundary map is a bijection
\[
\mathcal W(S,C)
\longrightarrow
\bigl[
\Max(C\setminus S),
\partial_{T_{\max}}^+S
\bigr]
\times
\bigl[
\Min(S\setminus C),
\partial_{T_{\max}}^-C
\bigr],
\qquad
T
\longmapsto
\bigl(
\partial_T^+S,
\partial_T^-C
\bigr),
\]
with inverse \eqref{eq:intermediate-coordinate-inverse}.
The boundary map preserves inclusion, since a cover between elements
of an interval is also a cover in $P$, while
\eqref{eq:intermediate-coordinate-inverse} is monotone in $X$ and
$Y$.  Hence this is an isomorphism of posets.

Finally, take
$X=\Max(C\setminus S)$ and 
$Y=\Min(S\setminus C)$.
By the preceding isomorphism, $U(X,Y)$ is the minimum element $T_{\min}$ of
$\mathcal W(S,C)$.  We now show that
$U(X,Y)=S\cup C$.
The inclusion $S\cup C\subseteq U(X,Y)$ is already known.
Suppose that
$x\in U(X,Y)\setminus(S\cup C)$.
Since $x\notin C$ and
$x\in(C\cup Y)^{\uparrow_{T_{\max}}}$, there exists
$y\in Y\subseteq S$ with $y\leq x$.
Along a saturated chain from $y$ to $x$, the first element outside
$S$ lies in
$\partial_{U(X,Y)}^+S=X\subseteq C$.
Since $C$ is a relative upset in $U(X,Y)$, this forces $x\in C$, a
contradiction.
Thus $U(X,Y)=S\cup C$.
Moreover,
\eqref{eq:intermediate-minimum-boundaries} follows from
\eqref{eq:intermediate-coordinate-boundaries}.

The two factors in
\eqref{eq:intermediate-boolean-isomorphism} are Boolean lattices
under inclusion. Their product is therefore a Boolean lattice, with
rank given by the sum of the two cardinality differences.
This is precisely
\eqref{eq:intermediate-boolean-rank}.
Since
$\omega^+(S,T)=|\partial_T^+S|$ and
$\omega^-(T,C)=|\partial_T^-C|$ for $T\in\mathcal W(S,C)$,
\eqref{eq:omega-boolean-rank} follows from
\eqref{eq:intermediate-minimum-boundaries} and
\eqref{eq:intermediate-boolean-rank}.
\end{proof}

We now return to the calculation of Ext groups. 
For $S,C\in\Int(P)$, define the graded $\kk$-vector space
$\mathsf K^\bullet(S,C)$ by
\begin{equation}
\label{eq:simple-simple-combinatorial-complex}
\mathsf K^p(S,C)
=
\bigoplus_{\substack{
T\in\mathcal W(S,C)\\
\omega(S,T,C)=p}}
\det(\partial_T^+S)
\otimes_\kk
\det(\partial_T^-C).
\end{equation}
For a cover $T\lessdot R$ in $\mathcal W(S,C)$,
\Cref{prop:intermediate-boolean-structure} shows that exactly one of
the two coordinates in
\eqref{eq:intermediate-boolean-isomorphism} changes.  Thus exactly one
of the following holds:
\begin{equation}
\label{eq:ss-upper-cover-data}
\partial_R^+S
=
\partial_T^+S\sqcup\{a\},
\qquad
\partial_R^-C
=
\partial_T^-C
\end{equation}
for a unique $a\in\partial_R^+S\setminus\partial_T^+S$, or
\begin{equation}
\label{eq:ss-lower-cover-data}
\partial_R^+S
=
\partial_T^+S,
\qquad
\partial_R^-C
=
\partial_T^-C\sqcup\{b\}
\end{equation}
for a unique $b\in\partial_R^-C\setminus\partial_T^-C$.
Define the corresponding component of the differential by
\begin{equation}
\label{eq:simple-simple-combinatorial-differential}
d^{\mathsf K}_{R,T}(\alpha\otimes\beta)
=
\begin{cases}
(a\wedge\alpha)\otimes\beta,
&
\text{in \eqref{eq:ss-upper-cover-data}},
\\[2mm]
(-1)^{|\partial_T^+S|}
\alpha\otimes(b\wedge\beta),
&
\text{in \eqref{eq:ss-lower-cover-data}}.
\end{cases}
\end{equation}
All other components are zero. 
The two length-two composites around each square in the Hasse diagram of $\mathcal W(S,C)$ cancel, so these maps define a differential
$d^{\mathsf K}$ on $\mathsf K^\bullet(S,C)$.

The following proposition identifies this combinatorial complex with
the derived Hom complex.  Its proof is given in
\Cref{app:simple-simple-assembly}.

\begin{proposition}
\label{prop:simple-simple-combinatorial-model}
For all $S,C\in\Int(P)$, there is an isomorphism
\begin{equation}
\label{eq:simple-simple-derived-combinatorial-model}
\bigl(\mathsf K^\bullet(S,C),d^{\mathsf K}\bigr)
\cong
\RHom_{\Lambda_P}\bigl(\LL(S),\LL(C)\bigr)
\end{equation}
in the derived category of cochain complexes of $\kk$-vector spaces.
\end{proposition}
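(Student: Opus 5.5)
The plan is to compute $\RHom_{\Lambda_P}(\LL(S),\LL(C))$ with the quasi-hereditary stratification by the heredity chain $J_\bullet$ of \eqref{eq:interval-cardinality-heredity-chain}, and then to compare the resulting filtered complex with $\mathsf K^\bullet(S,C)$.

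\emph{Step 1: a filtration of the derived Hom.} Filtering the regular bimodule $\Lambda_P$ by $J_\bullet$ and using \Cref{prop:cardinality-bimodule-strata}, the successive strata are $\bigoplus_{|T|=t}\DD(T)\otimes_\kk\D\NN(T)$. Since $\LL(C)\cong\Lambda_P\otimes_{\Lambda_P}\LL(C)$, the object $\LL(C)$ becomes, in the derived category, an iterated extension of the objects $\DD(T)\otimes_\kk(\D\NN(T)\otimes^{\mathbf L}_{\Lambda_P}\LL(C))$. Applying $\RHom(\LL(S),-)$ and using the standard facts $\RHom(\DD(T),\NN(T'))\cong\kk\,\delta_{TT'}$ and $\D(\D\NN(T)\otimes^{\mathbf L}Y)\cong\RHom(Y,\NN(T))$, I expect to identify the graded pieces of the induced filtration of $\RHom(\LL(S),\LL(C))$ with
\[
\RHom_{\Lambda_P}\bigl(\LL(S),\NN(T)\bigr)\otimes_\kk\RHom_{\Lambda_P}\bigl(\DD(T),\LL(C)\bigr).
\]
Equivalently, this is the twisted-complex (recollement) description of a highest weight category, in which $\LL(C)$ is built from the objects $\NN(T)\otimes\RHom(\DD(T),\LL(C))$. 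I would verify this description directly from \eqref{eq:cardinality-stratum-vector-space-decomposition}, one step of the chain at a time.

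\emph{Step 2: identify the pieces.} By \Cref{thm:standard-simple-ext-formula} and \Cref{cor:simple-costandard-ext-formula}, each factor is either zero or one-dimensional and concentrated in a single degree. The product is therefore nonzero exactly when $T\in\mathcal W(S,C)$. In that case it is $\det(\partial_T^+S)\otimes\det(\partial_T^-C)$, placed in degree $\omega(S,T,C)$. Hence, as graded spaces, the associated graded of the filtration is $\mathsf K^\bullet(S,C)$.

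\emph{Step 3: the differential.} Because every piece is one-dimensional in a single degree, the twisted complex can be replaced by a complex on $\mathsf K^\bullet$ whose component from $T$ to $R$ is nonzero only when $\omega(S,R,C)=\omega(S,T,C)+1$. By \eqref{eq:omega-boolean-rank}, this means $\operatorname{rk}_{S,C}$ increases by one. The filtration direction forces $|R|>|T|$, so $T\subsetneq R$, and the Boolean structure of \Cref{prop:intermediate-boolean-structure} then forces $T\lessdot R$. It remains to prove that each cover component is an isomorphism of the two determinant lines, and to fix signs. For a cover of type \eqref{eq:ss-lower-cover-data}, I would compute the connecting component via the map $(j_{T,R})_*$, or the quotient, relating the resolutions $\QQ_\bullet(T)$ and $\QQ_\bullet(R)$. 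Through the Dowker identification of \Cref{prop:Gamma-Dowker}, this should become the natural map $\partial\Simp(\partial_T^-C)\to\partial\Simp(\partial_R^-C)$, which on top reduced cohomology is $\beta\mapsto b\wedge\beta$. The upper covers are handled dually via $P^{\op}$, and the Koszul sign $(-1)^{|\partial_T^+S|}$ comes from commuting the degree shift past the first tensor factor. Even if the signs are only obtained up to nonzero scalars, the components are nonzero and squares commute up to sign on a Boolean cube, so the scalars can be normalized by rescaling the basis vectors. This yields an isomorphism with $(\mathsf K^\bullet,d^{\mathsf K})$.

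\emph{Main obstacle.} I expect the hardest step to be Step 3: showing that every cover component is nonzero, rather than merely that the degrees are right. Degree reasons alone do not exclude a zero component, and the simple--simple vanishing result depends on acyclicity of the whole cube when $\mathcal W(S,C)$ is not a singleton. I would try first to realize each component as an explicit Yoneda product of the generator of $\Ext^{\omega^+}(\LL(S),\NN(T))$ with a degree-one connecting class, computed from the explicit resolutions of \Cref{thm:standard-projective-resolution}.
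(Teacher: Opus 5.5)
Your Steps 1--2 and the final rescaling of the cover scalars over the Boolean lattice match the paper's argument (\Cref{prop:ss-exact-support-reduced-terms} and the normalization in the last step of the appendix). The gap is exactly where you put it: nothing in the proposal shows that the cover components $\widetilde d_{R,T}$ are nonzero. Objects of $D(\kk)$ are determined by their cohomology, and this nonvanishing is what forces $\RHom_{\Lambda_P}(\LL(S),\LL(C))$ to be acyclic when $|\mathcal W(S,C)|>1$. It is therefore the substantive part of the statement.

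The mechanism you sketch does not work as stated:
\begin{itemize}
\item For a lower cover \eqref{eq:ss-lower-cover-data} one has $b\in\Min(R)\setminus T$. By \Cref{prop:standard-hom-formula} there is then no nonzero homomorphism between $\DD(T)$ and $\DD(R)$ in either direction, so there is no chain map between $\QQ_\bullet(T)$ and $\QQ_\bullet(R)$ lifting one.
\item A simplicial map $\partial\Simp(\partial_T^-C)\to\partial\Simp(\partial_R^-C)$ induces maps $\widetilde H^r(\partial\Simp(\partial_R^-C))\to\widetilde H^r(\partial\Simp(\partial_T^-C))$ in a fixed degree $r$. By contrast, $\widetilde d_{R,T}$ raises the degree on the $C$-side by one.
\item Even with a degree-raising connecting map, you would still have to show that the component produced by gluing the strata factors as an isomorphism on the unchanged side tensored with that map.
\end{itemize}

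The paper avoids computing the component altogether. It passes to the bimodule subquotient $B_{T,R}$ spanned by admissible-component supports $W$ with $T\subseteq W\subseteq R$. The reduced complex of $B_{T,R}$ is the two-term complex given by $\widetilde d_{R,T}$. The paper then proves that $\mathsf C^\bullet(B_{T,R};S,C)$ is acyclic (\Cref{prop:ss-cover-bimodule-acyclicity}). For an upper cover this has two parts:
\begin{itemize}
\item the comparison $\DD(R)\otimes_\kk e_RB_{T,R}\to B_{T,R}$, combined with the restriction isomorphism of \Cref{lem:ss-standard-restriction-isomorphism};
\item the vanishing of $e_RB_{T,R}\otimes^{\mathbf L}_{\Lambda_P}\LL(S)$, via an explicit resolution and the closure operator $D\mapsto(D\cup\{a\})^{\downarrow_R}$.
\end{itemize}
The lower cover follows by duality.

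There are also two smaller errors. First, filtering $\Lambda_P\otimes_{\Lambda_P}\LL(C)$ by $J_\bullet$ gives pieces $\DD(T)\otimes_\kk(\D\NN(T)\otimes^{\mathbf L}_{\Lambda_P}\LL(C))$. Applying $\RHom(\LL(S),-)$ to these gives $\RHom(\LL(S),\DD(T))\otimes_\kk\D\RHom(\LL(C),\NN(T))$, not the pieces you want. The two descriptions you call equivalent are different filtrations. The right one comes from filtering the first argument of $\RHom(\Lambda_P\otimes^{\mathbf L}_{\Lambda_P}\LL(S),\LL(C))$, as in the paper.

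Second, the cardinality filtration only gives $|R|>|T|$, which does not imply $T\subsetneq R$: incomparable elements of $\mathcal W(S,C)$ can differ in rank by one. You need the finer filtration by admissible-component supports, which is triangular for inclusion by \Cref{prop:admissible-basis-composition} (see \eqref{eq:ss-support-upper-triangular}).
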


The cohomology of $\mathsf K^\bullet(S,C)$ can now be read from
\Cref{prop:intermediate-boolean-structure}.  Suppose that
$\mathcal W(S,C)$ is nonempty.  By
\eqref{eq:intermediate-boolean-isomorphism} and
\eqref{eq:omega-boolean-rank}, if $\mathcal W(S,C)$ has more than one
element, then, up to a fixed one-dimensional factor and a
cohomological degree shift, $\mathsf K^\bullet(S,C)$ is the augmented
simplicial cochain complex of a nonempty simplex.  Hence it is
acyclic.  This leaves the case in which $\mathcal W(S,C)$ is a
singleton.

We call a pair $(S,C)\in\Int(P)^2$ a \emph{saturated pair} in $P$ if
$\mathcal W(S,C)$ is a singleton.  By
\Cref{prop:intermediate-boolean-structure}, in this case
$\mathcal W(S,C)=\{S\cup C\}$.  For such a pair, we simply write
\begin{equation}
\label{eq:saturated-pair-degree}
\bar\omega(S,C)
:=
\omega(S,S\cup C,C)
=
|\Max(C\setminus S)|
+
|\Min(S\setminus C)|.
\end{equation}
We denote by $\Sat(P)$ the set of all saturated pairs in $P$. 

Using \Cref{prop:simple-simple-combinatorial-model}, we prove the following. 

\begin{theorem}
\label{thm:simple-simple-ext-formula}
Let $S,C\in\Int(P)$ and $p\geq0$.  Then
\begin{equation}
\label{eq:simple-simple-Ext}
\Ext^p_{\Lambda_P}\bigl(\LL(S),\LL(C)\bigr)
\cong
\begin{cases}
\det\bigl(\Max(C\setminus S)\bigr)
\otimes_\kk
\det\bigl(\Min(S\setminus C)\bigr),
&
\text{$(S,C)\in \Sat(P)$, $p=\bar\omega(S,C)$},
\\
0,
&
\text{otherwise}.
\end{cases}
\end{equation}
\end{theorem}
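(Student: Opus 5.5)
The plan is to read off the cohomology of $\RHom_{\Lambda_P}(\LL(S),\LL(C))$ from the combinatorial model in \Cref{prop:simple-simple-combinatorial-model}. This gives
\[
\Ext^p_{\Lambda_P}(\LL(S),\LL(C))\cong H^p\bigl(\mathsf K^\bullet(S,C),d^{\mathsf K}\bigr),
\]
so it suffices to compute $H^\bullet(\mathsf K^\bullet(S,C))$. I would split into three cases, according to whether $\mathcal W(S,C)$ is empty, a singleton, or has at least two elements.

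\emph{Empty case.} If $\mathcal W(S,C)=\varnothing$, then $\mathsf K^\bullet(S,C)=0$ by \eqref{eq:simple-simple-combinatorial-complex}, so every Ext group vanishes. Such a pair is not saturated, which matches the claim.

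\emph{Singleton case.} If $\mathcal W(S,C)$ is a singleton, then \Cref{prop:intermediate-boolean-structure} gives $\mathcal W(S,C)=\{S\cup C\}$. The complex is then concentrated in the single degree
\[
\omega(S,S\cup C,C)=\bar\omega(S,C),
\]
and the differential is zero. By \eqref{eq:intermediate-minimum-boundaries}, the term in that degree is
\[
\det(\partial^+_{S\cup C}S)\otimes\det(\partial^-_{S\cup C}C)=\det(\Max(C\setminus S))\otimes\det(\Min(S\setminus C)).
\]
This is exactly the first case of \eqref{eq:simple-simple-Ext}.

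\emph{Case of at least two elements.} Here I must prove that $\mathsf K^\bullet(S,C)$ is acyclic. Use the Boolean isomorphism \eqref{eq:intermediate-boolean-isomorphism} to identify $\mathcal W(S,C)$ with subsets $Z$ of the nonempty set
\[
E=\bigl(\partial^+_{T_{\max}}S\setminus\Max(C\setminus S)\bigr)\sqcup\bigl(\partial^-_{T_{\max}}C\setminus\Min(S\setminus C)\bigr),
\]
via $Z=(\partial_T^+S\setminus\partial^+_{T_{\min}}S)\sqcup(\partial_T^-C\setminus\partial^-_{T_{\min}}C)$. By \eqref{eq:omega-boolean-rank}, the degree of $T$ is $\bar\omega(S,C)+|Z|$.

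Fix an orientation of $\partial^+_{T_{\min}}S$, of $\partial^-_{T_{\min}}C$, and a total order on $E$ in which the upper elements precede the lower ones. Then each one-dimensional summand $\det(\partial_T^+S)\otimes\det(\partial_T^-C)$ can be identified with $\det(Z)$, up to a sign that I fix once and for all. Under this identification, $d^{\mathsf K}$ becomes $\pm$ the map $\beta\mapsto e\wedge\beta$ summed over $e\in E\setminus Z$. The sign $(-1)^{|\partial_T^+S|}$ in \eqref{eq:simple-simple-combinatorial-differential} is what makes moving $b$ past the upper block consistent. The result is, up to the shift by $\bar\omega(S,C)$ and a one-dimensional twist, the Koszul-type cochain complex $\bigwedge^\bullet\langle E\rangle_\kk$ with differential given by wedging with $\sum_{e\in E}e$. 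This is the augmented cochain complex of the full simplex $\Simp(E)$. Since $E\neq\varnothing$, it is acyclic; concretely, contraction with the dual of a fixed element of $E$ gives a contracting homotopy.

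The main obstacle is the sign bookkeeping in this identification: one must check that the coordinate change intertwines $d^{\mathsf K}$ with the standard simplicial coboundary, rather than some twisted version. Even without an exact match, an alternative argument works. Every component of $d^{\mathsf K}$ is an isomorphism of one-dimensional spaces, and squares anticommute. So after rescaling the basis (possible because the Boolean lattice is simply connected, making the sign cocycle a coboundary), the complex becomes the standard cube complex. That complex is acyclic as a tensor product of copies of the acyclic complex $\kk\xrightarrow{\sim}\kk$, one for each $e\in E$.
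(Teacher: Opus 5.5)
Your proposal is correct and is essentially the paper's proof: both read off the Ext groups as the cohomology of $\mathsf K^\bullet(S,C)$ via \Cref{prop:simple-simple-combinatorial-model}, and both split into three cases. These are the empty case, the saturated case (a single term $\det(\Max(C\setminus S))\otimes_\kk\det(\Min(S\setminus C))$ in degree $\bar\omega(S,C)$), and the case $|\mathcal W(S,C)|\geq2$, where the complex is a shifted, twisted augmented cochain complex of a nonempty simplex and hence acyclic. Your sign check, which the paper leaves implicit, also goes through: with your identifications the lower generators pick up only the constant sign $(-1)^{|\Max(C\setminus S)|}$, so $d^{\mathsf K}$ is wedging with a nonzero vector of $\langle E\rangle_\kk$ and is therefore exact (your rescaling fallback works as well).
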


\begin{proof}
By \Cref{prop:simple-simple-combinatorial-model}, the Ext groups are
the cohomology groups of $\mathsf K^\bullet(S,C)$.  If
$\mathcal W(S,C)$ is empty, the complex in
\eqref{eq:simple-simple-combinatorial-complex} is zero.  
If $\mathcal W(S,C)$ has more than one element, then
$(\mathsf K^\bullet(S,C),d^{\mathsf K})$ is acyclic by the preceding discussion.

Suppose that $(S,C)$ is saturated.  Then
\Cref{prop:intermediate-boolean-structure} gives
$\mathcal W(S,C)=\{S\cup C\}$.  Hence
\eqref{eq:simple-simple-combinatorial-complex} consists of a single
term.  By \eqref{eq:intermediate-minimum-boundaries}, this term is
\begin{equation}
\label{eq:saturated-pair-determinant-line}
\det\bigl(\Max(C\setminus S)\bigr)
\otimes_\kk
\det\bigl(\Min(S\setminus C)\bigr),
\end{equation}
and by \eqref{eq:saturated-pair-degree} it occurs in degree
$\bar\omega(S,C)$.  This proves
\eqref{eq:simple-simple-Ext}.
\end{proof}

For $S,C\in\Int(P)$ and $p\geq0$, let
$\beta_{\LL(S)}^p(C)$ denote the $p$th Betti multiplicity of $\LL(S)$ at $C$, that is, the multiplicity of $\PP(C)$ in the
$p$th term of a minimal projective resolution of $\LL(S)$.

\begin{corollary}
\label{cor:simple-simple-Betti-projective-dimension}
For $S,C\in\Int(P)$ and $p\geq0$,
\begin{equation}
\label{eq:simple-simple-Betti-multiplicities}
\beta_{\LL(S)}^p(C)
=
\begin{cases}
1,
&
\text{$(S,C)\in \Sat(P)$ and $p=\bar\omega(S,C)$},
\\
0
&
\text{otherwise}.
\end{cases}
\end{equation}
In particular, every indecomposable projective occurs with multiplicity
at most one in the minimal projective resolution of $\LL(S)$, and both
the multiplicities and the degrees in which they occur are independent
of the coefficient field.
\end{corollary}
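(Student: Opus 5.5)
The plan is to use the standard identification of Betti multiplicities with Ext groups into simples over an elementary algebra. Since $\Lambda_P$ is elementary, every simple module $\LL(C)$ satisfies $\End_{\Lambda_P}(\LL(C))\cong\kk$. Let $\PP_\bullet\to\LL(S)$ be a minimal projective resolution. Minimality means every differential has image inside the radical of its target. Therefore the complex $\Hom_{\Lambda_P}(\PP_\bullet,\LL(C))$ has zero differentials, because any map $\PP_p\to\LL(C)$ kills $\rad\PP_p$. This gives
\[
\Ext^p_{\Lambda_P}\bigl(\LL(S),\LL(C)\bigr)\cong\Hom_{\Lambda_P}\bigl(\PP_p,\LL(C)\bigr)\cong\kk^{\beta^p_{\LL(S)}(C)},
\]
where the last isomorphism uses $\Hom_{\Lambda_P}(\PP(A),\LL(C))\cong e_A\LL(C)$, which is $\kk$ for $A=C$ and $0$ otherwise. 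Hence $\beta^p_{\LL(S)}(C)=\dim_\kk\Ext^p_{\Lambda_P}(\LL(S),\LL(C))$.

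Next I would substitute \Cref{thm:simple-simple-ext-formula}. The determinant line $\det(\Max(C\setminus S))\otimes_\kk\det(\Min(S\setminus C))$ is one-dimensional by construction, so the dimension equals $1$ exactly when $(S,C)\in\Sat(P)$ and $p=\bar\omega(S,C)$, and equals $0$ otherwise. This is precisely \eqref{eq:simple-simple-Betti-multiplicities}.

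The remaining assertions follow from this formula. Multiplicities are at most one because the dimension is at most one. Field independence holds because $\Sat(P)$, $\mathcal W(S,C)$ and $\bar\omega(S,C)$ are defined purely from the combinatorics of $P$, with no reference to $\kk$. Taking the maximum over $C$ then gives $\pd\LL(S)$ as the largest $\bar\omega(S,C)$ over saturated pairs, although this is not needed for the statement.

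There is no real obstacle here. The only point requiring care is the vanishing of the induced differentials under minimality, and that is standard for finite-dimensional algebras: each differential maps into the radical, and every homomorphism to a simple module annihilates the radical.
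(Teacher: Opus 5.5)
Your proposal is correct and follows the paper's route. The paper states this corollary directly after \Cref{thm:simple-simple-ext-formula}, and it rests on the same standard fact you use: over the elementary algebra $\Lambda_P$, a minimal projective resolution gives $\beta^p_{\LL(S)}(C)=\dim_\kk\Ext^p_{\Lambda_P}(\LL(S),\LL(C))$, after which the determinant lines are one-dimensional and the saturated-pair condition is purely combinatorial.
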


Consequently,
\begin{equation}
\label{eq:simple-simple-projective-dimension}
\pd_{\Lambda_P}\LL(S)
=
\max_{\substack{
C\in\Int(P)\\
(S,C)\in \Sat(P)
}}
\bar\omega(S,C).
\end{equation}

\subsection{Global dimension}
\label{sec:global-dimension}

The projective dimension formula in \eqref{eq:simple-simple-projective-dimension} gives an exact combinatorial expression
for the global dimension of $\Lambda_P$.  Define
\begin{equation}
\label{eq:global-dimension-combinatorial-invariant}
\Omega(P)
:=
\max_{(S,C)\in\Sat(P)}
\bar\omega(S,C).
\end{equation}

\begin{theorem}
\label{thm:global-dimension-formula}
For every finite connected poset $P$,
\begin{equation}
\label{eq:global-dimension-formula}
\gldim\Lambda_P=\Omega(P).
\end{equation}
In particular, the global dimension of $\Lambda_P$ is independent of the coefficient field $\kk$.
\end{theorem}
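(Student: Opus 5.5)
The plan is to deduce the formula from the projective dimensions of the simple modules, using the standard fact that for a finite-dimensional algebra the global dimension equals the maximum of the projective dimensions of its simple modules. Since $\Lambda_P$ is quasi-hereditary (\Cref{thm:interval-cardinality-reedy-decomposition}), or alternatively by \Cref{prop:relative-auslander}, this global dimension is finite. Thus
\[
\gldim\Lambda_P=\max_{S\in\Int(P)}\pd_{\Lambda_P}\LL(S).
\]

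For each $S$, we compute $\pd\LL(S)$ from the minimal projective resolution. Its $p$th term is $\bigoplus_C\PP(C)^{\beta^p_{\LL(S)}(C)}$, and minimality gives $\beta^p_{\LL(S)}(C)=\dim\Ext^p_{\Lambda_P}(\LL(S),\LL(C))$ because $\Lambda_P$ is elementary. \Cref{cor:simple-simple-Betti-projective-dimension} then yields \eqref{eq:simple-simple-projective-dimension}. Here $\pd\LL(S)$ is the largest $p$ such that some $\beta^p(C)\neq0$, which is the maximum of $\bar\omega(S,C)$ over saturated pairs $(S,C)$.

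This maximum is over a nonempty set. Indeed, $(S,S)$ is saturated: $\mathcal W(S,S)$ consists of those $T$ in which $S$ is both a relative downset and a relative upset. By connectedness of $T$ such a $T$ equals $S$, so $\mathcal W(S,S)=\{S\}$. This pair contributes $\bar\omega(S,S)=0$, which is consistent with the degree-zero term $\PP(S)$.

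Taking the maximum over $S$ gives
\[
\gldim\Lambda_P=\max_{S}\ \max_{C:(S,C)\in\Sat(P)}\bar\omega(S,C)=\Omega(P).
\]
The quantity $\Omega(P)$ is defined purely in terms of the combinatorics of $\Int(P)$, namely the sets $\mathcal W(S,C)$, $\Max$, and $\Min$, none of which involve $\kk$. So the global dimension is independent of the field.

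There is no substantial obstacle, since all the homological content lies in \Cref{thm:simple-simple-ext-formula} and \Cref{prop:simple-simple-combinatorial-model}. The only points that need care are the identification of Betti numbers with the dimensions of $\Ext$ groups with simple modules via minimality, and the reduction of the global dimension to the simple modules.
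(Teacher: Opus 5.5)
Your proposal is correct and follows the paper's argument: the paper also proves the theorem by taking the maximum over $S$ of the formula for $\pd_{\Lambda_P}\LL(S)$, which it obtains from the simple--simple Ext formula via the Betti multiplicity corollary. Your additional remarks are correct details the paper leaves implicit, namely the use of minimality together with elementarity to identify Betti numbers with $\dim\Ext^p_{\Lambda_P}(\LL(S),\LL(C))$, and the observation that $(S,S)$ is saturated with $\bar\omega(S,S)=0$.
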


\begin{proof}
Taking the maximum of
\eqref{eq:simple-simple-projective-dimension} over
$S\in\Int(P)$ gives
\eqref{eq:global-dimension-formula}.
\end{proof}

The formula for $\Omega(P)$ also yields a sharp uniform upper bound.

\begin{corollary}    
\label{cor:uniform-global-dimension-bound}
For $n\geq3$, every finite connected poset $P$ with $|P|=n$
satisfies
\begin{equation}
\label{eq:uniform-global-dimension-bound}
\gldim\Lambda_P\leq n-1.
\end{equation}
For each $n \geq 3$, the bound in
\eqref{eq:uniform-global-dimension-bound} is attained by some such
poset.
\end{corollary}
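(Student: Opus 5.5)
By \Cref{thm:global-dimension-formula}, it suffices to show that $\bar\omega(S,C)\leq n-1$ for every saturated pair $(S,C)\in\Sat(P)$, and then to exhibit, for each $n\geq3$, a poset on $n$ elements with a saturated pair attaining $n-1$.

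\emph{Upper bound.} Put $X=\Max(C\setminus S)$ and $Y=\Min(S\setminus C)$. Both are subsets of the interval $T=S\cup C\subseteq P$, and $X\cap Y=\varnothing$, since $X\subseteq C\setminus S$ and $Y\subseteq S\setminus C$. Hence $\bar\omega(S,C)=|X|+|Y|\leq|S\triangle C|\leq n$. To rule out the value $n$, suppose $\bar\omega(S,C)=n$. Then $S\triangle C=P$ and $X\sqcup Y=P$, which forces $S\cap C=\varnothing$ and $S\cup C=P$.

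Since $T=S\cup C$ is an interval, hence connected, and $S$ and $C$ are disjoint, some element of $S$ is comparable with some element of $C$. Because $(S,T)\in\Upsilon^+$ and $(T,C)\in\Upsilon^-$, with $S$ a relative downset and $C$ a relative upset of $T$, any such comparability must run from $S$ up to $C$. We also have $\Max(C)=\Max(C\setminus S)=X$ and $\Min(S)=Y$. With $X=C$, the set $C$ is an antichain, so $C$ is connected only if $|C|=1$. Similarly, $S=Y$ is an antichain, so $|S|=1$. This gives $n=2$, contradicting $n\geq3$. The same count handles the degenerate cases: when $S=C$, or when one of the two intervals contains the other, one of $X$ and $Y$ is empty and the bound is immediate. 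I expect the main care point to be confirming that the combinatorial bookkeeping of $\Max$ and $\Min$ here matches the definitions exactly, in particular that $\Max(C\setminus S)=\Max(C)$ when the two intervals are disjoint.

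\emph{Sharpness.} For the attaining example I would try the following poset. Take $S=\{s\}$ and let $C$ be a connected set of size $n-1$ with $n-2$ maximal elements and one element below them, so that $|\Max(C\setminus S)|=n-2$ and $|\Min(S\setminus C)|=1$, giving $\bar\omega=n-1$. Concretely, let $P=\{s,c_0,c_1,\ldots,c_{n-2}\}$ with $s<c_i$ and $c_0<c_i$ for every $i\geq1$. Then:
\begin{enumerate}[\rm (1)]
\item $C=\{c_0,\ldots,c_{n-2}\}$ is an interval.
\item $S\cup C=P$ is an interval.
\item $S$ is a relative downset of $P$, with $\partial^+_PS=\{c_1,\ldots,c_{n-2}\}=\Max(P\setminus S)$.
\item $C$ is a relative upset of $P$, with $\partial^-_PC=\{s\}=\Min(P\setminus C)$.
\end{enumerate}
So $P\in\mathcal W(S,C)$. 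Saturation then holds because, by \Cref{prop:intermediate-boolean-structure}, every element of $\mathcal W(S,C)$ is an interval of $P$ containing $S\cup C=P$, so the minimum $S\cup C$ is the only element. This gives $\bar\omega(S,C)=(n-2)+1=n-1$. It remains only to check the covers, which I expect to be routine.
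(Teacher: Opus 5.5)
Your proposal is correct and follows the paper's proof. The upper bound uses the same equality-case analysis: equality forces $S\cap C=\varnothing$ and $S\cup C=P$, so $S$ and $C$ are connected antichains and hence singletons. For sharpness you use a different witness, the poset with $s,c_0<c_1,\dots,c_{n-2}$, disjoint $S=\{s\}$ and $C=P\setminus\{s\}$, and the count $(n-2)+1$; the paper instead takes the star $s<u_1,\dots,u_{n-1}$ with $S=\{s\}\subseteq C=P_n$, and your verification of $P\in\mathcal W(S,C)$ and of saturation goes through just as well.
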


\begin{proof}
Let $(S,C)\in\Sat(P)$.  Then
\[
\bar\omega(S,C)
=
|\Max(C\setminus S)|
+
|\Min(S\setminus C)|
\leq n.
\]
If equality held, then $S\cap C=\varnothing$,
$S\cup C=P$, $\Max(C)=C$, and $\Min(S)=S$.
Since $S$ and $C$ are connected, both would be singletons, which
contradicts $n\geq3$.  Hence
$\bar\omega(S,C)\leq n-1$ for every $(S,C)\in\Sat(P)$.
Thus $\Omega(P)\leq n-1$, and
\Cref{thm:global-dimension-formula} gives
\eqref{eq:uniform-global-dimension-bound}.

For sharpness, fix $n\geq3$ and consider the poset
$P_n=\{s,u_1,\ldots,u_{n-1}\}$,
where $s<u_i$ for $1\leq i\leq n-1$ and there are no further comparabilities.
For $S=\{s\}$ and $C=P_n$, one has
$\mathcal W(S,C)=\{P_n\}$, so $(S,C)\in\Sat(P_n)$, and
\[
\bar\omega(S,C)
=
|\Max(P_n\setminus\{s\})|
=
n-1.
\]
Hence $\Omega(P_n)\geq n-1$.  Together with
\eqref{eq:uniform-global-dimension-bound} and
\Cref{thm:global-dimension-formula}, this gives
$\gldim\Lambda_{P_n}=n-1$.
\end{proof}

Finally, combining \Cref{thm:global-dimension-formula} with  \Cref{prop:relative-auslander} gives the
corresponding formula for the interval-resolution global dimension.

\begin{corollary}
\label{cor:interval-resolution-global-dimension-formula}
Let $P$ be a finite connected poset with $|P|>1$. Then
\begin{equation}
\label{eq:interval-resolution-global-dimension-formula}
\intresgldim_\kk P=\Omega(P)-2.
\end{equation}
If $|P|\geq3$, then
\begin{equation}
\label{eq:uniform-interval-resolution-bound}
\intresgldim_\kk P\leq |P|-3.
\end{equation}
\end{corollary}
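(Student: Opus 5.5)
The first identity \eqref{eq:interval-resolution-global-dimension-formula} follows by substituting \Cref{thm:global-dimension-formula} into the relative Auslander formula of \Cref{prop:relative-auslander}. For $|P|>1$ that proposition gives $\intresgldim_\kk P=\gldim\Lambda_P-2$. We have $\gldim\Lambda_P=\Omega(P)$ for every finite connected poset, and this holds over any field. Hence $\intresgldim_\kk P=\Omega(P)-2$. As a side remark, this also shows that $\intresgldim_\kk P$ does not depend on $\kk$.

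For the bound \eqref{eq:uniform-interval-resolution-bound}, assume $|P|=n\geq3$. Then \Cref{cor:uniform-global-dimension-bound} gives $\Omega(P)=\gldim\Lambda_P\leq n-1$. Substituting into the first identity yields $\intresgldim_\kk P\leq n-3$. Sharpness is not part of the statement. Still, the poset $P_n$ from the proof of \Cref{cor:uniform-global-dimension-bound} shows that the bound is attained.

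There is no real obstacle here; the argument is pure bookkeeping. The only point to check is the hypothesis $|P|>1$, which \Cref{prop:relative-auslander} needs and which is implied by $|P|\geq3$.
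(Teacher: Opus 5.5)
Your proposal is correct and follows the same route as the paper: the first identity comes from substituting \Cref{thm:global-dimension-formula} into \Cref{prop:relative-auslander}, and the bound then follows from \Cref{cor:uniform-global-dimension-bound}. Your check that $|P|\geq3$ implies $|P|>1$ is the only verification needed, and your sharpness remark via $P_n$ is a correct extra.
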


\begin{proof}
The first equality follows from
\Cref{thm:global-dimension-formula} and \Cref{prop:relative-auslander}. The second follows from
\Cref{cor:uniform-global-dimension-bound}.
\end{proof}

\section{Rectangular grids}
\label{sec:grids}

Rectangular grids form a basic class of indexing posets in
multiparameter persistence theory.  For integers $m,\ell\geq 2$,
let $G_{m,\ell}:=[m]\times[\ell]$ and
$\Lambda_{m,\ell}:=\Lambda_{G_{m,\ell}}$.
Since $G_{m,\ell}\cong G_{\ell,m}$ as posets, it is enough to consider
$m\geq\ell$.  

For $m\geq\ell\geq2$, define a function
\begin{equation}
\label{eq:grid-combinatorial-function}
c(m,\ell)
:=
\min\{2\ell,m+\ell-2\}
=
\begin{cases}
2\ell-2,&m=\ell,\\
2\ell-1,&m=\ell+1,\\
2\ell,&m\geq\ell+2.
\end{cases}
\end{equation}

Our result is the following explicit formula for the global dimension of $\Lambda_{m,\ell}$.

\begin{theorem}
\label{thm:grid-exact-formula}
For $m\geq\ell\geq2$,
\begin{equation}
\label{eq:grid-exact-formula}
\gldim\Lambda_{m,\ell}=c(m,\ell),
\qquad
\intresgldim_\kk G_{m,\ell}=c(m,\ell)-2.
\end{equation}
\end{theorem}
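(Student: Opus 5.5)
By \Cref{thm:global-dimension-formula} and \Cref{cor:interval-resolution-global-dimension-formula}, it suffices to show $\Omega(G_{m,\ell})=c(m,\ell)$. The second equality in \eqref{eq:grid-exact-formula} then follows at once. The proof splits into a lower bound, obtained by exhibiting saturated pairs, and an upper bound on $\bar\omega(S,C)$ over all saturated pairs.

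\textbf{Lower bound.} I will construct explicit saturated pairs $(S,C)$ with $\bar\omega(S,C)=c(m,\ell)$.

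For $m\geq\ell+2$ the target value is $2\ell$. I will take $C$ to be a staircase-shaped interval whose set $\Max(C\setminus S)$ is an antichain of size $\ell$, running along an anti-diagonal and using every row. Dually, $S$ will be a staircase with $|\Min(S\setminus C)|=\ell$, shifted so that $S\cup C$ is convex and connected. The two extra columns supplied by $m\geq\ell+2$ are what allow both antichains to be placed.

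For $m=\ell$ and $m=\ell+1$, I will trim this construction at the corners, which loses two elements, respectively one element.

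Saturation will be verified through \Cref{prop:intermediate-boolean-structure}. It suffices to show that no interval $T\supsetneq S\cup C$ has extremal upper boundary for $S$ and extremal lower boundary for $C$. Equivalently, I must check that $\partial^+_{T_{\max}}S=\Max(C\setminus S)$ and $\partial^-_{T_{\max}}C=\Min(S\setminus C)$. For this I will show that adjoining any grid point to $S\cup C$ creates a new element of $\Max(T\setminus S)$ that is not a cover of $S$, or the dual failure. The grid edges block every extension, and this is exactly why the staircases must touch the boundary of the grid.

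\textbf{Upper bound.} Let $(S,C)$ be saturated and set $T=S\cup C$. Since $\Max(C\setminus S)$ and $\Min(S\setminus C)$ are antichains in $[m]\times[\ell]$, each has size at most $\ell$. This gives $\bar\omega\leq 2\ell$.

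To reach the sharper bounds $2\ell-2$ for $m=\ell$ and $2\ell-1$ for $m=\ell+1$, I will use saturation together with the extremality conditions.
\begin{enumerate}[\rm (1)]
\item If $\Max(C\setminus S)$ has $\ell$ elements, it is a full anti-diagonal antichain, one element in each row.
\item Each of these elements must cover an element of $S$, and $S$ is a relative downset in $T$. This forces $S$ to reach below the antichain.
\item Symmetrically, $\Min(S\setminus C)$ forces $C$ to reach above its antichain.
\end{enumerate}
Two full-length antichains sitting in a nested, shifted position need at least $\ell+2$ columns. I will make this precise by tracking the first coordinates of the extreme points of the two antichains.

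This also gives the uniform form $\bar\omega\leq m+\ell-2$. An antichain in the grid has size at most $\min(m,\ell)$, and the two antichains must be "separated", so the combined count is bounded by the number of anti-diagonal positions minus the two corners. To handle this, I will use the Boolean lattice structure of $\mathcal W(S,C)$: if there is slack in the grid, I will produce an enlargement $T\in\mathcal W(S,C)$ strictly larger than $S\cup C$, which contradicts saturation.

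\textbf{Main obstacle.} The hardest step is this separation argument in the upper bound, namely proving $|\Max(C\setminus S)|+|\Min(S\setminus C)|\leq m+\ell-2$. My first approach will be to project both antichains onto the anti-diagonal index $x+y$ and show that their positions interleave strictly. The boundary cases where an antichain touches a grid corner will have to be treated separately.
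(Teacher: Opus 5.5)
\textbf{The gap is in the upper bound.} Your lower bound is fine in outline. Made explicit, it is the paper's construction: with $D_d=\{(x,y)\mid x+y=d\}$, take $S=D_{\ell+1}\cup D_{\ell+2}$ and $C=D_{\ell+2}\cup D_{\ell+3}$, checking $(2,2)$ separately. The corner trimming is automatic, since $|D_{\ell+3}|=\min\{\ell,m-2\}$. Saturation is easiest to check by noting that every point outside $S\cup C$ lies strictly below a point of $D_{\ell+1}\subseteq S$ or strictly above a point of $D_{\ell+3}\subseteq C$. Such a point breaks the relative downset or upset property.

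The problem is the upper bound for $m\in\{\ell,\ell+1\}$, which you defer as the main obstacle. Your steps (1)--(3) only treat the case where both antichains have full size $\ell$. For $m=\ell$, the value $2\ell-1$ (sizes $\ell$ and $\ell-1$) is never excluded. The proposed mechanism is also misdirected. The bound needs only $S\cup C\in\mathcal W(S,C)$, not saturation, so no configuration exists from which to build an enlargement; the contradiction actually comes from connectivity. Projecting onto $x+y$ cannot count antichain elements either, since a whole $D_d$ maps to a single value.

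\textbf{The missing idea} is to work with the single interval $T=S\cup C$.
\begin{itemize}
\item Since $S\in\cD(T)$ and $C\in\cU(T)$, you get $\Max(C\setminus S)=\Max(T\setminus S)\subseteq\Max(T)$ and $\Min(S\setminus C)=\Min(T\setminus C)\subseteq\Min(T)$. Hence $\bar\omega(S,C)\leq|\Min(T)|+|\Max(T)|$.
\item Suppose $|T|>1$ and the smallest rectangle containing $T$ is $w\times h$. Then $|\Min(T)|+|\Max(T)|\leq w+h-1$. Indeed, each antichain meets each row and each column at most once. When $w=h$, both sets being the full antidiagonal of the box would make $T$ an antichain, hence a singleton. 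So $\bar\omega(S,C)\leq m+\ell-1$.
\item In the equality case, $w=m$ and $h=\ell$. A short analysis of the tight configuration then forces $T=\Min(T)\sqcup\Max(T)$, $S=\Min(T)$ and $C=\Max(T)$.
\item Connected antichains are singletons, so $\bar\omega(S,C)=2<m+\ell-1$, a contradiction.
\end{itemize}
Together with your bound $2\ell$, this yields $\bar\omega(S,C)\leq c(m,\ell)$.
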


\begin{proof}
By \Cref{thm:global-dimension-formula} and
\Cref{cor:interval-resolution-global-dimension-formula}, it is enough
to prove
$\Omega(G_{m,\ell})=c(m,\ell)$.

We first prove the upper bound.  Let
$(S,C)\in\Sat(G_{m,\ell})$ and put $T=S\cup C$.
Since $S$ is a relative downset and $C$ a relative upset in $T$,
\begin{equation}
\label{eq:grid-saturated-boundary-inclusions}
\Max(C\setminus S)
=
\Max(T\setminus S)
\subseteq
\Max(T),
\qquad
\Min(S\setminus C)
=
\Min(T\setminus C)
\subseteq
\Min(T).
\end{equation}
Each of $\Min(T)$ and $\Max(T)$ is an antichain, so each meets every
row in at most one element.  Hence
\eqref{eq:grid-saturated-boundary-inclusions} gives
$\bar\omega(S,C)\leq2\ell$.

If $|T|=1$, then $\bar\omega(S,C)=0$.  Suppose that $|T|>1$, and let
$[u,u+w-1]\times[v,v+h-1]$ be the smallest rectangular subgrid
containing $T$. 
Then $|\Min(T)|+|\Max(T)|\leq w+h-1$.
Together with \eqref{eq:grid-saturated-boundary-inclusions}, this gives $\bar\omega(S,C)\leq m+\ell-1$.
We claim that the value $m+\ell-1$ cannot occur.  
Suppose otherwise. In this case, all the inequalities in
\[
\bar\omega(S,C)
\leq
|\Min(T)|+|\Max(T)|
\leq
w+h-1
\leq
m+\ell-1
\]
are equalities.
The last equality gives $w=m$ and $h=\ell$.
Equality in the middle inequality forces
$T=\Min(T)\sqcup\Max(T)$.
Finally, equality in the first inequality forces both inclusions in
\eqref{eq:grid-saturated-boundary-inclusions} to be equalities.
Together with the description of $T$ above, this gives
$S=\Min(T)$ and $C=\Max(T)$.
In this case, $S$ and $C$ are connected antichains.
Thus they are both singletons.
It follows that
$\bar\omega(S,C)=2$, contrary to $m+\ell-1\geq3$.
Thus
\begin{equation}
\label{eq:grid-saturated-pair-upper-bound}
\bar\omega(S,C)
\leq
\min\{2\ell,m+\ell-2\}
=
c(m,\ell).
\end{equation}
Since $(S,C)$ was arbitrary,
$\Omega(G_{m,\ell})\leq c(m,\ell)$.

We now attain the bound.
If $(m,\ell)=(2,2)$, let
\begin{equation}
\label{eq:grid-two-by-two-saturated-pair}
S=\{(1,1)\},
\qquad
C=\{(1,1),(2,1),(1,2)\},
\qquad 
Z=C.
\end{equation}
The only interval properly containing $Z$ is $G_{2,2}$, and $Z$ is
not a relative upset in $G_{2,2}$.  Hence
$\mathcal W(S,C)=\{Z\}$, so $(S,C)$ is saturated.  Moreover,
$\bar\omega(S,C)=2=c(2,2)$.

Assume $(m,\ell)\neq(2,2)$.  For $d\in\mathbb Z$, put
$D_d=\{(x,y)\in G_{m,\ell}\mid x+y=d\}$, and set
\begin{equation}
\label{eq:grid-sharp-saturated-pair}
S:=D_{\ell+1}\cup D_{\ell+2},
\quad
C:=D_{\ell+2}\cup D_{\ell+3}, 
\qand 
Z:=S\cup C.
\end{equation}
See \Cref{fig:grid-SC-three-cases}. 
Both $S$ and $C$ are intervals.  
Moreover, $S$ is a relative downset of $Z$ whose upper boundary is
$D_{\ell+3}=\Max(Z)$, while $C$ is a relative upset of $Z$ whose
lower boundary is $D_{\ell+1}=\Min(Z)$.  Hence
$(S,Z)\in\Upsilon^+(G_{m,\ell})$ and
$(Z,C)\in\Upsilon^-(G_{m,\ell})$, so
$Z\in\mathcal W(S,C)$.

We show that $(S,C)$ is saturated.  Let
$R\in\mathcal W(S,C)$.  Then $Z=S\cup C\subseteq R$.
If $(x,y)\in R\setminus Z$, then either $x+y\leq\ell$ or
$x+y\geq\ell+4$.  In the first case there exists
$a\in D_{\ell+1}\subseteq S$ with $(x,y)\leq a$, contradicting
that $S$ is a relative downset in $R$.  In the second case there
exists $b\in D_{\ell+3}\subseteq C$ with $b\leq(x,y)$,
contradicting that $C$ is a relative upset in $R$.  Thus $R=Z$.
Hence $\mathcal W(S,C)=\{Z\}$, so $(S,C)$ is saturated.

Since
$\Min(S\setminus C)=D_{\ell+1}$ and
$\Max(C\setminus S)=D_{\ell+3}$, we have
\begin{equation}
\label{eq:grid-Z-extremal-count}
\bar\omega(S,C)
=
\ell+\min\{\ell,m-2\}
=
c(m,\ell).
\end{equation}
Therefore $\Omega(G_{m,\ell})\geq c(m,\ell)$.

Together with the upper bound, this gives
$\Omega(G_{m,\ell})=c(m,\ell)$.  The two equalities in
\eqref{eq:grid-exact-formula} now follow from
\Cref{thm:global-dimension-formula} and
\Cref{cor:interval-resolution-global-dimension-formula}.
\end{proof}

\begin{figure}[t]
\centering

\definecolor{gridblue}{RGB}{53,132,244}
\definecolor{gridpurple}{RGB}{128,45,190}
\definecolor{gridred}{RGB}{226,91,82}

\tikzset{
  grid edge/.style={draw=gray!55, line width=0.35pt},
  ambient vertex/.style={circle, draw=gray!60, fill=gray!20, inner sep=1.4pt},
  blue vertex/.style={circle, draw=gridblue, fill=gridblue, inner sep=2.3pt},
  purple vertex/.style={circle, draw=gridpurple, fill=gridpurple, inner sep=2.3pt},
  red vertex/.style={circle, draw=gridred, fill=gridred, inner sep=2.3pt},
  panel label/.style={font=\small},
  grid label/.style={font=\small}
}

\newcommand{\DrawAmbientGrid}[2]{%
  \foreach \i in {1,...,#1}{
    \foreach \j in {1,...,#2}{
      \ifnum\i<#1
        \draw[grid edge] (\i,\j) -- (\the\numexpr\i+1\relax,\j);
      \fi
      \ifnum\j<#2
        \draw[grid edge] (\i,\j) -- (\i,\the\numexpr\j+1\relax);
      \fi
    }
  }
  \foreach \i in {1,...,#1}{
    \foreach \j in {1,...,#2}{
      \node[ambient vertex] at (\i,\j) {};
    }
  }
}

\begin{tikzpicture}[x=0.60cm,y=0.60cm]

\begin{scope}[shift={(0,0)}]
  \node[panel label] at (3,6.75) {(a) $m=\ell$};
  \node[grid label] at (3,5.95) {$G_{5,5}$};

  \DrawAmbientGrid{5}{5}

  \foreach \p in {(1,5),(2,4),(3,3),(4,2),(5,1)}{
    \node[blue vertex] at \p {};
  }
  \foreach \p in {(2,5),(3,4),(4,3),(5,2)}{
    \node[purple vertex] at \p {};
  }
  \foreach \p in {(3,5),(4,4),(5,3)}{
    \node[red vertex] at \p {};
  }
\end{scope}

\begin{scope}[shift={(7.8,0)}]
  \node[panel label] at (3.5,6.75) {(b) $m=\ell+1$};
  \node[grid label] at (3.5,5.95) {$G_{6,5}$};

  \DrawAmbientGrid{6}{5}

  \foreach \p in {(1,5),(2,4),(3,3),(4,2),(5,1)}{
    \node[blue vertex] at \p {};
  }
  \foreach \p in {(2,5),(3,4),(4,3),(5,2),(6,1)}{
    \node[purple vertex] at \p {};
  }
  \foreach \p in {(3,5),(4,4),(5,3),(6,2)}{
    \node[red vertex] at \p {};
  }
\end{scope}

\begin{scope}[shift={(16.6,0)}]
  \node[panel label] at (4,6.75) {(c) $m\geq\ell+2$};
  \node[grid label] at (4,5.95) {$G_{7,5}$};

  \DrawAmbientGrid{7}{5}

  \foreach \p in {(1,5),(2,4),(3,3),(4,2),(5,1)}{
    \node[blue vertex] at \p {};
  }
  \foreach \p in {(2,5),(3,4),(4,3),(5,2),(6,1)}{
    \node[purple vertex] at \p {};
  }
  \foreach \p in {(3,5),(4,4),(5,3),(6,2),(7,1)}{
    \node[red vertex] at \p {};
  }
\end{scope}

\begin{scope}[shift={(1,-0.5)}]
  \node[blue vertex] at (0.8,0) {};
  \node[anchor=west,font=\footnotesize] at (1.4,0)
    {$D_{\ell+1}=S\setminus C=\Min(Z)$};

  \node[purple vertex] at (9.1,0) {};
  \node[anchor=west,font=\footnotesize] at (9.7,0)
    {$D_{\ell+2}=S\cap C$};

  \node[red vertex] at (15.3,0) {};
  \node[anchor=west,font=\footnotesize] at (15.9,0)
    {$D_{\ell+3}=C\setminus S=\Max(Z)$};
\end{scope}

\end{tikzpicture}

\caption{The saturated pair $(S,C)$ in
\eqref{eq:grid-sharp-saturated-pair}. The three panels show the
representative cases $(m,\ell)=(5,5)$, $(6,5)$, and $(7,5)$. Here
$D_{\ell+1}=S\setminus C=\Min(Z)$, $D_{\ell+2}=S\cap C$, and
$D_{\ell+3}=C\setminus S=\Max(Z)$, where $Z=S\cup C$.}
\label{fig:grid-SC-three-cases}
\end{figure}

\begin{remark}
\label{rem:AENY-grid-conjectures}
\Cref{thm:grid-exact-formula} resolves, over an arbitrary coefficient field, the two conjectures on
rectangular grids in
\cite[Conjectures~4.11 and~4.12]{AENY}.
For $\ell=2$ and $m\geq4$, it gives
\begin{equation}
\label{eq:AENY-height-two-conjecture}
\intresgldim_\kk G_{m,2}=2,
\end{equation}
which proves the former conjecture.
More generally, for every fixed $\ell\geq2$,
\begin{equation}
\label{eq:AENY-grid-stabilization}
\intresgldim_\kk G_{m,\ell}=2\ell-2
\qquad
(m\geq\ell+2).
\end{equation}
Thus the interval-resolution global dimension stabilizes at
$2\ell-2$ once $m\geq\ell+2$, proving the latter conjecture.
\end{remark}

\appendix
\addtocontents{toc}{\protect\setcounter{tocdepth}{1}}
\crefalias{section}{appendix}
\crefname{appendix}{Appendix}{Appendices}

\section{A combinatorial model for simple--simple derived Hom}
\label{app:simple-simple-assembly}

In this appendix, we prove
\Cref{prop:simple-simple-combinatorial-model}.
Throughout this appendix, $D(\kk)$ denotes the derived category of
cochain complexes of $\kk$-vector spaces.

\subsection{A finite reduction lemma}
\label{app:ss-finite-reduction}
We begin with a preliminary lemma needed for the argument below. 
Let $(\mathsf C^\bullet,d)$ be a bounded cochain complex of
finite-dimensional $\kk$-vector spaces.
Let $\mathcal Q$ be a finite indexing poset, and suppose that each term is equipped with a
decomposition
\[
\mathsf C^n
=
\bigoplus_{x\in\mathcal Q}\mathsf C_x^n 
\qquad
(n\in\mathbb Z).
\]

For $x,y\in\mathcal Q$, let
\[
d_{y,x}^n\colon
\mathsf C_x^n
\longrightarrow
\mathsf C_y^{n+1}
\]
denote the $(y,x)$-component of $d^n$.  Assume that $d$ is upper
triangular with respect to the $\mathcal Q$-indexed decomposition:
\begin{equation}
\label{eq:ss-upper-triangular-complex}
d_{y,x}^n\neq0
\quad\Longrightarrow\quad
x\leq y
\qquad
(x,y\in\mathcal Q,\ n\in\mathbb Z).
\end{equation}
We call $d$ strictly upper triangular if
$d_{x,x}^n=0$ for all $x\in\mathcal Q$ and $n\in\mathbb Z$.

Put $d_x:=d_{x,x}$. 
By \eqref{eq:ss-upper-triangular-complex}, $(\mathsf C_x^\bullet,d_x)$ is a cochain complex.

Consider the graded vector space 
\begin{equation}
\label{eq:ss-reduced-poset-graded-space}
\widetilde{\mathsf H}^n(\mathsf C)
=
\bigoplus_{x\in\mathcal Q}
H^n(\mathsf C_x^\bullet,d_x)
\qquad
(n\in\mathbb Z).
\end{equation}
The following lemma equips it with a differential that preserves
the triangular structure.

\begin{lemma}
\label{lem:ss-upper-triangular-cohomology}
There is a differential $\delta$ on
$\widetilde{\mathsf H}^\bullet(\mathsf C)$ such that $\delta$ is strictly upper
triangular with respect to
\eqref{eq:ss-reduced-poset-graded-space} and
$\bigl(\widetilde{\mathsf H}^\bullet(\mathsf C),\delta\bigr)
\simeq
(\mathsf C^\bullet,d)$ by a chain-homotopy equivalence.
\end{lemma}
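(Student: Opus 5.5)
We will prove this with the homological perturbation lemma, applied one diagonal block at a time. Because everything is over a field, we first choose, for each $x\in\mathcal Q$, a strong deformation retraction of $(\mathsf C_x^\bullet,d_x)$ onto its cohomology. Concretely, we split $\mathsf C_x^n=B_x^n\oplus H_x^n\oplus L_x^n$, where $B_x^n=\Image d_x^{n-1}$, $H_x^n$ is a complement of $B_x^n$ in $\ker d_x^n$, and $L_x^n$ is a complement of $\ker d_x^n$. This gives data
\[
i_x\colon H^\bullet(\mathsf C_x)\to\mathsf C_x^\bullet,\quad
p_x\colon\mathsf C_x^\bullet\to H^\bullet(\mathsf C_x),\quad
h_x\colon\mathsf C_x^\bullet\to\mathsf C_x^{\bullet-1}
\]
satisfying $p_xi_x=\id$ and $\id-i_xp_x=d_xh_x+h_xd_x$. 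We may also impose the side conditions $h_xi_x=0$, $p_xh_x=0$ and $h_xh_x=0$.

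Next we take direct sums over $x$ to obtain $i,p,h$ between $(\mathsf C^\bullet,d_0)$ and $(\widetilde{\mathsf H}^\bullet(\mathsf C),0)$, where $d_0=\bigoplus_x d_x$ is the diagonal part of $d$. We then write $d=d_0+\mu$, where $\mu=\sum_{x<y}d_{y,x}$ is the strictly upper triangular part. Since $d$ is a differential, $\mu$ is a perturbation of $d_0$. The maps $h$, $i$, $p$ are block diagonal, so $\mu h$ is strictly upper triangular with respect to the finite poset $\mathcal Q$. It is therefore nilpotent: any product of more than the length of the longest chain in $\mathcal Q$ such factors vanishes. Hence $(1-\mu h)^{-1}=\sum_{k\ge0}(\mu h)^k$ is a finite sum, and no convergence issue arises. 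This nilpotence is the point that needs care; boundedness and finite-dimensionality are used only to choose the splittings.

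By the basic perturbation lemma we get a new differential on $\widetilde{\mathsf H}^\bullet(\mathsf C)$,
\[
\delta=p\,\mu\sum_{k\ge0}(h\mu)^k\,i,
\]
together with perturbed maps $i'=\sum_k(h\mu)^k i$, $p'=p\sum_k(\mu h)^k$ and $h'=h\sum_k(\mu h)^k$. These give a deformation retraction between $(\mathsf C^\bullet,d)$ and $(\widetilde{\mathsf H}^\bullet(\mathsf C),\delta)$, and in particular a chain-homotopy equivalence. Each summand of $\delta$ is a composite of block-diagonal maps ($p$, $h$, $i$) and at least one strictly upper triangular factor $\mu$. Such a composite is strictly upper triangular, and therefore so is $\delta$. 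This proves the lemma.

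The only real verification is the standard perturbation-lemma algebra: that $\delta^2=0$ and that $i',p',h'$ satisfy the homotopy identities. Our case is simpler than the general one because every series involved is finite. If we wanted to avoid the perturbation lemma, an alternative is induction on $|\mathcal Q|$. We would pick a maximal $x$, so that $\mathsf C_x$ is a subcomplex, and then cancel the contractible summands $B_x\oplus L_x$ by Gaussian elimination. We would repeat this for each $x$, checking each time that the triangular shape is preserved. The perturbation formula above does this bookkeeping all at once.
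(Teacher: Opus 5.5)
Your argument is correct. It repackages the paper's proof rather than using a new idea. The paper chooses the same splittings $\mathsf C_x^n=H_x^n\oplus B_x^n\oplus L_x^n$ and cancels the invertible diagonal blocks $d_x\colon L_x^n\to B_x^{n+1}$ by Gaussian elimination. It observes that cancelling the block at $z$ changes an $x\to y$ component only when $x\leq z\leq y$. You perform all these cancellations at once through the basic perturbation lemma, with $h_x$ equal to $(d_x|_{L_x})^{-1}$ on $B_x$ and zero on $H_x\oplus L_x$.

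What your route buys is an explicit closed formula. The component $\delta_{y,x}$ is a finite sum over chains $x=z_0<z_1<\cdots<z_{k+1}=y$ in $\mathcal Q$ of terms $\pm\,p_y\,\mu_{y,z_k}h_{z_k}\mu_{z_k,z_{k-1}}\cdots h_{z_1}\mu_{z_1,x}\,i_x$. This has three consequences:
\begin{enumerate}[\rm (1)]
\item Strict upper triangularity is immediate.
\item It spares you the bookkeeping of iterated eliminations, such as checking that earlier cancellations leave the remaining diagonal blocks unchanged and invertible.
\item It makes the later claims \eqref{eq:ss-reduced-interval-equivalence} and \eqref{eq:ss-reduced-interval-locality} transparent. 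The block-diagonal data $(i,p,h)$ descends to each subquotient $\mathsf C^\bullet[x,y]$, and the formula for $\delta_{y,x}$ involves only $z\in[x,y]$.
\end{enumerate}
The paper's route is more elementary, since it invokes no external lemma.

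There is one small sign slip. Your splitting produces the convention $\id-ip=dh+hd$. With that convention, the geometric series in $\delta$, $i'$, $p'$, $h'$ should be $\sum_k(-h\mu)^k$ and $\sum_k(-\mu h)^k$. The formulas you wrote are the ones for $ip-\id=dh+hd$. This does not affect the triangularity conclusion or the chain-homotopy equivalence.
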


\begin{proof}
For each $x\in\mathcal Q$, set
$Z_x^n:=\ker d_x^n$, $B_x^n:=\operatorname{im} d_x^{n-1}$.
Choose a complement $H_x^n$ of $B_x^n$ in $Z_x^n$, and then a complement
$L_x^n$ of $Z_x^n$ in $\mathsf C_x^n$, so that
\[
Z_x^n=B_x^n\oplus H_x^n,
\qquad
\mathsf C_x^n=H_x^n\oplus B_x^n\oplus L_x^n.
\]
Then $H_x^n\cong H^n(\mathsf C_x^\bullet,d_x)$ and
the restriction $d_x\colon L_x^n\to B_x^{n+1}$ is an isomorphism.
Applying Gaussian elimination to these invertible diagonal blocks, we obtain
a chain-homotopy equivalent complex whose underlying graded space is
\eqref{eq:ss-reduced-poset-graded-space}.

Because the original differential is upper triangular with respect to
the $\mathcal Q$-indexed decomposition, Gaussian elimination of a block
indexed by $z$ can modify a component from the $x$-part to the $y$-part
only when $x\leq z\leq y$.  Hence upper triangularity is preserved.
The diagonal components vanish, since each diagonal complex has been
replaced by its cohomology.  
Therefore the resulting differential is strictly upper triangular with
respect to the decomposition
\eqref{eq:ss-reduced-poset-graded-space}.
\end{proof}

For a complex $(\mathsf C^\bullet,d)$ as above, fix a differential
$\widetilde d_{\mathsf C}$ obtained by the construction in the proof of
\Cref{lem:ss-upper-triangular-cohomology}, and write
\begin{equation}
\label{eq:ss-upper-triangular-reduction-notation}
\bigl(
\widetilde{\mathsf H}^\bullet(\mathsf C),
\widetilde d_{\mathsf C}
\bigr)
\end{equation}
for the resulting cochain complex.

Next, we consider the interval subquotients associated with the
$\mathcal Q$-indexed decomposition.  For $x\leq y$ in $\mathcal Q$,
define the cochain complex
\begin{equation}
\label{eq:ss-poset-interval-complex}
\mathsf C^\bullet[x,y]
=
\bigoplus_{x\leq z}\mathsf C_z^\bullet
\bigg/
\bigoplus_{\substack{x\leq z\\ z\not\leq y}}
\mathsf C_z^\bullet
\end{equation}
as a quotient. 
Indeed, by \eqref{eq:ss-upper-triangular-complex}, the first direct
sum on the right is a subcomplex of $\mathsf C^\bullet$, and the
second is a subcomplex of the first.  We write $d^{[x,y]}$ for the
differential induced by $d$.  As a graded vector space,
$\mathsf C^\bullet[x,y]$ has the canonical decomposition
\begin{equation}
\label{eq:ss-poset-interval-decomposition}
\mathsf C^n[x,y]
\cong
\bigoplus_{x\leq z\leq y}\mathsf C_z^n
\qquad
(n\in\mathbb Z),
\end{equation}
and $d^{[x,y]}$ is upper triangular with respect to this
$[x,y]$-indexed decomposition.

Define
$\bigl(\widetilde{\mathsf H}^\bullet(\mathsf C)[x,y],
\widetilde d_{\mathsf C}^{[x,y]}\bigr)$
by applying the same quotient construction as in
\eqref{eq:ss-poset-interval-complex} to
$\bigl(\widetilde{\mathsf H}^\bullet(\mathsf C),
\widetilde d_{\mathsf C}\bigr)$.
Its underlying graded vector space is
\begin{equation}
\label{eq:ss-reduced-interval-complex}
\widetilde{\mathsf H}^n(\mathsf C)[x,y]
\cong
\bigoplus_{x\leq z\leq y}
H^n(\mathsf C_z^\bullet,d_z).
\end{equation}
The Gaussian eliminations in the proof of
\Cref{lem:ss-upper-triangular-cohomology} descend to these
subquotients by the same triangularity argument. Hence there is a
chain-homotopy equivalence
\begin{equation}
\label{eq:ss-reduced-interval-equivalence}
\bigl(
\widetilde{\mathsf H}^\bullet(\mathsf C)[x,y],
\widetilde d_{\mathsf C}^{[x,y]}
\bigr)
\simeq
\bigl(
\mathsf C^\bullet[x,y],d^{[x,y]}
\bigr).
\end{equation}
By construction,
\begin{equation}
\label{eq:ss-reduced-interval-locality}
(\widetilde d_{\mathsf C}^{[x,y]})_{y,x}
=
(\widetilde d_{\mathsf C})_{y,x}.
\end{equation}

\subsection{Reduction of the simple--simple derived Hom complex}
\label{app:ss-reduction-of-simple-simple-derived-hom-complex}
Let $P$ be a finite connected poset. 
Fix $S,C\in\Int(P)$.  By \Cref{prop:relative-auslander},
$\Lambda_P$ has finite global dimension.  We may therefore choose a
bounded projective resolution
$\mathcal P^\bullet(S)\to\LL(S)$ and a bounded injective coresolution
$\LL(C)\to\mathcal I^\bullet(C)$.

For a finite-dimensional $\Lambda_P$--$\Lambda_P$-bimodule $B$, we
consider the bounded cochain complex of finite-dimensional
$\kk$-vector spaces
\begin{equation}
\label{eq:ss-concrete-derived-evaluation}
\mathsf C^\bullet(B;S,C)
=
\operatorname{Tot}\Hom_{\Lambda_P}
\left(
B\otimes_{\Lambda_P}\mathcal P^\bullet(S),
\mathcal I^\bullet(C)
\right).
\end{equation}
The chosen resolutions give an isomorphism
\begin{equation}
\label{eq:ss-concrete-derived-evaluation-derived}
\mathsf C^\bullet(B;S,C)
\cong
\RHom_{\Lambda_P}
\left(
B\overset{\mathbf L}{\otimes}_{\Lambda_P}\LL(S),
\LL(C)
\right)
\quad\text{in }D(\kk).
\end{equation}
For later use, we record the matrix blocks of
\eqref{eq:ss-concrete-derived-evaluation}.  If $\PP(X)$ and
$\II(Y)$ occur as summands of the chosen resolutions, then
\begin{equation}
\label{eq:ss-bimodule-derived-evaluation-matrix-block}
\Hom_{\Lambda_P}
\left(
B\otimes_{\Lambda_P}\PP(X),
\II(Y)
\right)
\cong
\D(e_YBe_X).
\end{equation}
Under these identifications, the matrix components of the
differential are dual to the corresponding left and right
$\Lambda_P$-actions on $B$.

We first specialize to the regular bimodule $B=\Lambda_P$.  Then
\begin{equation}
\label{eq:ss-regular-derived-evaluation}
\mathsf C^\bullet(\Lambda_P;S,C)
\cong
\operatorname{Tot}\Hom_{\Lambda_P}
\left(
\mathcal P^\bullet(S),
\mathcal I^\bullet(C)
\right)
\end{equation}
and, by \eqref{eq:ss-concrete-derived-evaluation-derived}, it is
isomorphic to $\RHom_{\Lambda_P}(\LL(S),\LL(C))$ in $D(\kk)$.
By the definition of the total Hom complex, each term is given by
\begin{equation}
\label{eq:ss-total-hom-term}
\mathsf C^n(\Lambda_P;S,C)
=
\bigoplus_{q-p=n}
\Hom_{\Lambda_P}
\bigl(
\mathcal P^p(S),
\mathcal I^q(C)
\bigr).
\end{equation}
For the matrix blocks,
\eqref{eq:ss-bimodule-derived-evaluation-matrix-block} gives
\begin{equation}
\label{eq:ss-derived-evaluation-matrix-block}
\Hom_{\Lambda_P}\bigl(\PP(X),\II(Y)\bigr)
\cong
\D\bigl(e_Y\Lambda_Pe_X\bigr),
\qquad
X,Y\in\Int(P).
\end{equation}
Thus the admissible-component basis of $e_Y\Lambda_Pe_X$ gives a dual
basis of each matrix block.  For $T\in\Int(P)$, let
\begin{equation}
\label{eq:ss-support-component}
\mathsf C_T^n
=
\operatorname{span}_{\kk}
\left\{
\bigl((\rho_{Y,X}^{T})^{\op}\bigr)^\vee
\middle| 
\begin{array}{l}
T\in\Adm(Y,X),\\
\Hom_{\Lambda_P}(\PP(X),\II(Y))
\text{ is a matrix block in total degree }n
\end{array}
\right\}.
\end{equation}
We then
have
\begin{equation}
\label{eq:ss-support-decomposition}
\mathsf C^n(\Lambda_P;S,C)
=
\bigoplus_{T\in\Int(P)}
\mathsf C_T^n.
\end{equation}

Let $d$ denote the differential of
$\mathsf C^\bullet(\Lambda_P;S,C)$.  We claim that $d$ is upper
triangular with respect to the $\Int(P)$-indexed decomposition
\eqref{eq:ss-support-decomposition}.  Indeed, on each matrix block
the differential is dual to left or right multiplication in
$\Lambda_P$.  By \Cref{prop:admissible-basis-composition},
multiplication can only decrease the support, so dualizing gives
\begin{equation}
\label{eq:ss-support-upper-triangular}
d_{R,T}^n\neq0
\quad\Longrightarrow\quad
T\subseteq R
\end{equation}
as required. 

We may therefore apply
\Cref{lem:ss-upper-triangular-cohomology} to
$\mathsf C^\bullet(\Lambda_P;S,C)$.  
We write
\begin{equation}
\label{eq:ss-reduced-simple-simple-complex}
\bigl(
\widetilde{\mathsf H}^\bullet(S,C),
\widetilde d
\bigr)
:=
\bigl(
\widetilde{\mathsf H}^\bullet
    (\mathsf C^\bullet(\Lambda_P;S,C)),
\widetilde d_{\mathsf C(\Lambda_P;S,C)}
\bigr)
\end{equation}
for the resulting reduced complex.
By \Cref{lem:ss-upper-triangular-cohomology}, this complex is
chain-homotopy equivalent to
$\mathsf C^\bullet(\Lambda_P;S,C)$, and hence
\begin{equation}
\label{eq:ss-reduced-simple-simple-derived-hom}
\bigl(
\widetilde{\mathsf H}^\bullet(S,C),
\widetilde d
\bigr)
\cong
\RHom_{\Lambda_P}\bigl(\LL(S),\LL(C)\bigr)
\quad\text{in }D(\kk).
\end{equation}

We next describe the terms of
$\widetilde{\mathsf H}^\bullet(S,C)$ and the possible nonzero
components of its differential.

\begin{proposition}
\label{prop:ss-exact-support-reduced-terms}
For every $p\in\mathbb Z$, there is an isomorphism of
$\kk$-vector spaces
\begin{equation}
\label{eq:ss-exact-support-reduced-terms}
\widetilde{\mathsf H}^p(S,C)
\cong
\bigoplus_{\substack{T\in\mathcal W(S,C)\\
\omega(S,T,C)=p}}
\det(\partial_T^+S)
\otimes_\kk
\det(\partial_T^-C).
\end{equation}
\end{proposition}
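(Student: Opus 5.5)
By definition, $\widetilde{\mathsf H}^p(S,C)=\bigoplus_{T}H^p(\mathsf C_T^\bullet,d_T)$. The plan is therefore to identify each diagonal complex $(\mathsf C_T^\bullet,d_T)$ for fixed $T\in\Int(P)$ with a tensor product of two complexes that we already understand, and then to read off its cohomology from \Cref{thm:standard-simple-ext-formula} and \Cref{cor:simple-costandard-ext-formula}.

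The key observation is that $d_T$ is obtained from $d$ by keeping only the components that preserve the support $T$. By \eqref{eq:ss-support-upper-triangular} and \Cref{prop:admissible-basis-composition}, these components come from multiplications modulo elements of strictly smaller support. In other words, $(\mathsf C_T^\bullet,d_T)$ is the summand indexed by $T$ of the complex obtained by replacing the regular bimodule $\Lambda_P$ in \eqref{eq:ss-concrete-derived-evaluation} by the stratum $J_{|T|}/J_{|T|-1}$; it is exactly the part corresponding to the span of the classes $(\rho^T_{Y,X})^{\op}+J_{|T|-1}$. By \Cref{prop:cardinality-bimodule-strata}, this span is the image of the bimodule $\DD(T)\otimes_\kk\D\NN(T)$. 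We will check from \eqref{eq:ss-bimodule-derived-evaluation-matrix-block} that the identification is compatible with the differentials, since both are dual to the left and right actions. This gives
\[
(\mathsf C_T^\bullet,d_T)\cong\mathsf C^\bullet\bigl(\DD(T)\otimes_\kk\D\NN(T);S,C\bigr).
\]
Because the bimodule is a tensor product over $\kk$ of a left module and a right module, the total Hom complex factors. Here $\bigl(\DD(T)\otimes_\kk\D\NN(T)\bigr)\otimes_{\Lambda_P}\mathcal P^\bullet(S)$ is $\DD(T)\otimes_\kk\bigl(\D\NN(T)\otimes_{\Lambda_P}\mathcal P^\bullet(S)\bigr)$, and Hom out of it into $\mathcal I^\bullet(C)$ is
\[
\Hom_\kk\bigl(\D\NN(T)\otimes_{\Lambda_P}\mathcal P^\bullet(S),\kk\bigr)\otimes_\kk\Hom_{\Lambda_P}\bigl(\DD(T),\mathcal I^\bullet(C)\bigr),
\]
using finite-dimensionality. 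By tensor–Hom adjunction, the first factor is $\Hom_{\Lambda_P}(\mathcal P^\bullet(S),\NN(T))$. Hence
\[
(\mathsf C_T^\bullet,d_T)\cong\Hom_{\Lambda_P}\bigl(\mathcal P^\bullet(S),\NN(T)\bigr)\otimes_\kk\Hom_{\Lambda_P}\bigl(\DD(T),\mathcal I^\bullet(C)\bigr),
\]
with the usual Koszul sign on the tensor differential.

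Next apply the Künneth formula over the field $\kk$. The first factor computes $\Ext^\bullet_{\Lambda_P}(\LL(S),\NN(T))$ and the second computes $\Ext^\bullet_{\Lambda_P}(\DD(T),\LL(C))$. By \Cref{cor:simple-costandard-ext-formula}, the first is $\det(\Max(T\setminus S))=\det(\partial_T^+S)$, concentrated in degree $\omega^+(S,T)$, when $(S,T)\in\Upsilon^+(P)$, and vanishes otherwise. By \Cref{thm:standard-simple-ext-formula}, the second is $\det(\partial_T^-C)$, concentrated in degree $\omega^-(T,C)$, when $(T,C)\in\Upsilon^-(P)$, and vanishes otherwise. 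Their tensor product is nonzero exactly when $T\in\mathcal W(S,C)$, in which case it is one-dimensional in degree $\omega(S,T,C)$ and equal to $\det(\partial_T^+S)\otimes\det(\partial_T^-C)$. Summing over $T$ gives \eqref{eq:ss-exact-support-reduced-terms}.

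The main obstacle is the first identification. We must verify carefully that the diagonal block of the differential on the $\rho^T$-dual basis agrees with the differential coming from $J_{|T|}/J_{|T|-1}$ restricted to the $T$-summand. Concretely, this means checking that in \Cref{prop:admissible-basis-composition} a composite contributes a support-$T$ term only when the support is unchanged. It also requires that the isomorphism $\theta_{|T|}$ of \Cref{prop:cardinality-bimodule-strata} sends $[q_{R,T}]\otimes j_{T,S}$ to $\rho^T_{R,S}$, which is \eqref{eq:cardinality-bimodule-basis-correspondence}, and that it intertwines the actions. Once this is established, everything else is formal homological algebra, and sign conventions only affect the isomorphism up to the choice of determinant orientations.
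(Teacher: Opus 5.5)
Your proposal is correct and follows essentially the same route as the paper. You identify $(\mathsf C_T^\bullet,d_T)$ with $\mathsf C^\bullet\bigl(\DD(T)\otimes_\kk\D\NN(T);S,C\bigr)$ via $\theta_{|T|}$, using that multiplication modulo $J_{|T|-1}$ retains exactly the support-$T$ components. You then split the cohomology, by tensor--Hom adjunction, $\kk$-duality and K\"unneth, into $\bigoplus_{i+j=p}\Ext^i_{\Lambda_P}(\LL(S),\NN(T))\otimes_\kk\Ext^j_{\Lambda_P}(\DD(T),\LL(C))$ and apply \Cref{cor:simple-costandard-ext-formula} and \Cref{thm:standard-simple-ext-formula}.
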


\begin{proof}
We recall that 
\[
\widetilde{\mathsf H}^p(S,C)
=
\bigoplus_{T\in\Int(P)}
H^p(\mathsf C_T^\bullet,d_T).
\]

Fix $T\in\Int(P)$ and put $t=|T|$.
Set $B_T:=\DD(T)\otimes_\kk\D\NN(T)$.
By \Cref{prop:cardinality-bimodule-strata},
the isomorphism $\theta_t$ identifies $B_T$ with the subspace of
$J_t/J_{t-1}$ spanned by
$(\rho_{R,Q}^{T})^{\op}+J_{t-1}$ with
$T\in\Adm(R,Q)$ and $R,Q\in\Int(P)$.
Thus, for $X,Y\in\Int(P)$,
\eqref{eq:cardinality-bimodule-basis-correspondence} identifies
$e_YB_Te_X$ with the one-dimensional space spanned by
$(\rho_{Y,X}^{T})^{\op}+J_{t-1}$ when $T\in\Adm(Y,X)$, and both
spaces are zero otherwise.  Hence
\eqref{eq:ss-bimodule-derived-evaluation-matrix-block} and
\eqref{eq:ss-support-component} identify
$\mathsf C^n(B_T;S,C)$ with $\mathsf C_T^n$ for every $n$.

Since $\theta_t$ is a bimodule isomorphism, under these degreewise
identifications the left and right actions on $B_T$ are induced by
multiplication in $J_t/J_{t-1}$.  By
\Cref{prop:admissible-basis-composition}, every term whose
admissible-component support is a proper subset of $T$ lies in
$J_{t-1}$.  
Thus the differential on
$\mathsf C^\bullet(B_T;S,C)$ corresponds to $d_T$ under these
degreewise identifications. 
With $B_T=\DD(T)\otimes_\kk\D\NN(T)$, we obtain an isomorphism of cochain complexes
\[
(\mathsf C_T^\bullet,d_T)
\cong
\mathsf C^\bullet
\bigl(
\DD(T)\otimes_\kk\D\NN(T);S,C
\bigr).
\]

By \eqref{eq:ss-concrete-derived-evaluation-derived}, tensor--Hom
adjunction, and finite-dimensional $\kk$-duality, we have
\begin{equation}
\label{eq:ss-exact-support-factorization}
H^p(\mathsf C_T^\bullet,d_T)
\cong
\bigoplus_{i+j=p}
\Ext_{\Lambda_P}^i\bigl(\LL(S),\NN(T)\bigr)
\otimes_\kk
\Ext_{\Lambda_P}^j\bigl(\DD(T),\LL(C)\bigr).
\end{equation}
Applying \eqref{eq:simple-costandard-Ext} and
\eqref{eq:standard-simple-Ext} to
\eqref{eq:ss-exact-support-factorization} gives
\begin{equation}
\label{eq:ss-exact-support-diagonal-cohomology}
H^p(\mathsf C_T^\bullet,d_T)
\cong
\begin{cases}
\det(\partial_T^+S)\otimes_\kk\det(\partial_T^-C),
&
T\in\mathcal W(S,C)
\qand
p=\omega(S,T,C),
\\
0,
&
\text{otherwise}.
\end{cases}
\end{equation}
Substituting this into the preceding direct sum gives
\eqref{eq:ss-exact-support-reduced-terms}.
\end{proof}

By \eqref{eq:ss-exact-support-reduced-terms}, we identify the
underlying graded $\kk$-vector spaces of
$\widetilde{\mathsf H}^\bullet(S,C)$ and
$\mathsf K^\bullet(S,C)$.
It remains to identify the differential
$\widetilde d$.  Since $\widetilde d$ is strictly upper triangular,
a nonzero component $\widetilde d_{R,T}$ can occur only when
$T\subsetneq R$ and
$\omega(S,R,C)=\omega(S,T,C)+1$.  Together with
\eqref{eq:omega-boolean-rank}, this gives
\begin{equation}
\label{eq:ss-exact-support-cover-condition}
\widetilde d_{R,T}\neq0
\quad\Longrightarrow\quad
T\lessdot R
\text{ in }\mathcal W(S,C).
\end{equation}

\subsection{Cover components}
\label{app:ss-cover-components}

By \eqref{eq:ss-exact-support-cover-condition}, it remains to determine the cover components of $\widetilde d$.
We first establish a restriction lemma for standard modules and then
analyze the bimodule subquotient associated with a cover in
$\mathcal W(S,C)$.

\begin{lemma}
\label{lem:ss-standard-restriction-isomorphism}
Let $C,T,R\in\Int(P)$ with $T\in\cD(R)$.
If $(T,C),(R,C)\in \Upsilon^-(P)$ and
$\partial_T^-C=\partial_R^-C$, then the morphism $\varphi_{R,T}$ in \eqref{eq:standard-Hom-generator} is defined, and the map
\begin{equation}
\label{eq:ss-standard-restriction-map}
\Ext_{\Lambda_P}^p\bigl(\varphi_{R,T},\LL(C)\bigr)
\colon
\Ext_{\Lambda_P}^p\bigl(\DD(T),\LL(C)\bigr)
\longrightarrow
\Ext_{\Lambda_P}^p\bigl(\DD(R),\LL(C)\bigr)
\end{equation}
is an isomorphism for every $p\geq0$.
\end{lemma}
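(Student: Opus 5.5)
The plan is to first verify that $\varphi_{R,T}$ is defined, and then to reduce the isomorphism on Ext groups to a nonvanishing statement in a single degree.

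\emph{Step 1: $\varphi_{R,T}$ is defined.} By \Cref{prop:standard-hom-formula} we need $T\in\cD(R)$, which is a hypothesis, and $\Min(R)=\Min(T)$. Extremality of both pairs together with $\partial_T^-C=\partial_R^-C$ gives $\Min(T\setminus C)=\Min(R\setminus C)$, and $C$ is a relative upset of both $T$ and $R$. The equality $\Min(R)=\Min(T)$ then follows from two checks.

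First, a minimal element of $R$ lying in $T$ is minimal in $T$. A minimal element of $R$ outside $C$ lies in $\Min(R\setminus C)\subseteq T$, and a minimal element of $R$ inside $C$ lies in $T$ because $C\subseteq T$. Hence $\Min(R)\subseteq\Min(T)$.

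Conversely, let $x\in\Min(T)$ and suppose $y<x$ with $y\in R$. Then $y\notin T$, so $y\notin C$, and hence some $m\in\Min(R\setminus C)=\Min(T\setminus C)\subseteq T$ satisfies $m\leq y$. If $x\notin C$, then $m<x$ contradicts the fact that $x\in\Min(T\setminus C)$, since that set is an antichain. If $x\in C$, then $m\leq y<x$ with $m,x\in T$, so convexity of $T$ forces $y\in T$, a contradiction. Hence $\Min(T)\subseteq\Min(R)$.

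\emph{Step 2: reduction to one degree.} By \Cref{thm:standard-simple-ext-formula}, both Ext groups vanish except in the single degree $p=\omega^-(T,C)=\omega^-(R,C)$. There both are one-dimensional, identified with $\det(\Min(T\setminus C))=\det(\Min(R\setminus C))$. So it suffices to show that the induced map is nonzero in this degree. The case $C=T$ (so $p=0$) needs separate handling: then $\partial_R^-C=\varnothing$, so $R=T$ and $\varphi_{R,T}$ is the identity. So assume $C\in\cU^\circ(T)$.

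\emph{Step 3: lifting $\varphi_{R,T}$ to a chain map.} Lift $\varphi_{R,T}$ to a chain map $\psi_\bullet\colon\QQ_\bullet(R)\to\QQ_\bullet(T)$ between the resolutions of \Cref{thm:standard-projective-resolution}, with $\psi_0=(q_{R,T})_*$. We only need the components that survive after applying $\Hom_{\Lambda_P}(-,\LL(C))$, namely the summands $\PP(C)$.

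By \Cref{prop:Gamma-relative-cochain}, the two sides are relative cochain complexes of $(\Simp(I_R(C)),\Gamma_R^-(C))$ and $(\Simp(I_T(C)),\Gamma_T^-(C))$. By \Cref{prop:Gamma-Dowker} and the proof of \Cref{prop:boundary-complex-topology}, both are governed by the same complex $\partial\Simp(\Min(T\setminus C))$.

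The concrete choice is as follows. For each maximal proper relative upset $U$ of $R$ containing $C$, the connected component of $U\cap T$ containing $C$ lies in some maximal proper relative upset of $T$. Choosing such a one defines a map $I_R(C)\to I_T(C)$ that is compatible with the relation $\mathcal R$ of \Cref{prop:Gamma-Dowker}: if $x\in\partial^-C$ lies in $U$, it lies in the chosen upset. I would then build $\psi$ simplicially from this vertex map, using the maps $q$ to pass from components of $U_J^R$ to components of $U_J^T$ and \Cref{lem:interval-directional-composition} to get commutativity.

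\emph{Main obstacle.} The hardest part is checking that $\psi$ is genuinely a chain map on all summands, not just on the $\PP(C)$-summands. If this becomes messy, the fallback is to invoke the comparison theorem to obtain some lift, and to compute only its $\PP(C)$-to-$\PP(C)$ components modulo $\rad$. Those components are forced to be identities on $\PP(C)$ along faces compatible with the vertex map.

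Once $\psi$ is in hand, the induced map on $\widetilde H^{\omega-2}$ is the map induced by the Dowker-compatible vertex map. Under the bipartite relation theorem, this is the identity on $\partial\Simp(\Min(T\setminus C))$. Hence it is an isomorphism of top reduced cohomology, and in particular nonzero, which completes the argument by Step 2.
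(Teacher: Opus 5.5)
Steps~1 and~2 are sound. The check $\Min(R)=\Min(T)$ is correct; the reverse inclusion is in fact immediate from $T\in\cD(R)$. Reducing to nonvanishing in the single degree $\omega^-(T,C)=\omega^-(R,C)$ via \Cref{thm:standard-simple-ext-formula} is legitimate and not circular.

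The gap is Step~3, which carries the whole content of the lemma and is not carried out. A lift $\psi_\bullet\colon\QQ_\bullet(R)\to\QQ_\bullet(T)$ must be defined on every summand, not only on the ones surviving $\Hom_{\Lambda_P}(-,\LL(C))$. Already in degree one you must factor
\[
q_{R,T}\circ j_{U_i^R,R}=\sum_{W\in\piZero(U_i^R\cap T)}\rho^{W}_{U_i^R,T}
\]
through $\bigoplus_k\PP(U_k^T)$. This needs a choice of a maximal proper relative upset of $T$ containing each component $W$, not just the component containing $C$.

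In degree $q$, take $C'\in\piZero(U_J^R)$ and a component $V$ of $C'\cap T$. The vertex attached to $j\in J$ must then be the one chosen for the component of $U_j^R\cap T$ containing $V$. So the vertex map depends on the component, and you still have to verify the chain-map identity with degenerate faces sent to zero. This can be done, but it is exactly what is missing.

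Your fallback does not replace this construction. For a lift produced by the comparison theorem, the scalar parts of the $\PP(C)\to\PP(C)$ components are not forced to be anything. Replacing $\psi$ by $\psi+dh+hd$ changes them, because the differentials of $\QQ_\bullet(T)$ have identity components between $\PP(C)$-summands. Only the induced map on cohomology is invariant, and computing it requires a specific lift.

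Even granting such a lift, your last sentence is not justified by what the paper provides. Suppose its $\PP(C)$-components give the pullback along a vertex map $h\colon I_R(C)\to I_T(C)$ with $h(\Gamma_R^-(C))\subseteq\Gamma_T^-(C)$. The Bipartite Relation Theorem behind \Cref{prop:Gamma-Dowker} only gives a homotopy equivalence $\Gamma_T^-(C)\simeq\cN_T^-(C)$ for a single relation. It says nothing about compatibility with a morphism of relations, so it does not tell you what $h$ does on $\widetilde H^{\omega^-(T,C)-2}$. To close the argument you would need three further ingredients:
\begin{enumerate}
\item the functorial form of Dowker duality (Chowdhury--M\'emoli), applied to the morphism of relations $(h,\id_{\partial_T^-C})$;
\item the identification $\cN_R^-(C)=\cN_T^-(C)=\partial\Simp(\Min(T\setminus C))$;
\item the five lemma for the pairs $(\Simp(I_R(C)),\Gamma_R^-(C))$ and $(\Simp(I_T(C)),\Gamma_T^-(C))$.
\end{enumerate}

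For comparison, the paper avoids both difficulties by replacing $\QQ_\bullet$ with the bar resolution $\mathsf B_\bullet(V)$, indexed by strict chains in $\cU(V)$. There the lift needs no choices. The induced cochain map is the pullback along $f\colon(C,R)_{\cU(R)}\to(C,T)_{\cU(T)}$, sending $U$ to the component of $U\cap T$ containing $C$. The map $g(V)=V^{\uparrow_R}$ satisfies $f\circ g=\id$ and $g\circ f\leq\id$. This gives the isomorphism in every degree directly, without \Cref{thm:standard-simple-ext-formula} or Dowker duality.
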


\begin{proof}
We first verify that $\varphi_{R,T}$ is defined.
We have $\Min(R)\subseteq T$.  Indeed,
$\Min(R)\cap C\subseteq C\subseteq T$, while
\[
\Min(R)\setminus C
\subseteq
\Min(R\setminus C)
=
\partial_R^-C
=
\partial_T^-C
\subseteq T.
\]
Since $T\in\cD(R)$, it follows that
$\Min(R)=\Min(T)$, as required. 

To prove the assertion, we use the following bar construction of a resolution of $\DD(V)$ for an interval $V\in\Int(P)$.
Put
$\mathsf B_0(V)=\PP(V)$ and, for $q\geq1$, let
\begin{equation}
\label{eq:ss-standard-bar-terms}
\mathsf B_q(V)
=
\bigoplus_{\substack{
U_0\subsetneq\cdots\subsetneq U_{q-1}\subsetneq V\\
U_i\in\cU(V)}}
\PP(U_0).
\end{equation}
The differential
$\mathsf B_q(V)\to\mathsf B_{q-1}(V)$ is defined as the alternating sum of the
following maps.  For $1\leq i\leq q-1$, deleting $U_i$ from
\[
U_0\subsetneq\cdots\subsetneq U_{q-1}\subsetneq V
\]
defines a map to the summand indexed by the resulting chain, acting as
the identity on $\PP(U_0)$.  Deleting $U_0$ uses the morphism
$\PP(U_0)\to\PP(U_1)$ induced by the inclusion $U_0\subsetneq U_1$.
When $q=1$, this last map is replaced by the morphism
$\PP(U_0)\to\PP(V)$ induced by $U_0\subsetneq V$.
Together with the canonical augmentation
$\mathsf B_0(V)=\PP(V)\to\DD(V)$, these maps form a chain complex
\begin{equation}
\label{eq:ss-standard-bar-resolution}
\cdots
\longrightarrow
\mathsf B_2(V)
\longrightarrow
\mathsf B_1(V)
\longrightarrow
\mathsf B_0(V)
\longrightarrow
\DD(V)
\longrightarrow
0.
\end{equation}

We claim that \eqref{eq:ss-standard-bar-resolution} is a projective
resolution of $\DD(V)$.  Every term is projective.  
After applying a primitive idempotent $e_X$ and fixing
$D\in\Adm(X,V)$, the support-$D$ part of $e_X\mathsf B_q(V)$ is
spanned by the chains
\[
D\subseteq U_0\subsetneq\cdots\subsetneq U_{q-1}\subsetneq V,
\qquad
U_i\in\cU(V).
\]
Thus these summands form the augmented simplicial chain complex of the
order complex of the half-open interval $[D,V)_{\cU(V)}$.
For $D\neq V$, this interval has the minimum $D$, so its order complex
is contractible and the augmented chain complex is exact.  For $D=V$,
there are no positive-degree terms and the augmentation identifies
the remaining basis vector with its image in $\DD(V)$.  Hence
\eqref{eq:ss-standard-bar-resolution} is exact.

Now let $C,T,R\in\Int(P)$ satisfy the hypotheses of the lemma. We
construct a chain map
\begin{equation}
\mathsf B_\bullet(R)\longrightarrow\mathsf B_\bullet(T)
\label{eq:ss-standard-bar-chain-map}
\end{equation}
lifting $\varphi_{R,T}$. In degree zero, we use the map
$(q_{R,T})_*\colon\PP(R)\to\PP(T)$. For $q\geq1$ and a chain
\[
U_0\subsetneq\cdots\subsetneq U_{q-1}\subsetneq R,
\]
let $V_0$ run through the connected components of $U_0\cap T$, and
for each $i>0$ let $V_i$ be the connected component of $U_i\cap T$
containing $V_0$.  The corresponding summand $\PP(U_0)$ is mapped to
$\PP(V_0)$ by the morphism induced by $q_{U_0,V_0}$, and we sum over
the possible $V_0$.  A term is taken to be zero if two consecutive
$V_i$ coincide.  
For every connected component $W$ of $U_1\cap T$, where for $q=1$ we read $U_1=R$ and $W=T$,
\Cref{prop:admissible-basis-composition} gives
\begin{equation}
\label{eq:ss-standard-bar-chain-map-composition}
q_{U_1,W}\circ j_{U_0,U_1}
=
\sum_{V\in\piZero(U_0\cap W)}
j_{V,W}\circ q_{U_0,V}.
\end{equation}
Thus the differential components corresponding to the deletion of
$U_0$ commute.  Compatibility with the remaining components follows
directly from deletion, so this defines the required chain map in \eqref{eq:ss-standard-bar-chain-map}.

For $V\in\{T,R\}$ and $q\geq1$, after applying
$\Hom_{\Lambda_P}(-,\LL(C))$ to $\mathsf B_q(V)$, a direct
summand $\PP(U_0)$ contributes only when $U_0=C$.
Thus the surviving
summands are indexed by chains
\[
C=U_0\subsetneq U_1\subsetneq\cdots\subsetneq U_{q-1}\subsetneq V.
\]
After omitting the fixed initial term $C$, these chains are precisely
the simplices of the order complex of the open interval
$(C,V)_{\cU(V)}$.

Applying $\Hom_{\Lambda_P}(-,\LL(C))$ to
\eqref{eq:ss-standard-bar-chain-map} gives a cochain map
\[
\Hom_{\Lambda_P}(\mathsf B_\bullet(T),\LL(C))
\longrightarrow
\Hom_{\Lambda_P}(\mathsf B_\bullet(R),\LL(C)).
\]
Under the identifications above, we claim that this is the simplicial pullback
induced by the order-preserving map
\begin{equation}
\label{eq:ss-standard-restriction-poset-map}
f\colon
(C,R)_{\cU(R)}
\longrightarrow
(C,T)_{\cU(T)},
\end{equation}
sending $U$ to the connected component of $U\cap T$ containing $C$.
We first check that the map $f$ is well-defined. 
Let $U\in(C,R)_{\cU(R)}$.  Since $U$ is a relative upset of $R$, $U\cap T$ is a relative upset of $T$.  Hence its connected component
$f(U)$ containing $C$ belongs to $\cU(T)$. 
It remains to show that
$C\subsetneq f(U)\subsetneq T$.
Since $U$ is connected and properly contains the relative upset $C$,
there exist $x\in U\setminus C$ and $c\in C$ with $x<c$.
Since $c\in T$ and $T\in\cD(R)$, we have $x\in T$.
Thus $C\subsetneq f(U)$.
To see that $f(U)\neq T$, suppose that $f(U)=T$.
Then $\Min(R)\subseteq T\subseteq U$.
Since $U$ is a relative upset of $R$, this gives $U=R$, a
contradiction. Thus, we have the map $f$ in \eqref{eq:ss-standard-restriction-poset-map}.
Moreover, $f$ is order preserving since $U\subseteq U'$ implies $f(U)\subseteq f(U')$.
Finally, it follows directly from the construction of
\eqref{eq:ss-standard-bar-chain-map} that the induced cochain map is the simplicial pullback associated with $f$.

On the other hand, we have an order-preserving map 
\begin{equation}
\label{eq:ss-standard-restriction-poset-inverse}
g\colon
(C,T)_{\cU(T)}
\longrightarrow
(C,R)_{\cU(R)},
\qquad
g(V)=V^{\uparrow_R}.
\end{equation}
Indeed, for every $V\in(C,T)_{\cU(T)}$, we have
$g(V)\in(C,R)_{\cU(R)}$ as follows.
First, $g(V)=V^{\uparrow_R}$ is a connected relative upset of $R$ and
$C\subsetneq g(V)$.
It remains to show that $g(V)\subsetneq R$.
Since $V\subsetneq T$ and
$\partial_T^-C=\Min(T\setminus C)$, there exists
$x\in\partial_T^-C$ with $x\notin V$.
By
$\partial_T^-C=\partial_R^-C=\Min(R\setminus C)$,
we have $x\in\Min(R\setminus C)$.
If $x\in V^{\uparrow_R}$, then some $v\in V$ satisfies $v\leq x$.
Since $C$ is a relative upset of $R$ and $x\notin C$, we have
$v\notin C$, and the minimality of $x$ in $R\setminus C$ gives
$v=x$, contradicting $x\notin V$.
Thus $g(V)\neq R$, and hence
$g(V)\in(C,R)_{\cU(R)}$.
Moreover, $g$ is order preserving, since
$V\subseteq V'$ implies
$V^{\uparrow_R}\subseteq (V')^{\uparrow_R}$.

Since $T\in\cD(R)$,
$(V^{\uparrow_R})\cap T=V$, while
$(f(U))^{\uparrow_R}\subseteq U$ because $U$ is a relative upset.
Hence
\begin{equation}
\label{eq:ss-standard-restriction-poset-homotopy}
f\circ g=\id,
\qquad
g\circ f\leq\id.
\end{equation}
Thus $f$ induces a homotopy equivalence of order complexes, and the
simplicial pullback induced by $f$ is a quasi-isomorphism.
Since the chain map
\eqref{eq:ss-standard-bar-chain-map} lifts $\varphi_{R,T}$,
\eqref{eq:ss-standard-restriction-map} is an isomorphism for $p>0$.

For $p=0$, the map is the identity if $T=R$.
If $T\subsetneq R$, then $C\neq T$ by the boundary equality, while
$C\neq R$ is immediate.
Hence both Hom groups vanish by the simple tops of the standard
modules.
\end{proof}

We next associate a bimodule subquotient with an inclusion
$T\subseteq R$ in $\Int(P)$.  Let
\[
B_{\leq R}
=
\operatorname{span}_\kk
\left\{
(\rho_{Q,U}^{W})^{\op}
\middle|
Q,U\in\Int(P),\ W\in\Adm(Q,U),\ W\subseteq R
\right\}
\]
and define
\begin{equation}
\label{eq:ss-cover-bimodule-subquotient}
B_{T,R}
=
B_{\leq R}
\bigg/
\operatorname{span}_\kk
\left\{
(\rho_{Q,U}^{W})^{\op}
\middle|
Q,U\in\Int(P),\ W\in\Adm(Q,U),\
W\subseteq R,\ T\nsubseteq W
\right\}.
\end{equation}
By \Cref{prop:admissible-basis-composition}, the numerator and
denominator in \eqref{eq:ss-cover-bimodule-subquotient} are
$\Lambda_P$--$\Lambda_P$-subbimodules.  
The induced admissible-component basis of $B_{T,R}$ therefore
consists of the basis vectors whose supports $W$ satisfy
$T\subseteq W\subseteq R$.

With the decompositions fixed above, the admissible-component basis
identifies
\begin{equation}
\label{eq:ss-cover-local-complex}
\mathsf C^\bullet(B_{T,R};S,C)
\cong
\mathsf C^\bullet(\Lambda_P;S,C)[T,R]
\end{equation}
as cochain complexes.  
Indeed, on each matrix block, both sides retain exactly the dual
admissible-component basis vectors with supports $W$ satisfying
$T\subseteq W\subseteq R$, together with the same differential
components between them.

\begin{proposition}
\label{prop:ss-cover-bimodule-acyclicity}
Let $S,C\in\Int(P)$ and let $T\lessdot R$ in
$\mathcal W(S,C)$.  Then the complex
$\mathsf C^\bullet(B_{T,R};S,C)$ is acyclic.
\end{proposition}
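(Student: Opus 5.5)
We would prove the acyclicity by separating the two cover types. After a symmetry reduction, the argument splits into a two-step filtration of $B_{T,R}$ by support and a comparison of the two resulting lines through \Cref{lem:ss-standard-restriction-isomorphism}.

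\emph{Reduction to one cover type.} By \Cref{prop:intermediate-boolean-structure}, a cover $T\lessdot R$ in $\mathcal W(S,C)$ is of exactly one of the two types \eqref{eq:ss-upper-cover-data} or \eqref{eq:ss-lower-cover-data}. We first determine the relative position of $T$ in $R$ from the coordinate description \eqref{eq:intermediate-coordinate-inverse}. In type \eqref{eq:ss-upper-cover-data} the $Y$-coordinate is unchanged, so
\[
T=R\cap\bigl(S\cup\partial_T^+S\bigr)^{\downarrow_{T_{\max}}}.
\]
Hence $T\in\cD(R)$ and $\partial_T^-C=\partial_R^-C$, which are exactly the hypotheses of \Cref{lem:ss-standard-restriction-isomorphism}. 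Dually, in type \eqref{eq:ss-lower-cover-data} we get $T\in\cU(R)$ with equal upper boundaries. Passing to $P^{\op}$ via \eqref{eq:poset-duality} and $\Lambda_{P^{\op}}\cong\Lambda_P^{\op}$ exchanges the two types: it swaps $\LL(S)$ and $\LL(C)$, turns $\mathsf C^\bullet(B_{T,R};S,C)$ into the $\kk$-dual of the corresponding complex, and preserves acyclicity. So we only treat type \eqref{eq:ss-upper-cover-data}.

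\emph{Filtration by support.} The bimodule $B_{T,R}$ has its admissible-component basis indexed by supports $W$ with $T\subseteq W\subseteq R$. The subspace spanned by basis vectors with support exactly $R$ is a subbimodule. This holds because, by \Cref{prop:admissible-basis-composition}, multiplication only shrinks supports, and shrinking below $R$ leaves $B_{\leq R}$ modulo the denominator only through supports still containing $T$. By \Cref{prop:cardinality-bimodule-strata}, this subbimodule is $\DD(R)\otimes_\kk\D\NN(R)$. Filtering the remaining supports by cardinality, \Cref{prop:cardinality-bimodule-strata} again identifies each stratum with a sum of copies of $\DD(W)\otimes_\kk\D\NN(W)$. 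Applying $\mathsf C^\bullet(-;S,C)$ and using \eqref{eq:ss-exact-support-factorization} together with \Cref{thm:standard-simple-ext-formula} and \Cref{cor:simple-costandard-ext-formula}, a stratum $W$ contributes cohomology only if $W\in\mathcal W(S,C)$. Since $T\lessdot R$, this leaves only $W=T$ and $W=R$, and each contributes a single line, in degrees $\omega(S,T,C)$ and $\omega(S,T,C)+1$. Thus $\mathsf C^\bullet(B_{T,R};S,C)$ is quasi-isomorphic to the cone of a connecting map between these two lines, and acyclicity is equivalent to this connecting map being nonzero.

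\emph{The connecting map.} We expect this step to be the main obstacle. We would not compute the connecting map directly. Instead we would factor it through the two tensor factors of \eqref{eq:ss-exact-support-factorization}.
\begin{enumerate}[\rm (1)]
\item On the standard side, $\varphi_{R,T}\colon\DD(R)\to\DD(T)$ is defined, since $\Min(R)=\Min(T)$ by the first paragraph of the proof of \Cref{lem:ss-standard-restriction-isomorphism}. By that lemma it induces an isomorphism $\Ext^\bullet(\DD(T),\LL(C))\xrightarrow{\sim}\Ext^\bullet(\DD(R),\LL(C))$.
\item On the costandard side, $\partial_R^+S=\partial_T^+S\sqcup\{a\}$. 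We would aim to show that the left action of $q_{R,T}$ on $B_{T,R}$ realizes the extension class of degree one that links $\Ext^{\omega^+(S,T)}(\LL(S),\NN(T))$ and $\Ext^{\omega^+(S,T)+1}(\LL(S),\NN(R))$.
\end{enumerate}
To make (2) concrete, we would use the injective coresolutions dual to $\QQ_\bullet$. There the two cochain complexes are $\partial\Simp(\partial_T^+S)$ and $\partial\Simp(\partial_T^+S\sqcup\{a\})$, by \Cref{prop:boundary-complex-topology} applied to $P^{\op}$. In these models the comparison is the coboundary $\alpha\mapsto a\wedge\alpha$, which is an isomorphism of determinant lines. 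Combining (1) and (2), the connecting map is an isomorphism (up to sign), so the cone is acyclic. As a fallback, if identifying the connecting map this way proves too delicate, we would compute it on the chain level in the single stratum pair $\{T,R\}$. For this we would use the bar resolutions $\mathsf B_\bullet(R)\to\mathsf B_\bullet(T)$ from the proof of \Cref{lem:ss-standard-restriction-isomorphism}, together with their duals.
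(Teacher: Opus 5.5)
Your setup is sound up to the sentence ``acyclicity is equivalent to this connecting map being nonzero''. There is one slip: the span of the support-$R$ basis vectors is a quotient bimodule of $B_{T,R}$, not a subbimodule. Multiplication shrinks supports, so it is the support-$T$ span that is a subbimodule.

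\textbf{The gap.} At that point you have only restated the problem. Nondegeneracy of the cover component is exactly \Cref{prop:ss-cover-component}, which the paper deduces \emph{from} the present statement, and your step (2) does not supply it. Two things go wrong.

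First, there are in general intervals $W$ with $T\subsetneq W\subsetneq R$ that are not in $\mathcal W(S,C)$. Take $P=\{s,y,u,a,c\}$ with $s<c$, $y<c$, $y<u<a$, $s<a$, and $S=\{s\}$, $C=\{c\}$. Then $T=\{s,y,c\}\lessdot R=P$ is a cover of type \eqref{eq:ss-upper-cover-data}, and $W=\{s,y,u,c\}$ lies strictly between them. Such strata are acyclic after evaluation, but they still contribute zig-zag terms to the transferred differential. So the connecting map is not given by a single extension class between $\DD(T)\otimes_\kk\D\NN(T)$ and $\DD(R)\otimes_\kk\D\NN(R)$, and your factorization into ``pullback along $\varphi_{R,T}$'' tensor ``a degree-one costandard class'' is not justified. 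Even without intermediate strata, you never say which class in $\Ext^1_{\Lambda_P}(\NN(T),\NN(R))$ is meant, or why Yoneda multiplication by it is nonzero on $\Ext^{\omega^+(S,T)}_{\Lambda_P}(\LL(S),\NN(T))$.

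Second, \Cref{prop:boundary-complex-topology} gives only homotopy equivalences. These are obtained via Dowker duality from the relative cochain complexes of \Cref{prop:Gamma-relative-cochain}. The coresolutions of $\NN(T)$ and $\NN(R)$ are indexed by different families of maximal proper relative downsets, and you construct no chain map between them. So ``the comparison is $\alpha\mapsto a\wedge\alpha$'' is just the definition of $d^{\mathsf K}$ in \eqref{eq:simple-simple-combinatorial-differential}, not a computation. The bar-resolution fallback is only an intention.

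\textbf{The missing idea.} The paper avoids the connecting map altogether. In case \eqref{eq:ss-upper-cover-data}, every interval $W$ with $T\subseteq W\subseteq R$ (not only $W=T$) satisfies:
\begin{itemize}
\item $W\in\cD(R)$ and $\Min(W)=\Min(R)$;
\item $(W,C)\in\Upsilon^-(P)$ with $\partial_W^-C=\partial_R^-C$.
\end{itemize}
Set $M_{T,R}=e_RB_{T,R}$. Then $\KK(R)M_{T,R}=0$, so multiplication gives a bimodule map $\eta_{T,R}\colon\DD(R)\otimes_\kk M_{T,R}\to B_{T,R}$, which on the $W$-stratum is $\varphi_{R,W}\otimes\id_{\D\NN(W)}$. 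Applying \Cref{lem:ss-standard-restriction-isomorphism} to all these $W$ shows that the induced map $\mathsf C^\bullet(B_{T,R};S,C)\to\mathsf C^\bullet(\DD(R)\otimes_\kk M_{T,R};S,C)$ is a quasi-isomorphism. This handles the intermediate strata uniformly.

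Acyclicity then reduces to $M_{T,R}\otimes^{\mathbf L}_{\Lambda_P}\LL(S)\cong0$. The paper proves this with an explicit projective resolution of $M_{T,R}$ by chains in $\cD(R)$ not containing $T$. After tensoring with $\LL(S)$, it becomes the augmented chain complex of the order complex of $\{D\in\cD(R)\mid S\subsetneq D,\ T\nsubseteq D\}$. That poset is contractible via the closure operator $D\mapsto(D\cup\{a\})^{\downarrow_R}$. This is where the new boundary element $a$ actually enters, and it is the argument your step (2) would need to replace.
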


\begin{proof}
Since $T\lessdot R$ in $\mathcal W(S,C)$, either
\eqref{eq:ss-upper-cover-data} or
\eqref{eq:ss-lower-cover-data} holds.

Assume first that \eqref{eq:ss-upper-cover-data} holds.
We first note that $\Min(R)\subseteq T$.  Indeed,
$\Min(R)\cap C\subseteq T$, while
\[
\Min(R)\setminus C
\subseteq
\Min(R\setminus C)
=
\partial_R^-C
=
\partial_T^-C
\subseteq T.
\]
Since $T$ is convex, it follows that $T\in\cD(R)$.  Consequently,
every interval $W$ with $T\subseteq W\subseteq R$ also belongs to
$\cD(R)$.  Moreover, since $\Min(R)\subseteq T\subseteq W$, we have $\Min(W)=\Min(R)$. In particular, the map $\varphi_{R,W}$ is defined.

Put $M_{T,R}:=e_RB_{T,R}$, regarded as a right $\Lambda_P$-module.
Since $\Min(R)\subseteq T$, no proper relative upset of $R$ contains
$T$.  By \eqref{eq:standard-kernel-generators} and
\Cref{prop:admissible-basis-composition}, the left
$\Lambda_P$-action on $B_{T,R}$ gives
$\KK(R)M_{T,R}=0$ inside $B_{T,R}$.
Multiplication therefore induces a bimodule map
\[
\eta_{T,R}\colon
\DD(R)\otimes_\kk M_{T,R}
\longrightarrow
B_{T,R}.
\]

Filter $B_{T,R}$ by the cardinality of the admissible-component
support, and give $M_{T,R}=e_RB_{T,R}$ the induced filtration.
We use the resulting filtration on
$\DD(R)\otimes_\kk M_{T,R}$ through the second factor.
Since multiplication cannot increase 
the admissible-component support, $\eta_{T,R}$ preserves these filtrations.

By \Cref{prop:cardinality-bimodule-strata} and
\eqref{eq:cardinality-bimodule-basis-correspondence}, for
$T\subseteq W\subseteq R$ and $t=|W|$, the summands corresponding
to $W$ in the $t$th successive quotients of the source and target
of $\eta_{T,R}$ are identified with
$\DD(R)\otimes_\kk\D\NN(W)$ and
$\DD(W)\otimes_\kk\D\NN(W)$, respectively.
Under these identifications, the map 
$\DD(R)\otimes_\kk\D\NN(W)\to \DD(W)\otimes_\kk\D\NN(W)$ 
induced by $\eta_{T,R}$ is exactly
$\varphi_{R,W}\otimes_\kk\id_{\D\NN(W)}$.

We claim that applying $\mathsf C^\bullet(-;S,C)$ to this map gives
a quasi-isomorphism. Since $C\in\cU(R)$ and
$C\subseteq W\subseteq R$, we have $C\in\cU(W)$. Moreover,
\[
\partial_W^-C
=
\partial_R^-C
=
\partial_T^-C
=
\Min(W\setminus C).
\]
Indeed, $\partial_R^-C=\partial_T^-C\subseteq T\subseteq W$, and
the convexity of $W$ in $R$ gives
$\partial_W^-C=\partial_R^-C$. Since
$\partial_R^-C=\Min(R\setminus C)$ and $W\in\cD(R)$, this set is
also $\Min(W\setminus C)$. Thus $(W,C)\in\Upsilon^-(P)$.
For $W\neq R$, \Cref{lem:ss-standard-restriction-isomorphism} shows
that $\Ext_{\Lambda_P}^p(\varphi_{R,W},\LL(C))$ is an isomorphism
for every $p\geq0$, while for $W=R$ the map is the identity.
By \eqref{eq:ss-concrete-derived-evaluation-derived} and
tensor--Hom adjunction, the claim follows.

Thus, after applying $\mathsf C^\bullet(-;S,C)$, the map induced by
$\eta_{T,R}$ is a quasi-isomorphism on every successive quotient.
An induction on the finite filtrations shows that the map induced by
$\eta_{T,R}$ is a quasi-isomorphism
\begin{equation}
\label{eq:ss-cover-derived-evaluation-comparison}
\mathsf C^\bullet(B_{T,R};S,C)
\longrightarrow
\mathsf C^\bullet(\DD(R)\otimes_\kk M_{T,R};S,C).
\end{equation}

It remains to show that the complex on the right is acyclic.
Since the right $\Lambda_P$-action on
$\DD(R)\otimes_\kk M_{T,R}$ is carried by the second factor, we have
\[
\bigl(\DD(R)\otimes_\kk M_{T,R}\bigr)
\overset{\mathbf L}{\otimes}_{\Lambda_P}\LL(S)
\cong
\DD(R)\otimes_\kk
\left(
M_{T,R}
\overset{\mathbf L}{\otimes}_{\Lambda_P}\LL(S)
\right).
\]
By \eqref{eq:ss-concrete-derived-evaluation-derived}, it remains to prove
\begin{equation}
\label{eq:ss-cover-one-sided-vanishing}
M_{T,R}
\overset{\mathbf L}{\otimes}_{\Lambda_P}\LL(S)
\cong0
\quad\text{in }D(\kk).
\end{equation}

To prove \eqref{eq:ss-cover-one-sided-vanishing}, we construct a projective resolution of the right module $M_{T,R}$.
Put $\mathsf F_0=e_R\Lambda_P$ and, for $q\geq1$, let
\[
\mathsf F_q
=
\bigoplus_{\substack{
D_0\subsetneq\cdots\subsetneq D_{q-1}\\
D_i\in\cD(R),\ T\nsubseteq D_i}}
e_{D_0}\Lambda_P.
\]
We define the differential
$\mathsf F_q\to\mathsf F_{q-1}$ as the alternating sum of the
following maps.  Deleting $D_i$ for $1\leq i\leq q-1$ acts as the
identity on $e_{D_0}\Lambda_P$, while deleting $D_0$ is given by left
multiplication by $q_{D_1,D_0}^{\op}$, where $D_1=R$ when $q=1$.
Together with the quotient
$e_R\Lambda_P\twoheadrightarrow M_{T,R}$, these maps form an
augmented complex of projective right modules.

We verify its exactness on the admissible-component basis.
After multiplying on the right by a primitive idempotent $e_X$, fix
a support $W\subseteq R$ occurring in the admissible-component basis.
If $T\subseteq W$, no support-$W$ basis vector occurs in
$\mathsf F_q$ for $q\geq1$, since such a vector would require
$W\subseteq D_0$ and hence $T\subseteq D_0$, contrary to the
indexing condition. 
In degree zero, the augmentation maps the support-$W$ basis vector of $e_R\Lambda_P$ to its class in $M_{T,R}$, and this class is nonzero.
If $T\nsubseteq W$, the terms with support $W$ form the augmented
simplicial chain complex of the order complex of
\[
\left\{
D\in\cD(R)
\mid
W\subseteq D,\ T\nsubseteq D
\right\}.
\]
The admissibility conditions imply $W\in\cD(R)$, so this poset has
minimum $W$ and hence has contractible order complex.
Thus $\mathsf F_\bullet\to M_{T,R}$ is a projective resolution.
Therefore
\begin{equation}
\label{eq:ss-cover-derived-tensor-resolution}
M_{T,R}
\overset{\mathbf L}{\otimes}_{\Lambda_P}\LL(S)
\cong
\mathsf F_\bullet\otimes_{\Lambda_P}\LL(S)
\quad\text{in }D(\kk).
\end{equation}
We now show that the complex on the right is acyclic. 
For every $D\in\Int(P)$, we have
\[
e_D\Lambda_P\otimes_{\Lambda_P}\LL(S)
\cong
e_D\LL(S)
=
\begin{cases}
\kk, & D=S,\\
0, & D\neq S.
\end{cases}
\]
Thus, in positive degrees, the only summands of
$\mathsf F_\bullet\otimes_{\Lambda_P}\LL(S)$ that survive are those
indexed by chains with $D_0=S$.

In degree zero,
\[
\mathsf F_0\otimes_{\Lambda_P}\LL(S)
=
e_R\Lambda_P\otimes_{\Lambda_P}\LL(S)
\cong e_R\LL(S)=0,
\]
since $S\subseteq T\subsetneq R$.

If $S=T$, the resulting complex is zero.  Indeed, there are no
positive-degree summands with $D_0=S$, since the indexing condition
$T\nsubseteq D_0$ would then fail, while the degree-zero term vanishes
as above.

Suppose that $S\subsetneq T$.  For $q\geq1$, the surviving summands in
$\mathsf F_q\otimes_{\Lambda_P}\LL(S)$ are indexed by chains
\[
S=D_0\subsetneq D_1\subsetneq\cdots\subsetneq D_{q-1},
\qquad
D_i\in\cD(R),\quad T\nsubseteq D_i.
\]
After omitting the fixed initial term $D_0=S$, these are precisely the
chains in the poset
\[
\cD_{S,T}(R)
:=
\left\{
D\in\cD(R)
\mid
S\subsetneq D,\ T\nsubseteq D
\right\}.
\]
The induced differential is the alternating deletion differential on
these chains.  
Thus, up to a degree shift, the remaining complex is the augmented
simplicial chain complex of the order complex of $\cD_{S,T}(R)$. 
To prove the acyclicity, it therefore suffices to show that
$\cD_{S,T}(R)$ is contractible.
Let $a$ be the element in \eqref{eq:ss-upper-cover-data} and set  
\[
c(D):=(D\cup\{a\})^{\downarrow_R}
\qquad
(D\in\cD_{S,T}(R)).
\]
This is a connected relative downset of $R$ containing $D$.
Since $T\nsubseteq D$ and
$\partial_T^+S=\Max(T\setminus S)$, there is
$b\in\partial_T^+S$ with $b\notin D$. 
Since
$a\in\partial_R^+S\setminus\partial_T^+S$
and $b\in\partial_T^+S\subseteq\partial_R^+S$, we have
$a\neq b$ and
$a,b\in\partial_R^+S=\Max(R\setminus S)$.
In particular, $b\nleq a$.
Moreover, since $D$ is a relative downset of $R$ and $b\notin D$,
we have $b\notin(D\cup\{a\})^{\downarrow_R}=c(D)$.
Thus $T\nsubseteq c(D)$, so $c(D)\in\cD_{S,T}(R)$.
The map $c$ is an ascending closure operator on $\cD_{S,T}(R)$.
Its image has the least element $(S\cup\{a\})^{\downarrow_R}$, and
hence $\cD_{S,T}(R)$ is contractible.
Therefore the complex above is acyclic, proving
\eqref{eq:ss-cover-one-sided-vanishing}.

Finally, the case \eqref{eq:ss-lower-cover-data} follows from the
preceding one by applying order reversal $P\mapsto P^{\op}$, with
$S$ and $C$ interchanged, and then applying $\kk$-duality, which
preserves acyclicity.
\end{proof}

\begin{proposition}
\label{prop:ss-cover-component}
Let $S,C\in\Int(P)$ and let $T\lessdot R$ in
$\mathcal W(S,C)$.  Then the component
\[
\widetilde d_{R,T}:
\det(\partial_T^+S)\otimes_\kk\det(\partial_T^-C)
\longrightarrow
\det(\partial_R^+S)\otimes_\kk\det(\partial_R^-C)
\]
is an isomorphism.
\end{proposition}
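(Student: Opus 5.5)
The plan is to reduce the cover component to the interval subquotient $[T,R]$ and then use the acyclicity of Proposition~\ref{prop:ss-cover-bimodule-acyclicity}. By \eqref{eq:ss-reduced-interval-locality}, the component $\widetilde d_{R,T}$ of the reduced complex of $\mathsf C^\bullet(\Lambda_P;S,C)$ coincides with the $(R,T)$-component of the reduced differential $\widetilde d^{[T,R]}$ on the subquotient $\widetilde{\mathsf H}^\bullet(S,C)[T,R]$. By \eqref{eq:ss-reduced-interval-equivalence} this subquotient is chain-homotopy equivalent to $\mathsf C^\bullet(\Lambda_P;S,C)[T,R]$. By \eqref{eq:ss-cover-local-complex}, the latter is isomorphic to $\mathsf C^\bullet(B_{T,R};S,C)$, which is acyclic by Proposition~\ref{prop:ss-cover-bimodule-acyclicity}. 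Hence $\widetilde{\mathsf H}^\bullet(S,C)[T,R]$ with the differential $\widetilde d^{[T,R]}$ is acyclic.

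The next step is to identify the underlying graded space of this reduced subquotient. By \eqref{eq:ss-reduced-interval-complex} it is $\bigoplus_{T\subseteq Z\subseteq R}H^\bullet(\mathsf C_Z^\bullet,d_Z)$. By \eqref{eq:ss-exact-support-diagonal-cohomology}, only those $Z$ lying in $\mathcal W(S,C)$ contribute, each contributing a single line in degree $\omega(S,Z,C)$. Since $T\lessdot R$ in $\mathcal W(S,C)$, and $\mathcal W(S,C)$ is a Boolean lattice ordered by inclusion (Proposition~\ref{prop:intermediate-boolean-structure}), I need that no $Z\in\mathcal W(S,C)$ satisfies $T\subsetneq Z\subsetneq R$. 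This holds because the inclusion order on $\mathcal W(S,C)$ is the restriction of inclusion of subsets, so an intermediate $Z$ would contradict the cover relation. So the reduced complex has exactly two nonzero terms: $\det(\partial_T^+S)\otimes\det(\partial_T^-C)$ in degree $\omega(S,T,C)$ and $\det(\partial_R^+S)\otimes\det(\partial_R^-C)$ in degree $\omega(S,T,C)+1$, by \eqref{eq:omega-boolean-rank}. The only possible differential component is $\widetilde d_{R,T}$, because the reduced differential is strictly upper triangular.

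A two-term complex $\kk\to\kk$ in consecutive degrees is acyclic exactly when its map is an isomorphism, which gives the claim. The step I expect to need the most care is the bookkeeping in the first paragraph. Specifically, I must check that the Gaussian eliminations fixed for the whole complex restrict compatibly to the subquotient $[T,R]$, so that the $(R,T)$-entry really is unchanged; this is what \eqref{eq:ss-reduced-interval-locality} asserts. I also need that the identification \eqref{eq:ss-cover-local-complex} respects the $\Int(P)$-indexed decomposition, so that the acyclicity of the subquotient transfers to the reduced model. Both follow from the triangularity argument already recorded, so I would cite them rather than redo them.
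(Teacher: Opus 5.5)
Your proposal is correct and follows the paper's proof essentially step for step. You pass to the subquotient $[T,R]$ via \eqref{eq:ss-reduced-interval-locality} and \eqref{eq:ss-reduced-interval-equivalence}, identify it with $\mathsf C^\bullet(B_{T,R};S,C)$ via \eqref{eq:ss-cover-local-complex}, invoke \Cref{prop:ss-cover-bimodule-acyclicity}, and use \eqref{eq:ss-exact-support-diagonal-cohomology}, the cover relation and \eqref{eq:omega-boolean-rank} to reduce to a two-term complex of lines in consecutive degrees whose only map is $\widetilde d_{R,T}$; the bookkeeping points you flag are exactly the facts the paper records beforehand and cites in the same way.
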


\begin{proof}
By \eqref{eq:ss-cover-local-complex} and
\eqref{eq:ss-reduced-interval-equivalence},
$\widetilde{\mathsf H}^\bullet(S,C)[T,R]$ is chain-homotopy
equivalent to
$\mathsf C^\bullet(B_{T,R};S,C)$.
By \eqref{eq:ss-exact-support-diagonal-cohomology}, the nonzero
summands of $\widetilde{\mathsf H}^\bullet(S,C)[T,R]$ are indexed by
the intervals $W\in\mathcal W(S,C)$ satisfying
$T\subseteq W\subseteq R$.
Since $T\lessdot R$ in $\mathcal W(S,C)$, these are precisely
$T$ and $R$.
By \eqref{eq:omega-boolean-rank}, their degrees differ by one.
Hence $\widetilde{\mathsf H}^\bullet(S,C)[T,R]$ is the two-term complex
\[
\det(\partial_T^+S)\otimes_\kk\det(\partial_T^-C)
\longrightarrow
\det(\partial_R^+S)\otimes_\kk\det(\partial_R^-C).
\]
Its differential is $\widetilde d_{R,T}$ by
\eqref{eq:ss-reduced-interval-locality}.
The complex $\mathsf C^\bullet(B_{T,R};S,C)$ is acyclic by
\Cref{prop:ss-cover-bimodule-acyclicity}, so this two-term complex is
acyclic. Therefore $\widetilde d_{R,T}$ is an isomorphism.
\end{proof}

We can now complete the proof of
\Cref{prop:simple-simple-combinatorial-model}.

\begin{proof}[Proof of \Cref{prop:simple-simple-combinatorial-model}]
By \eqref{eq:simple-simple-combinatorial-complex} and
\eqref{eq:ss-exact-support-reduced-terms}, we identify the underlying
graded $\kk$-vector spaces of $\mathsf K^\bullet(S,C)$ and
$\widetilde{\mathsf H}^\bullet(S,C)$. If
$\mathcal W(S,C)=\varnothing$, both complexes are zero, and the result
follows from \eqref{eq:ss-reduced-simple-simple-derived-hom}.
Suppose that $\mathcal W(S,C)$ is nonempty.

By \eqref{eq:ss-exact-support-cover-condition} and
\Cref{prop:ss-cover-component}, the nonzero components of
$\widetilde d$ are precisely those corresponding to covers
$T\lessdot R$ in $\mathcal W(S,C)$. For each such cover, let
$d^{\mathsf K}_{R,T}$ denote the component defined in
\eqref{eq:simple-simple-combinatorial-differential}.
Both $d^{\mathsf K}_{R,T}$ and $\widetilde d_{R,T}$ are isomorphisms
between the same one-dimensional vector spaces. Hence there is a
unique $\alpha_{R,T}\in\kk^\times$ such that
$\widetilde d_{R,T}=\alpha_{R,T}d^{\mathsf K}_{R,T}$.

By \Cref{prop:intermediate-boolean-structure},
$\mathcal W(S,C)$ is a Boolean lattice. Let $T_{\min}$ be its
minimum, put $c_{T_{\min}}=1$, and define $c_T$ as the product of the
scalars $\alpha$ along a saturated chain from $T_{\min}$ to $T$.
This is independent of the chosen chain. Indeed, any two saturated
chains are related by moves across squares in the Hasse diagram of
$\mathcal W(S,C)$, and the relations
$(d^{\mathsf K})^2=0$ and $\widetilde d^{\,2}=0$ show that the
products of the scalars $\alpha$ along the two paths around each
square agree. Thus $c_R=\alpha_{R,T}c_T$ for every cover
$T\lessdot R$.
Multiplication by $c_T$ on the summand indexed by $T$ therefore gives
an isomorphism of cochain complexes
\[
\bigl(\mathsf K^\bullet(S,C),d^{\mathsf K}\bigr)
\cong
\bigl(\widetilde{\mathsf H}^\bullet(S,C),\widetilde d\bigr).
\]
Together with \eqref{eq:ss-reduced-simple-simple-derived-hom}, this
gives
\[
\bigl(\mathsf K^\bullet(S,C),d^{\mathsf K}\bigr)
\cong
\RHom_{\Lambda_P}\bigl(\LL(S),\LL(C)\bigr)
\quad\text{in }D(\kk),
\]
as required.
\end{proof}

\section*{Acknowledgments and AI Tool Disclosure}
The author used OpenAI's ChatGPT, primarily with the GPT-5.6 Sol model,
as an AI assistant throughout this work.
It assisted with the development of mathematical ideas and proof
strategies and with the formulation and verification of arguments.
It was also used in drafting and organizing the manuscript and its
\TeX{} source.
The author reviewed and edited the resulting material and assumes full
responsibility for the content of this paper, including any remaining
errors.

This work is supported by JSPS Grant-in-Aid for Transformative Research Areas (A) (22H05105).

\bibliographystyle{alpha} 
\bibliography{interval}

\end{document}